\title[MF on GLSM]{Fundamental Factorization of a GLSM\\ Part I: Construction}
\author[Ciocan-Fontanine]{Ionut Ciocan-Fontanine}
\address{
  \begin{tabular}{l}
   Ionut Ciocan-Fontanine \\
   \hspace{.1in} University of Minnesota \\
   \hspace{.1in} School of Mathematics, 127 Vincent Hall \\
     \hspace{.1in}  206 Church St. SE, Minneapolis, MN, USA 55455 \\
   \hspace{.1in} Korea Institute for Advanced Study \\
   \hspace{.1in} 85 Hoegiro, Dongdaemun-gu, Seoul, Republic of Korea 02455 \\
   \hspace{.1in} Email: {\bf ciocan@math.umn.edu} \\
  \end{tabular}
}
\author[Favero]{David Favero}
\address{
  \begin{tabular}{l}
   David Favero \\
   \hspace{.1in} University of Alberta \\
   \hspace{.1in} Department of Mathematical and Statistical Sciences \\
   \hspace{.1in} Central Academic Building 632, Edmonton, AB, Canada T6G 2C7 \\
   \hspace{.1in} Korea Institute for Advanced Study \\
   \hspace{.1in} 85 Hoegiro, Dongdaemun-gu, Seoul, Republic of Korea 02455 \\
   \hspace{.1in} Email: {\bf favero@ualberta.ca} \\
  \end{tabular}
}
\author[Gu\'er\'e]{J\'er\'emy Gu\'er\'e}
\address{
  \begin{tabular}{l}
   J\'er\'emy Gu\'er\'e \\
   \hspace{.1in} Univ. Grenoble Alpes\\
   \hspace{.1in} CNRS, Institut Fourier, F-38000 Grenoble, France \\
    \hspace{.1in} Email: {\bf jeremy.guere@gmail.com} \\
  \end{tabular}
}
\author[Kim]{Bumsig Kim}
\address{
  \begin{tabular}{l}
   Bumsig Kim \\
   \hspace{.1in} Korea Institute for Advanced Study \\
   \hspace{.1in} 85 Hoegiro, Dongdaemun-gu, Seoul, Republic of Korea 02455 \\
   \hspace{.1in} Email: {\bf bumsig@kias.re.kr} \\
  \end{tabular}
}
\author[Shoemaker]{Mark Shoemaker}
\address{
  \begin{tabular}{l}
   Mark Shoemaker \\
   \hspace{.1in} Colorado State University \\
      \hspace{.1in} Department of Mathematics \\
   \hspace{.1in} 1874 Campus Delivery, Fort Collins, CO, USA, 80523-1874\\
   \hspace{.1in} Email: {\bf mark.shoemaker@colostate.edu} \\
  \end{tabular}
}
\declaretheoremstyle[notefont=\bfseries,notebraces={}{},
    headpunct={},postheadspace=1em]{mystyle}
\declaretheorem[style=mystyle,numbered=no,name=Condition]{condition-hand}
\newcommand{\cc}[1]{\mathcal{#1}}  
\newcommand{\f}[1]{\mathfrak{#1}}  
\newcommand{\newterm}{\textsf}
\newcommand{\op}[1]{\operatorname{#1}}
\newcommand{\sslash}{\mathbin{/\mkern-6mu/}}
\newcommand{\cE}{\mathcal{E}}
\newcommand{\PP}{\mathbb{P}}
\newcommand{\CC}{\mathbb{C}}
\newcommand{\ZZ}{\mathbb{Z}}
\newcommand{\NN}{\mathbb{N}}
\newcommand{\RR}{\mathbb{R}}
\newcommand{\QQ}{\mathbb{Q}}
\newcommand{\cO}{\mathcal{O}}
\newcommand{\cA}{\mathcal{A}}
\newcommand{\ev}{\mathrm{ev}}
\newcommand{\cB}{\mathcal{B}}
\newcommand{\cC}{\mathfrak{C}}
\newcommand{\cD}{\mathcal{D}}
\newcommand{\cF}{\mathcal{F}}
\newcommand{\bg}{\boldsymbol{(\underline g)}}
\newcommand{\cL}{\mathcal{L}}
\newcommand{\cM}{\mathcal{M}}
\newcommand{\cP}{\mathcal{P}}
\newcommand{\cV}{\mathcal{V}}
\newcommand{\cW}{\mathcal{W}}
\newcommand{\cX}{\mathcal{X}}
\newcommand{\cY}{\mathcal{Y}}
\newcommand{\cT}{\mathcal{T}}
\newcommand{\cZ}{\mathcal{Z}}
\newcommand{\Spec}{\textrm{Spec}}
\DeclareMathOperator{\GJ}{\langle J \rangle}
\DeclareMathOperator{\ch}{ch}
\newcommand{\rank}{\operatorname{rank}}
\newcommand{\logc}{ \mathring\omega ^{\mathrm{log}}_{\mathfrak{C}} }
\def\tand{\text{ and } }
\def\log{ \omega_{\cC}^{\op{log}} }
\def\nolog{ \omega_{\cC}}
\newcommand{\Xrig}{\cX^{\op{rig}}}
\theoremstyle{plain}
\newtheorem{thm}{Theorem}[subsection]
\newtheorem{theorem}[thm]{Theorem}
\newtheorem{dfn}[thm]{Definition}
\newtheorem{definition}[thm]{Definition}
\newtheorem*{dfn*}{Definition}
\newtheorem{proposition}[thm]{Proposition}
\newtheorem*{lem*}{Lemma}
\newtheorem{lemma}[thm]{Lemma}
\newtheorem{corollary}[thm]{Corollary}
\newtheorem{condition}{Condition}
\theoremstyle{definition}
\newtheorem{nt}[thm]{Notation}
\newtheorem{reduction}[thm]{Reduction}
\newtheorem{rem}[thm]{Remark}
\newtheorem*{rem*}{Remark}
\newtheorem*{rems*}{Remarks}
\newtheorem{exa}[thm]{Example}
\newtheorem*{exa*}{Example}
\newtheorem*{exait*}{\rm \em Example}
\newtheorem*{exadefit*}{\rm \em Example/Definition}
\newtheorem*{cla*}{\rm \em Claim}
\newtheorem{remark}[thm]{Remark}
\newcommand{\C}{\mathbb{C}}
\newcommand{\R}{\mathbb{R}} 
\newcommand{\Z}{\mathbb{Z}}
\newcommand{\Hom}{\operatorname{Hom}}
\def\Z{\op{\mathbb{Z}}}
\def\Q{\op{\mathbb{Q}}}
\def\presuper#1#2%
\def\chp{\presuper{\Z_2}{\op{ch}}}
\newcommand{\orb}{\text{\rm orb}}
\newcommand{\tw}{\text{\rm tw}}
\def\<{\left\langle}
\def\>{\right\rangle}
\def\A{\op{\mathbb{A}}}
\def\O{\op{\mathcal{O}}}
\newcommand{\E}{\mathcal{E}}
\newcommand{\F}{\mathcal{F}}
\newcommand{\T}{\mathcal{T}}
\newcommand{\dabsfact}[1]{\operatorname{D}(#1)}
\newcommand{\dbcoh}[1]{\operatorname{D}(#1)}
\newcommand{\ti }{\times}
\newcommand{\ot }{\otimes}
\newcommand{\ra }{\rightarrow}
\newcommand{\ord}{{\mathrm{ord}}}
\newcommand{\Sym}{{\mathrm{Sym}}}
\newcommand{\cK}{{\mathcal{K}}}
\newcommand{\ke }{{\varepsilon }}
\newcommand{\kb }{{\beta}}
\newcommand{\ka }{{\alpha}}
\let\wt\widetilde
\newcommand{\fM}{\mathfrak{M}}
\newcommand{\fX}{\mathfrak{X}}
\newcommand{\lan}{\langle}
\newcommand{\ran}{\rangle}
\newcommand{\cN}{\mathcal{N}}
\newcommand{\stab}{{\mathrm{st}}}
\newcommand{\bdeg}{\mathbf{deg}}
\newcommand{\LL}{\mathbb{L}}
\newcommand{\ufC}{\underline{\fC}}
\newcommand{\sL}{\mathscr{L}}
\newcommand{\td}{\mathrm{td}}
\newcommand{\sMbar}{\overline{\mathcal{M}}}
\newcommand{\sKbar}{\overline{\mathcal{K}}}
\newcommand{\Mfrak}{\mathfrak{M}}
\newcommand{\VmodtG}{V/\!\!/_\theta G}
\newcommand{\ringlog}{\mathring\omega ^{\mathrm{log}}_{\mathfrak{C}}}
\newcommand{\sG}{\mathscr{G}}
\newcommand{\VG}{[V_1/\!\!/_{\theta}G]}
\newcommand{\VGJ}{[V_1/\!\!/_{\theta} G_1]}
\newcommand{\fC}{\mathfrak{C}}
\newcommand{\pii}{ \pi_* {\Sigma_i}_*}
\newcommand{\Hoch}{\mathrm{Hoch}}
\DeclareMathOperator{\tot}{tot}
\DeclareMathOperator{\taut}{taut}
\numberwithin{equation}{section}
\begin{document}

\begin{abstract}
We define enumerative invariants associated to a hybrid Gauged Linear Sigma Model.  We prove that in the relevant special cases these invariants recover both the Gromov--Witten type invariants defined by Chang--Li and Fan--Jarvis--Ruan using cosection localization as well as the FJRW type invariants constructed by Polishchuk--Vaintrob.
The invariants are defined by constructing a ``fundamental factorization'' supported on the moduli space of Landau--Ginzburg maps to a convex hybrid model.  This gives the kernel of a 
 Fourier--Mukai transform; the associated map on Hochschild homology defines our theory.
\end{abstract}

\maketitle

\setcounter{tocdepth}{1}
\tableofcontents

\setcounter{section}{-1}
\section{Introduction}

\subsection{Background}
In the last quarter century, enumerative geometry has witnessed a dramatic transformation, due to a tremendous influx of ideas coming from string theory.  The development of Gromov--Witten theory for smooth algebraic varieties is one such example.  These results have revealed two important facts about the field, which were not previously apparent but are now crucial to its study.  First, that certain enumerative invariants of a smooth variety have a rich recursive structure when organized correctly.  For curve counting theories such as Gromov--Witten theory, the invariants form what is known as a \newterm{Cohomological Field Theory} (CohFT).  Second, the enumerative invariants for \textit{different spaces} are often related to each other in dramatic and surprising ways.  These correspondences take many forms, with most originating from physical considerations.
One of the first and most striking instances of both of the phenomena described above was the celebrated mirror theorem, first conjectured by Candelas et. al. \cite{CdlOGP} and subsequently proven by Givental \cite{G1}.
In this theorem, the rational curve counts of a quintic hypersurface $Q$ in $\PP^4$ were shown to be dictated by the variation of Hodge structure of a different space, the so-called ``mirror manifold'' to the quintic.

In attempting to realize the full scope of mirror symmetry and other related correspondences, one is led to consider more exotic geometries generalizing the notion of a smooth manifold or variety.  For instance the ``mirror'' to a smooth variety may not itself be a smooth variety but rather an \textit{orbifold}, i.e. a smooth Deligne--Mumford stack.  The Gromov--Witten theory of orbifolds has been an active area of study in the last decade \cite{ChenR1, ChenR2, AGV}.  In another direction, again inspired by mirror symmetry considerations \cite{Wi2}, one can generalize the idea of a smooth variety by adding the data of a function, or \textit{superpotential}.  A space $X$ together with a superpotential $w: X \to \mathbb{A}^1$ is known as a \newterm{Landau--Ginzburg model} (LG model).  

For example, the space $T = \tot( \cc O_{\PP^4}(-5))$ together with the superpotential \[w = p\sum_{i=1}^5 x_i^5\] is the LG model realization of the quintic hypersurface $Q = \{\sum_{i=1}^5 x_i^5 = 0\} \subset \PP^4$ (See Example~\ref{e:quintic} for details).  Note that the degeneracy locus of $w$ in $T$ is exactly the hypersurface $Q$.  A suitably defined curve-counting theory for the LG model $(T, w)$ should agree with the Gromov--Witten theory for $Q$ (see \cite{CL} for such a result).  In certain special cases, LG models have also been studied by mathematicians in more familiar contexts.  For instance given a polynomial $p: \mathbb{A}^N \to \mathbb{A}^1$ with an isolated singularity at the origin, many of the invariants of the singularity defined by $p$ can be equally considered as classical invariants of the LG model $(\mathbb{A}^N, p)$.   
Thanks to pioneering work of Witten \cite{Witten1} and a rigorous mathematical construction of Fan, Jarvis, and Ruan  \cite{FJR13,FJR07} and Polishchuk and Vaintrob \cite{PV},  an enumerative theory known as FJRW theory has been defined for these \textit{affine} LG models, in analogy with Gromov--Witten theory.

A particularly useful class of Landau--Ginzburg models are known as \newterm{Gauged Linear Sigma Models} (GLSMs).  A GLSM is defined in part by a vector space $V$ together with a linear action of a reductive group $G$ on $V$, and a $G$-invariant function $w: V \to \mathbb{A}^1$ (see Definition \ref{d:input0} for details).  The space $X$ is then given as a GIT quotient of $V$ by $G$, and the function $w$ descends to a superpotential $\bar w: X \to \mathbb{A}^1$.  In fact both the quintic hypersurface $Q$ ``='' $(T, w)$ and the affine model $(\mathbb{A}^N, p)$ are particular examples of GLSMs. 
Hence both Gromov--Witten theory (of hypersurfaces) and FJRW theory should be special cases of a more general but currently elusive curve counting theory for GLSMs.  For a given GLSM, this theory should have the structure of a cohomological field theory.

In a major recent advance, Fan, Jarvis, and Ruan  \cite{FJR15}  have begun to construct such a theory.
More precisely, they construct a collection of moduli spaces associated to a GLSM, the so-called moduli of \textit{Landau--Ginzburg stable maps} (Definition~\ref{stableLG}), and prove these spaces are Deligne--Mumford stacks.  
In addition, in the case of ``compact type'' insertions (see \cite[Definition 4.1.4]{FJR15}), they are able to endow these spaces with a virtual cycle defined on a proper substack.  Enumerative invariants can then be defined as integrals over this cycle.  This breakthrough is the first appearance of a theory of general GLSM invariants in mathematics.  Unfortunately, certain technical constraints place restrictions on when these methods can be applied, and the technique yields only a partially defined theory.

\subsection{Construction}
The primary goal of the current project is to construct an enumerative curve counting theory for GLSMs 
which both is purely algebraic and defines a full cohomological field theory.
In the current paper, we restrict our attention to a class of GLSMs called \textit{convex hybrid models}.  The theory is constructed in terms of the derived category of factorizations.  This work forms the bulk of the paper, and is completed in \S\ref{s:GLSMtheory} (see Definition~\ref{d:invariants}).

Our methods are inspired by Polishchuk and Vaintrob's innovative use of matrix factorizations to give an algebraic construction of the FJRW theory of affine LG models  \cite{PV}.  The current work extends their methods to the general setting of hybrid model GLSMs.
As with Polishchuk--Vaintrob, the construction takes place at the level of derived categories; for LG models these are categories of factorizations (also known as matrix factorizations). 

Given a hybrid model GLSM, we construct a family of smooth moduli spaces which contain the moduli of LG stable maps as a closed substack.  On these moduli spaces we define 
a \textit{fundamental factorization}, an object in the derived category of factorizations with support contained in the space of LG maps described above.

As our virtual cycle is lifted to a categorical object, so too are our generalized enumerative invariants.  Namely, the fundamental factorization serves as the kernel of a Fourier--Mukai transform - a functor from the derived category of factorizations of a hybrid model to the derived category of the moduli space of curves $\sMbar_{g,r}$.
This functor can be viewed as a categorical lift of the usual description of Gromov--Witten-type invariants in terms of cohomological integral transforms.  Indeed, the passage to 
Hochschild homology (see Section~\ref{s:hochom}) recovers the more familiar description, after application of a suitable Hochschild--Kostant--Rosenberg isomorphism identifying Hochschild homology with cohomology.

\subsection{Comparisons}
As mentioned above, both the Gromov--Witten theory of a complete intersection (in a convex GIT quotient of the form $[\VmodtG]$), as well as the FJRW theory of a singularity $w: [\mathbb{A}^N/G] \to \mathbb{A}^1$ are special cases of the hybrid model GLSMs we consider.  Indeed the term \textit{hybrid model} refers to a ``hybrid'' between Gromov--Witten and FJRW theory.  As such, a key test of the validity of our construction is to show that in these two opposite limiting cases our invariants agree with the previously defined Gromov--Witten or FJRW invariants.  These comparison results are proven in \S\ref{s:comwitothcon}, Theorems~\ref{t:mainGWcomp} and \ref{p:PVcomp1} respectively.

There are many examples of different GLSMs which define the same LG model.  
More precisely, one may easily construct a pair of GLSMs involving different vector spaces, groups and functions, but such that the corresponding GIT quotients together with their induced superpotentials are isomorphic.  In this case it is natural to ask if the corresponding GLSM invariants can be identified with one another.  In the final section we consider so-called \newterm{affine phase} GLSMs, i.e. abelian GLSMs such that the corresponding GIT quotient is isomorphic to a singularity $w: [\mathbb{A}^N/G] \to \mathbb{A}^1$. We prove (Theorem~\ref{t:equivGLSM}) that the invariants of any such affine phase agree with the FJRW invariants of the singularity as defined in \cite{PV}.

\subsection{Future work}

There are two technical steps in the construction presented herein which impose restrictions on
the types of GLSMs that we currently treat.
They are:
\begin{itemize}
	\item to embed Fan--Jarvis--Ruan's moduli space of LG maps (Definition~\ref{stableLG}) into a smooth  Deligne--Mumford stack, denoted $\square$,  which has \textit{quasi-projective coarse moduli};
	\item to realize this embedding as the zero locus of a section of a certain vector bundle on $\square$.
\end{itemize}
The first step forces us to restrict our attention to the class of so-called hybrid model GLSMs, due to the projectivity requirement.  
To obtain the second step, we impose the convexity condition of Definition \ref{d:convexity}.
These restrictions are in fact not strictly necessary, but we leave the more general case to a later paper for two reasons.  First, we wish to streamline the exposition here as much as possible.  Second, the construction described here is important in its own right, as it is necessary for our comparison with Gromov--Witten theory in \S~\ref{s:comwitothcon}.

Both Gromov--Witten theory and FJRW theory are known to have the structure of a cohomological field theory.  
In fact the general GLSM invariants constructed here also form a cohomological field theory.  For ``brevity'' we have chosen to save this result for the sequel \cite{CKFGS}.

Finally, we hope that our construction will shed new light on the connections between previously defined enumerative theories.  For example, our level of generality should be readily applicable to wall crossing results such as the LG/CY correspondence and should help to service the abundance of recent developments in this area. See e.g.~\cite{ChR, AS1, LPS, GR, CJR}.

\subsection{Structure of the paper}
\S\ref{s:OC} is in many ways an expanded introduction.  We define GLSMs and hybrid models, give a general overview of our construction of an enumerative theory for GLSMs,
and review the description of the moduli space constructed in \cite{FJR15}.

In \S\ref{s:fact}, we give an overview of categories of factorizations.   It summarizes the constructions of the (derived) pushforward, pullback, tensor product (see \S\ref{s:derfunc}), and Fourier--Mukai transforms, as well as 
 Hochschild homology (see \S\ref{s:hochom}). 
 Moreover, we set the stage for our comparison with Gromov--Witten theory, with a collection of results relating various cohomological and categorical integral transforms.

In \S\ref{s:PV machinery}, we construct a factorization associated to families of curves equipped with GLSM data.  We call this factorization the Polishchuk--Vaintrob (PV) factorization and determine its properties.  
In order to define it, we require three conditions (see \S\ref{Setup} and \S\ref{ss: admissible resolutions}).
We prove that Condition \ref{assume ev chainlevel} is automatic (see \S\ref{cond1}) and that Conditions \ref{assume alpha chainlevel} and \ref{compatible} are satisfied assuming that the base is a smooth Deligne-Mumford stack which is projective over an affine; this is called Assumption $(\star)$ (see \S\ref{ss:sc}).
We establish that the PV factorization is supported along the critical locus of the superpotential in \S \ref{s:supofthepvfac}.  Then finally in \S \ref{s:rigev}, we discuss a $\CC^*$-equivariant version of the PV factorization necessary for our construction.  

\S\ref{s:conFF} is dedicated to embedding the space of LG maps in a smooth Deligne--Mumford stack, and proving that the necessary projectivity assumptions hold on this ambient space.  

Finally in \S\ref{s:GLSMtheory}, we define our GLSM invariants.  We gather results from previous sections to construct a ``fundamental factorization'' supported on the moduli space of LG maps to the critical locus.  This forms the kernel of an integral transform, which is used to define our invariants.  In this section also we prove that these invariants are independent of the choices made in the construction.
 
In \S\ref{s:comwitothcon}, we compare the GLSM invariants of \S\ref{s:GLSMtheory} with other previously defined theories, such as a cosection localized version of Gromov--Witten theory of complete intersection varieties and the quantum singularity theory (or FJRW theory).

\subsection{Conventions and notations} \label{subsec: conventions}
We work in the algebraic category and over the field $\CC$ of complex numbers.
The algebraic stacks considered in this paper are finite type and semi-separated (i.e. with affine diagonal) over the ground field unless otherwise stated. 
A quotient stack is the stack quotient of an algebraic space by a linear algebraic group unless otherwise stated.
 Let $\rho : \cY \to M$ be the coarse moduli space of a separated DM stack $\cY$.
An invertible sheaf $\sL$ on  $\cY$ will be called {\it ample} 
 if the natural map $\rho^*\rho_*\sL \to \sL$ is an isomorphism and $\rho_*\sL$ is an ample invertible sheaf on the algebraic space $M$ (in the usual sense).  Our conventions regarding the relative ampleness of an invertible sheaf on a universal curve are described in \S \ref{subsubsec: cochain-level}.

\subsection{\it Acknowledgements} I. Ciocan-Fontanine was partially supported by NSF grant DMS-1601771.  D. Favero was partially funded by the Natural Sciences and Engineering Research Council of Canada and Canada Research Chair Program under NSERC RGPIN 04596 and CRC TIER2 229953.
J. Gu\'er\'e was first funded by the Einstein Foundation at the Humboldt University of Berlin and then by the University of Grenoble--Alpes.
B. Kim was partially supported by KIAS individual grant MG016404.
M. Shoemaker was partially supported by NSF grant DMS-1708104. 

B. Kim would like to thank K. Chung, T. Jarvis, T. Kim, S. Lee, A. Kresch,  M. Miura, and A. Polishchuk for helpful discussions.
M. Shoemaker would like to thank Y. Ruan and T. Jarvis for teaching him the GLSM framework and I. Shipman for many useful conversations on factorizations.   I. Ciocan-Fontanine and D. Favero would also like to thank the  Korea Institute for Advanced Study for their superb and abundant hospitality throughout the process of developing this work.   All authors are also grateful to the University of Minnesota's Department of Mathematics for the use of their facilities during several extended collaborative meetings.

\section{Overview of the construction}\label{s:OC}

\subsection{Input data} 

A \newterm{Gauged Linear Sigma Model}, or GLSM, is a collection of data describing a GIT quotient $[\VmodtG]$ of a vector space $V$ 
together with a superpotential $w \colon  [\VmodtG] \to \mathbb{A}^1$ and an ``$R$-charge". 
The ``open string $B$-model" theory of a GLSM is given by the \newterm{derived categories of factorizations}.  These categories have received a lot of attention in recent years (e.g. \cite{BFK14} \cite{PV}).
We discuss these categories in detail in \S\ref{s:fact} below. 
A ``closed string $A$-model" theory with target a GLSM was introduced by Fan, Jarvis, and Ruan \cite{FJR15}. We recall briefly the main points.

\begin{dfn}\label{d:input0}
The \newterm{GLSM input data} $(V, G, \CC^*_R, \theta, w)$ consists of:
\begin{enumerate}
\item a $\ZZ$-graded vector space, $V=\oplus_{n\in\ZZ}V_n$, with grading induced by the action (called the \newterm{R-charge}) of a one-dimensional torus 
$\CC^*_R  \subseteq GL(V)$;
\item an action by a linearly reductive group $G  \subseteq GL(V)$;
\item a choice of rational character $\theta \in \widehat G_\QQ$, where $\widehat G := \mathrm{Hom}(G,\CC^*)$, and $\widehat G_\QQ := \widehat G\otimes_\ZZ \QQ$; 
\item a $G$-invariant polynomial function $w \colon  V \to \mathbb{A}^1$, homogeneous of degree $\bdeg > 0$ with respect to the grading action $\CC^*_R$. 
\end{enumerate}
The actions of $G$ and $\CC^*_R$ are required to be \newterm{compatible}, that is, to satisfy
\begin{itemize} 
\item $G$ and $\CC^*_R$ commute: $g\lambda = \lambda g$ for all $g \in G$ and $\lambda \in \CC^*_R$;
\item $\langle J\rangle := G \cap \CC^*_R \simeq \mu_{\bdeg}$.
\end{itemize}
Here
\[
 J := (..., e^{2 \pi \sqrt{-1} c_i/\bdeg},  ... )\]
is the diagonal element in $GL(V)$ given by the weights $c_i$ of the $\CC ^*_R$-action on $V$.
Moreover, we assume that there are no strictly semistable points for the linearization of the $G$-action on $V$ given by $\theta$:
\begin{equation}\label{nostrictsemistablepts}
V^{ss}(\theta)=V^s(\theta).
\end{equation}
\end{dfn}

It follows from equation \eqref{nostrictsemistablepts} that the GIT stack quotient  
\[ [V/\!\!/_{\theta}G]:=[V^{ss}(\theta) / G]
\] 
is a smooth separated Deligne--Mumford stack, and that  
its coarse moduli space, $V/\!\!/_{\theta}G$, is projective over the affine scheme $\Spec\left( \text{Sym}^\bullet(V^\vee) \right)^G$.

The regular function $w \colon  V \to \mathbb{A}^1$ is called the \newterm{superpotential}.
The superpotential descends to a function on the GIT stack quotient, which, by abuse, we also denote $ w \colon  [ \VmodtG] \to \mathbb{A}^1$.  Define the closed substack
\[  Z (d w )  \subseteq [ V^{ss}(\theta) / G],\] as the critical locus of $w$, i.e., the zero locus of the section $d  w$.
We say the superpotential $w$ is \newterm{nondegenerate} if $Z(d{w})$ is proper over $\Spec(\CC)$.

Furthermore, we introduce the group 
\[
\Gamma := G \cdot \CC^*_R  \subseteq \mathrm{GL}(V).
\]
By compatibility, the R-charge induces a well defined character 
\[\begin{array}{ll} \chi \colon  &\Gamma \to \CC^* \\ & g \cdot \lambda \mapsto \lambda^{\bdeg} \end{array}\] 
for $g \in G$ and $\lambda \in \CC^*_R$.  We obtain the short exact sequence
\begin{equation}\label{e:gses}
1 \to G \to \Gamma \stackrel{\chi}{\to}  \CC^* \to 1.
\end{equation}

\begin{remark}
In \cite{FJR15}, the definition of the $A$-theory of a GLSM also requires a choice of a \newterm{good lift} of $\theta$ to $\widehat \Gamma$, that is, the choice of a character $\nu \in \widehat \Gamma$ whose restriction to $G$ is equal to $\theta$ and such that the $\Gamma$-semistable points for $\nu$ coincide with the $G$-semistable points for $\theta$:
\[ V^{ss}(\nu)=V^{ss}(\theta).\]
The existence of such a good lift is not automatic, but holds for almost all interesting examples; in particular, it will be automatic for the class of hybrid models that are the focus of this paper.
\end{remark}

\subsection{Landau--Ginzburg quasimaps}

Let $(V, G, \CC^*_R, \theta, \nu, w)$ be GLSM input data as above. 
Fix $g, r \geq 0$.
\begin{dfn}  \label{d:plg} 
A \newterm{prestable genus-$g$, $r$-pointed Landau--Ginzburg (LG) quasimap to $[V/\!\!/_\theta G]$} over a scheme $S$ consists of:
\begin{enumerate}
\item a prestable genus-$g$, $r$-pointed orbicurve  $(\pi: \cC \to S, \sG_1, \ldots, \sG_r )$ 
with a section $S \to  \sG _i$ of the gerbe marking $\sG_i$ for $1 \leq i \leq r$;
\item a principal $\Gamma$-bundle $\cP: \cC \to B\Gamma$ over $\cC$;
\item a section $\sigma: \cC \to \op{tot}\cV := \cP \times_\Gamma V$; and
\item an isomorphism $\varkappa: \chi_*(\cP) \to \mathring\omega ^{\mathrm{log}}_{\mathfrak{C}} $, \label{i:omlog}
\end{enumerate}
such that
\begin{itemize}
\item the morphism of stacks $\cP: \cC \to B\Gamma$ is representable;
\item for each geometric fiber $\cC_s$ of $\cC$, the subset $B:=\sigma^{-1}(\op{tot}\cV \setminus (\cP\times_\Gamma V^{ss}(\theta)))$ of $\cC_s$ is a finite set disjoint from the nodes and markings of $\cC_s$; the points of $B$ are called the base-points of the LG quasimap.
\end{itemize}
\end{dfn}

In item $\eqref{i:omlog}$ of the above definition, ${\log}$ is the logarithmic relative canonical line bundle and 
$\logc$ denotes the associated principal $\CC^*$-bundle. The notation $\op{tot}\cV$ is explained as follows: the mixed construction
$\cP\times_\Gamma V= [(\cP \times V)/\Gamma]$ defines a geometric vector bundle on $\cC$ and $\cV$ denotes its locally free sheaf of sections.

There is a natural notion of isomorphism of LG quasimaps, analogous to that of quasimaps \cite{CKM} which we present now.
Given a Landau--Ginzburg quasimap as above and a character $\eta \in \widehat \Gamma = \op{Hom}(\Gamma, \CC^*)$, let $ \CC(\eta)$ denote the corresponding representation and let $\cL_\eta$ denote the line bundle 
 \begin{equation}\cL_\eta := \cP \times_\Gamma \CC(\eta).\end{equation}
 If $S$ is connected,
the degree of the principal $\Gamma$-bundle $\cP$ is an element $d_0 \in \text{Hom}_{\Z}(\widehat \Gamma, \QQ)$ defined by
$$d_0( \eta) = \text{deg}(\cL_\eta) \in \QQ,$$ where $\text{deg}(\cL_\eta)$ is the degree of $\cL_\eta$ on the fibers of $\cC$.

Furthermore, by Equation~\eqref{e:gses} we have the sequence:
\[0 \to  \text{Hom}_{\Z}(\widehat G, \QQ) \to  \text{Hom}_{\Z}(\widehat \Gamma, \QQ)\stackrel{\chi_*}{\to}  \text{Hom}_{\Z}(\widehat \CC^*, \QQ) \to 0.
\]
Since $\chi_*$ is an isomorphism when restricted to $\text{Hom}_{\Z}(\widehat \CC^*_R, \QQ) $, this sequence has a distinguished splitting. Let 
\[
q \colon  \text{Hom}_{\Z}(\widehat \Gamma, \QQ) \to \text{Hom}_{\Z}(\widehat G, \QQ)
\]
 denote the map induced by this splitting, so that we have an isomorphism
 \[
 (q, \chi_*): \text{Hom}_{\Z}(\widehat \Gamma, \QQ)  \xrightarrow{\sim}  \text{Hom}_{\Z}(\widehat G, \QQ) \times  \text{Hom}_{\Z}(\widehat \CC^*, \QQ).
 \]

\begin{dfn}
The \newterm{degree} of an LG quasimap over a connected base is defined to be \[q(d_0) \in \op{Hom}_{\Z}(\widehat G, \QQ).\]
We say an
LG-quasimap over a scheme $S$ has degree $d$ if $q(d_0)=d$ for every connected component of $S$.
\end{dfn}

 \begin{rem}\label{degreeinG}
The reasoning for the definition above is as follows.  Given an LG quasimap of degree $d_0 \in \text{Hom}_{\Z}(\widehat \Gamma, \QQ)$, the image of $d_0$ under $\chi_*$ is determined by condition \eqref{i:omlog} of Definition~\ref{d:plg}.  Thus for an LG quasimap, the degree $d_0$ as an element of $\text{Hom}_{\Z}(\widehat \Gamma, \QQ)$ can be recovered from its image $q(d_0) \in \text{Hom}_{\Z}(\widehat G, \QQ)$.
\end{rem}
 
Let $Z$ be a closed subscheme of $V$, invariant under the action of $G$.  Let $\cZ_\theta$ denote $[Z \cap V^{ss}(\theta) / G]$.

\begin{dfn} A prestable LG quasimap to $\cZ_\theta$ over $S$ is a prestable LG quasimap to $[V/\!\!/_\theta G]$ satisfying the additional condition that the image of the associated $\Gamma$-equivariant map $[\sigma]:\cP\to V$ lies in $Z$.
\end{dfn}

\subsection{Stability conditions and moduli of stable LG maps}\label{sectLGmaps}
In analogy with the theory of quasimaps to GIT quotients \cite{CKM}, a family of stability conditions on LG quasimaps, indexed by
 a parameter $\ke\in \QQ_{>0}\cup \{ 0^+,\infty\}$ is introduced in \cite{FJR15}. 
In this paper, we work with the asymptotic stability $\ke=\infty$, which is directly analogous to the more familiar notion of stable maps.

 For a prestable $r$-pointed Landau--Ginzburg quasimap to $[V/\!\!/_\theta G]$ over $S$ as in Definition \eqref{d:plg}
 let $s$ be a geometric point of $S$ and let $\rho : \fC _s \to \ufC _s$ denote the coarse moduli space of the fiber $\fC _s$ at $s$. 
 By replacing $\nu$ with a multiple $m\nu$ if necessary, 
 we may assume that the natural map $\rho^*\rho_* \cL _{\nu}|_{\fC _s}  \to  \cL _{\nu}|_{\fC _s}$ is an isomorphism for each geometric point $s$. 
 
\begin{dfn}\label{stableLG}
A prestable $r$-pointed Landau--Ginzburg quasimap to $[V/\!\!/_\theta G]$ over $S$ is a \newterm{stable LG map to $[V/\!\!/_\theta G]$} if for each geometric fiber $C_s$ of $\cC$ the following two conditions hold:
\begin{enumerate}
\item the set $B$ of base-points is empty;
\item  the line bundle $(\log \otimes \cL_\nu^{\otimes M }) |_{\fC _s}$ is ample for $M$ sufficiently large.
\end{enumerate}
The same two conditions define stable LG maps to $\cZ = [Z \cap V^{ss} / G]$ for any closed $G$-invariant subscheme $Z \subseteq V$.
\end{dfn}
Note that stability depends on the good lift $\nu$ of $\theta$.

\begin{dfn}
Given $g$ and $r$, GLSM input data $(V, G, \CC^*_R, \theta, \nu, w)$, a choice of $Z  \subseteq V$ as above, and $d \in \op{Hom}_{\Z}(\widehat G, \QQ)$, the moduli stack
\[LG_{g, r}(\cZ, d),\]
of genus-$g$, $r$-pointed, degree $d$ Landau--Ginzburg maps to $\cZ$ is the stack parametrizing the degree $d$ {\it stable} families of Definition~\ref{stableLG}.  
\end{dfn}

\begin{theorem}[{\cite[Theorems 5.3.1 and 5.4.1]{FJR15}}]
The moduli space $LG_{g, r}(\cZ, d)$ is a separated Deligne--Mumford stack of finite type.  
When $\cZ$ is proper over $\mathrm{Spec}(\CC)$, the space $LG_{g, r}(\cZ, d)$ is proper as well. 
\end{theorem}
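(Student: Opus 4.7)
My plan is to realize $LG_{g,r}(\cZ, d)$ as an open substack of a stack built up in stages from known algebraic stacks, and then check the properties one at a time. The stack $\mathfrak{M}^{\mathrm{tw}}_{g,r}$ of $r$-pointed prestable twisted (orbi)curves is algebraic. Over the universal curve $\pi \colon \mathfrak{C} \to \mathfrak{M}^{\mathrm{tw}}_{g,r}$, the stack $\mathrm{Bun}_\Gamma(\mathfrak{C}/\mathfrak{M}^{\mathrm{tw}}_{g,r})$ parametrizing principal $\Gamma$-bundles, i.e.\ morphisms $\mathfrak{C} \to B\Gamma$, is algebraic by standard Hom-stack results. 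Representability of $\cP$ is an open condition on geometric fibers, and fixing $\chi_*(\cP) \simeq \logc$ cuts out a locally closed substack with the choice of $\varkappa$ then a $\Gm$-torsor over it. A section $\sigma$ of the associated geometric vector bundle $\mathrm{tot}\,\cV$ with image in $\cP\times_\Gamma Z$ is parametrized by a relative Hom-stack; the base-point-free condition and the asymptotic stability condition are both open. Fixing $d \in \op{Hom}_\Z(\widehat G,\QQ)$ pins down all Chern classes of the associated line bundles $\cL_\eta$, so boundedness holds, and we obtain an algebraic stack of finite type.

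For the Deligne--Mumford property, it suffices to show the automorphism group of every stable LG map is finite and unramified. Stability implies that $\omega_{\cC}^{\log}\otimes \cL_\nu^{\otimes M}$ is ample for $M$ large. Exactly as in Kontsevich's proof of the DM property of the stack of stable maps, this ampleness, together with the requirement that automorphisms preserve the markings, the gerbe structure, the $\Gamma$-bundle, and the section $\sigma$, forces the automorphism group scheme to be finite and reduced. Hence the diagonal is unramified.

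Separatedness is the valuative criterion: given two stable LG maps over $\mathrm{Spec}(R)$, $R$ a DVR with fraction field $K$, which are isomorphic over $\mathrm{Spec}(K)$, I would extend the generic isomorphism to the special fiber in the usual way. The underlying twisted curves are canonically identified by the separatedness of the Deligne--Mumford stack of stable curves (after internal stabilization), and the isomorphism of $\Gamma$-bundles and of sections extends by the Nagata--Zariski main theorem together with ampleness, which rules out flopping-type ambiguity.

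The main obstacle is properness when $\cZ$ is proper over $\mathrm{Spec}(\CC)$. I would again use the valuative criterion. Given an LG map over $\mathrm{Spec}(K)$, first apply semistable reduction (possibly after a finite extension of $R$) to extend the family of $r$-pointed twisted curves to $\mathrm{Spec}(R)$; this may introduce new rational components carrying a prescribed gerbe structure. Next, extend the $\Gamma$-bundle $\cP$ to the entire family using the algebraicity and properness properties of $\mathrm{Bun}_\Gamma$ over the universal curve, possibly after normalization or further base change. Then extend the section $\sigma$: because $\cZ$ is proper, the relative target $\cP \times_\Gamma (Z \cap V^{ss}(\theta))\to \cC$ restricted to the generic fiber is proper, so the section extends across the central fiber by the fiberwise valuative criterion applied to this proper map. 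Finally, contract components on which $\omega^{\log}_\cC \otimes \cL_\nu^{\otimes M}$ fails to be ample; stability of the limit is then automatic and makes the extension unique. The delicate point is carrying out these three extensions compatibly so that the isomorphism $\varkappa$ with $\logc$ persists and the base-point-free condition is recovered after stabilization; this is what forces one to work with $\cZ$ proper and the asymptotic stability condition $\epsilon = \infty$.
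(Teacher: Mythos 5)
First, note that the paper does not prove this statement: it is imported verbatim from Fan--Jarvis--Ruan \cite{FJR15} (their Theorems 5.3.1 and 5.4.1), so there is no in-paper proof to compare against. Your overall architecture (build the moduli as an open substack of a tower of Hom-stacks over $\fM^{\tw}_{g,r}$, then check DM/separated/proper by valuative criteria) is indeed the standard strategy of \cite{FJR15} and of the quasimap literature it imitates. But there is a genuine error in the properness step.

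You claim that ``because $\cZ$ is proper, the relative target $\cP\times_\Gamma(Z\cap V^{ss}(\theta))\to\cC$ restricted to the generic fiber is proper, so the section extends by the fiberwise valuative criterion.'' This is false: the fibers of $\cP\times_\Gamma(Z\cap V^{ss}(\theta))\to\cC$ are isomorphic to $Z\cap V^{ss}(\theta)$ itself, a locally closed subscheme of the affine space $V$, which is essentially never proper; what is proper by hypothesis is the quotient $\cZ=[Z\cap V^{ss}(\theta)/G]$. Properness of $\cZ$ can only be brought to bear after repackaging the pair $(\cP,\sigma)$ (with no basepoints) together with the rigidification $\varkappa$ as a single representable morphism from $\cC$ to the $\ringlog$-twisted target $\ringlog\times_{\CC^*}\cZ$, a proper Deligne--Mumford stack over $\cC$; the limit is then produced all at once as a twisted stable-map limit in the sense of Abramovich--Vistoli, not by three independent extensions of the curve, the $\Gamma$-bundle, and the section. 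Relatedly, ``extend the $\Gamma$-bundle using properness of $\mathrm{Bun}_\Gamma$'' is not available ($\mathrm{Bun}_\Gamma$ is not proper); the extension of $\cP$ is forced by the log condition $\chi_*(\cP)\cong\ringlog$ (a root-stack/finite-cover constraint) together with the extended map, and the domain curve of the limit is itself dictated by the map rather than chosen in advance by semistable reduction. The ``delicate point'' you flag at the end is therefore not a bookkeeping issue but the actual content of the properness proof, and as written your argument does not supply it.
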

For convex hybrid models (see \S\ref{ss: hybrid models} and \S\ref{ss:convexity}), we will  prove further  that $LG_{g, r}(\cZ, d)$ is a global quotient stack with projective coarse moduli (Proposition~\ref{p: projmod}).

\subsection{Hybrid models}\label{ss: hybrid models}

In this paper we focus on \newterm{hybrid models}.
(In fact, an additional technical assumption, called convexity over $BG$, will be required; see \S\ref{ss:convexity}.)   
This class of GLSMs includes ``geometric phases", such as complete intersections in convex varieties, as well as the ``affine phases" of \cite{FJR15, PV}. 

\begin{dfn}\label{d:input}
The \newterm{hybrid model input data} $(V, G, \CC^*_R, \theta, w)$ is a GLSM input data as in Definiton \ref{d:input0}, satisfying the following additional requirements:
\begin{enumerate}
\item The graded vector space $V$ decomposes as  $V = V_1 \oplus V_2 \cong \CC^n \oplus \CC^m$, 
such that the $\CC^*_R$-action is trivial on $V_1$ and has positive weights $c_1,\dots , c_m>0$ on $V_2$;
\item The character $\theta \in \widehat G_\QQ$ is such that $V_1^{s}(\theta) = V_1^{ss}(\theta)$ and $V^{ss}(\theta)= V_1^{ss}(\theta) \times V_2$; \label{i:c}
\end{enumerate}
\end{dfn}

\begin{nt}
Let us denote
\begin{eqnarray*}
\cX & := & [V_1 /\!\!/_\theta G] := [ V_1^{ss}(\theta) / G], \\
\cT & := & [\VmodtG] := [ V^{ss}(\theta) / G], \\
\cc Z & := & Z(dw)  \subseteq \cT.
\end{eqnarray*}
\end{nt}

\begin{dfn} \label{dfn: hybrid model}
A \newterm{hybrid model} consists of input data $(V = V_1 \oplus V_2, G, \CC^*_R, \theta, w)$ from above such that
\begin{enumerate}
\item $\left( \Sym V_1^\vee \right)^G = \CC$ (so that the stack $\cX$ is projective);
\item $G$ and $\CC^*_R$ are compatible; and
\item $w$ is nondegenerate and vanishes at $0$.
\end{enumerate}
\end{dfn}

Note that, by requirement (a) of Definition \ref{dfn: hybrid model}, the stack $\cX$ is a smooth Deligne--Mumford stack whose coarse moduli is projective. 
Furthermore, by requirement (b) of Definition~\ref{d:input}, the stack $\cT$ is identified with the total space of a vector bundle (with fiber $V_2$) on the stack $\cX$.
Also, the superpotential $w$ is nondegenerate if and only if we have 
$\cc Z  \subseteq \cX  \subseteq \cT$, where $\cX  \subseteq \cT$ is the inclusion as the zero section.

\begin{remark}
In the hybrid model case, the choice of a good lift $\nu$ of $\theta$ always exists and is unique up to a multiple.
Namely, after first replacing $\theta$ by a multiple, we can assume that $\theta$ is trivial on 
$\langle J \rangle=\langle (1, \ldots, 1, e^{2 \pi i c_1/\deg}, \ldots, e^{2 \pi i c_m/\deg})\rangle$.
Hence, we can extend the character to $\Gamma$ by sending $\CC^*_R \subseteq \Gamma$ to 1.
Note that the semi-stable locus of $V$ remains unchanged in this process, meaning, in the language of \cite{FJR15}, that we indeed obtain a \newterm{good lift}. This is the reason for omitting the good lift from the notation for hybrid models.
\end{remark}

\begin{dfn}\label{d:phases}
\begin{enumerate}
\item A hybrid model GLSM $(V, G, \CC^*_R, \theta, w)$ is called an \newterm{affine phase} if $[V_1 \sslash_{\theta} G] \cong B G'$ for some finite group $G'$. 
\item A hybrid model GLSM $(V, G, \CC^*_R, \theta, w)$ is called a \newterm{geometric phase} if the group $\CC ^*_R$ acts on $V_2$ via the standard multiplication (so that $c_1=c_2= ... = c_m=1$),
and $w$ is a polynomial function which is linear on $V_2$, i.e.,
$$w \in (V_2^{\vee}\ot_\CC \Sym^{\ge 1} (V_1^\vee))^{G}.$$ 
This implies $\bdeg =1 $, therefore $\lan J \ran = 1$.
Consider the vector bundle $\cE$ (with fiber $V_2$) on $\cX$, whose total space is $\cT$.
 Since $w$ is linear on $V_2$, it gives rise to a section
$f\in H^0(\cX,\cE^\vee)$ of the {\it dual} vector bundle $\cE ^\vee $. 
Nondegeneracy of the hybrid model implies that $f$ is a regular section with smooth zero locus $\cZ :=Z(f)=Z(dw)$. 
Note that this includes the special case of $V_2 = 0$.
\item A GLSM is \newterm{Calabi--Yau} if $\cT$ is Calabi--Yau.
\end{enumerate}
\end{dfn}

\begin{exa} \label{exa : w =0}
Consider the case where $V = V_1$ i.e. $V_2 = 0$ where $V$ is a $G$-representation such that $\left( \Sym V^\vee \right)^G = \CC$.  We equip this with a trivial $\C^*_R$-action and set $\Gamma = G \times \C^*_R$.  View $w = 0$ as a section of $\O(\chi)$ where $\chi$ is the projection character.   Take a generic stability condition $\theta$ so that the GIT stack quotient $[V /\!\!/_\theta G]$ is a smooth proper Deligne--Mumford stack.  This can be thought of as the GLSM corresponding to $[V /\!\!/_\theta G]$ itself.
\end{exa}

The following special case gives the GLSM theory for the quintic 3-fold, it is simply a specific example of a geometric phase.  In general, geometric phases can be used to create GLSM theories for zero loci of regular sections of 
homogeneous vector bundles in GIT quotients.

\begin{exa}\label{e:quintic}
Consider the vector space 
\[
V = V_1 \oplus V_2 =  \CC^5 \times \CC = \Spec(\CC[x_1, \ldots, x_5, p]),
\]
 with an action of $G = \CC^*$ with weights $(1,1,1,1,1,-5)$.  The superpotential $w \colon  V \to \CC$ is given by 
 \[
 w = p \sum_{i=1}^5 x_i^5.
 \]
   This function is homogeneous of degree $1$ if we choose our R-charge action to have weights $(0,0,0,0,0,1)$.
If we choose $\theta$ to be the identity character, then $\cT \to \cX$ is given by $\text{tot}(\cO_{\PP^4}(-5)) \to \PP^4$ and $\cZ$ is the quintic threefold $\{ \sum_{i=1}^5 x_i^5 = 0\}  \subseteq \PP^4$.  
\end{exa}

Changing the stability condition in the previous example flips the situation from Gromov--Witten theory to FJRW theory.
\begin{exa}
Let $V, G$ and $w$ be as in the previous example, but modify the R-charge to have weights $(1,1,1,1,1,0)$ and let $\theta = -\op{Id}$.  Now $w$ is homogeneous of degree 5 with respect to the R-charge grading.  Re-ordering our decomposition of $V = V_1 \oplus V_2$ , we choose $V_1 = \Spec(\CC[p])$ and $V_2 = \Spec(\CC[x_1, \ldots, x_5])$.  Then 
$\VmodtG \to \cX$ is given by $[\CC^5/\mu_5] \to B \mu_5$, and when descended to the quotient, the superpotential becomes the function $ w  \colon  [\CC^5/\mu_5] \to \CC$ given by the Fermat polynomial $\sum_{i=1}^5 x_i^5$.  Note that the nondegeneracy condition in this case reduces to the requirement that the descended $ w$ has an isolated singularity at the origin.
In this case we prove that the GLSM invariants we construct agree with Polishchuck and Vaintrob's construction of FJRW invariants (see \S\ref{sec: compare PV} and~\ref{s:enhanced PV comparison}).
\end{exa}

The previous example can also be achieved more simply.  
\begin{exa}
Consider the case where $V = V_2$ i.e. $V_1 = 0$, the group $\C^*_R$ acts with positive weights, and $w$ is a quasi-homogeneous polynomial with respect to these weights with an isolated singularity at the origin (this is the nondegeneracy condition).   Let $G$ be any finite diagonal group of symmetries of $w$.  This is the general setup of FJRW theory.
\end{exa}

For completeness we also give an example where $V_2$ is not a sum of one-dimensional representations.
\begin{exa}
Let $W=\CC^3$, $G = \op{GL}(W)$, $V_1 = \op{Hom}(W,\CC^6)$, and
$V_2 = W \ot \det W \oplus \bigwedge\nolimits ^2 W$. Let $\theta$ be the determinant character and $\C^*_R$ act trivially on $V_1$ and by scaling on $V_2$.  
Denote by $S$ the tautological subbundle on $\op{Gr}(3,6) = V_1\sslash_\theta G$. We  take an element $w \in ((\op{Sym}  V_1^\vee )\ot V_2^\vee)^G$, general enough  so
that the corresponding section $\sigma$ of $S^*(1)\oplus \bigwedge\nolimits ^2 S^*$ defines a codimension 6, smooth zero locus $Z(\sigma )$ in $\op{Gr}(3, 6)$. This is a Calabi-Yau 3-fold. For more such examples, see e.g., Table 1 of \cite{IIM}. 
\end{exa}

\begin{remark}
Note that in the previous example, the locus $Z(\sigma )$ in $\op{Gr}(3, 6)$ coincides
with $Z(dw)$ in $[ V_1 \oplus V_2 \sslash_\theta G]$.  This is the case for any geometric phase. 
\end{remark}

\subsection{The plan}\label{s:plan} 
Even though Landau--Ginzburg maps generally land in the stack $[V^{ss}(\nu)/\Gamma]$,
there are evaluation maps at the markings 
$$\op{ev}^i:LG_{g, r}(\cc T, d)\to I \cc T$$ to the inertia stack of the GIT quotient
$\cc T := [V/\!\!/_\theta G]$.
The most important structure needed to define the $A$-model of a nondegenerate GLSM is a virtual class in the homology of the proper moduli space $LG_{g, r}(\cc Z, d)$ of stable LG maps to the critical locus $\cc Z := Z(d w)$. 
In \cite{FJR15}, such a class is constructed algebraically, via the cosection localization method of Kiem and Li \cite{KiemLi}, only under certain restrictions; essentially when the evaluation maps are required to land in a proper substack of
$I\cc T$ (see \cite[Def. 6.1.6]{FJR15}). As a consequence, these virtual classes are in general not sufficient to produce the desired outcome of the construction of 
a Cohomological Field Theory (CohFT) in the sense of Kontsevich and Manin \cite{KM}. (In the parlance of \cite {FJR13, FJR15}, splitting at a {\it broad node} cannot be handled using the cosection localized virtual classes.)

The ultimate goal of our project is to give a full algebraic construction of a CohFT  for GLSM targets by generalizing the approach of
Polishchuck and Vaintrob, \cite{PV}, from the FJRW-theory of hypersurface singularities \cite{FJR13}. 
The idea of this approach is to ``lift" the homological virtual class to an object in an appropriate derived category of factorizations. The CohFT is then obtained by performing a Fourier--Mukai transform and passing to Hochschild homology. 

Specifically, we seek to implement the following program.  Let $(V, G, \CC^*_R, \theta, \nu, w)$ be a nondegenerate GLSM target as above.
Fix $g, r \geq 0$ in the stable range, i.e., satisfying
$2g -2+ r > 0$, and choose $d \in \text{Hom}_{\Z}(\widehat G, \QQ)$.  
Consider the diagram
\begin{center}
	\begin{tikzpicture}[scale=1]
	\node (A) at (-2,-1.5) {$\left(I \cc T \right)^r$};
	\node (B) at (-3,0) {$LG_{g, r}(\cc Z, d)$};
	\node (C) at (0,0) {$LG_{g, r}(\cc T, d)$};
	\node (D) at (2,-1.5) {$\sMbar_{g,r}$};
	\draw[->,] (C) -- (A);
	\draw[right hook->] (B) -- (C);
	\draw[->,] (C) -- (D);
	\node[right] at (-1,-0.75){$\mathrm{ev}$};
	\node[left] at (1,-0.75){$\mathrm{st}$};
	\end{tikzpicture}
\end{center}
with $\op{ev}=(\op{ev}_1,\dots, \op{ev}_r)$ 
and $\op{st}$ the stabilization map which forgets the data $(\cP,\sigma,\varkappa)$ and stabilizes the domain curve after removing orbifold structures on the markings.
Then there is an appropriate extension of this diagram as follows:
\begin{center}
	\begin{tikzpicture}[scale=1]
	\node (A) at (-2,-1.5) {$\left(I \cc T \right)^r$};
	\node (B) at (-4.5,0) {$LG_{g, r}(\cc Z, d)$};
	\node (B') at (-2,0) {$LG_{g, r}(\cc T, d)$};
	\node (C) at (0,0) {$\square$};
	\node (D) at (2,-1.5) {$\sMbar_{g,r}$};
	\node (E) at (1,1) {$ \; \quad \quad \cO_\square \xrightarrow{\beta} E \xrightarrow{\alpha} \cO_\square \otimes \CC(\chi)$};
	\draw[->,] (C) -- (A);
	\draw[right hook->] (B) -- (B');
	\draw[right hook->] (B') -- (C);
	\draw[->,] (C) -- (D);
	\draw[->,] (E) to[bend left=20] (C);
	\node[right] at (-1,-0.75){$\mathrm{ev}$};
	\node[left] at (1,-0.75){$\mathrm{st}$};
	\end{tikzpicture}
\end{center}
where in \S\ref{s:conFF} we construct a stack $\square = \square_{g, r, d}$ with a $\CC^*_R$-action, 
a $\CC ^*_R$-equivariant vector bundle $E \to \square$, 
and $\CC^*_R$-equivariant sections $\alpha \in \Gamma(E^\vee\ot \CC (\chi) ) = \Hom (E, \cO_{\square}\ot \CC (\chi))$, $\beta \in \Gamma(E)= \Hom (\cO_{\square}, E)$
satisfying  the following properties:
\begin{enumerate}
	\item[$(1)$] The space $\square $ is a smooth separated Deligne--Mumford stack with a ``stabilization map" $\op{st}:\square\to\sMbar_{g,r}$
	and with a pure dimension in each twisted component given by the formula
	$$\dim H^0(\cV|_{C_s}) - \dim H^1(\cV|_{C_s})+ \dim \fM_{g,r}^{\mathrm{orb}}(B\Gamma )_{\log}
	+ \rank E$$
	(see  \S \ref{moduli stacks} for the definition of the Artin stack $\fM_{g,r}^{\mathrm{orb}}(B\Gamma)_{\log}$).

	\item[$(2)$] For $1 \leq i \leq r$ there exist \newterm{smooth evaluation maps} $\op{ev}^i \colon \square \to  I\cc T$ which are $\CC^*_R$-equivariant.
	\item[$(3)$] 
	\begin{enumerate}
		\item The vanishing locus of $\beta$ is canonically isomorphic to $LG_{g,r}(\cc T, d)$,
		\item the common vanishing locus of $\alpha$ and $\beta$ is canonically isomorphic with $LG_{g,r}(\cc Z, d)$ (see Proposition~\ref{p:Ksupp} and~\ref{l:finalsupp}) for which
		$\op{ev}^i$ maps and $\stab$ maps are compatible, and
		\item the composition $-\alpha^\vee \circ \beta$ is equal to $-(\boxplus_{i = 1}^r \op{ev}^i)^*(w)$.
	\end{enumerate}
\end{enumerate}
Note that in $(3.\text{c})$ above, we endow $I \cc T$ with a superpotential, which we also call $w$ by abuse of notation, via composition with the natural morphism $I \cc T \to \cc T$.
Given the above, we define the Koszul factorization 
\[
K_{g,r, d} := \{ - \alpha, \beta \}  \in \dabsfact{[\square/\CC^*_R], - \op{ev}^* (\boxplus_{i=1}^r w)}
\]
which is supported on $LG_{g,n}(\cc Z, d)$.  We call this the {\it fundamental factorization}. It will play the role of the virtual fundamental class for $LG_{g,n}(\cc Z, d)$.  Namely, the Koszul factorization defines a Fourier--Mukai transform $\Phi_{K_{g,r, d}}$ for categories of factorizations,
\begin{center}  
	\begin{tikzpicture}[scale=1]
	\draw (2,1.5) node(A1){$\dabsfact{[\square/\CC^*_R],  \op{ev}^* (\boxplus_{i=1}^r w)}$}
	(1+7,1.5) node(A2){$\dabsfact{[\square/\CC^*_R], 0}_{LG_{g,n}(\cc Z, d)}$}
	(0,0) node(A3){$\prod_{i=1}^r\dabsfact{I[V^{ss}(\theta)/\Gamma], w}$}
	(1+9,0) node(A4){$\op{D}(\sMbar_{g,r})$}
	(0.5+4.5, 1.8) node{$ \overset{\mathbb{L}}{\otimes} K_{g,r, d}$}
	(0.7,.9) node{$\LL \op{ev}^*$}
	(1+8.3, .9) node{$\RR \op{st}_*$};	
	\draw[->,>=stealth] (A1) to[bend right=0] (A2);
	\draw[->,>=stealth] (A3) to[bend right=0] (A1);
	\draw[->,>=stealth] (A2) to[bend right=0] (A4);
	\end{tikzpicture}
\end{center}
i.e.
\[
\Phi_{K_{g,r, d}} (\cc E) := \RR \op{st}_* (\LL \op{ev}^* \cc E  \overset{\mathbb{L}}{\otimes}_{\O_\square} K_{g,r, d} ).
\]

The induced map on Hochschild homology (after being suitably adjusted by an appropriate Todd class modification),  intermixed with an HKR morphism from Hochschild homology to cohomology
gives us a collection of invariants (see \S\ref{s:GLSMinvs} the precise formulation).
In a sequel \cite{CKFGS} we show that the set of these invariants for all $g, r, d$, form a cohomological field theory.

\begin{remark}
The construction is in fact a bit more involved: in order to correct the invariants by an appropriate Todd class as in \cite[Equation (5.15)]{PV}, we factorize the map $\mathrm{st}$ as $\square \to \widetilde{U} \to \sMbar_{g, r}$ where $\widetilde{U}$ is a proper and smooth Deligne--Mumford stack on which we multiply by the Todd correction (for the precise description, see Definition \ref{d:invariants}).
\end{remark}

\section{Factorizations}\label{s:fact}
Factorizations are natural objects attached to an LG model, in the same way complexes of coherent sheaves are natural objects attached to a variety.
In this section, we recall the definition of the derived category of factorizations and some properties that will be use in the paper.
The true heart of our work will be to construct a natural factorization, called the fundamental factorization (see Definition~\ref{d:koszulcut}), for each moduli space of LG stable maps (under the condition it is a convex Hybrid Model, as defined in \S\ref{ss:convexity}). The fundamental factorization plays a similar role to the virtual structure sheaf in Gromov--Witten theory.

\subsection{Derived categories of Landau--Ginzburg models}

In order to naturally define derived functors, we use the more sheaf-theoretic approach to factorizations introduced by Lin--Polmerleano and Positselski \cite{LP, EP}.  We review the necessary concepts below.

Let $\mathcal X$ be an algebraic stack 
equipped with a line bundle $\mathcal L$ and a section $w$ of $\mathcal L$.  A \newterm{factorization} is the data $\cc E = (\cc E_{1}, \cc E_0, \phi_{1}^\cc E, \phi_0^\cc E)$ where $\mathcal{E}_{1}$ and $\mathcal{E}_0$ are quasi-coherent sheaves on $\mathcal X$ and 
$$
\mathcal{E}_{1} \stackrel{\phi_1^{\mathcal{E}}}{\rightarrow} \mathcal{E}_0 \stackrel{\phi_{0}^{\mathcal{E}}}{\rightarrow} \mathcal{E}_{1} \otimes_{\O_{\cX}} \mathcal{L}
$$
are morphisms such that
\begin{equation*}\begin{aligned}
\phi_{0}^{\mathcal{E}} \circ \phi_1^{\mathcal{E}} &= w, \\
(\phi_1^{\mathcal{E}} \otimes \op{Id}_{\mathcal{L}}) \circ \phi_{0}^{\mathcal{E}} &= w.
\end{aligned}\end{equation*}

\begin{definition}
 The \newterm{shift}, denoted by $[1]$, sends a factorization $\mathcal E$ to the factorization, 
 \begin{displaymath}
  \mathcal E[1] := (\mathcal E_0,\mathcal E_{1}\otimes_{\O_{\cX}} \mathcal L,-\phi^{\mathcal E}_0,-\phi^{\mathcal E}_{1} \otimes_{\O_{\cX}} {\mathrm{Id}}_{\mathcal L}).
 \end{displaymath}
\end{definition}

A morphism between two factorizations of even degree $f  \colon  \cc E \to \F[2k]$ is a pair $f = (f_0, f_1)$ defined by 
$$
\mathrm{Hom}^{2k}_{\mathsf{Fact}(\mathcal X, w)} ( \cc E, \F) : = \mathrm{Hom}_{\mathrm{Qcoh}(\cX)}( \cc E_{0}, \F_{0} \otimes_{\O_{\cX}} \mathcal L{}^k) \oplus \mathrm{Hom}_{\mathrm{Qcoh}(\cX)}( \cc E_{1}, \F_{1} \otimes_{\O_{\cX}} \mathcal L{}^k)
$$
and, similarly,  a morphism of odd degree $f  \colon  \cc E \to \F[2k+1]$ is a pair $f = (f_0, f_{1})$ defined by 
$$
\mathrm{Hom}^{2k+1}_{\mathsf{Fact}(\mathcal X, w)} ( \cc E, \F) : = \mathrm{Hom}_{\mathrm{Qcoh}(\cX)}( \cc E_{0}, \F_{1} \otimes_{\O_{\cX}} \mathcal L{}^{k+1}) \oplus \mathrm{Hom}_{\mathrm{Qcoh}(\cX)}( \cc E_{1}, \F_{0} \otimes_{\O_{\cX}} \mathcal L{}^k).
$$
You can equip these Hom sets with a differential coming from the graded commutator with the morphisms defining $\E$ and $\F$.  
This yields a $\ZZ$-graded dg category $\mathsf{Fact}(\mathcal X, w)$.

We denote by $Z^0 \mathsf{Fact}(\mathcal X, w)$ the subcategory of $\mathsf{Fact}(\mathcal X, w)$ with the same objects but whose morphisms are only the closed degree zero morphisms between any two objects $\cc E$ and $\F$.  The homotopy category of the dg category $\mathsf{Fact}(\mathcal X, w)$ is denoted by $H^0 \mathsf{Fact}(\mathcal X, w)$.

\begin{remark}
The morphisms in $Z^0 \mathsf{Fact}(\mathcal X, w)$ are just the commuting morphisms of factorizations.  Morphisms in $H^0 \mathsf{Fact}(\mathcal X, w)$ are commuting morphisms of factorizations up to homotopy.
\end{remark}

The category $Z^0 \mathsf{Fact}(\mathcal X, w)$ is abelian (see e.g., \cite[Lemma 2.4]{BDFIK}).  Hence, the notion of exact sequences and complexes of objects in $Z^0 \mathsf{Fact}(\mathcal X, w)$ makes sense.

\begin{definition}
Given a complex of objects 
$$
\bigg[ \ldots \rightarrow \cc E^b \stackrel{f^b}{\rightarrow} \cc E^{b+1} \stackrel{f^{b+1}}{\rightarrow} \ldots \bigg]
$$
in $Z^0 \mathsf{Fact}(\mathcal X, w)$, we define the \newterm{totalization} of the complex to be the factorization $\T \in \mathsf{Fact}(\mathcal X, w)$ given by the data:
\begin{align*}
 \mathcal T_{1} & := \bigoplus_{l} \mathcal E_{1}^{2l} \otimes_{\O_{\cX}} \cL^{-l} \oplus \bigoplus_{l} \mathcal E_0^{2l+1} \otimes_{\O_{\cX}} \cL^{-l-1} \\
 \mathcal T_0 & := \bigoplus_{l} \mathcal E_{0}^{2l} \otimes_{\O_{\cX}} \cL^{-l} \oplus \bigoplus_{l} \mathcal E_{1}^{2l+1} \otimes_{\O_{\cX}} \cL^{-l} \\
  \phi^{\mathcal T}_{1} & := \begin{pmatrix} \ddots & 0 & 0 & 0 & 0 \\ \ddots & -\phi_{0}^{\mathcal E^{-1}} & 0 & 0 & 0 \\ 0 & f_{0}^{-1}  & \phi_{1}^{\mathcal E^{0}} & 0 & 0\\ 0 & 0 & f_1^{0} & -\phi_{0}^{\mathcal E^1} \otimes \op{Id}_{\mathcal L^{-1}}  & 0 \\ 0 & 0 & 0 & \ddots & \ddots \end{pmatrix} \\
 \phi^{\mathcal T}_0 & := \begin{pmatrix} \ddots & 0 & 0 & 0 & 0 \\ \ddots & -\phi_{1}^{\mathcal E^{-1}} \otimes \op{Id}_{\mathcal L}  & 0 & 0 & 0 \\ 0 & f_{1}^{-1} \otimes \op{Id}_{\mathcal L}  & \phi_0^{\mathcal E^{0}} & 0 & 0 \\ 0 & 0 & f_{0}^{0} & -\phi_{1}^{\mathcal E^1} & 0 \\ 0 & 0 & 0 & \ddots & \ddots \end{pmatrix} 
\end{align*}
\end{definition}

\begin{remark}
Given an exact sequence
\[
0 \to \mathcal E \xrightarrow{f} \mathcal F \xrightarrow{g} \mathcal G \to 0
\]
in  $Z^0 \mathsf{Fact}(\mathcal X, w)$, the totalization $\mathcal T$ is
\begin{align*}
 \mathcal T_{1} & :=  \mathcal E_1 \otimes \cL \oplus \mathcal F_0  \oplus \mathcal G_1 \\
 \mathcal T_0 & := \mathcal E_0 \otimes \cL \oplus \mathcal F_1 \otimes \cL \oplus \mathcal G_0  \\
  \phi^{\mathcal T}_1 & := 
 \begin{pmatrix}
 \phi_1^{\mathcal E} \ot \mathrm{Id}_\cL & 0 & 0 \\
 f_1 \ot \mathrm{Id}_\cL & -\phi_0^{\mathcal F} & 0 \\
 0 & g_0  & \phi^{\mathcal G}_1
  \end{pmatrix} \\
 \phi^{\mathcal T}_0 & := 
 \begin{pmatrix}
 \phi_0^{\mathcal E} \ot \mathrm{Id}_\cL & 0 & 0 \\
 f_0 \ot \mathrm{Id}_\cL & -\phi_1^{\mathcal F} \ot \mathrm{Id}_\cL & 0 \\
 0 & g_1  \ot \mathrm{Id}_\cL & \phi^{\mathcal G}_0
  \end{pmatrix} \\
  \end{align*}
This totalization can be viewed as the cone of the map of complexes
\[
\begin{tikzcd}
{[}  \mathcal E \ar[r, "f"] \ar[d] & \mathcal F {]} \ar[d, "g"]  \\
{[}  0 \ar[r] & \mathcal G {]}.
\end{tikzcd} 
\]
The components are the same those of $\mathcal E[2] \oplus \mathcal F[1] \oplus \mathcal G$ and the differential has been deformed using $f,g$.  Localizing by this object makes $\mathcal G$ isomorphic to the cone of the morphism $f$.  This is the idea in what follows.
\end{remark}

Denote by $\mathsf{CoAcyc}(\mathcal X, w)$ the minimal triangulated subcategory of $H^0 \mathsf{Fact}(\mathcal X, w)$ containing the totalizations of all short exact sequences in $Z^0 \mathsf{Fact}(\mathcal X, w)$ and closed under infinite direct sums.  Objects of $\mathsf{CoAcyc}(\mathcal X, w)$ are called \newterm{coacyclic}.

We have the following general definition.
\begin{definition}
The \newterm{coderived category} $\op{D}^{\text{co}}[\mathsf{Fact}(\mathcal X, w)]$ of $\mathsf{Fact}(\mathcal X,w)$ is the Verdier quotient of $H^0 \mathsf{Fact}(\mathcal X, w)$ by $\mathsf{CoAcyc}(\mathcal X, w)$.  
The \newterm{derived category} of $(\mathcal X, w)$, which we denote by $\dabsfact{\mathcal X, w}$, is the smallest full triangulated subcategory of $\op{D}^{\text{co}}[\mathsf{Fact}(\mathcal X, w)]$ which contains the coherent factorizations and is closed under taking direct summands.  \end{definition}

\begin{remark}
As the notation suggests, the category $\dabsfact{\mathcal X, w}$ can be thought of as the derived category of the gauged Landau--Ginzburg model $(\mathcal X, w)$.
\end{remark}

\subsection{Derived functors}\label{s:derfunc}
In this section, the goal is to define Fourier--Mukai functors in the derived category of factorizations.
They are of prime interest in this paper, since they enter explicitly into the definition of our GLSM theory.
We start with the definitions of derived pullbacks, pushforwards, and tensor products.

\

Let $f \colon  \mathcal Y \to \mathcal X$ be a morphism of algebraic stacks.  From the section $w$ of $\mathcal L$ on $\mathcal X$, we can pullback to get a section $f^*w$ of $f^* \mathcal L$ on $\mathcal Y$.  At the level of dg categories, we get the following functors:
\begin{definition}
 \begin{align*}
  f^* \colon  \mathsf{Fact}(\cX, w) & \to \mathsf{Fact}(\cY, f^*w) \\
  (\cc E_{1},\cc E_0,\phi^{\cc E}_{1},\phi^{\cc E}_0) & \mapsto (f^* \cc E_{1},f^* \cc E_0,f^* \phi^{\cc E}_{1},f^* \phi^{\cc E}_0)
 \end{align*}
 and 
 \begin{align*}
  f_* \colon  \mathsf{Fact}(\cY, f^*w) & \to \mathsf{Fact}(\cX, w) \\
  (\mathcal F_{1},\mathcal F_0,\phi^{\mathcal F}_{1},\phi^{\mathcal F}_0) & \mapsto (f_* \mathcal F_{1},f_* \mathcal F_0,f_* \phi^{\mathcal F}_{1},f_* \phi^{\mathcal F}_0).
 \end{align*}
 Note that by the projection formula $f_*(\mathcal F \otimes_{\O_{\cX}} f^*\mathcal L) \cong (f_*\mathcal F) \otimes_{\O_{\cX}} \mathcal L $ under which $f_*(f^*w)$ corresponds to $w$ so this is well-defined. Let $v$ be a section of $\cL$ on $\cX$.
  We define a dg-functor,
 \begin{displaymath}
  \otimes_{\mathcal O_{\mathcal X}}  \colon  \mathsf{Fact} (\mathcal X, w) \otimes_k \mathsf{Fact}(\mathcal X, v) \to \mathsf{Fact}(\mathcal X, v+w)
 \end{displaymath}
 by setting
 \begin{align*}
 \left(\cc E \otimes_{\O_{\cX}} \mathcal F \right)_0 & := \cc E_0 \otimes_{\O_{\cX}} \mathcal F_0 \oplus \cc E_{1} \otimes_{\O_{\cX}} \mathcal F_{1}\otimes_{\O_{\cX}} \mathcal L \\
  \left(\cc E \otimes_{\O_{\cX}} \mathcal F \right)_{1} & := \cc E_{0} \otimes_{\O_{\cX}} \mathcal F_1 \oplus \cc E_{1} \otimes_{\O_{\cX}} \mathcal F_{0} \\
 \phi^{\cc E \otimes_{\O_{\cX}} \mathcal F}_{0} & 
 := \begin{pmatrix} 
 \op{Id}_{\cc E_0} \otimes_{\O_{\cX}} \phi^{\mathcal F}_{0} &  \phi^{\cc E}_{1} \otimes_{\O_{\cX}} \op{Id}_{\mathcal F_{1}} \\ 
 -\phi^{\cc E}_{0} \otimes_{\O_{\cX}} \op{Id}_{\mathcal F_{0}} &  \op{Id}_{\cc E_{1}} \otimes_{\O_{\cX}} \phi^{\mathcal F}_{1}  
  \end{pmatrix} \\
 \phi^{\cc E \otimes_{\O_{\cX}} \mathcal F}_{1} & := 
 \begin{pmatrix}
\op{Id}_{\cc E_{0}} \otimes_{\O_{\cX}} \phi^{\mathcal F}_{1} &  - \phi^{\cc E}_{1} \otimes_{\O_{\cX}} \op{Id}_{\mathcal F_{0}} \\ 
\phi^{\cc E}_{0}\otimes_{\O_{\cX}} \mathcal L \otimes_{\O_{\cX}} \op{Id}_{\mathcal F_{1}} &   \op{Id}_{\cc E_{1}} \otimes_{\O_{\cX}} \phi^{\mathcal F}_{0}   
  \end{pmatrix}
 \end{align*}
\end{definition}

Let $\op{D}^{\op{co}}[\op{Fact}(\mathcal X, w)]_{f^*}$ denote the full subcategory of $ \op{D}^{\op{co}}[\mathsf{Fact}(\mathcal X,w)]$ 
consisting of factorizations with $f^*$-acyclic components.  Similarly, let $\op{D}^{\op{co}}[\op{Fact}(\mathcal X, w)]_{f_*}$ denote the full subcategory of 
$ \op{D}^{\op{co}}[\mathsf{Fact}(\mathcal X,w)]$ 
consisting of factorizations with $f_*$-acyclic components and  
$\op{D}^{\op{co}}[\op{Fact}(\mathcal X, w)]_{\otimes}$ denote the full subcategory of $ \op{D}^{\op{co}}[\mathsf{Fact}(\mathcal X,w)]$ 
consisting of factorizations with flat components.

\begin{definition}[{\cite[\S 3.1]{PV11}}] \label{d:nice}
 An algebraic stack $\cX$ is called a \newterm{nice quotient stack} if $\cX = [T/ H]$ where $T$ is a noetherian scheme and $H$ is a reductive linear algebraic group such that $T$ has an ample family of $H$-equivariant line bundles.
\end{definition}

\begin{remark}
 According to \cite[Theorem 4.4 and Remark 4.3]{KreschGeometry},  every smooth separated Deligne--Mumford stack $\cX$ with quasi-projective coarse moduli space is of form
$[Y/G]$ for some quasi-projective scheme $Y$ and for some $G=GL(n)$. Thus $\cX$ is a nice quotient stack. 
\end{remark}

From now on assume that $\mathcal X$ is a smooth nice quotient stack.  Then, by \cite[Proposition 2.18, Proposition 2.19, Proposition 2.20]{BDFIK} the inclusions induce equivalences of categories
\begin{align*}
\op{D}^{\op{co}}[\op{Fact}(\mathcal X, w)]_{f^*} &  \xrightarrow{ \iota_{f^*} } \op{D}^{\op{co}}[\mathsf{Fact}(\mathcal X,w)] \\
\op{D}^{\op{co}}[\op{Fact}(\mathcal X, w)]_{\otimes} &  \xrightarrow{ \iota_{\otimes}} \op{D}^{\op{co}}[\mathsf{Fact}(\mathcal X,w)]. \\
\end{align*}
 \begin{definition}
The \newterm{derived pullback} is the functor
 \begin{align*}
  \LL f^* \colon  \op{D}^{\op{co}}[\mathsf{Fact}(\mathcal X,w)] & \to \op{D}^{\op{co}}[\mathsf{Fact}(\mathcal Y, f^*w)] \\
  \cc E & \mapsto f^*\mathcal \iota_{f^*}^{-1} \cc E.
 \end{align*}
Let $\mathcal F \in \op{D}^{\op{co}}[\mathsf{Fact}(\mathcal X,v)]$.   The \newterm{derived tensor product} is the functor
 \begin{align*}
(-  \overset{\mathbb{L}}{\otimes}_{\O_{\cX}} \mathcal F) \colon  \op{D}^{\op{co}}[\mathsf{Fact}(\mathcal X,w)] & \to \op{D}^{\op{co}}[\mathsf{Fact}(\mathcal X, w+v)] \\
  \cc E & \mapsto \iota_{\otimes} ^{-1} \mathcal F \otimes_{\O_{\cX}} \cc E.
 \end{align*}
  
 \end{definition}

\newcommand{\mM}{\cM}

\begin{definition}
Let $\cM$  be a closed substack of $\mathcal X$.  Then, by definition, the map $\cM \to \cX$ is representable.  We say that a factorization $\cE$ has \newterm{support} on $\cM$ if for any scheme $T$ and any morphism $T \to \cX$ the restriction of $\cE$ to $(T \times_{\cX} \cX) \backslash (T \times_{\cX} \cM)$ is coacyclic.  We denote the full subcategory of $\dabsfact{\cX, w}$ consisting of factorizations with support on $\cM$ by $\dabsfact{\cX, w}_\cM$.
\end{definition}

Now we assume that $\mM \subseteq \mathcal Y$ is a closed substack such that the composition 
\[
\mM \hookrightarrow \mathcal Y  \xrightarrow{f} \mathcal X
\]
is proper.    Our goal now is to similarly define $  \mathbb{R}f_*$ and show that it restricts to a functor from $\dabsfact{\mathcal Y, f^*w}_{\cM}$ to $ \dabsfact{ \mathcal X, w }$.

This will require the following setup.  Let $\mathcal X, \mathcal Y$ be smooth separated nice quotient stacks, $w$ be a section of the line bundle on $\mathcal X$, 
and $f  \colon  \mathcal Y \to \mathcal X$ be a morphism of finite cohomological dimension.

Consider a smooth atlas
\[
p  \colon  Y \to \mathcal \cY
\] of $\cY$ such that $Y$ is an affine scheme.
The adjunction morphism $\op{Id} \to  {p}_*p^*$ gives rise to the (cosimplicial) \v Cech resolution
\[
0 \to \mathcal F \to {p}_*p^* \mathcal F \to  ({p}_*p^*)^2 \mathcal F  \to ...
\] of a quasi-coherent sheaf $\mathcal F$.
This is an $f_*$-acyclic resolution of $\mathcal F$.
Similarly, given any factorization $\mathcal E$, we get an exact sequence
\begin{equation}
0 \to \mathcal E \to {p}_*p^* \mathcal E \to  ({p}_*p^*)^2 \mathcal E  \to ...
\label{smooth cech cover}
\end{equation}
in $Z^0 \mathsf{Fact}(\mathcal Y, f^*w)$ since the  \v Cech resolution
is functorial and exactness is measured component-wise.  We denote by $\check{\mathcal C}(\mathcal E)$ the totalization of the above complex and by $\underline{\check{\mathcal C}}(\mathcal E)$ the totalization of the truncated complex
\[
\bigg [ 0 \to {p}_*p^* \mathcal E \to  ({p}_*p^*)^2 \mathcal E  \to ... \bigg ].
\]

Following \cite[Remark 1.3]{EP}, we will now see that the notion of coacyclicity of a factorization is local for the smooth topology.
\begin{proposition}
If $U$ is a smooth atlas of $\cX$ and the pullback of $\cE$ to $U$ is coacyclic, then $\cE$ is coacyclic.
\end{proposition}
\begin{proof} We may assume that $U$ is an affine scheme and denote the morphism $U\to\cX$ by $p$.
There is an exact triangle
\[
\mathcal E \to  \check{\mathcal C}(\mathcal E) \to \underline{ \check{\mathcal C}}(\mathcal E) \to \mathcal E [1]
\]
in $H^0 \mathsf{Fact}(\mathcal X, w)$.  Now notice that $\check{\mathcal C}(\mathcal E)$ is coacyclic by \cite[Lemma 2.16]{BDFIK}.  On the other hand, since $p$ is smooth and affine ${p}_*p^*$ is exact.  It follows that $({p}_*p^*)^i \cE$ is coacyclic for all $i$.  Therefore $\underline{ \check{\mathcal C}}(\mathcal E)$ is a homotopy limit of coacyclic complexes, hence coacyclic (as it is a cone of direct sums of coacyclic complexes, see e.g. the proof of ibid.). It follows from the exact triangle above that $\mathcal E$ is also coacyclic.
\end{proof}

\begin{corollary}
The support of a coherent factorization $\cE$ is local in the smooth topology.
\label{cor: local support}
\end{corollary}
\begin{proof}
This follows immediately from the above proposition.
\end{proof}

\begin{lemma}
Any object of $\dabsfact{\mathcal Y, f^*w}_{\mM}$ is a direct summand of an object represented by a factorization with coherent components supported on $\mM$.
\label{lem: compact generation}
\end{lemma}

\begin{proof}
We omit the proof which is a repetition of \cite[\S 1.10]{EP}.  See especially \cite[Corollary 1.10 (b)]{EP} which says exactly the desired statement phrased in the language of that paper.
\end{proof}

\begin{proposition}
There is a well-defined derived pushforward functor 
\begin{align*}
\mathbb R f_*  \colon  \dabsfact{\mathcal Y, f^*w}_{\mM} & \to \dabsfact{\mathcal X, w}  \\
\cc E & \mapsto f_*( \underline{\check{\mathcal C}}(\cE)).
\end{align*}
\end{proposition}

\begin{proof}
As $  \underline{\check{\mathcal C}}(\cE)$ is $f_*$-acyclic, this gives a functorial definition of a pushforward functor which naturally lands in the coderived category of quasi-coherent factorizations.  We need to show that $ f_*( \underline{\check{\mathcal C}}(\cE))$ actually lies in $ \dabsfact{\mathcal X, w}$, i.e. that it is generated by factorizations with coherent components.   

For each $i$, we let $D_i$ be the totalization of the complex of factorizations
\[
\bigg [ f_*{p}_*p^* \cE \to  f_*({p}_*p^*)^2 \cE \to ... \to \text{Ker}\Big( f_* ({p}_*p^*)^{i} \cE \to f_* ({p}_*p^*)^{i+1} \cE\Big) \bigg ].
 \]

Now we let 
\[
\overline{D} :=  0 \to f_*({p}_* p^* \cE) \to f_*(({p}_* p^*)^2 \cE) \to ...
\]
 be the infinite complex in $Z^0 \mathsf{Fact}(\mathcal X, w)$ corresponding to  $f_*(\underline{\check{\mathcal C}}(\cE))$ so that  $ \op{H}^{i}(\overline{D})$ is a factorization.
Then for each $i$ we have an exact triangle
\begin{equation} \label{eq: cech triangle}
\begin{tikzcd}
D_{i} \ar[r] & D_{i+1}  \ar[r] & \op{H}^{i}(\overline{D}) \ar[r] & D_{i}[1]
\end{tikzcd}
\end{equation}
in $\dabsfact{\mathcal X, w}$.

Let $\cc E_0, \cc E_1$ be the components of $\cc E$. 
Then $\op{H}^i(\overline{D})$ is a factorization whose components are 
\[
\op{H}^i( \underline{\check{\mathcal C}}(\cE_0)) = \mathbb R^i f_* \cc E_0
\]
 and 
 \[
\op{H}^i( \underline{\check{\mathcal C}}(\cE_1)) = \mathbb R^i f_* \cc E_1.
\] 
For now, assume that $\cc E_0, \cc E_1$ are coherent and supported on $\mM$.
Since, by assumption $\cM$ is proper over $\mathcal X$, it follows that $\op{H}^i( \overline{D})$ has coherent components.
Now, notice that $D_0 = \op{H}^{0}(\overline D)$.
Therefore by induction and \eqref{eq: cech triangle}, $D_n$  lies in $\dabsfact{\mathcal X, w}$.

Since by assumption $f$ has finite cohomological dimension,  it follows that $D_n \cong f_*( \underline{\check{\mathcal C}}(\cE) )$ for  $n \gg 0$.  In conclusion, under the assumption that $\cc E_0, \cc E_1$ are coherent and supported on $\mM$, we have shown that $\mathbb R f_* \cc E = f_*( \underline{\check{\mathcal C}}(\cE))$ lies in 
 $\dabsfact{\mathcal X, w}$.
The result now follows from Lemma~\ref{lem: compact generation} since for any object $\cE' \in \dabsfact{\mathcal Y, f^*w}_{\mM}$, the factorization $\mathbb R f_* \cE'$ is a direct summand of some $\mathbb R f_* \cE$ which we have shown lies in $ \dabsfact{\mathcal X, w}$ and this category is closed under direct summands by definition.
\end{proof}

\begin{proposition}[Projection Formula]\label{p:projform} Let $\mathcal X, \mathcal Y$ be smooth separated nice quotient stacks. Let $w, v$ be sections of a line bundle $\cL$ on $\mathcal{X}$.
Suppose that 
\[
f : \cY \to \cX
\]
is a morphism with proper $f|_{\cM}$. 
For any $\mathcal F \in \dabsfact{\mathcal X, v}$ and $\cc E \in \dabsfact{\cY, f^*w}_{\mM}$ one has an isomorphism in $\dabsfact{\mathcal X, w + v}$,
\[
\mathbb R f_*(\cc E \overset{\mathbb{L}}{\otimes}_{\O_{\cY}} {\mathbb L}f^*\mathcal F)  = \mathbb R  f_*(\cc E) \overset{\mathbb{L}}{\otimes}_{\O_{\cX}} \mathcal F.
\]
\label{projection formula}
\end{proposition}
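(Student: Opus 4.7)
The plan is to reduce the statement to a componentwise application of the classical (sheaf-level) projection formula, by choosing convenient representatives in each absolute derived category. Since $\mathcal X$ is a smooth nice quotient stack, I would first invoke the equivalence $\iota_\otimes \colon \op{D}^{\op{abs}}[\op{Fact}(\mathcal X, v)]_{\otimes} \xrightarrow{\sim} \op{D}^{\op{abs}}[\op{Fact}(\mathcal X, v)]$ to replace $\mathcal F$ by a representative $\widetilde{\mathcal F}$ whose components are flat $\mathcal O_{\mathcal X}$-modules. Because pullback preserves flatness, the components of $f^*\widetilde{\mathcal F}$ are then flat $\mathcal O_{\mathcal Y}$-modules, so $\mathbb L f^* \mathcal F$ is represented by $f^* \widetilde{\mathcal F}$ and the derived tensor product $\cc E \overset{\mathbb L}{\otimes}_{\mathcal O_{\mathcal Y}} \mathbb L f^* \mathcal F$ is computed by the underived $\cc E \otimes_{\mathcal O_{\mathcal Y}} f^* \widetilde{\mathcal F}$.

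Next, I would compute both sides using the $f_*$-acyclic resolution $\check{\mathcal C}$ of $\mathcal O_{\mathcal Y}$ constructed from the smooth affine atlas. On the right, this yields
\begin{equation*}
\mathbb R f_*(\cc E) \overset{\mathbb L}{\otimes}_{\mathcal O_{\mathcal X}} \mathcal F \;\cong\; f_*\bigl(\cc E \otimes_{\mathcal O_{\mathcal Y}} \check{\mathcal C}\bigr) \otimes_{\mathcal O_{\mathcal X}} \widetilde{\mathcal F},
\end{equation*}
where the last tensor product is underived because $\widetilde{\mathcal F}$ has flat components. On the left, I would instead apply $\check{\mathcal C}$ to $\cc E \otimes_{\mathcal O_{\mathcal Y}} f^*\widetilde{\mathcal F}$ and use the associativity isomorphism
\begin{equation*}
\bigl(\cc E \otimes_{\mathcal O_{\mathcal Y}} f^*\widetilde{\mathcal F}\bigr) \otimes_{\mathcal O_{\mathcal Y}} \check{\mathcal C} \;\cong\; \bigl(\cc E \otimes_{\mathcal O_{\mathcal Y}} \check{\mathcal C}\bigr) \otimes_{\mathcal O_{\mathcal Y}} f^*\widetilde{\mathcal F}.
\end{equation*}
At this point a componentwise application of the classical projection formula $f_*(A \otimes_{\mathcal O_{\mathcal Y}} f^* B) \cong f_*(A) \otimes_{\mathcal O_{\mathcal X}} B$ (valid for $B$ flat) matches the two sides, and compatibility with the differentials of the factorizations is automatic because the projection formula is natural in both arguments and commutes with the totalization operation.

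The main technical point I would need to justify carefully is that $(\cc E \otimes_{\mathcal O_{\mathcal Y}} \check{\mathcal C}) \otimes_{\mathcal O_{\mathcal Y}} f^*\widetilde{\mathcal F}$ really has $f_*$-acyclic components, so that it computes $\mathbb R f_*$ of the left-hand side without further resolution. This follows from the classical projection formula applied to higher direct images: if $A$ is $f_*$-acyclic and $B$ is flat on $\mathcal X$, then $R^i f_*(A \otimes_{\mathcal O_{\mathcal Y}} f^* B) \cong R^i f_*(A) \otimes_{\mathcal O_{\mathcal X}} B$, which vanishes for $i>0$. Verifying this compatibility on the stack $\mathcal Y$ reduces to the analogous statement on a smooth affine atlas, where it is standard. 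The support condition requiring $\cc E \in \dabsfact{\mathcal Y, f^*w}_{\cM}$ is preserved throughout (support is determined by $\cc E$, and tensoring with $\mathbb L f^*\mathcal F$ can only shrink it), so the output indeed lies in $\dabsfact{\mathcal X, w+v}$ as needed. The main obstacle is thus bookkeeping: ensuring that the chosen flat and acyclic representatives interact correctly with the factorization structure, which I expect to follow formally from the functorial properties of $\check{\mathcal C}$ established earlier in the section.
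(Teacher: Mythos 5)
Your proposal is correct and follows essentially the same route as the paper's own (very terse) proof: replace $\mathcal F$ by a representative with flat components via $\iota_\otimes$, resolve $\cc E$ by $\cc E \otimes_{\O_{\cY}} \check{\mathcal C}$, and apply the classical projection formula componentwise using its naturality. Your added verification that $(\cc E \otimes_{\O_{\cY}} \check{\mathcal C}) \otimes_{\O_{\cY}} f^*\widetilde{\mathcal F}$ remains $f_*$-acyclic is exactly the detail the paper leaves implicit, and your argument for it is sound.
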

\begin{proof}
Since we are working in derived categories on smooth separated nice quotient stacks, we may assume that $\mathcal F$ is locally-free and that $\cc E$ has been replaced by  $\cc E \otimes_{\O_{\mathcal X}} \check{\mathcal C}$.  Then, since the usual projection formula is functorial, we may apply it component-wise to our factorization and the morphisms between components.
\end{proof}

\begin{definition} \label{d:FourMuk} Let $\mathcal X$, $\mathcal Y$ be  smooth separated nice quotient stacks.
Let $\cL, \cN$ be line bundles on $\cX, \cY$ respectively and $w \in \op{H}^0(\cX, \cL), v \in \op{H}^0(\cY, \cN)$. Let 
\[
(f,g) : \cZ \to \cX \times \cY
\]
 be a smooth separated nice quotient stack lying over $\cX \times \cY$ and assume that $f^*\cL \cong g^*\cN$
 and that $(f, g)$ has finite cohomological dimension.  Let $P \in \dabsfact{\cZ, g^*v -f^*w}$ such that the support of $P$ is proper over $\cY$.  The \newterm{Fourier--Mukai transform} by $P$ is the functor
 \begin{align*}
 \Phi_P : \dabsfact{\cX, w} & \to \dabsfact{\cY, v} \\
\cE & \mapsto \RR g_*(P  \overset{\mathbb{L}}{\otimes}_{\O_{\cZ}}  \LL f^*\cE)
 \end{align*}

\end{definition}

\begin{rem}
In \S\ref{ss:convexity}, we construct a \textit{fundamental factorization} for each moduli space of convex hybrid model LG maps.
This factorization is used as a kernel for a Fourier--Mukai transform whose passage to Hochschild homology defines enumerative GLSM invariants.  Our fundamental factorization is a particular Koszul factorization, a concept which we will now define. \end{rem}

\subsection{Koszul factorizations}\label{s:kosfac}
In \cite{PV}, one of the main components of the construction is the so called Koszul factorization.  This construction, in fact, goes all the way back to the birth of factorizations \cite{EisMF}.

\begin{definition} \label{definition: Koszul factorization}
Let $\cc E$ be a locally-free sheaf of finite rank on $\mathcal X$.  Suppose we have the following commutative diagram of morphisms,
\[
\begin{tikzcd}
\O_{\cX} \ar[r, "\beta"] \ar[rr, bend right, "w"] & \cc E^\vee \ar[r, "\alpha^\vee"] & \cL \\
\end{tikzcd}
\]
i.e. $\alpha \in \op{H}^0(\cX, \cc E \otimes \cL), \beta \in \op{H}^0(\cX, \cc E^\vee)$ and $ \alpha^\vee \circ \beta = w$.
	The \newterm{Koszul Factorization} $\{\alpha, \beta\}$ associated to the data $(\cc E, \alpha, \beta)$ is defined as 
	\begin{align*}
	\{\alpha, \beta\}_{1} & := \bigoplus_{l \geq 0} (\bigwedge\nolimits^{2l+1} \cc E) \otimes_{\mathcal O_{\mathcal {\mathcal X}}} \mathcal L^l \\ 
	\{\alpha, \beta\}_0 & := \bigoplus_{l \geq 0} (\bigwedge\nolimits^{2l} \cc E)\otimes_{\mathcal O_{\mathcal X}} \mathcal L^l \\
	\phi^{ \{\alpha, \beta\}}_0, \phi^{ \{\alpha, \beta\}}_{1} & := \bullet \ \lrcorner \ \beta + \bullet \wedge \alpha,
	\end{align*}
	where $\bullet \ \lrcorner \ \beta, \bullet \wedge \alpha$ are the contraction and wedge operators respectively.
\end{definition}

\begin{remark}
	If $\beta$ is a regular section of $\cc E^\vee$, then  $\{\alpha, \beta\}$ is isomorphic in $\dabsfact{\mathcal X, w}$ to the factorization $(0, \O_{Z(\beta)}, 0, 0)$, i.e., it is a locally-free replacement of this factorization (see, e.g., \cite[Proposition 3.20]{BFK14}).
\end{remark}

\begin{proposition}\label{p:Ksupp}
 The Koszul factorization $\{\alpha, \beta\}$ is supported on $Z(\alpha) \cap Z(\beta)$.
	\label{prop: Koszul support}
\end{proposition}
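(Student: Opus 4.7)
The plan is to verify directly that $\{\alpha,\beta\}$ is acyclic on the open complement $U := \cX\setminus(Z(\alpha)\cap Z(\beta))$ by producing an explicit contracting homotopy locally, since acyclicity of factorizations is local in the smooth topology (see the discussion preceding Corollary~\ref{cor: local support}).

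The total differential of $\{\alpha,\beta\}$ may be written as the odd-degree operator $d := \iota_\beta + \alpha\wedge(-)$ on $\bigwedge^\bullet\cE$ (twisted by appropriate powers of $\cL$). Using $\iota_\beta^2 = 0$, $\alpha\wedge\alpha = 0$, and the Cartan-type identity
\begin{equation*}
\iota_\beta(\alpha\wedge x) + \alpha\wedge\iota_\beta(x) = \beta(\alpha)\cdot x,
\end{equation*}
one confirms $d^2 = \beta(\alpha) = w$, consistent with Definition~\ref{definition: Koszul factorization}. I will cover $U$ by the two open substacks $U_\beta := \cX\setminus Z(\beta)$ and $U_\alpha := \cX\setminus Z(\alpha)$ and exhibit a contracting homotopy on each.

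On a smooth chart of $U_\beta$ trivializing $\cE$, non-vanishing of $\beta\in\Gamma(\cE^\vee)$ lets us split $\cE \cong \O\cdot e\oplus\cE'$ with $\beta(e) = 1$. Setting $h := e\wedge(-)$, one has $h^2 = 0$, while
\begin{align*}
\iota_\beta(e\wedge x) + e\wedge\iota_\beta(x) &= \beta(e)\cdot x = x,\\
\alpha\wedge(e\wedge x) + e\wedge(\alpha\wedge x) &= 0,
\end{align*}
the second identity by anti-commutativity of $\wedge$ on degree-one elements. Hence $dh + hd = \mathrm{id}$. Dually, on a chart of $U_\alpha$ where both $\cE$ and $\cL$ have been trivialized, $\alpha$ becomes a non-vanishing section of $\cE$; choose $\alpha^*\in\cE^\vee$ with $\alpha^*(\alpha) = 1$ and set $h := \iota_{\alpha^*}$. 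Anti-commutativity of interior products together with $\iota_{\alpha^*}(\alpha\wedge x) + \alpha\wedge\iota_{\alpha^*}(x) = x$ again gives $dh + hd = \mathrm{id}$.

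In either case $h$ is a contracting homotopy, so the restriction of $\{\alpha,\beta\}$ to each chart is null-homotopic, in particular acyclic. Since the two families of charts cover $U$, smooth-local acyclicity implies that $\{\alpha,\beta\}|_U$ is acyclic, which is exactly the claim that $\{\alpha,\beta\}$ is supported on $Z(\alpha)\cap Z(\beta)$. The only mild obstacle is bookkeeping the $\cL$-twists in the definition of $\{\alpha,\beta\}$ so that $h$ is genuinely a degree-$1$ morphism in $\mathsf{Fact}(\cX,w)$; this works because the operators $\alpha\wedge$, $\iota_\beta$, $e\wedge$, and $\iota_{\alpha^*}$ all carry $\cL$-weights dictated by the definition of $\{\alpha,\beta\}$.
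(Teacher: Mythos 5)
Your proof is correct, and it reaches the same conclusion by a slightly different route than the paper. Both arguments begin identically, by invoking smooth-local detection of acyclicity/support (Corollary~\ref{cor: local support}) to reduce to a chart where $\cE$ and $\cL$ are trivial. The paper then exploits the multiplicative structure: on such a chart $\{\alpha,\beta\}$ factors as a derived tensor product $\overset{\mathbb{L}}{\otimes}_{i}\{\alpha_i,\beta_i\}$ of rank-one Koszul factorizations, and away from $Z(\alpha)\cap Z(\beta)$ some factor has a non-vanishing $\alpha_i$ or $\beta_i$, so that factor (and hence the product) is acyclic; the explicit homotopy is only written in the rank-one case. You instead keep the full exterior algebra and write the contracting homotopy directly, as $e\wedge(-)$ for a local splitting $\beta(e)=1$ on $U_\beta$ and as $\iota_{\alpha^*}$ on $U_\alpha$; your rank-one specializations are exactly the homotopies appearing in the paper's displayed diagram. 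What your version buys is that it avoids having to know that tensoring against an acyclic factorization preserves acyclicity (true here because the components are locally free, but an extra input in the paper's reduction), at the cost of the Cartan-identity bookkeeping and the $\cL$-twist check, which you correctly flag and which does go through: $e\wedge(-)$ and $\iota_{\alpha^*}$ are honest degree $-1$ endomorphisms of $\{\alpha,\beta\}$ in $\mathsf{Fact}(\cX,w)$ once $\alpha^*$ is taken in $\cE^\vee\otimes\cL^{-1}$ (or $\cL$ is trivialized on the chart, as you do).
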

\begin{proof}
	By Corollary~\ref{cor: local support}, the assertion is local in the smooth topology.  Hence, we may assume that $\cX = U$ is a scheme and $B = \O_{U}^n$ and $\cL = \O_{U}$.
	In this case, we may express $\alpha $ as $ \oplus_{i=1}^n \alpha_i$ and $\beta $ as $ \oplus_{i=1}^n \beta_i$ and
	\[
	\{\alpha, \beta\} =  \underset{i=1 ... n}{\overset{\mathbb{L}}{\bigotimes}} \{ \alpha_i, \beta_i \}.
	\]
	As the tensor product of an acyclic factorization with a locally free factorization is acyclic, we have reduced to the case $B = \O_{U}$.  Assume $\alpha$ is non-vanishing.  In this case the identity map for $\{\alpha, \beta\}$ has an explicit homotopy,
	\[
	\begin{tikzcd}
	\O_U \arrow[r, "\beta^\vee"] \arrow{d}[swap]{\op{Id}} & \O_U  \arrow[r, "\alpha"]  \arrow[d, swap, "\op{Id}"]  \arrow{dl}[swap]{0} & \O_U  \arrow[d, "\op{Id}"]  \arrow{dl}[swap]{\alpha^{-1}} \\
	\O_U \arrow[r, "\beta^\vee"] & \O_U  \arrow[r, "\alpha"] & \O_U. \\
	\end{tikzcd}
	\]
	There is a similar explicit homotopy when $\beta$ is non-vanishing.
\end{proof}

\subsection{Hochschild Homology}\label{s:hochom}
Hochschild homology takes the place of the cohomology of a variety when dealing with derived categories of factorizations, so that it will be the main ingredient for the state space of the GLSM theory.

Fix, now and forever, a field $k$ of characteristic $0$ and let $\mathsf{C}$ be a $k$-linear small dg-category.  
We denote by $\Hoch (\mathsf{C})$ the Hochschild chain complex of $\mathsf{C}$ (see e.g.\ \cite[\S 3.2]{keller1998cyclic}).
%
%%We define the \emph{Hochschild chain complex} of $\mathsf{C}$ as the total complex of the double complex whose $p^{\op{th}}$ component is
%%\[
%%\mathcal C(\mathsf{C}) := \coprod_{(c_0, ..., c_{p+1}) \in \mathsf{C}^{p+2}}  \text{Hom}_{\mathsf{C}} (c_{0}, c_{1}) \ot... \ot  \text{Hom}_{\mathsf{C}} (c_p, c_{p+1})
%%\]
%%with differential
%%\[
%%d(f_0 \ot ... \ot f_{p+1}) =  \sum_{i=0}^{p} (-1)^i f_0 \ot ... \ot f_{i+1}f_i \ot ... \ot f_{p+1}.
%%\]
%%\[
%%\mathcal C(\mathsf{C}) := \coprod_{(c_0, ..., c_p) \in \mathsf{C}^{p+1}} \text{Hom}_{\mathsf{C}} (c_p, c_0) \ot   \text{Hom}_{\mathsf{C}} (c_{p-1}, c_p) \ot... \ot  \text{Hom}_{\mathsf{C}} (c_0, c_1)
%%\]
%%with differential
%%\[
%%d(f_p \ot ... \ot f_0) = f_{p-1} \ot ... \ot f_0f_p + \sum_{i=1}^p (-1)^i f_p \ot ... \ot f_if_{i-1} \ot ... \ot f_0.
%%\]

\begin{definition}
 The \newterm{Hochschild homology} $\op{HH}_{*}(\mathsf{C})$ of $\mathsf{C}$ is defined to be the homology of $\Hoch (\mathsf{C})$,
	\begin{displaymath}
\op{HH}_*(\mathsf{C}) := 	\op{H}_{*}(\Hoch (\mathsf{C})).
	\end{displaymath}	
	We denote by $\op{HH}_*(\cX,w)$ the Hochschild homology of the homotopy category of factorizations with injective components whose isomorphism class lies in $\dabsfact{\cX, w}$ 
	(This is a natural dg enhancement of $\dabsfact{\cX, w}$ on a nice quotient stack, see \cite[Corollary 2.23]{BDFIK}).
\end{definition}

Let $\mathsf C$ and $\mathsf D$ be $k$-linear small dg-categories and $F : \mathsf C \to \mathsf D$ be a dg functor.
%%The map of chain complexes
%%\begin{align*}
%% \coprod \text{Hom}_{\mathsf{C}} (c_0, c_1) \ot ... \ot  \text{Hom}_{\mathsf{C}} (c_p, c_{p+1}) 
%%& \to 
%%\coprod \text{Hom}_{\mathsf{D}} (d_0, d_1) \ot  ... \ot  \text{Hom}_{\mathsf{D}} (d_p, d_{p+1}) \\
%%f_0 \ot ... \ot f_{p+1} & \mapsto F(f_0) \ot ... \ot F(f_{p+1}) 
%%\end{align*}
The dg functor $F$ naturally induces a chain map $\Hoch (\mathsf C) \to \Hoch (\mathsf D)$
and hence a functorial linear map of $k$-vector spaces
\begin{equation} \label{eq: functorial push}
F_* : \op{HH}_*(\mathsf{C}) \to \op{HH}_*(\mathsf{D}).
\end{equation}

 For a quasi-compact separated scheme $X$, the Hochschild homology of a natural dg enhancement of $\dabsfact{X}$ is canonically isomorphic to the 
Hochschild homology $\op{HH}_*(X)$ of $X$ by \cite[Theorem 5.2]{keller1998cyclic}. 
	Furthermore, for a smooth separated scheme $X$, there is an isomorphism between the Hochschild homology of $X$ 
	and the Hodge cohomology of $X$.  It is commonly referred to as the HKR isomorphism.
	
	The HKR isomorphism was first proven in the affine case by Hochschild--Kostant--Rosenberg in \cite{HKR}. In our level of generality, it is due independently to Swan \cite[Corollary 2.6]{Swan2} and Kontsevich \cite{Kon}.  See also \cite{Yek}. 
%	The preservation of the Chern character was stated in \cite{Markarian} and proven as \cite[Theorem 4.5]{Cal}. 
%Altogether, these results amount to the following theorem.
We state these results as the following theorem.

\begin{theorem} \label{theorem: HKR}
	Let $X$ be a smooth separated scheme and $\cE$ be a perfect complex. There is a natural isomorphism,
	\[
	\phi_{\op{HKR}} \colon \bigoplus_l \op{HH}_l(X) \to \op{Hod}^*(X) = \bigoplus_l \bigoplus_{q-p=l} \op{H}^{p}(X, \Omega ^q_X).
	\]
	where $\op{Hod}^*(X)$ denotes the Hodge cohomology of $X$.
\end{theorem}

%\begin{theorem} \label{theorem: HKR}
%	Let $X$ be a smooth separated scheme and $\cE$ be a perfect complex. There is a natural isomorphism,
%	\[
%	\phi_{\op{HKR}} \colon \bigoplus_l \op{HH}_l(X) \to \op{Hod}^*(X) = \bigoplus_l \bigoplus_{q-p=l} \op{H}^{p}(X, \Omega ^q_X).
%	\]
%	where $\op{Hod}^*(X)$ denotes the Hodge cohomology of $X$.
%	Under this isomorphism,
%	the Chern character and classical Chern character agree under the HKR isomorphism,
%	\begin{displaymath}
%	\phi_{\op{HKR}}(\op{ch}(\cc E)) = \op{ch}(\cc E) \in  \op{Hod}^*(X)
%	\end{displaymath}
%	where the left hand side is the dg Chern character and the  right hand side is the usual Chern character.
%\end{theorem}

%{\color{red}

\begin{definition}\label{d:hkrmor}
	The isomorphism $ \phi_{\op{HKR}}$ is called the \newterm{Hochschild--Kostant--Rosenberg isomorphism} or  \newterm{HKR isomorphism} for short.
	%{\color{blue} 
	Let $Y$ be a smooth projective variety, then the de Rham cohomology $\op{H}^*(Y)$ and the Hodge cohomology $\op{Hod}^*(Y)$ are canonically isomorphic via the Hodge decomposition $d: \op{Hod}^*(Y) \to \op{H}^*(Y)$.  Define the twisted Hodge decomposition by
	\begin{equation}\label{eqn: twist T} T: \gamma  \mapsto \frac{d(\gamma)}{(\sqrt{2\pi i} )^{|\gamma |}} \end{equation} where $|\gamma |$ denotes
the real homological degree of $\gamma$.
	Define  $$\bar{\phi}_{\op{HKR}}: \op{HH}_*(Y) \to \op{H}^*(Y)$$ to be the composition $T \circ \phi_{\op{HKR}}$.
	
	In the case that  $\cY$ is a smooth separated Deligne--Mumford stack with projective coarse moduli space, we may still define an \newterm{HKR morphism} (no longer an isomorphism) as follows.
	By \cite[Corollary 4.2]{EHKV}, there exists a proper, generically finite, surjective morphism $\pi:Y \to \cY$ such that $Y$ is a smooth 
	projective variety.
	Define $\bar{\phi}_{\op{HKR}}$ as the composition
	\[
	\bar{\phi}_{\op{HKR}}\colon \op{HH}_*(\cY) \xrightarrow{(\mathbb L \pi^*)_*} \op{HH}_*(Y) \xrightarrow{\phi_{\op{HKR}}} 
	\op{Hod}^*(Y) \xrightarrow{T} \op{H}^*(Y)
	\xrightarrow{\frac{1}{\deg \pi} \pi_*}  \op{H}^*(\cY)
	\]%}
	\label{def: Deligne--Mumford HKR map} 
\end{definition}

\begin{remark} \label{rem: HKRindependent}
	The definition of the HKR morphism is independent of the choice of smooth $Y$ since given two proper, generically finite, surjective representable morphisms from $Y,Y'$, we may pass to their fiber product.  It then suffices to check that the HKR map for $Y, Y'$ agree with the one provided by  a resolution of  $Y \times_\cY Y'$.  This follows from the fact that $\pi_*\pi^*$ is, cohomologically, multiplication by $\deg \pi$. 
\end{remark}

\begin{remark}
	This definition follows the one in \cite{PV} but is less than optimal.  In general, we anticipate an ``HKR isomorphism'' between the Hochschild homology of a smooth separated Deligne--Mumford stack and its Chen--Ruan cohomology.  However, at present, this has only been proved in the global quotient case by a finite group \cite{ACC}.
	The map defined above should agree with the projection to the untwisted component of the inertia stack.  This can be shown in the case where $\cZ$ is a global quotient stack by a finite group.
\end{remark}

\begin{remark}  \label{rem: relative HKR} In fact, in the above references the HKR isomorphism lifts to a natural quasi-isomorphism from the sheafified Hochshild homology to the Hodge complex
$$I_X:  \LL\Delta^* \Delta_* \cO _X \to  \sum _i \Omega _X^{i} [i].$$
Using this, given a closed subscheme $Z \hookrightarrow X$, one can also define a relative HKR isomorphism
$$\phi_{\op{HKR}}^{\op{rel}} := \RR\Gamma (\iota _Z)_! \iota _Z ^!  I_X \colon \op{HH}_*(X)_Z \to \op{Hod}^*(X)_Z$$
from the relative Hochschild homology to the relative Hodge cohomology.  It is an isomorphism by the 5-lemma.
\end{remark}

For the reader's convenience, we recall the definition of Fourier-Mukai transforms for Deligne--Mumford stacks (which motivated Definition~\ref{d:FourMuk}).

\begin{definition}\label{d:it0} 
	Let  
\[
(f,g) : \cZ \to \cX \times \cY
\]
be a representable morphism between smooth separated Deligne--Mumford nice quotient stacks.  
Let $\cK \in \dbcoh{\cZ}$ be any object with proper support over $\cY$.
	Then, we define the \newterm{Fourier-Mukai transform} to be the functor
 \begin{align*}
 \Phi_{\cK} : \dabsfact{\cX} & \to \dabsfact{\cY} \\
\cE & \mapsto \RR g_*(\cK  \overset{\mathbb{L}}{\otimes}_{\O_{\cZ}}  \LL f^*\cE)
 \end{align*}
\end{definition}

By \eqref{eq: functorial push},
this induces a map on Hochschild homology
\[
({\Phi_{\cK}})_* : \op{HH}_*(\cX) \to  \op{HH}_*(\cY).
\]
\begin{definition}\label{d:HKR Phi} 
Let $(f, g): \cZ \to \cX \times \cY$ be as in Definition~\ref{d:it0} 
with the added assumption that $\cX$ is a  smooth projective variety.
	Then, for  $\psi \in \op{H}^*(\cX)$,
	we define
	\[
	{\Phi_{\cK}^{\op{HKR}}}(\psi) := \bar \phi_{\op{HKR}}({\Phi_{\cK}})_* \bar \phi_{\op{HKR}}^{-1}(\psi).
	\]
\end{definition}

One may also define various cohomological integral transforms.
\begin{definition}
Let $(f, g): \cZ \to \cX \times \cY$ be as in Definition~\ref{d:it0} with the added assumption that $\cY$ is proper (and hence so is $\cZ$).
Given a cycle $ \phi \in  \op{H}^*(\cZ)$, we define the \newterm{cohomological integral transform} as
 \begin{align*}
 \Phi_{\phi}^{\op{H}} : \op{H}^*(\cX) & \to \op{H}^*(\cY) \\
\psi & \mapsto g_*(f^*\psi \cup \phi).
 \end{align*}
 \end{definition}

\subsection{Comparisons}
Given a geometric phase GLSM, we compare in \S \ref{s:comwitothcon} the GLSM invariants with those of Gromov--Witten theory.
The following general results will be useful for that purpose.
\subsubsection{Integral transforms and HKR}

The following theorem is a slight extension of results of Ramadoss.  It compares two induced Fourier--Mukai functors, one in Hochschild homology and the other in cohomology, under the HKR isomorphism.

\begin{theorem}[{\cite{ram}}]\label{t:ram}
Let $(f, g): \cZ \to \cX \times \cY$ be as in Definition~\ref{d:it0}, with the added assumptions that
$\cX$ is a  smooth projective variety, which will be denoted by $X$; 
the stack $\cY$ has projective coarse moduli space; and finally 
the stack $\cZ$ is a proper Deligne--Mumford stack. 
Then, for the integral transform $\Phi_{\mathcal K} \colon  \dbcoh{X} \to \dbcoh{\cY}$ the action of ${\Phi_{\mathcal K}}_*$ under the HKR {\it morphism} is equal to the cohomological integral transform
	associated to $\op{td}(\cZ/ \cY) \op{ch}(\mathcal K) \in  \op{H}^*(\cZ  ; \C)$, i.e. 
	\[
	{\Phi_{\mathcal K}^{\op{HKR}}} = \Phi^{\op{H}}_{\op{td}(\cZ/ \cY)  \op{ch}(\mathcal K)}. 
	\]
	\label{thm: Ramadoss}
\end{theorem}

\begin{proof}
By \cite[Proposition 5.1]{KreschGeometry}, one can find a finite flat surjective morphism $\pi:Y \to \cY$ such that $Y$ is a smooth projective scheme.  
From which we get a fiber product
\[
\begin{tikzcd}
Z \ar[r, "h"] \ar[d, swap, "\pi|_Z"]  & Y \ar[d, "\pi"] \\
\cZ \ar[r, "g"] & \cY. \\
\end{tikzcd}
\]		

Fix $\gamma \in  \op{H}^*(X)$.
  We have the following chain of equalities,
\begin{align*}
{\Phi^{\op{HKR}}_{\cK}}(\gamma) & = \frac{1}{\deg \pi} \pi_* \circ  \bar \phi_{\op{HKR}} \circ \pi^* \circ  {(\Phi_{\cK}})_* \circ   \bar \phi_{\op{HKR}}^{-1}(\gamma) \\
& = \frac{1}{\deg \pi} \pi_* \circ  \bar \phi_{\op{HKR}} \circ  (\Phi_{\LL \pi|_Z^*\cK})_* \circ   \bar \phi_{\op{HKR}}^{-1}(\gamma) \\
& = \frac{1}{\deg \pi} \pi_* \circ  \bar \phi_{\op{HKR}} \circ  (\Phi_{\RR (f,h)_* \LL \pi|_Z^*\cK})_* \circ   \bar \phi_{\op{HKR}}^{-1}(\gamma) \\
& = \frac{1}{\deg \pi} \pi_* \circ  \Phi^{\op{H}}_{\td( X \times Y / Y)  \op{ch}(\RR (f,h)_* \LL \pi|_Z^*\cK)}(\gamma) \\
& = \frac{1}{\deg \pi} \pi_* \pi^* \circ  \Phi^{\op{H}}_{\td( X)  \op{ch}(\RR (f,g)_* \cK)}(\gamma) \\
& =   \Phi^{\op{H}}_{\td( X \times \cY / \cY)  \op{ch}(\RR (f,g)_* \cK)}(\gamma) \\
& =   \Phi^{\op{H}}_{\td(\cZ / \cY)  \op{ch}\cK}(\gamma). \\
\end{align*}
The first line is by definition.
The second line follows from flat base change.
The third line follows from the projection formula.
%The main justification is the fourth line which is a combination of Theorem 2 and Theorem 6 of \cite{ram}.  
The main justification is the fourth line which follows from Theorem 2 and Theorem 6 of \cite{ram}, after identifying cohomology and Hodge cohomology as in Definition~\ref{d:hkrmor}.
The sixth line is flat base change and the seventh line is obvious.
The eighth line is \cite[Theorem 3.5]{Edidin}.   
\end{proof}

\subsubsection{Factorization categories and the derived category}\label{s:comptenpro}

Let $X$ be a smooth variety and $G$ be an affine algebraic group.  Let $T$ denote the total space of a $G$-equivariant vector bundle $\cc E$ 
on $X$, and let $f$ be a regular equivariant section of $\cc E^\vee$ on $X$.   Let $Z := Z(f)$ be the zero locus of $f$. Fiberwise dilation gives a $\CC^*$-action on $T$.  Furthermore, the variety $T$ inherits a function $w$ which corresponds to the pairing with $f$.  We have the following maps:
\begin{equation}\label{e:ISH}
 \begin{tikzcd}
T|_Z \arrow{r}{i} \arrow[d, bend right, swap, "\pi|_Z"]& T \\
Z \arrow{ur}{j} \arrow[u, bend right, "j'"] \arrow[r, "f"] & X
 \end{tikzcd}
\end{equation}

\begin{theorem}[{\cite{Isik, Shipman, Hirano}}]
There is an equivalence of categories
\[
\phi_+ := \mathbb Ri_* \mathbb L \pi|_Z^*  \colon  \dbcoh{[Z/G]} \to \dabsfact{[T / G \times \CC^*], w}.
\]
with inverse
\[
\phi^{-1}_+ = \mathbb R(\pi|_Z)_* \mathbb L i^* \overset{\mathbb{L}}{\otimes} \op{det} \cE^\vee [-\op{rank } \cE]   \colon   \dabsfact{[T / G \times \CC^*], w} \to \dbcoh{[Z/G]}.
\]
Furthermore, if $M \subseteq [Z/G]$ is a closed proper substack, then $\phi_+, \phi^{-1}_+ $ restrict to an equivalence
\[
 \dbcoh{[Z/G]}_M \cong  \dabsfact{[T / G \times \CC^*], w}_{[M / \CC^*]}
\]
where $[M / \CC^*]$ is viewed as a substack of $[T/G \times \CC^*]$ by inclusion along the zero section.
\label{thm: Hirano}
\end{theorem}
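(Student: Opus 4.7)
The plan is to follow the well-established strategy originating in the work of Isik and Shipman in the non-equivariant case and extended by Hirano in the equivariant setting. The key geometric input is the Koszul resolution of the inclusion $T|_Z \hookrightarrow T$: letting $p \colon T \to X$ denote the bundle projection, $T|_Z$ is cut out by the regular section $p^*f$ of the locally free sheaf $p^*\cc E^\vee$, and there is a tautological section $y$ of $p^*\cc E$ satisfying $\langle y, p^*f \rangle = w$. The resulting Koszul factorization $\{p^*f, y\}$ (Definition~\ref{definition: Koszul factorization}) is a factorization of $w$ on $[T/G \times \CC^*]$ supported along $Z$, and it is this factorization which bridges sheaves on $[Z/G]$ and factorizations on $[T/G\times\CC^*]$.

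First, I would check that $\phi_+$ lands in $\dabsfact{[T/G \times \CC^*], w}$ by identifying it, up to natural isomorphism, with the functor obtained by tensoring $\LL j'^*(-)$ with $\{p^*f, y\}$, where $j'$ is the zero section inclusion; equivalently, the Koszul factorization supplies a locally-free replacement of $\mathbb R i_* \cO_{T|_Z}$ as a factorization of $w$, and tensoring with $\mathbb L \pi|_Z^*\cF$ yields $\phi_+(\cF)$. Equivariance is automatic since $f$ is $G$-equivariant and $y$ has $\CC^*_R$-weight one, matching the weight of $w$.

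Second, I would verify that $\phi_+$ and $\phi_+^{-1}$ are mutually inverse by direct computation. For $\cF \in \dbcoh{[Z/G]}$,
\[
\phi_+^{-1}\phi_+(\cF) = \mathbb R(\pi|_Z)_* \mathbb L i^* \mathbb R i_* \mathbb L \pi|_Z^*\cF \overset{\mathbb L}{\otimes} \det \cc E^\vee[-\rank \cc E].
\]
Since $i$ is a regular closed immersion with conormal bundle $\pi|_Z^*\cc E$, the self-intersection formula gives $\mathbb L i^* \mathbb R i_*(-) \cong (-) \otimes \bigwedge\nolimits^\bullet \pi|_Z^* \cc E$. When this is interpreted inside $\dabsfact{[T/G\times\CC^*], w}$, the Koszul differential supplied by the interaction of the tautological section $y$ with the potential $w$ makes all intermediate exterior powers null-homotopic, leaving only the top degree piece $\pi|_Z^* \det \cc E[-\rank \cc E]$, which cancels the twist. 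The projection formula for the vector bundle projection $\pi|_Z$ then recovers $\cF$. The composition $\phi_+ \phi_+^{-1}$ is handled analogously by checking on a set of generators (for instance, pushforwards from a trivializing cover of $X$, reducing to an affine Koszul computation).

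Finally, the support statement follows formally: if $\cF$ is supported on $M \subseteq [Z/G]$, then $\mathbb L\pi|_Z^*\cF$ is supported on $\pi|_Z^{-1}(M)$ and $\mathbb R i_*$ preserves closed support, yielding a factorization supported on $[M/\CC^*]$ via the zero section; the reverse direction uses that $\mathbb L i^*$ and $\mathbb R(\pi|_Z)_*$ likewise preserve support along the zero section. The principal obstacle is the cancellation of intermediate exterior powers in the second step: this is not a statement about the derived category of $T|_Z$ (where those terms genuinely contribute), but a phenomenon specific to the factorization category, forced by the potential $w$. A clean implementation therefore requires making explicit the contracting homotopies built from $y$ and $w$, which is the technical heart of the argument.
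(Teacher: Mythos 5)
The paper does not actually reprove this equivalence: it is quoted from Isik and Shipman, in the equivariant generality needed here from Hirano \cite[Proposition 4.8]{Hirano}, and the formula for $\phi_+^{-1}$ is identified as the adjoint via further references; only the support statement is argued directly. Your proposal instead sketches a from-scratch proof in the spirit of Shipman's argument (Koszul factorization as kernel, computation of both composites). That is a legitimate route, but as written it has two genuine gaps. The first is the direction $\phi_+\phi_+^{-1}\cong\op{id}$, which is the hard half of the theorem --- it amounts to essential surjectivity --- and ``checking on a set of generators'' presupposes that you already possess a generating set of $\dabsfact{[T/G\times\CC^*],w}$ whose members you can relate to the image of $\phi_+$. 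Producing such a generation statement (e.g.\ that the category is split-generated by objects pushed forward from the zero section, using support on the critical locus together with Positselski-type generation by coherent factorizations) is precisely the technical core of the Isik/Shipman/Hirano proofs and cannot be reduced to ``an affine Koszul computation.''

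The second gap is in the forward direction of the support statement. From $\cF$ supported on $M$ you correctly get that $\RR i_*\LL\pi|_Z^*\cF$ is supported on $\pi|_Z^{-1}(M)=T|_M$, but that is the entire fiber of $T$ over $M$, not the zero-section copy $[M/\CC^*]$ demanded by the statement. The missing ingredient --- which is the whole content of the paper's argument here --- is that every object of $\dabsfact{[T/G\times\CC^*],w}$ is automatically supported on $Z(dw)$, which lies inside the zero section, so the support is cut down to $T|_M\cap Z(dw)\subseteq M$. Finally, a smaller imprecision in your computation of $\phi_+^{-1}\phi_+$: after restriction to $T|_Z$ the contraction differential vanishes and only wedging with the tautological section survives, but that Koszul complex resolves $j'_*\det\cE[-\rank\cE]$ (supported on the zero section $Z\subseteq T|_Z$), not the line bundle $\pi|_Z^*\det\cE[-\rank\cE]$ on all of $T|_Z$. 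It is exactly this zero-section support that lets $\RR(\pi|_Z)_*$ return $\cF\otimes\det\cE[-\rank\cE]$ rather than something involving $\op{Sym}^\bullet\cE^\vee$; with your stated intermediate answer the projection-formula step would not recover $\cF$.
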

\begin{proof}
The equivalence is the main theorem of Isik's paper \cite{Isik} where the functor was defined differently.  It was proven independently by Shipman \cite{Shipman} who defined the functor this way.  The generalization we are using is due to Hirano (see \cite[Proposition 4.8]{Hirano}).  The fact that the inverse functor is so defined, is because this functor is the adjoint (see  e.g. \cite[Theorem 3.8]{EP}, \cite[Corollary 2.5.7]{Pol16}, or \cite[Theorem 4.36]{Hirano17}).

For the final statement, notice that if $A$ is supported on $M$ then $\mathbb Ri_* \mathbb L \pi|_Z^*A$ is supported on $[T|_M / G \times \CC^*]$.  On the other  hand, every factorization is supported on the critical locus of $w$ which is contained in the zero section of $[T / G \times \CC^*]$.  Hence $\mathbb Ri_* \mathbb L \pi|_Z^*A$ is supported on $[M / \CC^*]$.  Conversely, if $B$ is supported on $[M / \CC^*]$ then $\mathbb R(\pi|_Z)_* \mathbb L i^* B \overset{\mathbb{L}}{\otimes} \op{det} \cE^\vee [-\op{rank } \cE]$ is supported on $M$.
\end{proof}
We will use $\phi_- := \mathbb R i_* \mathbb L \pi|_Z^*  \colon  \dbcoh{[Z/G]} \to \dabsfact{[T / G \times \CC^*], -w}$ to denote the analogous functor but where we now view $\mathbb R i_*$ as mapping to $\dabsfact{[T / G \times \CC^*], -w}$.

\begin{lemma}
Let $M \subseteq [Z/G]$ be a closed proper substack.  The following diagram is commutative
\begin{equation} \label{eq: MFtensor}
 \begin{tikzcd}
  \dabsfact{[T / G \times \CC^*], w}_{[M/ \CC^*]}  \arrow{rr}{(- \overset{\mathbb{L}}{\otimes} \phi_-(P))} &  & \dabsfact{[T/ G \times \CC^*], 0}_{[M/ \CC^*]} \arrow{dr}{\mathbb R \pi_*}.\\
\dbcoh{[Z/G]}_M \arrow{rr}{(- \overset{\mathbb{L}}{\otimes} P \otimes \op{det} \cc E) [\rank \mathcal{E} ]} \arrow{u}{\phi_+} & & \dbcoh{[Z/G]}_M \ar{r}{\mathbb R f_* }& \dbcoh{[X/G]}_M.
\end{tikzcd}
\end{equation}
\label{FM commutative diagram}
\end{lemma}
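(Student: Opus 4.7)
The plan is to verify the isomorphism by rewriting the left-hand side in three moves and then invoking the quasi-inverse description of $\phi_-$ from Theorem~\ref{thm: Hirano}. Concretely, for $A \in \dbcoh{[Z/G]}_M$ and $P \in \dbcoh{[Z/G]}$, the goal is to establish
\[
\mathbb R \pi_*\!\left(\phi_+(A) \overset{\mathbb{L}}{\otimes} \phi_-(P)\right) \;\cong\; \mathbb R f_*\!\left(A \overset{\mathbb{L}}{\otimes} P \otimes \det \cE\right)[\rank \cE].
\]

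First, since $\phi_+(A) = \mathbb R i_*\mathbb L\pi|_Z^*A$ is pushed forward along the closed embedding $i \colon T|_Z \hookrightarrow T$, the projection formula (Proposition~\ref{p:projform}) for $i$ yields
\[
\phi_+(A) \overset{\mathbb{L}}{\otimes} \phi_-(P) \;\cong\; \mathbb R i_*\!\left(\mathbb L\pi|_Z^*A \overset{\mathbb{L}}{\otimes} \mathbb L i^*\phi_-(P)\right).
\]
The resulting factorization has potential $w+(-w)=0$; moreover $w$ vanishes on $T|_Z$, so the intermediate categories on $T|_Z$ also carry zero potential.

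Next, using the commutative relation $\pi \circ i = f \circ \pi|_Z$ from diagram~\eqref{e:ISH}, I rewrite $\mathbb R\pi_*\mathbb R i_* = \mathbb R f_*\mathbb R(\pi|_Z)_*$ and apply the projection formula for the vector bundle map $\pi|_Z \colon T|_Z \to Z$ to extract the pulled-back factor $A$:
\[
\mathbb R\pi_*\!\left(\phi_+(A) \overset{\mathbb{L}}{\otimes} \phi_-(P)\right) \;\cong\; \mathbb R f_*\!\left(A \overset{\mathbb{L}}{\otimes} \mathbb R(\pi|_Z)_* \mathbb L i^*\phi_-(P)\right).
\]

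Finally, the inner term is handled by the explicit description of $\phi_-^{-1}$ from Theorem~\ref{thm: Hirano}, namely
\[
\phi_-^{-1} \;=\; \mathbb R(\pi|_Z)_* \mathbb L i^* \overset{\mathbb{L}}{\otimes} \det\cE^\vee[-\rank\cE].
\]
Applying this equivalence to $\phi_-(P)$ returns $P$, and rearranging the tensor factor yields
\[
\mathbb R(\pi|_Z)_* \mathbb L i^*\phi_-(P) \;\cong\; P \otimes \det\cE[\rank\cE].
\]
Substituting back into the previous display produces the lower route of the diagram.

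The only technical point requiring care is the applicability of Proposition~\ref{p:projform} in the factorization setting with the indicated support conditions. This is routine: since $A$ is supported on $M$, the object $\phi_+(A)$ is supported on $[M/\CC^*]$, and this support condition is preserved under $\mathbb L i^*$, $\overset{\mathbb{L}}{\otimes}$, and $\mathbb R(\pi|_Z)_*$, placing every intermediate object in the correct subcategory and making each invocation of the projection formula legitimate.
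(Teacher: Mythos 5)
Your argument is correct and follows essentially the same route as the paper's proof: projection formula along the closed embedding $i$, the identity $\pi\circ i = f\circ \pi|_Z$, projection formula along $\pi|_Z$, and finally the explicit quasi-inverse from Theorem~\ref{thm: Hirano} to identify $\mathbb R(\pi|_Z)_*\mathbb L i^*\phi_-(P)$ with $P\otimes\det\cE[\rank\cE]$. The closing remark on propagation of the support condition is a reasonable (and slightly more explicit) justification of the same applications of Proposition~\ref{p:projform} that the paper invokes.
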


\begin{proof}
We have the following functorial sequence of isomorphisms
\begin{align*}
\mathbb R \pi_* ( \phi_+(Q) \overset{\mathbb{L}}{\otimes} \phi_-(P) ) & = \mathbb R \pi_* (\mathbb R i_* \pi|_Z^* Q  \overset{\mathbb{L}}{\otimes} \mathbb R i_* \pi_|Z^* P)   \\
& = \mathbb R \pi_* \mathbb R i_* (\pi|_Z^* Q  \overset{\mathbb{L}}{\otimes} \mathbb L i^* \mathbb R i_* \pi|_Z^* P)   \\
& = \mathbb R f_* \mathbb R (\pi|_Z)_* (\pi|_Z^* Q  \overset{\mathbb{L}}{\otimes} \mathbb L i^* \mathbb R i_* \pi|_Z^* P)   \\
& = \mathbb R f_* (Q  \overset{\mathbb{L}}{\otimes} \mathbb R (\pi|_Z)_*  \mathbb L i^* \mathbb R i_* \pi|_Z^* P)   \\
& = \mathbb R f_* (Q  \overset{\mathbb{L}}{\otimes} \mathbb \phi^{-1}(\phi(P)) \overset{\mathbb{L}}{\otimes} \op{det} \cc E )  [\rank \mathcal{E} ]  \\
& = \mathbb R f_* (Q  \overset{\mathbb{L}}{\otimes} P \overset{\mathbb{L}}{\otimes} \op{det} \cc E )  [\rank \mathcal{E} ]  \\
\end{align*}
where we use Proposition~\ref{projection formula} in lines 2 and 4.
\end{proof}

\begin{definition}\label{d:phiplus}
Define the equivalence $\tilde \phi_{+/-} \colon  \dbcoh{[Z/G]} \to \dabsfact{[T / G\times \CC^*_R], w}$ to be
\[ \tilde \phi_{+/-} := \op{det} (\cc E^\vee)\overset{\mathbb{L}}{\otimes} \phi_{+/-} [-\rank \mathcal{E} ].\]
\end{definition}
\begin{remark}\label{r:prr}
The previous proposition can be restated as 
\[ \mathbb R \pi_* (\tilde \phi_+(P) \overset{\mathbb{L}}{\otimes} \phi_-(Q) )= {\mathbb R} f_*(P \overset{\mathbb{L}}{\otimes} Q).\]
\end{remark}

On $[T/G]$, the pull-back vector bundle $ \pi^*\cc E$ is naturally $\CC^*_R$-equivariant and has the $\CC^*_R$-invariant tautological section 
$\taut\in H^0([T/G], \pi^* \cc E)$. We also have the section $ \pi^* f \in H^0([T/G], \pi^*(\cc E^\vee \ot_{\cO_{[T/G]}} \cO_{[T/G]}(\eta)))$ where $\eta$ is the character of $\CC^*_R$ of weight one.
The composition
$$\cO_{[T/G]}\xrightarrow{\taut} \pi^*\cc E \xrightarrow{ \pi^*(f)^\vee \ot \op{id}_{\cO_{[T/G]}(\eta)}} \cO_{[T/G]}(\eta)$$
is equal to $w$,
hence we get a Koszul factorization
\begin{equation}\label{S1}
S_1 := \{ \pi^*(f)^\vee, \taut \} \in \dabsfact{[T /G \times \CC^*_R], w}.
\end{equation}
\begin{remark}\label{r:os1}
We note for future reference that $\phi_+(\cO_Z) = S_1^\vee  = \{{\taut}^\vee, \pi^*(f)\}$ and consequently, 
\[\tilde \phi_+ (\cO_Z) = \det(\cc E^\vee) \overset{\mathbb{L}}{\otimes} S_1^\vee [-\rank \cc E] = S_1.\]
\end{remark}

Now assume that $[Z/G[$ is represented by a smooth projective variety.  In this case, we can use the HKR isomorphism to compare the Hochschild cohomology of $\dabsfact{[Z/G]}$ with the usual cohomology of $[Z/G]$.
We  introduce the following Todd class correction to the HKR isomorphism to get the identification of Hochschild homology $\op{HH}_*([T / G\times \CC^*_R], w)$ with the cohomology of $[Z/G]$ we will require.
\begin{definition}\label{d:phitodd}
Let $[Z/G]$ as above be represented by a smooth projective variety.  We define the isomorphism \[\varphi_*^{\td}: \op{H}^*([Z/G]) \to \op{HH}_*([T / G \times \CC^*_R], w)\] by
\[ \varphi_*^{\td}(\gamma) := (\tilde \phi_+)_* \circ \bar \phi_{\op{HKR}}^{-1} (\td( \cc E) \cup \gamma).\]
%where we identify $\op{H}^*([Z/G])$ with $\op{Hod}^*([Z/G])$.
\end{definition}

\subsubsection{Comparing localized Chern characters}\label{s:comlocchecha}

\newcommand{\pullbackp}{{[p]}}

Consider a vector bundle $A$ on a smooth Deligne--Mumford stack $S$.  
Equip $S$ with a trivial $\CC^*$-action and suppose we have a $\CC^*$-action on $ \op{tot} A $ which is equivariant with respect to the projection
 $p  \colon  \op{tot} A \to S$. This yields a fiber square,
\begin{equation}
\begin{tikzcd}
 \op{tot} A   \arrow[d, "p"] \arrow[r, "{\pi_{A }}"] & {[ \op{tot} A  / \CC^*]}  \arrow[d, "p"]\\
S \arrow[r, "{\pi_{S}}"] &  {[S / \CC^*]} .
\end{tikzcd}
\label{forget square}
\end{equation}

Let $X$ be a closed substack of $S$.  In \cite{PVold}, Polishchuk and Vaintrob defined a 2-periodic localized Chern character for a certain class of two periodic complexes $\cE$.  This is a bivariant class
\[
\chp_{X}^{\op{tot} A} (\cE) \in A^*(X\to \op{tot}A )_{\QQ}.
\]
Since, $S$ is smooth, by Remark 5.3.2 and Lemma 5.3.4 of \cite{Chiodo}, this descends to a map
\[
\chp_{X}^{\op{tot} A}: \dabsfact{{\op{tot} A} , 0}_{X} \to A^*(X\to \op{tot}A )_{\QQ}.
\]

We will now compare this to the usual Chern character in the proposition below. 
For this we view $ \mathbb R p_*$ as a functor
\[
\mathbb R p_*  \colon  \dabsfact{[\op{tot} A / \CC^*], 0}_X \to \dbcoh{S}_X
\]
using the equivalence,
\[
\dabsfact{[S/\CC^*], 0} = \dbcoh{S}.
\]

\begin{proposition} 
There is a commutative diagram

\[
\begin{tikzcd}
\dabsfact{[\op{tot} A / \CC^*], 0}_{X} \ar[d, "\mathbb R p_*"] \ar[r, "\mathbb  L \pi_A^*"] & \dabsfact{\op{tot} A , 0}_{X}  \ar[r, "\chp_{X}^{\op{tot} A}"]   & A^*(X\to \op{tot}A )_{\QQ} \ar[d, "\pullbackp\td(A)"] \\
\dbcoh{S}_X  \ar[rr, "\ch^{S}_X"] & { }& A^*(X\to S)_{\QQ} \\
\end{tikzcd}
\]
where $[p]$ is the canonical orientation of the flat map $p$. 
\label{prop: chiodo}
\end{proposition}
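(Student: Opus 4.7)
The plan is to reduce the claim to a bivariant Grothendieck--Riemann--Roch identity for the smooth, flat vector bundle projection $p$, after first unraveling the factorization language in the case of zero potential.

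\textbf{Step 1 (Reduction to complexes of sheaves).} Because the potential is zero, every object of $\dabsfact{[\op{tot}A/\CC^*],0}_X$ is a $2$-periodic complex of $\CC^*$-equivariant coherent sheaves supported on $X$, and the eigenspace decomposition coming from the fiberwise $\CC^*$-action on $\op{tot}A$ unfolds this $2$-periodic complex into a bounded $\ZZ$-graded complex of $\CC^*$-equivariant coherent sheaves. Under the forgetful functor $\mathbb L\pi_A^*$, this bounded complex represents an object of $\dbcoh{\op{tot}A}_X$, and similarly $\mathbb Rp_*\cE\in\dbcoh{S}_X$ is represented by a bounded complex because the support $X$ is contained in the zero section and hence proper over $S$. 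On such bounded complexes, the localized Chern characters $\chp^{\op{tot}A}_X$ and $\ch^S_X$ reduce to their classical definitions via finite locally free resolutions.

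\textbf{Step 2 (Bivariant GRR for $p$).} The morphism $p:\op{tot}A\to S$ is smooth of pure relative dimension $\rank A$ with relative tangent bundle $p^*A$, and the canonical orientation class $[p]$ is exactly the flat pullback in the bivariant Chow theory. For a bounded complex $\cE$ of coherent sheaves with support in the zero section (hence proper over the base), Fulton's bivariant Grothendieck--Riemann--Roch (\cite[Ch.~18]{Fulton}, applied to the non-proper but ``$X$-proper'' map $p$) yields
\[
\ch^S_X(\mathbb Rp_*\cE) \;=\; [p]\cdot \td(p^*A)\cdot \chp^{\op{tot}A}_X(\cE) \;=\; [p]\cdot \td(A)\cdot \chp^{\op{tot}A}_X(\cE),
\]
using $p^*\td(A)=\td(p^*A)$ and the projection formula to pull the Todd class back under $[p]$. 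This is precisely the commutativity of the diagram.

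\textbf{Step 3 (Passing to Deligne--Mumford stacks).} The main obstacle is that Fulton's bivariant GRR is stated for schemes, so one must lift it to the Deligne--Mumford setting in which $S$ and $\op{tot}A$ live. The strategy mirrors Definition~\ref{def: Deligne--Mumford HKR map}: using \cite[Proposition 5.1]{KreschGeometry} choose a finite flat surjective representable morphism $U\to S$ from a smooth scheme $U$, form the fiber square with $U\times_S\op{tot}A\to U$ (which is again a vector bundle), apply the scheme-theoretic bivariant GRR upstairs, and descend using the projection formula together with flat base change for $\mathbb Rp_*$, $\mathbb L\pi_A^*$, and the bivariant Chow operations. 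Independence of the chosen cover is handled exactly as in Remark~\ref{rem: HKRindependent} by passing to fiber products of two covers. The compatibility of the $\CC^*$-equivariant structure with the forgetful functor $\mathbb L\pi_A^*$ in diagram~\eqref{forget square} is automatic because the action is trivial on $S$, so no genuinely equivariant GRR is needed beyond the reduction performed in Step~1.
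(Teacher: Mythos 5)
There are two genuine gaps. First, Step 1 rests on the claims that every object of $\dabsfact{[\op{tot}A/\CC^*],0}_X$ has components supported on $X$ and that the weight decomposition unfolds the $2$-periodic complex into a \emph{bounded} complex. Neither holds in general: support of a factorization on $X$ only means acyclicity off $X$, and the components of the relevant objects (e.g.\ Koszul factorizations) are locally free, so their $\CC^*$-weight spaces are unbounded and the unfolding is an unbounded complex. One could repair this by reducing to the split generators whose components are coherent and genuinely supported on $X$ (Lemma~\ref{lem: compact generation}), but as written the passage from $2$-periodic complexes to honest $K_0$-classes is not justified. The paper instead proves, for an \emph{arbitrary} $2$-periodic complex $W_\bullet$ of coherent sheaves exact off $X$, that $\chp^{\op{tot}A}_X(W_\bullet)=\chp^{\op{tot}A}_X\bigl(H^0(W_\bullet)\xrightarrow{0}H^1(W_\bullet)\bigr)$ by an explicit deformation over $\A^1$ (a localized version of Chiodo's Lemma 5.3.8); this is the step your argument silently skips.

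Second, and more seriously, Step 2 invokes ``Fulton's bivariant GRR applied to the non-proper but $X$-proper map $p$.'' Fulton's Riemann--Roch theorems in Chapter 18 are for \emph{proper} morphisms; there is no off-the-shelf localized GRR for the vector bundle projection $p$, so this citation is essentially an assertion of the proposition itself. The actual argument goes through the zero section $i\colon S\to\op{tot}A$: one first shows $\RR i_*\circ\RR p_*=\op{id}$ on the Grothendieck group of complexes on $\op{tot}A$ exact off $X$ (using that a coherent sheaf set-theoretically supported on the zero section has a filtration whose graded pieces are scheme-theoretically supported there), and then applies Riemann--Roch for the regular closed embedding $i$ (Fulton, Corollary 18.1.2), which is where the factor $\td(A)^{-1}[p]$ enters. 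Your Step 3 (descent along finite flat covers) is not used in the paper's proof and in any case does not address either of these points.
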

\begin{proof}
Let ${}^{\Z_2}K_0(\tot A)_{X}$ (resp. ${}^{\Z_2}K_0(S)_{X}$) denote the Grothendieck group of the abelian category 
of two periodic cochain complexes of coherent sheaves on $\tot A$ (resp. $S$) which are exact on $\tot A \setminus X$ (resp. $S\setminus X$).
We can view the result as coming from the following extended diagram.
\[
\begin{tikzcd}
\dabsfact{[\op{tot} A / \CC^*], 0}_{X} \ar[d, "\mathbb R p_*" ] \ar[r, "\mathbb L  \pi_A^*"] & \dabsfact{{\op{tot} A} , 0}_{X} \ar[d, "\mathbb R p_*"] \ar[r] &  {}^{\Z_2}K_0(\op{tot}(A))_{X} \ar[d, "\mathbb R p_*"] \ar[r, "\chp_{X}^{\op{tot} A}"]   
& A^* (X \ra \op{tot} A )_{\QQ} \ar[d, "\pullbackp \td(A)"] \\
\dbcoh{S}_X \ar[r, "\mathbb L \pi_S^*"]   & \dabsfact{S , 0}_X \ar[r]&  {}^{\Z_2}K_0(S)_X \ar[r, "{}^{\Z_2}\ch ^{S}_{X}"] & A^*(X\ra S)_{\QQ} \\
\end{tikzcd}
\]
The  left commutative square comes from flat base change from \eqref{forget square} and the middle square is obvious.
Now the horizontal arrow along the bottom is  just $\ch ^{S}_{X}$ by \cite[Proposition 2.2]{PVold}.  The commutativity on the right square is a localized version of \cite[Lemma 5.3.8]{Chiodo} which we now prove.

Let 
\[
W_\bullet := \bigg[ ... \xrightarrow{d_0}  W_1 \xrightarrow{d_1} W_0 \xrightarrow{d_0} W_1  \xrightarrow{d_1}  ...\bigg]
\]
be a 2-periodic complex of coherent sheaves on $\op{tot} A$ supported on $X$.  
Let $$i: S \to \op{tot} A$$ be the inclusion of the zero section.  We note that $\RR i_*$ is surjective, as follows from the fact that any coherent sheaf on $ \op{tot} A$  set-theoretically supported on $X$ has a filtration whose associated graded pieces are scheme-theoretically supported on $X$.  Let $V_\bullet$ denote a 2-periodic complex of coherent sheaves on $S$ with support on $X$ such that 
$\RR i_*(V_\bullet) = W_\bullet$.  Since $p\circ i: S \to S $ is the identity, $\RR p_* (W_\bullet) = V_\bullet$.

By  \cite[Proposition 2.3 (vi)]{PV}, 
\begin{align*}
 \chp_{X}^{\op{tot} A}(W_\bullet) = \op{td}(A)^{-1} \chp_X^S(V_\bullet) \cdot [i].
\end{align*}
From this we immediately conclude that 
\begin{align*}
\op{td}(A) \chp_{X}^{\op{tot} A}(W_\bullet)[p] & = \chp_X^S(V_\bullet)\cdot [i]\cdot[p] \\
&= \chp_X^S(\RR p_* (W_\bullet)),
\end{align*}
as desired.
\end{proof}

\section{Admissible resolutions of GLSMs}\label{s:PV machinery}
In this section, we construct a distinguished factorization, that we call a PV factorization due to its initial use by  Polishchuk--Vaintrob in \cite{PV}.

\subsection{Setup}\label{Setup} 
We start with a (partial) GLSM data $(V,G,\CC^*_R,w)$, {\it without} the choice of a linearization 
$\theta$. 
Recall from \S\ref{s:OC} that we have the groups
\[
\Gamma :=  G \cdot \CC _{R}^*  \subseteq GL(V)
\]
and a character $\chi$ entering in the exact sequence
\[
1 \to G \to \Gamma \xrightarrow{\chi} \CC^* \to 1.
\]
\noindent
Note that we may consider $V$ either as an algebraic object, i.e., as a $\Gamma$-representation, or as a geometric object, i.e., as the affine space $\op{Spec} \op{Sym} V^\vee$ with the linear action of $\Gamma$. We will switch between these two viewpoints without further comment when convenient.

\subsubsection{Some moduli stacks}\label{moduli stacks}
\begin{definition}\label{d:twistedcurves}
Let $\fM_{g,r}^{\op{tw}}$ be the moduli stack parametrizing families of (balanced) prestable twisted curves (in the sense of \cite{AV}) with $r$ gerbe markings (see \cite{Olsson07}). 
\end{definition}
By \cite[Theorems 1.9, 1.10]{Olsson07}, the stack $\fM_{g,r}^{\op{tw}}$ is a smooth Artin stack, locally of finite type. The fibered product of the universal gerbes over 
$\fM_{g,r}^{\op{tw}}$ gives the smooth Artin stack
\begin{equation}\label{d:orbi-curves}\fM^\orb_{g,r}:= \sG_1\times_{\fM_{g,r}^{\op{tw}}} \sG_2\times_{\fM_{g,r}^{\op{tw}}} \dots 
\times_{\fM_{g,r}^{\op{tw}}} \sG_r,\end{equation}
which parametrizes families of marked twisted curves together with sections of the gerbe markings. These are precisely the 
``families of prestable orbi-curves"
appearing in Definition \ref{d:plg}.
\begin{remark}\label{r:tworb}
To avoid confusion, in what follows we will use the terms ``twisted curves''  to refer to families parametrized by Definition~\ref{d:twistedcurves} and ``orbi-curves'' to refer to families parametrized by \eqref{d:orbi-curves}.
\end{remark}

\begin{definition}
For any reductive complex algebraic group $G$, we let $\Mfrak^{\orb/\tw}_{g,r}(BG)$ denote the stack parametrizing families of prestable orbi-curves (resp. twisted curves) $\mathcal{C}\to T$, with $T$ a scheme, together 
with a principal $G$-bundle  
on $\mathcal{C}$, 
such that the induced morphism $\mathcal{C} \to BG$ is representable.
\end{definition}

\begin{definition}
Define the stack $\Mfrak^{\orb/\tw}_{g,r}(B\Gamma)_{\log}$ parametrizing families of 
prestable orbi-curves (resp. twisted curves) together with a principal $\Gamma$-bundle $P$ {\it and} an isomorphism 
$\chi_*(P) \cong \logc$, such that the induced map $[P]:\mathcal{C} \to B\Gamma$ is representable.  \end{definition}

These are again smooth Artin stacks, locally of finite type (for proofs, see, e.g., \cite[Proposition 2.1.1]{CKM}, \cite[\S 2]{CCK}, and \cite[Lemma 5.2.2]{FJR15} ).

There are obvious forgetful smooth morphisms 
$$\Mfrak^{\orb/\tw}_{g,r}(BG)\ra \fM^{\orb/\tw}_{g,r}, \;\;\;\;\;
\Mfrak^{\orb/\tw}_{g,r}(B\Gamma)_{\log} \ra \Mfrak^{\orb/\tw}_{g,r}(B\Gamma)\ra \fM^{\orb/\tw}_{g,r},$$
under which the universal curves $\cC$ (together with the gerbes with sections) pull back.

\subsubsection{Stacks mapping to $\Mfrak^\orb_{g,r}(B\Gamma)_{\log}$.}
In this section we will work with an Artin stack $S$ with a map $S\ra \Mfrak^\orb_{g,r}(B\Gamma)_{\log}$ by which we pull-back the universal structures on $ \Mfrak^\orb_{g,r}(B\Gamma)_{\log}$. To fix notation, we describe these explicitly. 
First, we have a flat and proper family
\[
\pi \colon  \mathfrak C \to S
\]
 of $r$-pointed prestable twisted curves over $S$, together with sections 
 $S\ra \sG _i $
 of the $r$ disjoint gerbe markings $ \sG_i  \subseteq \mathfrak C$. 
 We denote $\Sigma _i:\sG_i \hookrightarrow \mathfrak C$ the inclusion map.
 For any $T$-point $T\to S$, with $T$ a scheme, we have a family of $r$-pointed prestable orbi-curves over $T$ by pull-back. We require that each section $T\to {\sG_i}_T$ induces an isomorphism of $T$ with the coarse moduli space of $\sG_i$. It follows that the
composition $\pi_T\circ{\Sigma_i}_T :{\sG_i}_T\to T$ is the coarse moduli map. In particular, we have a canonical isomorphism,
$$\mathbb R (\pi\circ\Sigma_i)_* \cO_{\sG_i} \cong (\pi\circ\Sigma_i)_*\cO_{\sG_i}\cong \cO_S$$
(where the first equality uses the fact that $\mu_{r_i}$ is linearly reductive).
 
Denote by $\sG:=\coprod_i \sG_i \subseteq \cC$ the union of the gerbe markings, with inclusion map $\Sigma$.
Each of $\sG_i$ is an effective Cartier divisor in $\cC$, and $\sG$ is equal to $\sG_1 + ... + \sG_r $ as a Cartier divisor. Hence there are exact sequences
$$0\to \cO_\cC(-\sG_i)\to \cO_\cC\to {\Sigma_i}_*\cO_{\sG_i}\to 0,$$
$$0\to \cO_\cC(-\sG)\to \cO_\cC\to \Sigma_*\cO_{\sG}\to 0.$$
 
 For a quasicoherent sheaf $\cF$ on $\cC$ we denote 
 $$\cF|_{\sG_i}:={\Sigma_i}_*\Sigma_i^*\cF,\;\;\;\; \cF|_\sG:=\Sigma_*\Sigma^*\cF.$$
 If $\cF$ is locally free we have induced short exact sequences,
 $$0\to \cF(-\sG)\to \cF\to \cF|_\sG\to 0.$$
 Since $\sG _i = S \times B\mu _{m_i}$ for some positive integers $m_i$, the pullback $\Sigma_i^*\cF$ 
 can be considered as a vector bundle on $S$ with fiberwise linear action by $\mu _{m_i}$.
 Hence, for each $i$, the pushforward $\pi_*(\cF|_{\sG_i})$ is the $\mu _{m_i}$-invariant part of the vector bundle.
 
Recall that for the sheaf of relative log differentials, $\log:=\omega _{\mathfrak{C}}(\sG_1 + ... + \sG_r  )$,
there are canonical isomorphisms
 \begin{equation}\label{trivial at markings}
  \Sigma _i^* \log  \cong   \cO_{\sG _i},
 \end{equation}  
 and therefore induced canonical isomorphisms
 \[
 \pi_*(\log|_{\sG_i})\cong\cO_S,\;\;\;\; \pi_*(\log|_\sG)\cong \cO_S^r.
 \]

Further, we are given a principal $\Gamma$-bundle $P$ on $\cC$, {\it and} a fixed isomorphism of principal 
$\CC^*$-bundles,
\[
\varkappa  \colon P \times _\Gamma \CC ^*  \to \ringlog ,
\]
where $\Gamma$ acts on $\CC^*$ by the character $\chi$.
We abuse notation and will use the same letter for the induced isomorphism of line bundles, $\varkappa : \cL_{\chi}\to \log$. 

The data $(\mathfrak C\ra S,\sG,P,\varkappa)$ coming from the map $S\ra \Mfrak^\orb_{g,r}(B\Gamma)_{\log}$ couples with the $\Gamma$-representation $V$ and the $G$-invariant superpotential $w$ to give additional structures.

\subsubsection{Coupling with $V$}

Let $\cV$ denote the locally-free sheaf of sections of the geometric vector bundle $P \times_\Gamma V$.
Set, as in Definition \ref{d:plg},
\[
\op{tot} \mathcal V := \Spec(\op{Sym}\cV^\vee)=P \times_\Gamma V.
\]

The sheaf $\cV$ comes with ``evaluation maps" at the markings to the inertia stack $I[V/G]$. To see this,
fix an index $i\in\{1,\dots,r\}$ and consider the total space 
$$\tot (\pi _* \cV |_{\sG_i})\xrightarrow{p_i} S, $$
with its family of curves with gerbe markings
$$
\xymatrix{ 
\widetilde{\sG}_i \ar[r]^{r_i} \ar[d]_{\widetilde{\Sigma}_i} & \sG_i \ar[d]^{\Sigma_i} \\
\widetilde{\cC} \ar[r]^{q_i} \ar[d]_{\widetilde{\pi}} & \cC \ar[d]^{\pi} \\
\tot (\pi _* \cV |_{\sG_i}) \ar[r]^{p_i} & S
}
$$
obtained by pull-back from $S$.

Denote 
$$\pi_i:=\pi\circ \Sigma_i :\sG_i\ra S,\;\;\;\; \widetilde{\pi}_i:=\widetilde{\pi}\circ \widetilde{\Sigma_i} :\widetilde{\sG}_i\ra \tot (\pi _* \cV |_{\sG_i}), $$
the projections from the gerbes to their respective bases, and set 
$$\cV_i:= \Sigma_i^*\cV, \;\;\;\; \widetilde{\cV}_i:= \widetilde{\Sigma}_i^* q_i^*\cV= r_i^*\cV_i.$$
There is a base-change isomorphism
\begin{equation}\label{gerbe base change}
p_i^*(\pi_i)_*\cV_i \cong (\widetilde{\pi}_i)_*r_i^*\cV_i = (\widetilde{\pi}_i)_*\widetilde{\cV}_i.
\end{equation}

The vector bundle $p_i^*(\pi_i)_*\cV_i=p_i^*(\pi _* \cV |_{\sG_i})$ on $\tot (\pi _* \cV |_{\sG_i})$ has the tautological section
$$\tau_i\in H^0(\tot (\pi _* \cV |_{\sG_i}), p_i^*(\pi_i)_*\cV_i).
$$
By \eqref{gerbe base change} and the identification  
$$ H^0(\widetilde{\sG}_i, \widetilde{\cV}_i) =  H^0(\tot (\pi _* \cV |_{\sG_i}), (\widetilde{\pi}_i)_*\widetilde{\cV}_i),
$$
we may view $\tau_i$ as a global section in $H^0(\widetilde{\sG}_i, \widetilde{\cV}_i)$.

Let 
$$\widetilde{P}_i:= \widetilde{\Sigma}_i^*q_i^*P=r_i^*\Sigma_i^* P$$
be the pulled-back principal $\Gamma$-bundle on $\widetilde{\sG}_i$, so that $\widetilde{\cV}_i$ is the sheaf of sections of
the geometric vector bundle $\widetilde{P}_i\times_\Gamma V$. The pair $(\widetilde{P}_i,\tau_i)$ gives
rise  to a morphism
 \begin{equation*}
[\widetilde{P}_i,\tau_i] \colon  \widetilde{\sG} _i \to [V /\Gamma].
 \end{equation*}
It is representable, since $[\widetilde{P}_i]\colon  \widetilde{\sG} _i\to B\Gamma$ is so.
Further, being the pull-back of $\sG_i$, the gerbe $\widetilde{\sG}_i$ comes with a section. It follows that the diagram 
\begin{equation*}\label{gerbe to VmodGamma}
\xymatrix{
\widetilde{\sG}_i \ar[r]^{[\widetilde{P}_i,\tau_i]} \ar[d]_{\widetilde{\pi}_i} & [V/\Gamma] \\
\op{tot} (\pi_*\cV|_{\sG_i}) & 
}
\end{equation*}
determines a morphism
\begin{equation}\label{eval VmodGamma} 
\op{tot} (\pi_* \mathcal V|_{\sG_i}) \to I[V/\Gamma],
\end{equation}
see \cite[\S3.2]{AGV}. 
    Here $I\fX$ denotes the inertia stack $\fX \times _{\fX \times \fX} \fX$ of an algebraic stack $\fX$. We allow $I\fX$ 
    to not be quasi-compact. 

The map \eqref{eval VmodGamma} factors through the natural map $I[V/G]\ra I[V/\Gamma]$. Indeed,
since $\widetilde{P}_i \times _\Gamma \CC ^*$ is canonically isomorphic to $\widetilde{P}_i/G$, using the trivialization of 
$\widetilde{\Sigma} ^*_i\omega_{\widetilde{\cC}}^{\op {log}}$ coming from \eqref{trivial at markings},
we obtain a principal $G$-bundle $\widetilde{P'}_i$ on $\widetilde{\sG}_i$ such that 
$\widetilde{P'}_i\times _G \Gamma \cong \widetilde{P}_i$ as $\Gamma$-bundles.
It follows that there is an isomorphism of geometric vector bundles $\widetilde{P'}_i\times _G V\cong  \widetilde{P}_i\times_\Gamma V$,
and therefore $\widetilde{P'}_i\times _G V$ comes with the section $\tau_i$. In other words, we have constructed a diagram
\begin{equation*}\label{gerbe to target} 
\xymatrix{
\widetilde{\sG}_i \ar[r]^{[\widetilde{P'}_i,\tau_i]} \ar[d]_{\widetilde{\pi}_i} & [V/G] \\
\op{tot} (\pi_*\cV|_{\sG_i}) & 
}
\end{equation*}
which determines the required morphism
\begin{equation}\label{eval V}
\op{ev}^i_{\cV}: \op{tot} (\pi_* \mathcal V|_{\sG_i}) \to I[V/G],
\end{equation}
factoring \eqref{eval VmodGamma}.

\subsubsection{Coupling with $w$}\label{coupling with w}
Equip $\A^1$ with the standard dilation action of $\CC^*$ and view the superpotential as a function
\[
w \colon  V \to \A^1
\]
which is $\Gamma$-equivariant with respect to the map $\chi: \Gamma\ra\CC^*$.   
This amounts to giving a $\Gamma$-invariant element  $ w \in \CC _{\chi}\ot \op{Sym} V^\vee $.  
Pairing with this $w$ gives a morphism of $\Gamma$-representations $\overline{w}: \op{Sym} V \to \C_\chi$.  
Then, viewing it as
a morphism of vector bundles on the classifying stack $B\Gamma$ and pulling back via the map $[P]:\cC\to B\Gamma$
gives a homomorphism $ [P]^*\overline{w}:  \op{Sym} \mathcal V \to \cL_\chi$ of locally free sheaves on $\cC$.

Define 
\begin{equation}\label{kappa_w}
\varkappa_w  \colon  \op{Sym} \mathcal V \to \log 
\end{equation}
as the composition
\begin{align*}
 \op{Sym} \mathcal V  \xrightarrow{[P]^*\bar{w}} \cL_\chi 
 \xrightarrow{\varkappa} \log.
\end{align*}
\begin{remark}
In the discussion of the evaluation maps and of $\varkappa_w$ we have essentially ignored the $\CC_R^*$-action, i.e., the {\it lower} grading of $V$. This was done for the purpose of streamlining the exposition in subsections \S\ref{ss: admissible resolutions}-\S\ref{ss:sc} below. However, this action is crucial for the theory developed in the paper and we will take it into account fully in \S\ref{s:rigev}
\end{remark}

\subsection{Admissible resolutions}\label{ss: admissible resolutions}
Admissible resolutions are special resolutions of $\mathbb{R}\pi_*\cV$ on $S$ satisfying three conditions (see Definition \ref{admissible}).
We prove that the first condition is valid over any base Artin stack $S\ra\fM_{g,r}(B\Gamma)_{\log}$ for which 
the map $\pi ':\cC ' \ra S$ is projective where $\cC'$ is the universal coarse curve over $S$, so that $\pi'$ is a representable morphism (see \S \ref{cond1}).
However, to obtain the second and third conditions, we need to work over a Deligne--Mumford stack $S$ whose coarse moduli space is projective over an affine, 
as we show in \S \ref{ss:sc}.
In \S \ref{PVconst}, we explain how to construct a fundamental factorization from an admissible resolution.

\subsubsection{Morphisms in the derived category} 

We construct two morphisms in the bounded derived category $\dbcoh{S}$ of coherent sheaves on $S$.

\begin{enumerate}

\item The first morphism exists when there is at least one marking and is easy to define. Namely, for each $i=1,\dots, r$, the restriction map 
$\op{res}_i:\mathcal V  \to  \mathcal V|_{\sG_i}$
induces a morphism
\begin{align}
\mathbb{R} \pi_* \mathcal V  \xrightarrow{\mathbb{R}\pi_*\op{res}_i} & \mathbb{R} \pi_* ( \mathcal V|_{\sG_i}) \notag \\
& = \pi_* ( \mathcal V|_{\sG_i}). \label{eq: evmap}
\end{align}
Hence we also have a morphism $\mathbb{R}\pi_*\op{res}= \sum_{i=1}^r \mathbb{R}\pi_*\op{res}_i$ in $\dbcoh{S}$
\begin{equation}\label{total evmap}
\mathbb{R} \pi_* \mathcal V  \xrightarrow{\mathbb{R}\pi_*\op{res}}  \pi_* ( \mathcal V|_{\sG})\cong  \oplus_{i=1}^r\pi_* ( \mathcal V|_{\sG_i}).
\end{equation}

\item 
The other morphism is more involved.
Let
\begin{equation}\label{natural sym map}
\mathrm{natural} \colon  \op{Sym}\mathbb{R} \pi_* \mathcal V \to \mathbb{R} \pi_* \op{Sym} \mathcal V 
\end{equation}
be the map coming from the counit of the adjunction $\mathbb L \pi^* \dashv \mathbb R \pi_*$.  That is, the morphism $\mathrm{natural}$ is induced from the composition, 
\begin{equation*}
\mathbb{L} \pi^* \op{Sym}  \mathbb{R} \pi_* \mathcal V \to  \op{Sym} \mathbb{L} \pi^* \mathbb{R} \pi_* \mathcal V    
\to \op{Sym} \mathcal V.
 \label{rescaled natural map} 
\end{equation*}

Composing with the derived push-forward of the map $\varkappa_w$ (see \eqref{kappa_w}) gives a morphism in the derived category
\begin{align}
\op{Sym}\mathbb{R} \pi_* \mathcal V \notag 
&  \xrightarrow{\mathrm{natural}} \mathbb{R} \pi_* \op{Sym} \mathcal V  \notag \\
& \xrightarrow{ \mathbb{R} \pi_* \varkappa_w} \mathbb{R} \pi_* \log .
\label{eq: derivedlogmap} 
\end{align}

We may combine $\eqref{eq: derivedlogmap}$ with the restriction map to $\R\pi_* \log|_{\sG}$ (in the case of no markings this becomes the zero map).  Since $\log$ trivializes on $\sG$, we have $\R\pi_* \log|_{\sG} \cong \O_S^r$.  This gives a map
\begin{equation}
 \op{Sym} \mathbb R\pi_* \cV \to  \O_S^r.
 \label{intermediate}
\end{equation}
The map above leads to the following morphism of exact triangles
\begin{equation}
\begin{tikzcd}
E \ar[r] \ar[d, dashed] & \op{Sym} \mathbb R\pi_* \cV \ar[r] \ar[d] &  \O_S^r  \ar[d] \ar[r] & E[1] \ar[d, dashed]  \\
 \mathbb R\pi_* \nolog  \ar[r] & \mathbb R\pi_* \log \ar[r] &  \O_S^r \ar[r] &  \mathbb R\pi_* \nolog [1] \\
\end{tikzcd}
\label{eq: mapofcones}
\end{equation}
where
\[
E:=\op{Cone}(\op{Sym}\mathbb R \pi_* \cV \to \O_S^{r})[-1],
\]
is the shifted cone.  The dashed arrow is not uniquely determined in this fashion.  Therefore, we need to provide a mechanism to determine it.

Let
\[
E_d :=\op{Cone}(\op{Sym}^d \mathbb R \pi_* \cV \to \O_S^{r})[-1],
\]
and
\[
\op{Sym}^d \mathfrak C := [\overbrace{\mathfrak C \times_S ... \times_S \mathfrak C}^{d-times} / S_d]
\]
where $S_d$ is the symmetric group on $d$ letters acting in the obvious way.

We have a $S_d$-equivariant diagonal map
\[
\Delta_d : [\mathfrak C / S_d] \to \op{Sym}^d \mathfrak C.
\]
This induces a morphism of exact sequences on $\op{Sym}^d \mathfrak C$
\begin{equation}
\begin{tikzcd}
0 \ar[r] & \op{ker}  \ar[r] \ar[d, "f_d"] & \cV^{\boxtimes d} \ar[r] \ar[d] &  (\Delta_d)_*\log|_{\sG}  \ar[d] \ar[r] & 0   \\
0 \ar[r] &   (\Delta_d)_* \nolog  \ar[r] &  (\Delta_d)_* \log \ar[r] &   (\Delta_d)_* \log|_{\sG} \ar[r] &  0 \\
\end{tikzcd}
\end{equation}
Pushing forward and composing with the Grothendieck trace map we obtain our desired second map
\begin{align}
E_d & \xrightarrow{ \R\pi_*f_d} \R\pi_*\nolog \notag \\
& \xrightarrow{\text{H}^1} \R^1\pi_*\nolog[-1]  \notag \\
& \xrightarrow{\text{trace}}  \O_S[-1] \label{eq: finalalpha}
\end{align}
\end{enumerate}

In the presence of markings, the morphisms \eqref{total evmap} and \eqref{intermediate} satisfy a certain compatibility.
Namely, apply the functor $\op{Sym}$ to \eqref{total evmap} and then compose with the inclusion
\[\op{Sym}\pi_* ( \mathcal V|_{\sG_i})\xrightarrow{\op{natural}} \pi_* ( \op{Sym}\mathcal V|_{\sG_i})
\]
to get
\begin{equation}
\op{Sym}\mathbb{R} \pi_* \mathcal V  \xrightarrow{\oplus_{i=1}^r \op{natural}\circ \op{Sym}\mathbb{R}\pi_*\op{res}_i}   \oplus_{i=1}^r\pi_* ( \op{Sym}\mathcal V|_{\sG_i}).
\end{equation}

Consider the commutative diagram of quasicoherent sheaves on $\cC$
\[
\begin{CD}
 \op{Sym} \cV @> [P]^*\bar{w} >>  \cL_\chi @>\varkappa>> \log \\
@V\op{restrict}VV @V \op{restrict} VV @V \op{restrict} VV\\
{\Sigma}_*{\Sigma}^* \op{Sym} \cV @> {\Sigma}_*{\Sigma}^*[P]^* \bar{w} >>  {\Sigma}_*{\Sigma}^* \cL_\chi @> {\Sigma}_*{\Sigma}^* \varkappa >> {\Sigma}_*{\Sigma}^* \log \\
\end{CD}
\]
\noindent
Applying $\mathbb{R}\pi_*$ gives a commuting diagram in $\dbcoh{S}$
\[
\begin{CD}
 \mathbb{R}\pi_*\op{Sym} \cV @>\pi_*\varkappa_w>> \mathbb{R}\pi_*\log \\
@VVV  @VVV\\
\oplus_{i=1}^r\pi_* ((\op{Sym} \cV)|_{\sG_i})  @> \bigoplus_i\pi_*{\Sigma_i}_*{\Sigma_i}^* \varkappa_w >> \cO_S^r \\
\end{CD}
\]
where we have now omitted the middle column.

The natural map \eqref{natural sym map} followed by the clockwise composition from the top left to the bottom right gives the morphism 
\eqref{intermediate}.  On the other hand, the natural map \eqref{natural sym map} followed by composition in the counterclockwise direction gives 
$\sum_{i=1}^r \pi_*{\Sigma_i}_*{\Sigma_i}^* \varkappa_w \circ \op{Sym}\mathbb{R}\pi_*\op{res}_i$.  This gives our advertised compatibility: as morphisms in the derived category, the morphism \eqref{intermediate} is equal to:
\begin{equation}\label{derived compatibility}
\sum_{i=1}^r \pi_*{\Sigma_i}_*{\Sigma_i}^* \varkappa_w \circ \op{natural}\circ \op{Sym}\mathbb{R}\pi_*\op{res}_i  .
\end{equation}

\begin{lemma} \label{lem: the function evw}
The function $w \circ \mathrm{ev} ^i_{\mathcal V}$ as a section of
$\mathrm{Sym} (\pi_* \mathcal{V} |_{\mathscr G _i})^{\vee}$ coincides with the dual of $\pi_* \Sigma _{i*} \Sigma _i^* \varkappa _w \circ   \mathrm{natural} $.
\end{lemma}

\begin{proof} First recall that the gerbes we consider are trivial i.e.\ $\mathscr G_i= S\times B\mu _{r_i}$ for some integer positive $r_i$.
Now let $P_i'$ be the reduction of $\Sigma _i^* P$ to the group $G$ by the trivialization of $P_i/G$ viewed as a principal $G$-bundle on $S$ with a $\mu_{r_i}$-equivariant structure.  Then the vector bundle $\pi_* (\mathcal{V} |_{\mathscr{G}_i})$ is nothing more than the $\mu_{r_i}$-fixed subbundle $(P_i' \times _{G} V )^{\mu _{r_i}}$
of $P_i' \times _G V$.

Thus we have natural maps of stacks
\[ (P_i' \times _{G} V )^{\mu _{r_i}} \overset{\mathrm{inc}}{\to} P_i' \times _G V \overset{\mathrm{pr}}{\to} [V/G], \] 
where $\mathrm{pr}$ is the quotient map to the second factor. These  in turn  give homomorphisms between the sheaves of $\cO_S$-algebras
\[ (\mathrm{Sym} V^{\vee})^G\otimes_{\mathbb C} \mathcal{O}_S \overset{\mathrm{pr}^{\sharp}}{\to} \mathrm{Sym} (P_i' \times _{G} V )^{\vee} 
\overset{\mathrm{inc}^{\sharp}}{\to}  \mathrm{Sym} ((P_i' \times _{G} V )^{\mu _{r_i}})^{\vee} ) \]
such that 
\begin{equation} \label{eq: wev}
\mathrm{inc}^{\sharp} \circ \mathrm{pr}^{\sharp} (w ^{\sharp} \otimes 1_{\mathcal O_S}) = w \circ \mathrm{ev} ^i_{\mathcal V}
\end{equation}
as functions on $\tot (P'_i \times _G V)^{\mu _{r_i}} = \tot \pi_* (\mathcal{V} |_{\mathscr{G}_i})$.

On the other hand, the following diagram of sheaves of $\mathcal O _S$-modules commutes 
\[ \xymatrix{  \mathrm{Sym}  (P_i' \times _{G} V )^{\mu _{r_i}}  \ar[r]_{\mathrm{natural}} \ar[rd]_{\mathrm{inc} ' \circ \mathrm{natural}} 
 &  (\mathrm{Sym}  P_i' \times _{G} V )^{\mu _{r_i}} \ar[r]_{\pi_* \Sigma _{i*} \Sigma _i^* \varkappa _w}  \ar[d]_{\mathrm{inc} '} 
 &  \ar[d]_{=} (P_i'\times _{G} \mathbb{C}_{\chi })^{\mu _{r_i}}\\
                                 &   P_i' \times _{G} \mathrm{Sym} V  \ar[r]_{P_i' (w)}   &  P_i'\times _{G} \mathbb{C}_{\chi }  = \mathcal O _S ,} \]
where $P_i'(w)$ is the obvious homomorphism induced from $w$.
Now $\mathrm{inc} ' \circ \mathrm{natural}$ is dual to $\mathrm{inc} ^{\sharp}$ and $P_i'(w)$ is dual to $\mathrm{pr}^{\sharp} (w \ot 1_{\cO_S})$.  

In conclusion, the bottom path is dual to $\mathrm{inc}^{\sharp} \circ \mathrm{pr}^{\sharp} (w ^{\sharp} \otimes 1_{\mathcal O_S})$ which equals $w \circ \mathrm{ev} ^i_{\mathcal V}$ by \eqref{eq: wev}.  Meanwhile, the top path is just $\pi_* \Sigma _{i*} \Sigma _i^* \varkappa _w \circ   \mathrm{natural} $.
\end{proof}

\subsubsection{Cochain-level realization} \label{subsubsec: cochain-level}

We will need cochain-level realizations of the two compatible derived-level morphisms \eqref{total evmap} and \eqref{eq: finalalpha}.
  Let $\rho : \cC \to \cC'$ be the natural map to the universal coarse curve. We have $\pi' \circ \rho = \pi$.
 From now on we assume that $S$ is quasi-compact and there is an invertible sheaf $\cO (1)$ on $\fC$ such that $\rho _* \cO (1)$ is a $\pi'$-ample line bundle. 
In this case we will simply say that
{\it $\cO (1)$ is $\pi$-ample}.
For example, if we have a linearization $\theta$ of the $G$-action on $V$ with a lift $\nu\in \widehat{\Gamma}$, we would assume that 
the ampleness part of the stability condition \eqref{stableLG} holds, namely that $\cO (1):= \log \ot \cL _\nu ^{\ot M}$ is $\pi $-ample  (for some positive $M$).

Choose a two-term finitely generated resolution by vector bundles,
 \[
\mathbb{R} \pi_* \mathcal V \cong  [ A \xrightarrow{d} B ] \quad \textrm{on $S$},
 \]
 which  may be constructed with the use of a $\pi $-ample invertible sheaf $\cO (1)$ (see Corollary \ref{surjectivity downstairs}.) 

\begin{dfn}\label{admissible} The resolution $[ A \xrightarrow{d} B ]$ is called \newterm{admissible} if it satisfies 
Conditions~\ref{assume ev chainlevel}, \ref{assume alpha chainlevel}, and \ref{compatible} below.
\end{dfn}

\begin{condition}
\label{assume ev chainlevel}
The resolution $[ A \to B ]$ satisfies the following:
\begin{enumerate}
\item The map \eqref{eq: evmap} is realized at the cochain level i.e.\ as a map
\begin{equation}
 A \xrightarrow{\op{ev}_A^i}  \pi _* ( \mathcal V |_{\sG_i} )
\label{eq: evchainmap}
\end{equation}
\item The map $A \xrightarrow{\oplus_{i=1}^r \op{ev}_A^i}    \pi _* (\mathcal V |_{\sG} )$ is surjective\end{enumerate} 
\end{condition}

\begin{condition}
\label{assume alpha chainlevel} 
There are homomorphisms $Z: \op{Sym} A \ra \cO_S^r$ and  $\alpha^\vee:  (\op{Sym} A) \otimes B \ra \O _S$ making 
a commutative diagram 
\begin{equation}
\begin{CD}
\op{Sym} A @>>>  (\op{Sym} A) \otimes B \\
@V Z VV @V \alpha^\vee VV\\
 \O_{S}^r@>\op{sum}>> \O_{S}
\end{CD}
\label{eq: alphachainmaporiginal}
\end{equation}
such that the restriction 
\begin{equation}
\label{eq: alphachainmap}
\begin{CD}
\bigg [ \op{Sym}^d A @> (\delta, -Z) >> ( \op{Sym}^{d-1} A )\otimes B \oplus \O_S^r  @>>> ( \op{Sym}^{d-2} A) \otimes  \bigwedge\nolimits^2 B @>>> ... \bigg ]\\
@VVV @VV(\alpha^\vee|_{\op{Sym}^{d-1} A \otimes B}, \op{sum})V \\
\bigg [ 0@>>> \O_{S} \bigg ]
\end{CD}
\end{equation}
is a cochain level realization of the map \eqref{eq: finalalpha}.
\end{condition}

\begin{condition}
The compatibility \eqref{derived compatibility} is realized at the cochain level, i.e.~the following holds:
 \[
Z =  \sum_{i=1}^r \pii \Sigma_i^* \varkappa_w \circ\op{natural}\circ \op{Sym} \op{ev}_A^i ,
 \]
 where $\op{ev}_A^i$ is the map in \eqref{eq: evchainmap}.
 \label{compatible}
\end{condition}

\begin{remark}
In what follows, when we refer to an admissible resolution $[ A \xrightarrow{d} B ]$, we mean the resolution, \textit{together with choices of maps} $\op{ev}_A^i, Z, \alpha^\vee$ satisfying the conditions above.
\end{remark}

\subsection{Definition of the Polishchuk--Vaintrob factorization}\label{PVconst}
For an admissible resolution as above, let $\tot(A):=\Spec(\op{Sym}A^\vee)$, with projection $p: \tot (A) \ra S$. 

\subsubsection{Evaluation maps}
Recall the morphism \eqref{eval V}
$$\op{ev}^i_{\cV}: \op{tot} (\pi_* \mathcal V|_{\sG_i}) \to I[V/G].$$
\begin{definition} \label{eval on tot A}
The \newterm{evaluation map} $\op{ev}^i: \tot (A) \ra  I[V/G]$ is the composition 
\[ \tot (A) \xrightarrow{\ev^i_A}  \op{tot} (\pi_* \mathcal V|_{\sG_i}) \xrightarrow{\op{ev}^i_{\cV}} I[V/G]. \]
\end{definition}

\subsubsection{Maps $\alpha$ and $\beta$}

Let
\begin{equation}\label{explicit beta}
\beta = p^*d \circ \tau_A  \colon  \O_{\op{tot}(A)} \to p^*A \to p^* B
\end{equation}
be the section induced by the differential $A\xrightarrow{d} B$,
where $\tau_A$ is the tautological section of $p^*A$ obtained by the Casimir element in $A^\vee \ot A  \subseteq \Sym A^\vee \ot A$.
Abusing notation, we also denote the associated cosection as
\[
\alpha^\vee \colon  p^*B \to \O_{\op{tot}(A)}.
\]

We abuse notation further and denote also by $w: I[V/G] \ra \mathbb{A}^1$ the composition 
$I[V/G] \ra [V/G] \xrightarrow{w} \mathbb{A}^1$.

\begin{proposition}
Given any admissible resolution $[A \to B]$ of $\mathbb{R} \pi_* \mathcal V$  we have the following equality:
\begin{equation}
\alpha^\vee \circ \beta = \sum _i w \circ \op{ev}^i
\label{depend on w}
\end{equation}
\end{proposition}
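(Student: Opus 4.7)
The plan is to identify both sides of the stated equality, viewed as global regular functions on $\tot(A)$, with the same underlying chain-level datum arising from the admissible resolution $[A\xrightarrow{d}B]$ of $\mathbb{R}\pi_*\cV$.

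First, through the definition $\beta = p^*d\circ\tau_A$ and the Casimir nature of the tautological section $\tau_A\in \Gamma(\tot(A),p^*A)$, the function $\alpha^\vee\circ\beta$, viewed as an element of $\op{Sym} A^\vee=\Gamma(\tot(A),\O_{\tot(A)})$, can be expressed in terms of the chain-level pairing $\alpha^\vee\colon \op{Sym} A\otimes B \to \O_S$ of Condition~\ref{assume alpha chainlevel}. Concretely, the graded components of $\alpha^\vee\circ\beta$ in $\op{Sym}^\bullet A^\vee$ correspond, under the natural duality $\op{Sym}^n A^\vee \cong (\op{Sym}^n A)^\vee$, to the compositions $\alpha^\vee|_{\op{Sym}^{n-1}A\otimes B}\circ\delta$, where $\delta$ is the Koszul differential appearing in the top row of \eqref{eq: alphachainmap}. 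This is a direct symmetric-algebra calculation carried out in a local basis for $A$ and $B$.

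Second, the commutativity of \eqref{eq: alphachainmaporiginal} in Condition~\ref{assume alpha chainlevel} allows one to replace $\alpha^\vee\circ\delta$ by $\op{sum}\circ Z$, and Condition~\ref{compatible} further rewrites this as $\sum_{i=1}^r \pi_{i*}\Sigma_i^*\varkappa_w\circ\op{natural}\circ\op{Sym}\op{ev}_A^i$. Each summand in this expression must then be identified with $w\circ\op{ev}^i$ as a function on $\tot(A)$. By the construction \eqref{eval V}, the morphism $\op{ev}^i\colon \tot(A)\to I[V/G]$ records the value of the universal $\cV$-section $\op{ev}_A^i(\tau_A)$ at the $i$-th marking; on the other hand, by the definition $\varkappa_w = \varkappa\circ [P]^*\bar w$ and the canonical trivialization \eqref{trivial at markings} of $\log$ at the markings, the operator $\pi_{i*}\Sigma_i^*\varkappa_w\circ\op{natural}$ corresponds precisely to the evaluation of the superpotential (through its dual $\bar w\colon \op{Sym} V\to \C_\chi$) at such a marking-restricted section.

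The main obstacle lies in this last identification: one must carefully reconcile the algebraic symmetric-power pairings with the geometric polynomial evaluation of $w$ at the markings, keeping track of the normalizations in the duality $\op{Sym}^n A^\vee \cong (\op{Sym}^n A)^\vee$ and in the Koszul differential on $\op{Sym}^n A$ so that all the numerical factors match.
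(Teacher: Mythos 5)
Your proposal is correct and follows essentially the same route as the paper's proof: expand $\alpha^\vee\circ\beta$ on the tautological section via the Koszul-type differential, invoke the commutativity of \eqref{eq: alphachainmaporiginal} to pass to $\op{sum}\circ Z$, then use Condition~\ref{compatible} and the dualization $\cO_S\to\Sym(A^\vee)$ to identify the result with $\sum_i w\circ\op{ev}^i$. The normalization issue you flag at the end is exactly the point the paper settles by its convention that the tautological section of $\op{Sym}^t A$ is $a\mapsto a^{\otimes t}/t!$ (cf.\ Remark~\ref{rem: tautological section} and \eqref{eq: dual identification convention}).
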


\begin{proof}
We start by recalling that $\alpha^\vee$ is the map
\[
\alpha ^{\vee} : (\mathrm{Sym} A) \otimes B \to \mathcal{O}_S
\]
 in \eqref{eq: alphachainmaporiginal}.
This yields a map
\[
\alpha^{\vee}_1 : B \to \mathrm{Sym} A^{\vee}
\]
 (since the map itself factors through a finite summand of $\mathrm{Sym} A$). 
This $\alpha^{\vee}_1$  gives an $\mathcal{O}_S$-module homomorphism 
$$\alpha ^{\vee}_2: (\mathrm{Sym} A^{\vee}) \otimes B \to \mathrm{Sym} A^{\vee} \otimes \mathrm{Sym} A^{\vee} \underset{product}{\to} \mathrm{Sym} A^{\vee}, $$
which was also denoted by $\alpha ^{\vee}$ (abusing notation). 

Denote by $w^{\sharp}$  the element of $(\mathrm{Sym} V^{\vee})^G$ induced from $w$.
To prove \eqref{depend on w}, it is enough to 
show that for every collection of local sections $a_1$, ..., $a_t$ of $A$,
\begin{equation} \label{reduced eqn} 
\langle \alpha ^{\vee}_2 \circ \beta (1) , a_1 ... a_t \rangle =\sum _{i=1, ..., r} \langle (\mathrm{ev}^i )^{\sharp}\circ w^{\sharp}, a_1 ... a_t \rangle ,
\end{equation}
where $(\mathrm{ev}^i )^{\sharp}$ is the ring homomorphism $(\mathrm{Sym} V^{\vee})^G \ \to \mathrm{Sym} \ A^{\vee}$ induced from
the composition 
\[ \mathrm{tot} A \xrightarrow{\mathrm{ev}_A^i} \mathrm{tot} (\pi_* \mathcal{V} |_{\mathscr{G}} ) \xrightarrow{\mathrm{ev}^i_{\mathcal{V}}} I[V/G] \to [V/G] \to \mathrm{Spec} (\mathrm{Sym} V^{\vee})^G.\]
Now we have a sequence of equalities  \[ \begin{array}{rll}
\langle \alpha ^{\vee}_2 \circ \beta (1) , a_1 ... a_t \rangle & = \alpha ^{\vee} ( \sum _{j=1,..., t}  a_1 ... \hat{a_j} ... a_t \otimes da_j)  \\
& =   \mathrm{sum} \circ Z (a_1 ... a_t)  \\
&=  \sum_{i=1}^r \pii \Sigma_i^* \varkappa_w \circ\op{natural}\circ \op{Sym} \op{ev}_A^i ( a_1 ... a_t)   \\
&=  \sum _{i=1, ..., r} \langle (\mathrm{ev}^i )^{\sharp}\circ w^{\sharp}, a_1 ... a_t \rangle 
\end{array}  \] 
The first line is by definition of $\alpha_2^\vee$.  The second line is by \eqref{eq: alphachainmaporiginal}.  The third line is by Condition \ref{compatible}.  The fourth line is by Lemma \ref{lem: the function evw}.  This completes the proof.
\end{proof}

\begin{remark} \label{rem: tautological section}
In \cite[Section 4.1]{PV}, they rescale the map $\alpha^\vee$.  This is, roughly, due to the fact that they view the tautological section of $\op{Sym}^t A$ as the map $ a \mapsto a^{\otimes t}$ without the factor of $t!$.  More precisely, they assemble $\alpha^\vee$ from pieces obtained from each monomial in $w$ using tautological sections of the corresponding monomial component of $\op{Sym} A$ using the R-charge decomposition $A = A_1 \oplus ... \oplus A_n$ (see \S\ref{s:rigev}).
Our rescaling of the tautological section is consistent with viewing the exponential function on the tautological section for $A$ as a tautological section of $\op{Sym} A$ which is what we do in \S\ref{Z dw}.
\end{remark}

\begin{definition}\label{d:pvfact}
Let $[A \to B]$ be an admissible resolution of $\mathbb R\pi_* \cV$ on $S$.
The \newterm{Polishchuk--Vaintrob (PV) factorization} associated to $(\mathfrak C \to S, P, V, \Gamma , \chi, \varkappa , w)$ is the Koszul factorization $\{-\alpha, \beta\}$ on $\op{tot} A$.
\end{definition}

\subsection{On Condition \ref{assume ev chainlevel}}\label{cond1}

We show that resolutions satisfying Condition \ref{assume ev chainlevel} can be constructed over an arbitrary base $S$.
\begin{lemma}
\label{ev at level C} 
There is a $\pi$-acyclic resolution $[ \cA \to \cB ]$ of $\cV$ on $\cC$, with $\cA, \cB$ locally free of finite rank, and with a homomorphism 
$ \cA \ra \cV|_{\sG}  $  satisfying:
\begin{enumerate}
\item The diagram 
\[ \xymatrix{ \cV \ar[r] \ar[d] & \cA \ar[d]  \\
   \cV|_{\sG}   \ar[r]_{=}  &   \cV|_{\sG} } \]
is commutative. 
\item The kernel of $\cA\to \cV|_{\sG}$ is locally free and $\pi$-acyclic.
\end{enumerate}
\end{lemma}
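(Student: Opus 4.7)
The plan is to construct $\cA$ as a direct sum $\cA_0 \oplus \cW$ of two locally free $\pi$-acyclic sheaves on $\cC$, where $\cA_0$ provides an abstract $\pi$-acyclic resolution of $\cV$ while $\cW$ carries a surjection onto $\cV|_\sG$ that will define the homomorphism $\cA \to \cV|_\sG$.

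First, by Serre vanishing applied to the projective morphism $\pi$ with its relatively ample line bundle $\cO(1)$, I choose $N \gg 0$, divisible enough so that $\cO(N)|_\sG$ becomes canonically trivial (in particular $\cV(N)|_\sG \cong \cV|_\sG$), such that $\cW := \cV(N)$ and $\cV(N-\sG)$ are both $\pi$-acyclic. The natural restriction $s \colon \cW \twoheadrightarrow \cV|_\sG$ then has locally free $\pi$-acyclic kernel $\cV(N-\sG)$, and pushing down the defining sequence yields a surjection $\pi_*\cW \twoheadrightarrow \pi_*(\cV|_\sG)$. Next I construct an auxiliary loc-free $\pi$-acyclic resolution $0 \to \cV \xrightarrow{\iota_0} \cA_0 \to \cB_0 \to 0$ by the standard Serre-vanishing argument (pick a surjection $\bigoplus_j \cO(-n_j)^{a_j} \twoheadrightarrow \cV^\vee$ for $n_j \gg 0$, extend one more step of a locally free resolution, and dualize). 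Then I choose a lift $\iota_1 \colon \cV \to \cW$ of the restriction $\cV \to \cV|_\sG$ along $s$---equivalently a global section of $\pi_*(\op{End}(\cV)(N))$ restricting to the identity of $\pi_*(\op{End}(\cV)|_\sG)$---which exists for $N$ sufficiently large, possibly after replacing $\cW$ by a suitable enlargement to absorb any remaining $H^1$-obstruction on $S$.

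With these ingredients, I set $\cA := \cA_0 \oplus \cW$ with embedding $\tilde\iota \colon \cV \hookrightarrow \cA$ given by $v \mapsto (\iota_0(v), \iota_1(v))$, and homomorphism $\varphi \colon \cA \to \cV|_\sG$ given by $(a, w) \mapsto s(w)$. Commutativity of the required diagram is immediate: $\varphi \circ \tilde\iota = s \circ \iota_1$ equals the restriction map by construction. To verify that $\cB := \cA/\tilde\iota(\cV)$ is locally free and $\pi$-acyclic, observe that the composite projection $\cA \to \cA_0 \to \cB_0$ vanishes on $\tilde\iota(\cV)$ and so descends to a surjection $\cB \twoheadrightarrow \cB_0$; a short diagram chase identifies its kernel $(\iota_0(\cV) \oplus \cW)/\tilde\iota(\cV)$ with $\cW$ via $(\iota_0(v), w) \mapsto w - \iota_1(v)$, yielding an extension $0 \to \cW \to \cB \to \cB_0 \to 0$ of locally free $\pi$-acyclic sheaves. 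Finally, the kernel $\cK := \ker \varphi$ of the surjection $\varphi$ is locally free as the kernel of a surjection between locally free sheaves on the smooth stack $\cC$, and from the pushforward long exact sequence associated to $0 \to \cK \to \cA \to \cV|_\sG \to 0$ one reads $R^1\pi_*\cK = \op{coker}(\pi_*\cA \to \pi_*(\cV|_\sG))$, which vanishes since $\pi_*\cW$ is a direct summand of $\pi_*\cA$ surjecting onto $\pi_*(\cV|_\sG)$.

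The main technical hurdle will be the existence of the global lift $\iota_1$: over an affine base this is immediate from the surjectivity of $\pi_*\cW \twoheadrightarrow \pi_*(\cV|_\sG)$, but over an arbitrary Artin base $S$ it is obstructed by a class in $H^1(S, \pi_*(\op{End}(\cV)(N-\sG)))$ which must be killed by enlarging $N$ and, if necessary, replacing $\cW$ by an appropriate enlargement.
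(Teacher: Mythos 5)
Your construction is correct except at the one point you yourself flag, and unfortunately that point is a genuine gap rather than a technical hurdle that can be absorbed by enlarging $N$. The lift $\iota_1\colon \cV\to\cW=\cV(N)$ of the restriction $\cV\to\cV|_\sG$ through $s$ is obstructed by a class in $H^1\bigl(\cC,\,\cH om(\cV,\cV(N-\sG))\bigr)$. Relative Serre vanishing for the projective morphism $\pi$ kills only the $R^1\pi_*$ contribution to this group; via Leray there remains the contribution $H^1\bigl(S,\pi_*\cH om(\cV,\cV(N-\sG))\bigr)$, and increasing $N$ does not shrink this term (if anything, $\pi_*\cH om(\cV,\cV(N-\sG))$ grows with $N$). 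This matters because the whole point of this lemma, in contrast to Conditions~\ref{assume alpha chainlevel} and \ref{compatible}, is that it must hold over an \emph{arbitrary} Artin base $S\to\Mfrak^\orb_{g,r}(B\Gamma)_{\log}$ with $\pi$ projective --- Assumption $(\star)$ (projectivity of the coarse space over an affine) is introduced later precisely to kill such $H^1/\mathrm{Ext}^1$ obstructions on $S$, and is not available here. The proposed ``enlargement of $\cW$'' is not specified, and I do not see a candidate that produces the required global section.

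The fix is to avoid lifting altogether, which is what the paper does. Starting from a locally free resolution $0\to\cV\to\cA'\to\cB'\to 0$ with $\cA'(-\sG)$ $\pi$-acyclic, define $\cA$ as the fiber product (equalizer) $\cA:=\cA'\times_{\cA'|_\sG}\cV|_\sG$, i.e.\ pairs $(a,v)$ with $a|_\sG$ equal to the image of $v$ under $\cV|_\sG\hookrightarrow\cA'|_\sG$. The map $\cA\to\cV|_\sG$ is the second projection, its kernel is $\cA'(-\sG)$ (locally free and $\pi$-acyclic), and the monomorphism $\cV\to\cA$ is the \emph{tautological} map $v\mapsto(\iota_0(v),\,v|_\sG)$, which lands in the fiber product automatically because $\iota_0$ commutes with restriction to $\sG$. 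No global section of anything on $S$ needs to be produced, so no base obstruction arises. The remainder of your argument (local freeness of $\cB$ from the locally split sequence, $\pi$-acyclicity from the relative dimension one of $\pi$) then goes through essentially as you wrote it. One further small slip: your stated reason that $\ker\varphi$ is locally free (``kernel of a surjection between locally free sheaves'') is not right as $\cV|_\sG$ is a torsion sheaf on $\cC$; the correct reason is your explicit identification of the kernel with $\cA_0\oplus\cV(N-\sG)$ (resp.\ with $\cA'(-\sG)$ in the fiber-product construction).
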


\begin{proof}
Using the $\pi $-ample line bundle $\cO(1)$,
 we may choose a finitely generated locally free resolution of  $\cV $ 
\begin{eqnarray}\label{cA'cB'}
0 \to \cV \to \cA' \to \cB ' \to 0.
\end{eqnarray}
such that $\cA' (-\sG)$ is $\pi$-acyclic.\footnote{A construction of such a resolution goes as follows. 
 First, note that for large $n$ the natural map $\pi^* \pi _* \cO (n)  \to \cO (n)$  is a surjection with  locally free kernel.
By tensoring the dual map with $\cV (n)$, we obtain an injection $\cV  \to (\pi^* \pi _* \cO (n) )^{\vee} \ot \cV (n)$ with locally free cokernel. The rest is standard.}

Composing the restriction $\cA'\to\cA'|_{\sG}$ with the first projection, we get a map $f: \cA'\oplus  \cV|_{\sG}\to \cA'|_{\sG}$. Similarly,
we have another map $g: \cA'\oplus  \cV|_{\sG}\to \cA'|_{\sG}$, obtained by composing the inclusion
$ \cV|_{\sG}\to \cA'|_{\sG}$ with the second projection.
Define $\cA$ as the equalizer
$$ \xymatrix{
\cA\ar[r]  & \cA ' \oplus  \cV|_{\sG} \ar@<.5ex>[r]^f \ar@<-.5ex>[r]_g &  \cA'|_{\sG}.
}$$
Note that
the kernel of the projection $\cA \ra  \cV|_{\sG}$ is canonically isomorphic to $\cA' (-\sG)$.
Since $\cA' (-\sG)$ is $\pi$-acyclic, so is $\cA$. 

Now we have a natural monomorphism $\cV\ra \cA$ and we define $\cB$ as its cokernel.
Since the exact sequence \eqref{cA'cB'} is locally split, one can easily check that $\cA$ and $\cB$ are finitely generated locally free.
\end{proof}

\begin{corollary} \label{surjectivity downstairs}
For $\cA, \cB$ as in Lemma \eqref{ev at level C}, we obtain 
a two term vector bundle resolution 
$$\RR \pi _* \cV \cong  [\pi_* \cA \ra \pi _*\cB]$$
with a surjective homomorphism $\pi_* \cA \ra  \oplus _i \pi_*(\cV|_{\sG_i})$.
\end{corollary}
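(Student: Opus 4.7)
The plan is a direct application of the properties granted by Lemma~\ref{ev at level C}. First, from the construction in that lemma we obtain on $\cC$ a short exact sequence
\[
0 \to \cV \to \cA \to \cB \to 0
\]
with $\cA, \cB$ locally free of finite rank and $\pi$-acyclic. Applying $\pi_*$ yields the long exact sequence
\[
0 \to \pi_*\cV \to \pi_*\cA \xrightarrow{d} \pi_*\cB \to R^1\pi_*\cV \to R^1\pi_*\cA = 0,
\]
so that $[\pi_*\cA \xrightarrow{d} \pi_*\cB]$ (in cohomological degrees $0$ and $1$) represents $\RR\pi_*\cV$. Since $\cA, \cB$ are locally free and of finite rank on $\cC$, and since $\pi$ is flat with cohomologically trivial fibers on these sheaves, $\pi_*\cA$ and $\pi_*\cB$ are vector bundles on $S$; this gives the desired two-term vector bundle resolution.

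For the surjectivity statement, I would invoke part (b) of Lemma~\ref{ev at level C}, which furnishes a short exact sequence
\[
0 \to K \to \cA \to \cV|_{\sG} \to 0
\]
on $\cC$, with $K$ locally free and $\pi$-acyclic. Pushing forward by $\pi_*$ and using $R^1\pi_* K = 0$ gives a surjection $\pi_*\cA \twoheadrightarrow \pi_*(\cV|_\sG)$. Finally, since $\sG = \coprod_i \sG_i$ as a disjoint union, $\cV|_\sG = \bigoplus_i \Sigma_{i*}\Sigma_i^*\cV$, and so $\pi_*(\cV|_\sG) = \bigoplus_i \pi_*(\cV|_{\sG_i})$, yielding the required surjection $\pi_*\cA \twoheadrightarrow \bigoplus_i \pi_*(\cV|_{\sG_i})$.

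There is no real obstacle here; the content of the corollary is essentially a bookkeeping consequence of Lemma~\ref{ev at level C}. The only point requiring a moment of care is verifying that $\pi_*\cA$ and $\pi_*\cB$ are indeed locally free (rather than merely coherent and $\pi_*$-acyclic), which follows from flatness of the family together with the $\pi$-acyclicity and local freeness of $\cA$ and $\cB$ on $\cC$ by cohomology and base change.
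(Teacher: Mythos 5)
Your proof is correct and is exactly the intended deduction: the paper states this as a corollary with no separate argument, since it follows immediately from pushing forward the two short exact sequences $0\to\cV\to\cA\to\cB\to 0$ and $0\to\ker\to\cA\to\cV|_{\sG}\to 0$ and using the $\pi$-acyclicity guaranteed by Lemma~\ref{ev at level C}. Your added remarks (local freeness of $\pi_*\cA$, $\pi_*\cB$ via cohomology and base change, and the splitting $\pi_*(\cV|_{\sG})\cong\oplus_i\pi_*(\cV|_{\sG_i})$ from $\sG=\coprod_i\sG_i$) are the right bookkeeping points and match the paper's conventions.
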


The following lemma will be used several times in later sections.

\begin{lemma}\label{l:surjadmi}
Suppose $[A' \to B']$ is a resolution of $\R\pi_* \mathcal V$ satisfying Condition~\ref{assume ev chainlevel} (respectively \ref{assume alpha chainlevel}, \ref{compatible}) and 
\[
\begin{tikzcd}
A\arrow[r] \arrow[d, "e_A"] & B \arrow[d, "e_B"] \\
A' \arrow[r] & B'
\end{tikzcd}
\]
is a quasi-isomorphism.  Assume that $e_B$ is surjective.  Then  $[A \to B]$ is a resolution of $\R\pi_* \mathcal V$ satisfying Condition~\ref{assume ev chainlevel} (respectively \ref{assume alpha chainlevel}, \ref{compatible}).
\end{lemma}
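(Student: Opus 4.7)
The plan is to transport the structure maps from $[A' \to B']$ to $[A \to B]$ by pre-composition with the quasi-isomorphism $(e_A, e_B)$, and then verify each of the three conditions in turn.

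First I would establish that $e_A$ is automatically surjective. Since both $[A \to B]$ and $[A' \to B']$ are locally free resolutions of $\mathbb{R}\pi_* \mathcal V$ concentrated in cohomological degrees $0$ and $1$, the chain map $(e_A, e_B)$ induces isomorphisms on $H^0$ and $H^1$. Applying the snake lemma to the short exact sequences $0 \to H^0 \to A \to \op{im}(d) \to 0$ and $0 \to \op{im}(d) \to B \to H^1 \to 0$ together with their primed analogs and the surjectivity of $e_B$, one obtains successively $\op{coker}(e_{\op{im}(d)}) \cong \op{coker}(e_B) = 0$ and then $\op{coker}(e_A) \cong \op{coker}(e_{\op{im}(d)}) = 0$.

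Having surjectivity of $e_A$ in hand, I would define the new structure maps by
\[
\op{ev}^i_A := \op{ev}^i_{A'} \circ e_A, \qquad Z := Z' \circ \op{Sym}(e_A), \qquad \alpha^\vee := (\alpha')^\vee \circ \left(\op{Sym}(e_A) \otimes e_B\right).
\]
Condition~\ref{assume ev chainlevel}(1) then holds because pre-composing a cochain representative of a derived morphism with a quasi-isomorphism produces another cochain representative of the same derived morphism. Condition~\ref{assume ev chainlevel}(2) follows directly from surjectivity of $e_A$ combined with the primed-version surjectivity of $A' \to \pi_*(\mathcal V|_{\sG})$. For Condition~\ref{assume alpha chainlevel}, the commutativity of diagram \eqref{eq: alphachainmaporiginal} for the new maps reduces to the identity $(\op{Sym}(e_A) \otimes e_B) \circ \delta_A = \delta_{A'} \circ \op{Sym}(e_A)$, which is the functoriality of the Koszul-type differential applied to the chain map $(e_A, e_B)$; the realization of \eqref{eq: finalalpha} at the cochain level follows because the pair $(\op{Sym}(e_A), \op{id}_{\mathcal O_S^r})$ induces a quasi-isomorphism between the corresponding cone complexes $E_d$ that lifts the identity at the derived level. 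Finally, Condition~\ref{compatible} transports by direct substitution using the primed version:
\[
\sum_{i=1}^{r} \pii \Sigma_i^* \varkappa_w \circ \op{natural} \circ \op{Sym}(\op{ev}^i_A) = \left( \sum_{i=1}^{r} \pii \Sigma_i^* \varkappa_w \circ \op{natural} \circ \op{Sym}(\op{ev}^i_{A'}) \right) \circ \op{Sym}(e_A) = Z' \circ \op{Sym}(e_A) = Z.
\]

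The main (and only non-formal) obstacle is the surjectivity of $e_A$, which is handled by the snake-lemma argument above; once that is in place, every other claim is a straightforward pullback via functoriality.
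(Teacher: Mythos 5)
Your proof is correct and takes essentially the same route as the paper: define $\op{ev}^i_A$, $Z$, and $\alpha^\vee$ by precomposition with $e_A$ (respectively $\op{Sym}(e_A)\otimes e_B$) and check that all three conditions transport by functoriality of the chain map and of $\op{Sym}$. The only difference is that you explicitly establish the surjectivity of $e_A$ (via the snake lemma) needed for part (b) of Condition~\ref{assume ev chainlevel}, a point the paper's proof leaves implicit; your argument for it is correct.
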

\begin{proof}
For Condition~\ref{assume ev chainlevel}, define $\op{ev}^i_{A}$  as the following composition
\[
\op{ev}^i_{A} : A \xrightarrow{e_A}  A ' \xrightarrow{\op{ev}^i_{A'} }  \O_U
\]
which clearly satisfies Condition~\ref{assume ev chainlevel} if $\op{ev}^i_{A'}$ does.
For Conditions \ref{assume alpha chainlevel}, \ref{compatible}, define $Z$ and $\alpha^\vee$ as the following compositions
\[
Z : \op{Sym}  A \xrightarrow{\op{Sym}(e_A)} \op{Sym}  A ' \xrightarrow{ Z' } \O_U.
\]
\[
\alpha^\vee : \op{Sym}  A \otimes  B \xrightarrow{\op{Sym}(e_A) \otimes e_B} \op{Sym}  A ' \otimes  B ' \xrightarrow{ (\alpha')^\vee} \O_U.
\]
where $(\alpha')^\vee$ and $Z'$ are the corresponding maps for the resolution $[A' \to B']$.  Condition~\ref{assume alpha chainlevel} for $Z, \alpha^\vee$ is immediate from Condition~\ref{assume alpha chainlevel} for $Z', (\alpha')^\vee$.  Condition~\ref{compatible} for $Z$ follows from  Condition~\ref{compatible} for $Z'$ using functoriality of $\op{Sym}$.
\end{proof}

\subsection{On Conditions \ref{assume alpha chainlevel} and \ref{compatible}}\label{ss:sc}

In this subsection, our goal is to investigate when admissible resolutions exist on a stack 
$S\to \Mfrak^\orb_{g,r}(B\Gamma)_{\log}$. We explain how an argument 
of Polishchuk and Vaintrob \cite{PV} can be adapted to provide an explicit construction. 
This argument requires a certain cohomology vanishing statement and unfortunately we are able to ensure it holds only after imposing some restrictions on $S$. Therefore, in this subsection we will make the following:

\medskip

${\bf Assumption}\; (\star):$ $S$ is a separated Deligne--Mumford stack of finite type over $\Spec(\CC)$, 
which is a global quotient stack by a linear algebraic group action, and whose coarse moduli space is projective over an affine noetherian scheme.  

\begin{rem} Some of the requirements in Assumption $(\star)$ are relatively harmless for the applications we envision to GLSM theory. 
Typically $S$ will be an appropriate moduli stack of LG quasimaps and the finite type and Deligne--Mumford condition will be automatic once a stability condition is imposed. 
Also, these moduli stacks are all expected to be global quotients and there are well-known techniques to prove this is indeed the case in many situations. However, projectivity over an affine of the coarse moduli is not automatic.
\end{rem}

\begin{proposition} Let $S$ be a stack satisfying Assumption $(\star)$.
Let $U$ be a stack equipped with a morphism $U\xrightarrow{u} \Mfrak^\orb_{g,r}(B\Gamma)_{\log}$. If $u$ factors through a morphism $S\to \Mfrak^\orb_{g,r}(B\Gamma)_{\log}$ (for example, if $U$ is an open substack of $S$),
then there exists an admissible resolution $[A \to B]$ of $\R(\pi_U)_* \mathcal V$ on $U$.
\label{prop: satisfy all}
\end{proposition}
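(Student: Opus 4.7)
The plan is to construct an admissible resolution on $S$ itself and then pull it back to $U$. Pulling back is the easy half: given an admissible resolution $[\widetilde A \to \widetilde B]$ on $S$, restriction along $u$ yields a resolution of $\R(\pi_U)_*\cV$ by flat base change (since $\widetilde A, \widetilde B$ are locally free and the universal curve data pulls back). Condition~\ref{assume ev chainlevel} is preserved because surjectivity of $\widetilde A \to \oplus_i \pi_*(\cV|_{\sG_i})$ is stable under pull-back, and Conditions~\ref{assume alpha chainlevel}--\ref{compatible} are preserved because all maps $Z, \alpha^\vee$ are natural in $S$. If needed, a small modification via Lemma~\ref{l:surjadmi} allows us to arrange surjectivity of the relevant map after pull-back. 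So the real task is constructing an admissible resolution on $S$.

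For the construction on $S$, I would follow the approach of Polishchuk--Vaintrob \cite{PV}. Start on the universal curve $\pi:\cC \to S$ and combine Lemma~\ref{ev at level C} with a twisting argument. Because $\pi$ is projective we have a $\pi$-ample line bundle $\cO(1)$ on $\cC$; Assumption $(\star)$ further provides an ample line bundle $L$ on the coarse moduli of $S$, which we pull back to $S$. The idea is to choose a two-term $\pi$-acyclic locally-free resolution $[\cA \to \cB]$ of $\cV$ as in Lemma~\ref{ev at level C} so that $\cA$ is a direct sum of copies of $\pi^*L^{-m}(-n)$ for sufficiently large $m$ and $n$, and then let $A := \pi_*\cA$, $B := \pi_*\cB$. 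Condition~\ref{assume ev chainlevel} is immediate from Corollary~\ref{surjectivity downstairs}.

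The heart of the construction is producing the pair $(Z, \alpha^\vee)$ satisfying Conditions~\ref{assume alpha chainlevel} and~\ref{compatible}. The derived map \eqref{eq: finalalpha} is built geometrically: pull $\varkappa_w$ back to the symmetric products $\op{Sym}^d \cC$, combine with the diagonal $\sigma_d$, and apply the Grothendieck trace along $\pi$. By choosing $m$ and $n$ large enough, I can ensure that for all $d$ up to $\deg w$, the sheaves $\op{Sym}^k \cA \otimes \bigwedge^l \cB$ are $\pi$-acyclic with vanishing higher cohomology on $S$ (and similarly on $\op{Sym}^d \cC$ for the relevant exterior powers). Under these vanishings, the derived push-forward commutes with the formation of symmetric/exterior powers at the cochain level, so the trace construction descends to genuine honest cochain maps $Z : \op{Sym} A \to \O_S^r$ and $\alpha^\vee : \op{Sym} A \otimes B \to \O_S$ realizing~\eqref{eq: finalalpha}. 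Condition~\ref{compatible} is then forced: both sides of the identity in Condition~\ref{compatible} are $\pi_*$ applied to the same chain-level morphism on $\cC$ (namely the restriction of $\varkappa_w$ to the gerbes versus its action on $\op{Sym}\cV$), so they agree on the nose by construction rather than only up to homotopy.

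The hard part is the cohomology vanishing required to upgrade the derived map~\eqref{eq: finalalpha} to an actual cochain map: a priori the obstructions live in $\op{Ext}^{\geq 1}$-groups on $S$ which need not vanish for an arbitrary base. This is precisely where Assumption~$(\star)$ enters in an essential way, since projectivity of the coarse moduli over an affine is what supplies the ample line bundle used to twist and kill these Ext groups. Once this vanishing is arranged, everything else is formal.
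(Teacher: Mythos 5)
Your high-level architecture coincides with the paper's: reduce to constructing the resolution on $S$ itself (pull-backs of admissible resolutions are admissible), start from the Condition~\ref{assume ev chainlevel} resolution of Corollary~\ref{surjectivity downstairs}, define $Z$ by the formula in Condition~\ref{compatible} so that condition holds by construction, and use the ampleness supplied by Assumption~$(\star)$ to kill the obstruction to realizing \eqref{eq: finalalpha} at the cochain level. The gap is in the mechanism for that last step. The obstruction to lifting the derived morphism \eqref{eq: finalalpha} to an honest cochain map lives, via the hypercohomology spectral sequence, in the groups $\op{Ext}^q(\op{Sym}^i A \otimes \bigwedge\nolimits^j B, \O_S) = H^q\bigl(S, (\op{Sym}^i A)^\vee \otimes \bigwedge\nolimits^j B^\vee\bigr)$ for $q\ge 1$, $j\ge 1$ --- that is, in the cohomology of the \emph{duals} of the terms of $E_d$. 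The vanishing you arrange ($\pi$-acyclicity and vanishing of $H^{>0}(S,-)$ for the sheaves $\op{Sym}^k\cA\otimes\bigwedge\nolimits^l\cB$ themselves) is not the relevant group, and the claim that derived push-forward then "commutes with $\op{Sym}/\bigwedge$ at the cochain level" does not rescue it: the chosen representative of $E_d$ is assembled from $\op{Sym}^i(\pi_*\cA)\otimes\bigwedge\nolimits^j(\pi_*\cB)$, which is not literally the push-forward of an acyclic complex computing $f_d$ on $\op{Sym}^d\mathfrak C$, so one must still lift $\R\pi_*f_d$ and the trace across quasi-isomorphisms and the same Ext-obstruction reappears.

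More fundamentally, twisting $\cA$ on the curve cannot produce the negativity that kills these Ext groups. For $\cV$ to embed into a direct sum of line bundles, those line bundles must be sufficiently \emph{positive}, both fiberwise and in the base direction; hence $A=\pi_*\cA$ is positive and $(\op{Sym}^i A)^\vee$ has abundant higher cohomology. (As literally written, $\pi^*L^{-m}(-n)$ with $n\gg 0$ is not even $\pi$-acyclic, and $\cV$ does not inject into it.) The paper escapes this tension by twisting on the base \emph{after} pushing forward: Lemma~\ref{lem: kill ext} replaces the degree-one term $\pi_*\cB$ by a very negative bundle $\cE^\vee(-m_1)^{\oplus N}$ surjecting onto it and reconstructs the degree-zero term as the fiber product; the Polishchuk--Vaintrob estimate then shows that the single mandatory factor $\bigwedge\nolimits^{q_2}\bar C_1^\vee$ with $q_2\ge 1$ is positive enough to dominate the uncontrolled $(\op{Sym}^{q_1}\bar C_0)^\vee$ factor. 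This fiber-product modification of the resolution on $S$ is the idea missing from your proposal.
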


Two technical lemmas are required for the proof of Proposition~\ref{prop: satisfy all}.  The first lemma is a slightly more general version of
\cite[Lemma 4.2.4]{PV}, which is in fact what their argument establishes. The second lemma is a corollary of \cite[Lemma 4.2.5]{PV},
as we explain.

\begin{lemma} \label{lem: cohomology vanishing}
Let $S$ be a  stack satisfying Assumption $(\star)$.
 Let 
$\O(1)$ denote the pull-back to $S$ of a relatively ample line bundle on the coarse moduli $\underline {S}$. 
Then,
there exists a vector bundle  $\cE$ on $S$ such that for any coherent sheaf $\cF$ on $S$ the natural map
\[
\op{H}^0(S, \cE^\vee \otimes \cF(n)) \otimes \cE(-n) \to \cF
\]
is surjective for $n>>0$ and
\[
\op{H}^i(S, \cF(n)) =0
\]
for $i > 0, n >> 0$. 
\end{lemma}

\begin{lemma}
Let $S$ be a stack satisfying Assumption $(\star)$. 
Let $\cE$ be a vector bundle on $S$ satisfying the conclusion of Lemma \ref{lem: cohomology vanishing}.

Let $[C_0 \to C_1]$ be a 2-term complex of vector bundles on $S$.  Given the above setup, a positive integer $d \in \NN$, and any vector bundle $\cD$ on $S$, there exists $m_0 > 0$ such that for any $m_1 \geq m_0$ and any surjection
\[
\bar{C_1} := \cE^\vee(-m_1)^{\oplus N} \xrightarrow{\sigma} C_1\ra 0
\] 
one has
\begin{equation}
\op{H}^i(S, (\op{Sym}^{q_1}\bar C_0)^\vee \otimes \bigwedge\nolimits^{q_2} \bar C_1^\vee \otimes \cD) =0
\label{eq: ext vanishing}
\end{equation}
for  $d\ge q_1+q_2$ and $q_2\ge 1$, where the bundle $\bar{C}_0$ is the fiber product of $C_0$ and $\bar C_1$ over $C_1$, which completes the following vertical quasi-isomorphism,
\[
\begin{tikzcd}
0 \ar[d] \ar[r] & \bar C_0 \arrow[r] \arrow[d] & \bar C_1 \arrow[d] \ar[r] & 0 \ar[d]\\
0 \ar[r] & C_0\arrow[r] & C_1\ar[r] & 0 .
\end{tikzcd}
\]
\label{lem: kill ext}
\end{lemma}

\begin{proof} Given $[C_0 \to C_1]$ and $\cD$ as above, the statement of the lemma is the same as the statement of the lemma for the complex $[C_0\oplus \cD \ra C_1]$ where, instead of the original $\cD$, we now have $\cD=\cO$.  Hence, we may assume $\cD=\cO_S$.  The vanishing \eqref{eq: ext vanishing} now follows from Equation $(4.26)$ of \cite[Lemma 4.2.5]{PV}, in view of the fact that, since we work in characteristic zero, for any vector bundle $\cW$, both the symmetric power $\op{Sym}^q\cW$ and the exterior power $\bigwedge\nolimits^q\cW$ are direct summands
in $\cW^{\ot q}$.
\end{proof} 

\begin{proof}[Proof of Proposition~\ref{prop: satisfy all}.]
First notice that if $[A\to B]$ is an admissible resolution on $S$, then its pull-back is an admissible resolution on $U$. Hence we may assume $U=S$.

Now, start with a resolution $\pi_* \cA  \ra \pi_*\cB$ satisfying Condition \ref{assume ev chainlevel}, whose existence is guaranteed by Corollary \ref{surjectivity downstairs}.
We modify it so that it also satisfies 
Condition \ref{assume alpha chainlevel}.  Specifically, by Lemma~\ref{lem: kill ext}, we may choose a component-wise surjective quasi-isomorphism
\[
\begin{tikzcd}
A \arrow[r] \arrow[d, "e_A"] & B \arrow[d, "e_B"] \\
\pi_* \mathcal A \arrow[r] & \pi_*\cB
\end{tikzcd}
\]
such that 
\begin{equation}
\op{Ext}^q(\op{Sym}^i A \otimes \bigwedge\nolimits^j B, \O_S) = 0
\label{eq: extvanishing}
\end{equation}
 for $i+ j \le d_0$, $j\ge 1$ and $q \geq 1$. Here $d_0$ is the polynomial degree of $w$ 
so that $w \in \Sym^{\bullet \le d_0} V^\vee$.  
Now, we may define
\[
Z := \sum_{i=1}^r \pii \Sigma_i^* \varkappa_w \circ \op{natural}\circ \op{Sym} \op{ev}_A^i 
\]

Then, we may take $E_d$ to be the complex
\[
\begin{tikzcd}
E_d: =  \bigg [ 0 \ar[r] & \op{Sym}^d A  \arrow[r, "{(\delta, -Z)}"] & \op{Sym}^{d-1} A  \otimes B \oplus \O_S^r  \arrow[r, "{(\delta, 0)}"] &  \op{Sym}^{d-2} A  \otimes \bigwedge\nolimits^2 B \arrow[r, "\delta"] & \dotsc \bigg ]\\
\end{tikzcd}
\]

There is a spectral sequence
\[
E_1^{p,q} = \bigoplus_{p} \op{Ext}^q(E_d^p, \cO_S) \Rightarrow \op{Hom}_{\dbcoh{S}}(E_d^p, \cO_S[p+q]).
\]
 where $E_d^p$ denotes the degree $p$ component of $E_d$. 
 It follows from \eqref{eq: extvanishing} that for $p+q = -1$  and every $0 \leq  d  \leq d_0$,  the morphism \eqref{eq: finalalpha} can be realized at the cochain level
(see e.g. the proof of  Lemma 5.7 on page 89 of \cite{GKZ}).
That is, for $d \leq d_0$ there exists $\alpha^\vee_d$  realizing \eqref{eq: finalalpha} as follows
\[
\begin{tikzcd}
\op{Sym}^d A  \arrow[r, "{(\delta, -Z)}"] \arrow[d] & \op{Sym}^{d-1} A  \otimes B \oplus \O_S^r  \arrow[r, "{(\delta, 0)}"] \arrow[d, "{(\alpha^\vee_d,\op{sum})}"] &  \op{Sym}^{d-2} A  \otimes \bigwedge\nolimits^2 B \arrow[r, "\delta"] \arrow[d] & \dotsc \\
0 \arrow[r] & \O_S \arrow[r] & 0 \arrow[r] & \dotsc \\
\end{tikzcd}
\]
  Then we can define 
\[
\alpha^\vee := \sum_{d=0}^{d_0} \alpha^\vee_d.
\]
Conditions \ref{assume alpha chainlevel} and \ref{compatible} are manifestly satisfied by this construction. 
\end{proof}

Proposition \ref{prop: satisfy all} applies in particular to the case of the hybrid models of \S\ref{ss: hybrid models}. However, for the analysis of the geometric phase in \S\ref{sec: compare GW}, we will need a refinement in which the admissible resolution satisfies an additional property. This is spelled out explicitly in the following statement.

\begin{proposition}\label{prop: satisfy all geometric} 
Assume we have a hybrid model, so that $V=V_1\oplus V_2$.
Assume further that  the superpotential is linear on $V_2$, i.e., that $w \in V_2^\vee \ot \Sym^{\ge 1} (V_1^{\vee})$. Suppose
$U\xrightarrow{u} \Mfrak^\orb_{g,r}(B\Gamma)_{\log}$ factors through a stack $S$ over $\Mfrak^\orb_{g,r}(B\Gamma)_{\log}$ 
satisfying Assumption $(\star)$.
Then we may choose an admissible resolution $[A \to B]$ of $\RR\pi_*\cV$ on $U$ with
$A = A_1 \oplus A_2' \oplus  \pi_* (\cV_2|_{\sG})$, and such that $\op{ev}^i_A$ followed by the projection onto $ \pi_*  (\cV_2|_{\sG})$ agrees with the projection of $A$ onto this summand.
\end{proposition}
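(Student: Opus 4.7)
The plan is to adapt the construction of Proposition~\ref{prop: satisfy all} by exploiting the splitting $\cV = \cV_1 \oplus \cV_2$ induced by $V = V_1 \oplus V_2$, together with the hypothesis that $w$ is linear in the $V_2$-direction. The idea is to build a resolution whose degree-zero term splits compatibly with the $V_1/V_2$ decomposition, and then to enlarge it by an acyclic piece isomorphic to $\pi_*(\cV_2|_\sG)$ that absorbs the $V_2$-part of the evaluation map into a direct summand.

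First, I would apply Lemma~\ref{ev at level C} separately to $\cV_1$ and $\cV_2$, producing $\pi$-acyclic resolutions $[\cA_j \to \cB_j]$ of $\cV_j$ equipped with surjections $\cA_j \to \cV_j|_\sG$ whose kernels are $\pi$-acyclic. Taking pushforwards and direct sums yields a two-term vector bundle resolution $[\pi_*\cA_1 \oplus \pi_*\cA_2 \to \pi_*\cB_1 \oplus \pi_*\cB_2]$ of $\RR\pi_*\cV$ whose evaluation map is block-diagonal with respect to the $V_1/V_2$ splitting. Applying Lemma~\ref{lem: kill ext} separately on each summand (with a sufficiently large common twist $m_1$), I obtain a surjective quasi-isomorphism $[A_1 \oplus A_2' \to B_1 \oplus B_2] \to [\pi_*\cA_1 \oplus \pi_*\cA_2 \to \pi_*\cB_1 \oplus \pi_*\cB_2]$ with $A_j, B_j$ direct sums of twists of $\cE^\vee$, preserving the direct-sum structure. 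By Lemma~\ref{l:surjadmi}, this new two-term complex inherits Condition~\ref{assume ev chainlevel}.

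Next, I would modify the resolution by adding an acyclic piece. Set
\[ A := A_1 \oplus A_2' \oplus \pi_*(\cV_2|_\sG), \qquad B := B_1 \oplus B_2 \oplus \pi_*(\cV_2|_\sG), \]
with differential
\[ d(a_1, a_2', v) := (d_{A_1}(a_1),\, d_{A_2'}(a_2'),\, v - e_2(a_2')), \]
where $e_2 \colon A_2' \to \pi_*(\cV_2|_\sG)$ is the $V_2$-component of the evaluation map. A direct computation shows that $\ker d \cong \pi_*\cV$ (via $(a_1, a_2') \mapsto (a_1, a_2', e_2(a_2'))$) and $\op{coker} d \cong \RR^1\pi_*\cV$, so $[A \to B]$ remains a resolution of $\RR\pi_*\cV$. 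Define
\[ \op{ev}^i_A(a_1, a_2', v) := (\op{ev}^i_1(a_1),\, v_i), \]
where $v_i$ denotes the $\sG_i$-component of $v \in \pi_*(\cV_2|_\sG) = \bigoplus_i \pi_*(\cV_2|_{\sG_i})$. By construction the projection of $\op{ev}^i_A$ onto $\pi_*(\cV_2|_{\sG_i})$ equals the $\sG_i$-component of the projection of $A$ onto its $\pi_*(\cV_2|_\sG)$-summand, which is exactly the sought compatibility. A short check on $\ker d \cong \pi_*\cV$ confirms that $\op{ev}^i_A$ realizes the derived restriction map, giving Condition~\ref{assume ev chainlevel}; surjectivity of $\op{ev}_A = \sum_i \op{ev}^i_A$ is immediate from the identity on the $\pi_*(\cV_2|_\sG)$-summand.

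Finally, I would define $Z$ via the formula of Condition~\ref{compatible} (making that condition tautological) and construct $\alpha^\vee$ satisfying Condition~\ref{assume alpha chainlevel} by the spectral sequence / Ext-vanishing argument used in Proposition~\ref{prop: satisfy all}. The linearity of $w$ in $V_2$ enters crucially here: on the new summand $\pi_*(\cV_2|_\sG) \subseteq A$, only the first symmetric power contributes to $Z$ through $\varkappa_w$, so no higher-order contributions arise from the added direct factor. The main obstacle is arranging the Ext-vanishing \eqref{eq: extvanishing} for the enlarged bundles $A, B$, which now contain $\pi_*(\cV_2|_\sG)$ as a summand alongside the $\cE^\vee(-m_1)^N$-factors. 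This is handled by folding the symmetric and exterior powers of $\pi_*(\cV_2|_\sG)$ and its dual into the auxiliary bundle $\cD$ of Lemma~\ref{lem: kill ext} before choosing $m_1$, thereby ensuring the vanishing uniformly on all the summands of $\op{Sym}^p A \otimes \bigwedge\nolimits^q B$ that appear.
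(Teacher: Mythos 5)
Your proof is correct and follows essentially the same route as the paper's: split $\cV=\cV_1\oplus\cV_2$, resolve the two factors separately so that $\pi_*(\cV_2|_\sG)$ appears as a direct summand of the degree-zero term with the evaluation map given by projection onto it, and use the linearity of $w$ in $V_2$ so that the cosection factors through the $V_2$-linear part of $\op{Sym}[A\to B]$, which reduces the required Ext-vanishing to Lemma~\ref{lem: kill ext} with the (fixed) $V_2$-terms playing the role of the auxiliary bundle $\cD$. The only, harmless, deviation is that you manufacture the $\pi_*(\cV_2|_\sG)$ summand after pushing forward, by adjoining an acyclic two-term piece over the base and perturbing the differential by $-e_2$, whereas the paper builds it in at the level of sheaves on $\cC$ via the snake lemma applied to $\cV_2\to\cA_2'\oplus\cV_2|_\sG$.
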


\begin{proof}
We use the direct sum decomposition $\cV = \cV_1 \oplus \cV_2$ to resolve the summands individually.   

First we find a resolution $[\pi_*\cA _1\to \pi_*\cB _1]$ of $\RR \pi_* \cV_1$ as in Corollary \ref{surjectivity downstairs}.

Next, choose any $\pi$-acyclic locally free resolution of   $\cV_2$ on $\cC$,
\[
0 \to \cV_2 \to \cA'_2 \to \cB'_2 \to 0.
\]
and define $\cB_2$ to be the cokernel of the map $\cV_2 \to \cA_2' \oplus \cV_2|_{\sG}$.  The snake lemma gives the diagram
\begin{equation}
\begin{tikzcd}
  &  0 \arrow[d] & 0  \arrow[d] & 0  \arrow[d]&  \\
0 \arrow[r] & \mathcal V_2(-\sG) \arrow[r] \arrow[d] & \mathcal A'_2 \arrow[r] \arrow[d] & \mathcal B _2 \arrow[r]  \arrow[d]& 0 \\
0 \arrow[r] & \mathcal V_2 \arrow[r] \arrow[d] & \mathcal A' _2\oplus  \mathcal V_2|_{\sG} \arrow[r]  \arrow[d] & \mathcal B_2\arrow[r] \ar[d] & 0 \\
0 \ar[r] & \mathcal V_2|_{\sG} \arrow[d] \arrow[r] &   \mathcal V_2|_{\sG} \arrow[d] \ar[r] & 0  \\
& 0 & 0 &  &
\end{tikzcd}
\label{snake2}
\end{equation}
Pushing forward via $\pi$, we get a resolution
$\mathbb R \pi_* \mathcal V_2 \cong [(\pi_* \mathcal A'_2 \oplus \pi_* \cV_2|_{\sG}) \to \pi_*\cB_2].$
Set 
$$A_2 := \pi _* \cA'_2 \oplus  \pi_* \cV_2|_{\sG}, \;\;\;\; B_2:= \pi_*\cB_2.$$
Then 
$$[\pi_* \cA_1\oplus A_2\to \pi _* \cB_1\oplus B_2]\cong \RR\pi_*\cV$$ manifestly satisfies Condition \ref{assume ev chainlevel}.

The morphism
$\op{Sym}[\pi_* \cA_1\oplus A_2\to \pi _* \cB_1\oplus B_2] \to [\O_S^r\to \cO_S]$
factors through the projection
\[
\op{Sym}[\pi_* \cA_1\oplus A_2\to \pi _* \cB_1\oplus B_2]  \to [A_2\ra B_2] \otimes \op{Sym}[\pi_* \cA_1 \to \pi_* \cB_1].
\]
Now we use Lemma~\ref{lem: kill ext} to modify only the first summands, by choosing a quasi-isomorphism
\[
\begin{tikzcd}
A_1 \arrow[r] \arrow[d, "e_A"] & B_1 \arrow[d, "e_B"] \\
\pi_* \mathcal A_1 \arrow[r] & \pi_*\cB_1
\end{tikzcd}
\]
such that $\op{Ext}^q(\op{Sym}^i A_1 \otimes \bigwedge\nolimits^j B_1 \otimes A_2, \O_S) = 0$,  $\op{Ext}^q(\op{Sym}^i A_1 \otimes \bigwedge\nolimits^j B_1 \otimes B_2, \O_S) = 0$
 for $i+j \le d_0 -1$, $j\ge 1$, and $q \geq 1$. 
The resolution $[A_1\oplus A_2 \to B_1 \oplus B_2]$ is admissible and satisfies the required
property, with $A':=A_1\oplus \pi_*\cA'_2$.
\end{proof}

\subsection{Support of the PV factorization}\label{s:supofthepvfac}

\subsubsection{Base change and $\tot\pi_*(\cV)$.}
Denote by
\[
\tot (\pi _*\cV) := \op{Spec } \op{Sym} \mathbb R^1 \pi_* ( \cV^\vee \otimes \nolog),
\]
so that, for any $g \colon T\ra S$, we have
$\Hom _S (T,  \tot (\pi _*\cV)) = \Gamma (\mathfrak{C} _T, \cV _T)$ by
the canonical Serre duality identification $ \mathbb R^1 \pi_* ( \cV^\vee \otimes \nolog) ^\vee \cong \pi _*\cV$.
Here $\cV _T := h^*\cV$, from the pullback diagram
\begin{equation}
\begin{tikzcd}
\mathfrak C_T \ar[r, "h"] \arrow[d, "\pi_T"] & \mathfrak C \ar[d, "\pi"] \\
T \ar[r, "g"] & S.
\end{tikzcd}
\label{C over T}
\end{equation}
Note that since $\pi$ is flat of relative dimension one, Tor-independent base change gives an isomorphism 
$\mathbb{R}^1(\pi _T)_* h^* \cE  \cong g^* \mathbb{R}^1\pi _* \cE$ for any vector bundle on $\mathfrak{C}$.
Hence $\tot ((\pi _T)_* h^* \cV ) \cong \tot (\pi _*\cV ) \ti _S T$.

\subsubsection{The map $dw_T$}
Considering $w$ as a $\Gamma$-invariant element of $\Sym V^\vee \ot \CC _{\chi}$, we obtain
its differential $dw \in \Sym V^\vee \ot V^\vee \ot \CC _{\chi}$ which is $\Gamma$-invariant.
Recall that there is a natural pairing
\begin{align*}
\op{Sym}V \times  \op{Sym}V^\vee & \to k \\
(v_1 \otimes ... \otimes v_n, \phi_1 \otimes ... \otimes \phi_n) & \mapsto \sum_{\sigma \in S_n} \prod_{i=1}^n \phi_i(v_{\sigma(i)}).
\end{align*}
This leads to a natural isomorphism in characteristic 0 which we describe on a basis
\begin{align}
(\op{Sym}V)^\vee & \to \op{Sym}V^\vee \notag \\
(e_1 \otimes... \otimes e_n)^* & \mapsto n! e_1^* \otimes... \otimes e_n^*.
\label{eq: dual identification convention}
\end{align}
Under this convention, pairing with $dw$ gives a map of $\Gamma$-representations 
\[
\overline{dw}\colon \op{Sym}V\to V^\vee \otimes \C_{\chi}.
\] 
For any $T\xrightarrow{g} S$, with $T$ a scheme,
the map $\overline{dw}$ induces a  homomorphism $\cO_S$-modules
$$\overline{dw}_T: \pi _* (\op{Sym} \cV_T) \ra \pi _* (\cV_T^\vee  \ot \log),$$
via pull-back by $[g^*P]:\cC_T\ra B\Gamma$, followed by push-forward by $\pi=\pi_T$.
The composition
$$\op{Sym} \pi _*( \cV_T  ) \xrightarrow{\op{natural}} \pi _* (\op{Sym} \cV_T) \xrightarrow{\overline{dw}_T} \pi _* (\cV_T^\vee  \ot \log) \cong (\RR ^1\pi_* \cV_T  (-\sG )) ^\vee$$
(where the last isomorphism comes from Serre duality)
induces the homomorphism
\begin{equation}\label{dwS} 
dw_T : \op{Sym} \pi _*( \cV _T)  \ot  \RR ^1\pi_* (\cV _T (-\sG ))  \to \cO_T.
\end{equation}
This can be alternatively described as the map
\[
\op{Sym} \pi _*( \cV _T)  \ot  \RR ^1\pi_* (\cV _T (-\sG ))  \to \RR ^1 \pi_* (\nolog) \cong \cO_T.
\]
induced by $\overline{dw}$ as in Section 3.2 of \cite{CL}.

\subsubsection{The degeneracy locus $Z(dw_{\tot (\pi_*\cV)}(\exp \tau_{\pi_*\cV}))$}\label{Z dw}

Consider the map \eqref{dwS} associated to the projection $p': \tot (\pi_*\cV ) \ra S$. Consider also the tautological
global section $\tau_{\pi_*\cV}$ of $\pi _*(\cV_{\tot (\pi_*\cV )})$. Together, they determine a
cosection of the sheaf $ \RR ^1\pi_* \cV _{\tot (\pi_*\cV )} (-\sG ) $,
\begin{equation}\label{cosection dw}
dw_{\tot (\pi_*\cV) }(\exp \tau_{\pi_*\cV}) :  \RR ^1\pi_* \cV _{\tot (\pi_*\cV )} (-\sG )  \to \cO_{\tot (\pi_*\cV )}. 
\end{equation}

Let $m$ be the degree of the largest monomial in the polynomial $w$.  A priori, the exponential in the formula is an element of the completed symmetric algebra,
$\exp \tau_{\pi_*\cV}\in\widehat{\op{Sym}} H^0(\tot (\pi_*\cV ) , \pi _*( \cV_{\tot (\pi_*\cV)}) )$, but we view it as an element of 
$\op{Sym} (\pi _*( \cV_{\tot (\pi_*\cV)})$ by first considering its truncation modulo $(\tau_{\pi_*\cV})^{m+1}$ and then taking the
image under the natural map
$$\op{Sym}^{\leq m}H^0(\tot (\pi_*\cV ) , \pi _*( \cV_{\tot (\pi_*\cV)}) )\ot \cO_{\tot (\pi_*\cV )}\longrightarrow 
\op{Sym}^{\leq m} (\pi _*( \cV_{\tot (\pi_*\cV)})).$$
In this way, we view $\exp \tau_{\pi_*\cV}$ as the tautological section of a truncation of $\op{Sym} (\pi _*( \cV_{\tot (\pi_*\cV)})$, which is consistent with earlier conventions (see Remark~\ref{rem: tautological section}).

We denote by
$$Z(dw_{\tot (\pi_*\cV)}(\exp \tau_{\pi_*\cV})):=\Spec(\op{coker}  dw_{\tot (\pi_*\cV) }(\exp \tau_{\pi_*\cV})) \subseteq \tot (\pi_*\cV)$$
the degeneracy locus of the cosection \eqref{cosection dw}.

\begin{proposition}\label{PV support}
	Assume that there is an admissible resolution $[A \to B]$ of $\mathbb R \pi_* \cV$ on $S$.
	Then $Z(dw_{\tot (\pi_*\cV)}(\exp \tau_{\pi_*\cV}))$ is canonically identified with a closed substack of $\op{tot} A$ and the PV factorization $\{-\alpha, \beta\}$
	is supported on $Z(dw_{\tot (\pi_*\cV)}(\exp \tau_{\pi_*\cV}))$.
\end{proposition}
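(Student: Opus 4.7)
The plan is as follows.

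First, I identify $Z(\beta)\subseteq\op{tot}A$ canonically with $\op{tot}(\pi_*\cV)$. By \eqref{explicit beta}, $\beta=p^*d\circ\tau_A$, so a $T$-point $s\in\Gamma(T,A_T)$ of $\op{tot}A$ lies in $Z(\beta)$ precisely when $d_T(s)=0$. Using Tor-independent base change applied to the two-term locally free resolution $[A\to B]\cong\RR\pi_*\cV$, this functor coincides with the functor of points of $\op{tot}(\pi_*\cV)$, and under this identification the tautological section $\tau_A|_{Z(\beta)}$ is sent to $\tau_{\pi_*\cV}$. In particular, the closed substack $Z(dw_{\op{tot}(\pi_*\cV)}(\exp\tau_{\pi_*\cV}))\subseteq\op{tot}(\pi_*\cV)$ is canonically identified with a closed substack of $\op{tot}A$, establishing the first assertion.

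Second, by Proposition~\ref{prop: Koszul support}, the Koszul factorization $\{-\alpha,\beta\}$ is supported on $Z(\alpha)\cap Z(\beta)$, which by the previous step is a closed substack of $\op{tot}(\pi_*\cV)$. It therefore suffices to prove the inclusion $Z(\alpha^\vee|_{\op{tot}(\pi_*\cV)})\subseteq Z(dw_{\op{tot}(\pi_*\cV)}(\exp\tau_{\pi_*\cV}))$ of cosection vanishing loci inside $\op{tot}(\pi_*\cV)$. This containment will in turn follow once I exhibit a factorization of the second cosection through the first.

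Third, the heart of the proof is to match the two cosections, both of which arise from the data $\varkappa_w\colon\Sym\cV\to\log$. On the one hand, $dw_{\op{tot}(\pi_*\cV)}(\exp\tau_{\pi_*\cV})$ is obtained from the derivative $\overline{dw}$ of $\bar w$, evaluated at the exponential of the tautological section, and Serre-dualized via $(R^1\pi_*(\cV(-\sG)))^\vee\cong\pi_*(\cV^\vee\otimes\log)$. On the other hand, $\alpha^\vee$ is, by Condition~\ref{assume alpha chainlevel}, a cochain-level realization of the derived map \eqref{eq: finalalpha}, built from $\RR\pi_*\varkappa_w$ followed by the trace $R^1\pi_*\nolog\to\O_S$; Condition~\ref{compatible} controls the boundary contributions through the compatibility \eqref{derived compatibility}. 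Restricted to $\op{tot}(\pi_*\cV)$ and paired against $\exp\tau_{\pi_*\cV}$, these two constructions produce the same underived output up to the identifications coming from the resolution $[A\to B]$, the surjection $R^1\pi_*(\cV(-\sG))\twoheadrightarrow R^1\pi_*\cV$, and Serre duality, which together furnish a natural bundle map $R^1\pi_*(\cV(-\sG))|_{\op{tot}(\pi_*\cV)}\to p^*B|_{\op{tot}(\pi_*\cV)}$ through which $dw_{\op{tot}(\pi_*\cV)}(\exp\tau_{\pi_*\cV})$ factors as the composition with $\alpha^\vee|_{\op{tot}(\pi_*\cV)}$.

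The main obstacle is the careful bookkeeping of Serre-duality isomorphisms and of the factorial conventions of \eqref{eq: dual identification convention} and Remark~\ref{rem: tautological section}. Concretely, one must verify that the Koszul-type differential $(\delta,Z)$ in \eqref{eq: alphachainmap}, when dualized and evaluated on $\exp\tau_{\pi_*\cV}$, reproduces precisely the contraction against $\overline{dw}(\exp\tau_{\pi_*\cV})$ appearing in the definition of $dw_{\op{tot}(\pi_*\cV)}$, with Condition~\ref{compatible} ensuring that the marking contributions on both sides agree.
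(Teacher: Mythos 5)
Your overall skeleton matches the paper's: identify $Z(\beta)\cong\tot(\pi_*\cV)$ via the resolution (this is exactly Lemma~\ref{explicit  Zbeta}), reduce via Proposition~\ref{p:Ksupp} to comparing the two cosections on $Z(\beta)$, and match them using Conditions~\ref{assume alpha chainlevel} and~\ref{compatible}. However, the key step as you state it has a directional error. You propose to exhibit ``a natural bundle map $\RR^1\pi_*(\cV(-\sG))\to p^*B$ through which $dw_{\tot(\pi_*\cV)}(\exp\tau_{\pi_*\cV})$ factors as the composition with $\alpha^\vee$.'' No such natural map exists: the map furnished by the resolution is the \emph{connecting homomorphism} $\partial\colon B\twoheadrightarrow \RR^1\pi_*\cV(-\sG)$ (the cokernel of $A'=\ker(A\to\pi_*(\cV|_{\sG}))\to B$), which goes the other way, and $\RR^1\pi_*\cV(-\sG)$ need not even be locally free. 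The correct compatibility, which is the content of Lemma~\ref{alpha and dw}, is that the \emph{first} cosection factors through the \emph{second}:
\[
\alpha^\vee\big|_{\Sym(\pi_*\cV_T)\otimes B_T}=dw_T\circ(\op{id}\otimes\partial).
\]
This relation by itself only gives the inclusion $Z(dw)\subseteq Z(\alpha^\vee)\cap Z(\beta)$ --- the opposite of what you need. The inclusion you actually require, $Z(\alpha^\vee)\cap Z(\beta)\subseteq Z(dw)$, follows only after invoking the \emph{surjectivity} of $\partial$ (if $dw_T((\exp\sigma_f)\otimes\partial b)=0$ for all local $b$ and $\partial$ is onto, then $dw_T(\exp\sigma_f)=0$). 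Your proposal never invokes this surjectivity, so as written the argument does not close.

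Separately, the ``heart'' of the proof --- verifying the displayed compatibility --- is dismissed as bookkeeping of Serre duality and factorial conventions, but two substantive ingredients are missing. First, the map \eqref{eq: finalalpha} defining $\alpha^\vee$ is built from $w$ itself (via $\varkappa_w$ and the trace), whereas $dw_T$ is built from the differential $\overline{dw}$; the bridge between them is Euler's homogeneous function theorem applied to each homogeneous piece of $w$ evaluated on $\exp\tau_{\pi_*\cV}$, which is precisely why the normalization of the tautological section matters. Second, the commutativity of the relevant diagram is checked in the paper only for one specific geometric resolution (built from a $\pi$-acyclic resolution of $\cV$ on $\cC$) and then transported to an arbitrary admissible resolution via a roof of componentwise surjective quasi-isomorphisms (Lemma~\ref{l:surjective}); asserting that the identification holds ``up to the identifications coming from the resolution $[A\to B]$'' skips this reduction.
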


The rest of this subsection will be occupied by the proof of the above Proposition.
Since by Proposition~\ref{p:Ksupp} a Koszul factorization is always supported on the common zero locus $Z(\alpha,\beta)$, we begin
with describing $Z(\beta)$ and $Z(\alpha)$ for the PV factorization.

\subsubsection{The substack $Z(\beta)$}\label{Zbeta}

Let $[A\xrightarrow{d} B]$ be a two-term vector bundle complex with
an isomorphism $\varphi$ between $[A\ra B]$ and $\RR\pi _*\cV$ in $\op{D}(S)$.
Let $p: \tot A \ra S$ be the projection and let $\beta : \cO_{\tot A}\ra p^*B$ be the section induced by $d$, as in \eqref{explicit beta}.
Denote by $Z(\beta) \subseteq \tot A$ its zero locus. The following lemma establishes in particular the first assertion of Proposition \ref{PV support}.

\begin{lemma}\label{explicit  Zbeta}
	There is an isomorphism $Z(\beta ) \cong \tot \pi_*\cV$, induced by $\varphi$.
\end{lemma}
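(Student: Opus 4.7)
The plan is to prove the isomorphism by comparing functors of points, using flat base change to translate the condition $\beta = 0$ into a sheaf-theoretic description of sections of $\cV$.

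First, I would unpack the functors on both sides. For a morphism $g\colon T\to S$ from a scheme $T$, a $T$-point of $\tot A$ is the same as a section $\cO_T \to g^*A$, i.e.\ an element of $H^0(T, g^*A)$. A $T$-point of $Z(\beta)$ is then an element $s \in H^0(T, g^*A)$ whose image under $g^*d\colon g^*A \to g^*B$ vanishes. That is,
\[
Z(\beta)(T) \;=\; \ker\!\bigl(H^0(T, g^*A) \xrightarrow{g^*d} H^0(T, g^*B)\bigr) \;=\; H^0\!\bigl(T, \ker(g^*d)\bigr).
\]
On the other side, by the defining property of $\tot\pi_*\cV$ recalled at the beginning of the subsection, $\tot \pi_*\cV (T) = \Gamma(\mathfrak{C}_T, \cV_T)$.

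The main step is to identify these two sets canonically using $\varphi$. Since $\pi\colon\mathfrak{C}\to S$ is flat of relative dimension one, Tor-independent base change along \eqref{C over T} gives an isomorphism $g^*\RR\pi_*\cV \cong \RR(\pi_T)_*\cV_T$ in $\op{D}(T)$. Pulling back the two-term complex $[A\xrightarrow{d}B]$ yields a two-term complex $[g^*A \xrightarrow{g^*d} g^*B]$ of vector bundles sitting in degrees $0$ and $1$, and $\varphi$ induces an isomorphism between this complex and $\RR(\pi_T)_*\cV_T$ in $\op{D}(T)$. Taking $\mathcal H^0$ of both sides then produces a canonical isomorphism
\[
\ker(g^*d) \;\cong\; \mathcal{H}^0\!\bigl(\RR(\pi_T)_*\cV_T\bigr) \;=\; (\pi_T)_*\cV_T.
\]
Taking global sections over $T$ and applying the adjunction $\Gamma(T, (\pi_T)_*-) = \Gamma(\mathfrak{C}_T,-)$ gives a natural bijection $Z(\beta)(T) \cong \Gamma(\mathfrak{C}_T,\cV_T) = \tot\pi_*\cV(T)$, functorial in $T$.

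By Yoneda, this yields the desired isomorphism $Z(\beta) \cong \tot\pi_*\cV$, and it is canonical in $\varphi$ by construction. The only subtlety worth highlighting is that although $\tot\pi_*\cV$ was \emph{defined} via $\op{Spec}\op{Sym}\RR^1\pi_*(\cV^\vee\otimes\nolog)$ rather than as a moduli of sections, the identification $\Hom_S(T,\tot\pi_*\cV)=\Gamma(\mathfrak{C}_T,\cV_T)$ provided there is precisely what lets us match functors of points, so no additional work is needed to close this gap.
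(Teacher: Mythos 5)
Your proof is correct and follows essentially the same route as the paper's: both identify a $T$-point of $Z(\beta)$ with a section of $g^*A$ factoring through $\ker(g^*d)$, and then use $h^0$ of the (base-changed) quasi-isomorphism $\varphi$ together with Tor-independent base change to identify $\ker(g^*d)$ with $(\pi_T)_*\cV_T$, hence with $T$-points of $\tot\pi_*\cV$. You merely spell out the flat base change step more explicitly than the paper does.
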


\begin{proof}
	First we note that $Z(\beta)$ is the closed substack of $\tot A$
	parameterizing objects $f: T \ra \tot A$ for which $f^*\kb:\cO_T\ra g^*B$ is the zero map, where $g=p\circ f :T\ra S$.
	
	On the other hand, if we start with $g: T\ra S$, to give a lift $f: T\ra \tot A$ amounts to giving a global section 
	$\sigma _f \in H^0(T, g^*A)$, with $\sigma _f =f^*\tau_A$.

	We have $f^*\kb = (g^*d) \circ \sigma _f$, so it vanishes identically if and only if $\sigma_f$ factors through the kernel of
	$g^*A\xrightarrow{g^*d} g^*B$. The composition 
	$$\cO_T\xrightarrow{\sigma_f} \ker(g^*d) \stackrel{\stackrel{h^0(\varphi)}{\sim}}{\longrightarrow}(\pi_T)_*(\cV_T)$$ 
	gives the required object of $\tot \pi_*\cV$.
\end{proof}

\subsubsection{The substack $Z(\ka )$}

We keep the set-up from \S\ref{Zbeta}, and assume further that the resolution $[A\xrightarrow{d} B]\stackrel{\varphi}{\cong}\RR\pi_*\cV$ is admissible,
so that we have the cosection $\alpha^\vee : p^*B \ra \cO _{\tot A}$.

Its degeneracy locus $Z(\ka )$ (which is the same as the zero locus of the dual section of $p^*B^\vee$) parametrizes 
the closed substack of $\tot A$ whose
objects are maps $f: T\ra \tot A$, with $T$ a scheme, for which $f^*\alpha^\vee = 0$. Note that 
$f^* \alpha^\vee$ is the $\cO_T$-module homomorphism  $B_T \ra \cO_T$ given by
$$f^*(\alpha^\vee ) (b) = \alpha^\vee_T ((\exp \sigma _f )\ot b)$$ for every
local section $b$ of the vector bundle $B_T:= g^*B$ on $T$.
Here $\alpha^\vee _T : = g^*(\alpha^\vee)$, and we interpret $\exp \sigma _f$ as the local restriction of a suitable truncation (depending only on $w$) of an element in 
$\widehat{\op{Sym}}H^0(T, A_T)$, as explained in \S\ref{Z dw}.

As $[A\xrightarrow{d} B]$ is admissible, it satisfies Condition 1 of Definition \ref{admissible}. Define the vector bundle $A'$ on $S$ as the kernel in the exact sequence 
$$0\ra A'\ra A\ra \pi_* (\cV |_\sG)\ra 0.$$
The quasi-isomorphism $\varphi$ induces a quasi-isomorphism
$$[A'\xrightarrow{d'} B] \stackrel{\varphi '}{\cong} \R\pi _* \cV (-\sG),$$ 
where $d'=d|_{A'}$. Hence there is a surjective ``connecting homomorphism"
$$\partial: B \twoheadrightarrow \op{coker} d' \stackrel{\stackrel{h^1(\varphi')}{\sim}}{\longrightarrow} \RR ^1\pi _* \cV (-\sG).$$
Moreover, for any $T\xrightarrow{g} S$, we have a corresponding
$\partial :B_T \twoheadrightarrow \RR ^1\pi _* \cV_T (-\sG)$ by the base-change property of $\RR ^1\pi _* \cV (-\sG)$.

\begin{lemma}\label{alpha and dw}
	Given $T\xrightarrow{g} S$, the composition
	$$ \op{Sym}(\pi_*\cV_T)\ot B_T \xrightarrow{\op{id}\ot\partial} \op{Sym}(\pi_*\cV_T)\ot \RR ^1\pi _* \cV_T (-\sG)\xrightarrow{dw_T} \cO_T$$
	coincides with the restriction of $\alpha^\vee_T$ to $\op{Sym}(\pi_*\cV_T)\ot B_T$.
\end{lemma}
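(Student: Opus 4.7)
The plan is to verify the asserted equality of $\cO_T$-linear maps by combining the admissibility structure of $\alpha^\vee$ with the polarization identity $\overline{dw}(v_1\cdots v_{d-1})(v_d)=\overline{w}(v_1\cdots v_d)$, using Serre duality as the bridge.

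First, I would reduce to $T=S$ by flat base change, which is valid since $\pi$ is flat of relative dimension one and $\cV$, $A$, $B$ are locally free. Next I would examine the restriction of $\alpha^\vee$ to $\op{Sym}(\pi_*\cV)\ot B$. Because $\pi_*\cV=\ker(d\colon A\to B)$, the Koszul differential $\delta$ in diagram \eqref{eq: alphachainmap} vanishes on $\op{Sym}^k(\pi_*\cV)$ as well as on $\op{Sym}^{k}(\pi_*\cV)\ot B$. Applying Condition \ref{compatible}, $Z$ restricted to $\op{Sym}(\pi_*\cV)$ becomes the explicit sum $\sum_i\pii\Sigma_i^*\varkappa_w\circ\op{natural}\circ\op{Sym}(\op{ev}_A^i)$, which is the concrete pairing with $w$ at the markings. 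Together these observations identify $(\alpha^\vee,\op{sum})|_{\op{Sym}(\pi_*\cV)\ot B}$ as a canonical cochain-level realization of the pairing with $w$ on the relevant subcomplex, forced by diagram \eqref{eq: alphachainmap} to model the derived morphism \eqref{eq: finalalpha}.

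In parallel, I would unfold the left-hand side via Serre duality. The surjection $\partial\colon B\twoheadrightarrow\RR^1\pi_*(\cV(-\sG))$ coming from the quasi-isomorphism $[A'\to B]\cong\RR\pi_*\cV(-\sG)$ is Serre-dual to the inclusion $\pi_*(\cV^\vee\ot\log)\hookrightarrow B^\vee$ obtained from the dual resolution. Hence $dw_T\circ(\op{id}\ot\partial)$ is the canonical cochain realization of the pairing $\op{Sym}\pi_*\cV\ot\RR^1\pi_*(\cV(-\sG))\to\cO_S$ induced by $\overline{dw}$ and the Grothendieck trace $\RR^1\pi_*\nolog\to\cO_S$. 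The polarization identity $\overline{dw}(v_1\cdots v_{d-1})(v_d)=\overline{w}(v_1\cdots v_d)$, which is just the statement that $dw$ is the differential of $w$, translates this pairing into the pairing with $\overline{w}$ built into the construction of $\alpha^\vee$, giving an equality of derived morphisms. Strict cochain-level equality on $\op{Sym}(\pi_*\cV)\ot B$ follows because the vanishing of $\delta$ on this subspace removes the usual ambiguity in choosing cocycle representatives: any two admissible choices of $\alpha^\vee$ agree there.

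The main obstacle is the Serre-duality bookkeeping. One must verify that $\partial^\vee$ is precisely the inclusion coming from the dual resolution of $\RR\pi_*\cV(-\sG)$, and track the polarization identity through $\pi_*$ with the correct normalization \eqref{eq: dual identification convention}, including the interaction with $\varkappa$ that identifies $\cL_\chi$ with $\log$. Once these technical compatibilities are pinned down, the lemma reduces to matching two tautological realizations of the same derived pairing with $\overline{dw}$.
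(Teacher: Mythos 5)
Your opening moves are sound and match the paper's own strategy: since the Koszul differential $\delta$ kills $\op{Sym}(\pi_*\cV_T)\ot B_T$ (because $d$ vanishes on $\pi_*\cV_T\subseteq A_T$), this subspace consists of cocycles of the complex representing $E$, so the restriction of $\alpha^\vee_T$ there is independent of the homotopy ambiguity and is computed by the induced map $H^1(E)\to\cO_T$ of \eqref{eq: finalalpha}. This is exactly the paper's map $\psi$ and the identification $\alpha^\vee_T|_{\op{Sym}(\pi_*\cV_T)\ot B_T}=H^1(E\to\cO_T)\circ\psi$.

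The gap is in the next step. You assert that $dw_T\circ(\op{id}\ot\partial)$ and $H^1(E\to\cO_T)\circ\psi$ are ``two tautological realizations of the same derived pairing,'' with the polarization/Euler identity and Serre duality doing the translation. That equality is precisely the commutativity of diagram \eqref{commutes!}, and it is not tautological: the morphism \eqref{eq: finalalpha} is built from $w$ via the symmetric products $\op{Sym}^d\fC$, the maps $f_d$ into $(\Delta_{d+1})_*\nolog$, and the Grothendieck trace, whereas $dw_T$ is built from $\overline{dw}$ and Serre duality, and $\partial$ is a connecting homomorphism for the twisted sequence $0\to\cV(-\sG)\to\cA'\to\cB\to0$. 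Relating them requires (i) Euler's theorem \eqref{eq: eulerthm}, which you have, but also (ii) a concrete compatibility between the cone structure of $E_d$, the multiplication $\op{Sym}^{d}\fC\times_T\fC\to\op{Sym}^{d+1}\fC$, the twist by $-\sG$, and the trace — this is the page-long diagram chase of Lemma \ref{commutes makes proper}, which you compress into ``Serre-duality bookkeeping'' without carrying it out. Moreover, that computation is only feasible for the particular resolution of Lemma \ref{ev at level C}, where $\partial$ has an explicit description; the paper then transfers the result to an arbitrary admissible resolution via the surjective roof of Lemma \ref{l:surjective}, a reduction your proposal omits. As written, the proposal restates the lemma at the derived level rather than proving it.
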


Granting the lemma, we can easily finish the proof of Proposition \ref{PV support}. Indeed, 
when $f$ factors as
$$T\ra \tot \pi_*\cV =Z(\beta) \subseteq \tot A,$$
we have $\sigma_f=f^*(\tau_{\pi_*\cV})$. 
By Lemma \ref{alpha and dw},  
$$\alpha^\vee_T ((\exp \sigma _f )\ot b) = 0 \Longleftrightarrow  dw_T ((\exp \sigma _f ) \ot \partial b ) =0$$ for 
every local section $b$ of the vector bundle $B_T$ on $T$. 
By the surjectivity of $\partial$, we conclude that 
\begin{equation} \label{eq: support equals LG}
Z(\alpha)\cap Z( \kb) = Z(dw_{\tot (\pi_*\cV )}(\exp \tau_{\pi_*\cV})),
\end{equation}
 as required.

\subsubsection{Proof of Lemma \ref{alpha and dw}}\label{a commuting diagram for support}

Let $T$ be a scheme over $S$. Since we will work exclusively over $T$, 
we drop the subscript $T$ from the curve $\cC_T$, the bundles $\mathcal V_T$, $B_T$, etc.

Consider the morphism \eqref{intermediate} and recall that $E$ is the shifted cone in $\dbcoh{T}$
\[E:=\op{Cone}(\op{Sym}\mathbb R \pi_* \cV \to \O_T^{r})[-1]
\]
and that \eqref{eq: finalalpha} provides a morphism 
$E \ra \cO _T [-1]$ in $\dbcoh{T}$.
Denote by $H^1(E\ra \cO _T)$ the induced morphism $H^1(E) \ra \cO _T$ on cohomology sheaves.

Given the resolution $[A\ra B]$ of Lemma \ref{alpha and dw}, there is a natural map 
\begin{equation}\label{map psi}
\psi : \op{Sym}(\pi_* \cV) \otimes B \ra H^1(E)
\end{equation}
defined as follows.
Represent $E$ as the complex 
$$
[\op{Sym} A \xrightarrow{d_1^E} (\op{Sym}A \ot B) \oplus \cO_S^r \xrightarrow{d_2^E} \dots ]
$$
and note that the inclusion 
$$
\op{Sym}(\pi_* \cV) \otimes B \subseteq \op{Sym}A \ot B \subseteq (\op{Sym}A \ot B) \oplus \cO_S^r 
$$
factors through $\ker d_2^E$. 
Composing with the quotient map $\ker d_2^E\ra H^1(E)$ gives the sheaf homomorphism $\psi$, independent of the choice of representative for $E$.

One easily checks that the composition $H^1(E\ra \cO_T)\circ \psi$ is precisely the restriction of 
$\alpha^\vee$ to $\op{Sym}(\pi_* \cV) \otimes B$. We are therefore reduced to proving that the diagram
\begin{equation}\label{commutes!}
\begin{tikzcd}
\op{Sym}(\pi_* \cV) \otimes B \ar[d, swap, "\op{Id} \ot \partial"]  \ar[r, "\psi"] & H^1(E)  \ar[d, "H^1(E\ra \cO_T)"] \\
\op{Sym} (\pi_* \cV ) \otimes  \mathbb R^1 \pi_* \cV (-\sG) \ar[r, swap, "dw_T"] & \O_T \\
\end{tikzcd} 
\end{equation}
is commutative.

We prove this, by first arguing that this only needs to be done for a particular resolution.  We do so below and provide the setup for a particular resolution which we show in Lemma~\ref{commutes makes proper} satisfies \eqref{commutes!}.  This is enough to deduce the statement of Proposition \ref{PV support}.  

Indeed, the maps $\partial$ and $\psi$, can be defined in the same way for {\it any} resolution which satisfies Condition 1 (but which is not necessarily admissible).
If there exists one particular resolution $[A_0\ra B_0]$ of $ \RR^1\pi_* \cV$ which satisfies Condition 1 and for which 
the diagram \eqref{commutes!} commutes, then the corresponding diagram commutes for any 
other resolution $[A\ra B]$ satisfying Condition 1. Indeed, by Lemma~\ref{l:surjective} (proven below)
there exists a resolution $[\widetilde{A}\ra \widetilde{B} ]$, together with a {\it surjective} quasi-isomorphism 
$[\widetilde{A} \ra \widetilde{B} ]\to[A \ra B]$, and a {\it surjective} quasi-isomorphism 
$[\widetilde{A}\ra \widetilde{B}] \ra [A_0 \ra B_0]$.
This induces the following commuting diagram
\[ 
\begin{tikzcd}  &  \widetilde{B}  \ar[d, twoheadrightarrow, "\partial"] \ar[dl, twoheadrightarrow] \ar[dr, twoheadrightarrow] & \\
B_0 \ar[r, twoheadrightarrow, "\partial"]& \RR^1\pi_* \cV (-\sG) & B \ar[l, twoheadrightarrow, "\partial"]\\
\end{tikzcd}
\]
From these, the claim follows immediately.  It remains to construct one resolution for which \eqref{commutes!} commutes.

Let $0 \to \cV \to [\mathcal A \to \mathcal B] \to 0$ be a resolution of $\mathcal V$ by vector bundles such that $\mathbb R^1 \pi_*\mathcal A = \mathbb R^1 \pi_* \mathcal B  = 0$.  Assume further that the map $\cV \to \cV|_{\sG}$ extends to a map $\cA \to \cV|_{\sG}$, 
with $\pi$-acyclic kernel. By Lemma \ref{ev at level C} such a resolution always exists, and $[\pi_*\cA\ra\pi_*\cB]$ is a resolution of 
$\RR\pi_*\cV$ which satisfies Condition 1.

Let $\mathcal A'$ be the kernel of the map $\cA \to \mathcal V|_{\sG}$.  Then, the snake lemma gives the following diagram:
\begin{equation}
\begin{tikzcd}
&  0 \arrow[d] & 0  \arrow[d] & 0  \arrow[d]&  \\
0 \arrow[r] & \mathcal V(-\sG) \arrow[r] \arrow[d] & \mathcal A' \arrow[r] \arrow[d] & \mathcal B \arrow[r]  \arrow[d]& 0 \\
0 \arrow[r] & \mathcal V \arrow[r] \arrow[d] & \mathcal A \arrow[r]  \arrow[d] & \mathcal B \arrow[r] \ar[d] & 0 \\
0 \ar[r] & \mathcal V|_{\sG} \arrow[d] \arrow[r] &   \mathcal V|_{\sG} \arrow[d] \ar[r] &  0 \\
& 0 & 0 &  &
\end{tikzcd}
\label{snake}
\end{equation}

\noindent
Hence the short exact sequence
\[
0 \ra \mathcal{V}  (-\sG) \ra\mathcal A' \ra \mathcal{B} \ra 0
\]
induces the connecting homomorphism,
\[
\partial  \colon   \pi_* \cB \to  \mathbb R^1 \pi_*\cV (-\sG ).
\]
Note that $\partial$ is surjective by the vanishing $\mathbb{R}^1\pi_*\cA'=0$.

\begin{lemma}\label{commutes makes proper}
	In the situation above, the following diagram is commutative:
	\begin{equation}
	\begin{tikzcd}
	\op{Sym}(\pi_* \cV) \otimes \pi_*\cB \ar[d, swap, "{\op{Id} \ot \partial }"]  \ar[r, "\psi"] & H^1(E)  \ar[d, "H^1(E\ra \cO_T)"] \\
	\op{Sym} (\pi_* \cV ) \otimes  \mathbb R^1 \pi_* \cV (-\sG) \ar[r, swap, "dw_T"] & \O_T \\
	\end{tikzcd} 
	\label{eq: commutes makes proper}
	\end{equation} 
\end{lemma}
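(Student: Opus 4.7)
The plan is to verify the commutativity of \eqref{eq: commutes makes proper} by producing explicit cochain-level representatives for both the map $H^1(E \to \cO_T)$ and the map $dw_T \circ (\op{Id}\ot \partial)$, and then comparing them on a general pure tensor $\sigma_1 \cdots \sigma_k \ot b$ with $\sigma_j \in \pi_*\cV$ and $b \in \pi_*\cB$. The essential idea is that both compositions amount to computing the same element of $\mathbb R^1\pi_*\nolog$, once passed through the Grothendieck trace.

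First, I would choose a $\pi$-acyclic resolution $\nolog \hookrightarrow [\cN_0 \to \cN_1]$ so that $\mathbb R\pi_*\nolog$ is represented by $[\pi_*\cN_0 \to \pi_*\cN_1]$ and the trace is realized by a concrete surjection $\pi_*\cN_1 \twoheadrightarrow \cO_T$. Using the resolution $0 \to \cV \to \cA \to \cB \to 0$ on $\cC$ and its consequence $\mathbb R\pi_*\cV \cong [\pi_*\cA \to \pi_*\cB]$, the cone $E$ admits an explicit cochain-level description as $[\op{Sym}(\pi_*\cA) \xrightarrow{(\delta,Z)} \op{Sym}(\pi_*\cA)\ot \pi_*\cB \oplus \cO_T^r \to \dots]$, and the morphism $E \to \mathbb R\pi_*\nolog[-1]$ comes from lifting the composition $\op{Sym}\mathbb R\pi_*\cV \to \mathbb R\pi_*\op{Sym}\cV \xrightarrow{\varkappa_w} \mathbb R\pi_*\log$ to the chain level using $\varkappa_w : \op{Sym}\cV \to \log$. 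Under the chosen representatives, $\psi(\sigma_1\cdots \sigma_k \ot b)$ sits in $\op{Sym}(\pi_*\cA)\ot \pi_*\cB$, and its image in $\pi_*\cN_1$ is obtained by: lifting $b \in \pi_*\cB$ to a \v Cech $1$-cocycle with values in $\cA'$ using the top row of \eqref{snake}, applying $\varkappa_w$ to $\sigma_1\cdots\sigma_k \ot (\text{lift of }b)$, and pushing forward.

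Second, I would compare this to the other route. By the snake lemma applied to \eqref{snake}, the connecting morphism $\partial : \pi_*\cB \to \mathbb R^1\pi_*\cV(-\sG)$ is computed by exactly the same \v Cech recipe: lift $b$ to a local section of $\cA'$, and take its coboundary in $\cV(-\sG)$. Granting this, the two sides of \eqref{eq: commutes makes proper} differ only by the comparison between $\bar w$ evaluated on a tensor with one factor in $\cV(-\sG)$ and the contraction of $\bar{dw}$ with the same factor --- which is exactly the statement that, under the pairing conventions of \eqref{eq: dual identification convention} and the interpretation of $\exp\tau_{\pi_*\cV}$ explained in \S\ref{Z dw}, the linearization of $\bar w$ at a symmetric product of sections of $\cV$ is $\bar{dw}$ paired with the symmetric product. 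Applying Serre duality to identify $\mathbb R^1\pi_*\nolog$ with $\cO_T$ via the trace completes the identification.

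I expect the main obstacle to be the careful bookkeeping of normalization constants and sign conventions: specifically, matching the symmetric-power pairing in \eqref{eq: dual identification convention}, the exponential expansion of the tautological section used to define $dw_T$ in \S\ref{Z dw}, and the explicit cochain lifts coming from \eqref{snake}. Once these are reconciled, commutativity of \eqref{eq: commutes makes proper} reduces to a direct \v Cech-level computation that both compositions produce the same element of $\mathbb R^1\pi_*\nolog$, namely the Serre dual of $\bar{dw}(\sigma_1\cdots\sigma_k)$ evaluated on $\partial b$.
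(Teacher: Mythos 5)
Your proposal is correct in substance but reaches the conclusion by a different route than the paper. The paper never chooses cocycle representatives: it first records the identity $w\circ\op{m}=dw$ on $\Gamma$-representations (Euler's homogeneous function theorem --- this is exactly where the factorial convention \eqref{eq: dual identification convention} and the hypothesis that $w$ vanishes at $0$ enter), then promotes this identity to a commutative diagram of sheaves on the symmetric products $\op{Sym}^{d+1}\mathfrak C$ built from the multiplication maps $m_d$ and the diagonals $\Delta_d$, passes to kernels of the restrictions at the markings so as to land in $\nolog$, and applies $\R(\pi_{d+1})_*$ and $H^1$; commutativity of \eqref{eq: commutes makes proper} is then read off functorially. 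You instead evaluate both compositions on explicit representatives: lift $b$ locally to sections $\tilde b_i$ of $\cA'$ (note this is a $0$-cochain whose coboundary is a $1$-cocycle in $\cV(-\sG)$ representing $\partial b$ --- your second paragraph has this right, your first paragraph misstates it as a $1$-cocycle valued in $\cA'$), and then compare $\varkappa\circ\bar w\bigl(\sigma\cdot(\tilde b_i-\tilde b_j)\bigr)$ with $\bigl\langle \varkappa\circ\overline{dw}(\sigma),\,\tilde b_i-\tilde b_j\bigr\rangle$ as classes in $\R^1\pi_*\nolog$ before applying the trace. This comparison is precisely the Euler/polarization identity you call ``linearization,'' so the argument does close up. The paper's route buys freedom from acyclic resolutions of $\nolog$ and from cocycle bookkeeping, with all normalizations absorbed once into \eqref{eq: eulerthm}; yours buys a more concrete, hands-on verification. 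Two points need care in your version: (i) the morphism $E\to\R\pi_*\nolog[-1]$ is not an arbitrary chain-level lift of \eqref{intermediate} --- the dashed arrow in \eqref{eq: mapofcones} is not unique, and the lemma concerns the specific morphism \eqref{eq: finalalpha} built from $f_d$, so your cochain representative must be the one induced by $f_d$; (ii) since $\cC_T$ is a twisted curve, ``\v Cech'' must be interpreted via the acyclic-resolution calculus of \eqref{smooth cech cover} rather than affine covers, which your choice of $\pi$-acyclic resolutions does accommodate.
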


\begin{proof}
First, notice that by Euler's Homogeneous Function Theorem\footnote{For a homogeneous function $f$ of degree $t+1$, the equality $(t+1) f = \sum x_i \partial_i f$ holds.  To apply this here, we use the fact that $w$ has no constant term (see Definition~\ref{dfn: hybrid model}).  In addition, we use the convention described in Equation~\eqref{eq: dual identification convention} which ensures the proper scaling for the above commutativity.}, the following diagram commutes,
	\begin{equation}
	\begin{tikzcd}
	\op{Sym} V \otimes V \ar[r, "dw"] \ar[d, swap, "\op{m}"] & \C_\chi \\
	 \op{Sym} V \ar[ur, "w"] &
	\end{tikzcd}
	\label{eq: eulerthm}
	\end{equation}
	where 
	\[
	\op{m}( (a_1 \otimes ... \otimes a_t) \otimes v) :=   a_1 \otimes ... \otimes a_t \otimes v .
	\]

Recall that, working over $T$, we define $\op{Sym}^d \mathfrak C := [\overbrace{\mathfrak C \times_T ... \times_T \mathfrak C}^{d-times} / S_d ]$.
Define a map
\[
m_d: \op{Sym}^d \mathfrak C \times_T  \mathfrak C \to \op{Sym}^{d+1} \mathfrak C.
\]
In what follows, consider $\cV^{\boxtimes d+1}$ on $ \op{Sym}^d \mathfrak C \times_T  \mathfrak C$  with the $S_d$-equivariant structure given by permuting the first $d$-terms and as a $S_{d+1}$-equivariant sheaf on $\op{Sym}^{d+1} \mathfrak C$ by permuting all terms.  Similarly, the sheaf $\cV^{\otimes d}$ may be consider as a sheaf on $\mathfrak C / S_d$ where the $S_d$-equivariant structure comes from permuting all terms.  From this we get a commutative diagram of sheaves on $ \op{Sym}^{d+1} \mathfrak C$:
\[
\begin{tikzcd}
(m_d)_* \cV^{\boxtimes d} \boxtimes \cV(-\sG) \ar[r] \ar[d]  &  (m_d)_*((\Delta_d)_*\cV^{\otimes d}  \boxtimes \cV(-\sG) ) \ar[d] & \\
(m_d)_* \cV^{\boxtimes d+1} \ar[r] \ar[d]  &  (m_d)_*((\Delta_d)_*\cV^{\otimes d}  \boxtimes \cV) \ar[d] & \\
\cV^{\boxtimes d+1} \ar[r]  & (\Delta_{d+1})_* \cV^{\otimes d+1} \ar[d] & \\
& (\Delta_{d+1})_* \log \ar[r] & (\sigma_{d+1})_* \log \\
\end{tikzcd}
\]

Notice that the maps from the top two terms in the diagram to the last term are zero.  Hence, we may replace the rest of the diagram by kernels of the resulting 5 maps to $ (\sigma_{d+1})_* \log$ to get a diagram
\[
\begin{tikzcd}
(m_d)_* \cV^{\boxtimes d} \boxtimes \cV(-\sG) \ar[r] \ar[d]  &  (m_d)_*((\Delta_d)_*\cV^{\otimes d}  \boxtimes \cV(-\sG) ) \ar[d] & \\
 \op{ker}_1 \ar[r] \ar[d]  &   \op{ker}_2 \ar[d]  \\
 \op{ker}_3 \ar[r] \ar[dr, "f_d"] &  \op{ker}_4 \ar[d]  \\
& (\Delta_{d+1})_* \nolog \\
\end{tikzcd}
\]

Notice that after applying $(\pi_d)_*$ the right hand side of the diagram corresponds to a sequence of functors applied to $w \circ m = dw$ from \eqref{eq: eulerthm}.  Hence the map from top to bottom pushes forward to the $d^{\op{th}}$ component of $dw$.

Applying $\R(\pi_d)_*$, therefore leads to  the following commutative diagram:
\[
\begin{tikzcd}
\op{Sym} \pi_*\cV \otimes B[-1] \ar[d] & \\
\op{Sym} \pi_*\cV \otimes \RR\pi_* \cV(-\sG)\ar[r] \ar[d]  &  \pi_* \op{Sym} \cV \otimes \RR\pi_* \cV(-\sG) \ar[d] & \\
C(\op{Sym} \RR\pi_* \cV \otimes \RR\pi_* \cV) \ar[r] \ar[d]  &   C(\RR\pi_*( \op{Sym} \cV \otimes \cV)) \ar[d]  \\
C(\op{Sym} \RR\pi_* \cV) \ar[r]  \ar[dr, "{\sum_{d=0}^{d_0} \R(\pi_d)_* f_d}"] \ar[ddr, swap, "{(\alpha^\vee, \op{sum})}"] & C(\RR\pi_*( \op{Sym} \cV) ) \ar[d]  \\
& \RR\pi_* \nolog  \ar[d, "{\op{H}^1}"] & \\
   & \O_T[-1] & \\
\end{tikzcd}
\]
where $d_0$ is the top degree of $w$ and for each complex $D$, we denote by $C(D)$ the mapping cone of the map $D \to \O_T^r$.
	Applying $\op{H}^1$ to the above, the left path becomes the top of the diagram in the statement of the lemma.  	The right path becomes the bottom.
\end{proof}

\begin{lemma}\label{l:surjective} Let $E_\bullet$ and $F_\bullet$ be quasi-isomorphic 2-term complexes of vector bundles over a smooth Deligne--Mumford stack with quasi-projective coarse moduli space.
There exists a roof diagram
\begin{equation}\label{e:roofdiagram}
\begin{tikzcd}
& \bigg [ \bar G_{1}  
 \ar[ld, twoheadrightarrow, swap] \ar[r, "d_{\bar G}"] \ar[rd, twoheadrightarrow] & \bar G_{0} \bigg ] \ar[rd, twoheadrightarrow] \ar[ld, twoheadrightarrow, crossing over, near end, swap] 
& \\
\bigg [ E_{1} \ar[r, "d_E"] & E_{0} \bigg ]  & \bigg [ F_{1} \ar[r, "d_F"]  
  & F_{0} \bigg ]
\end{tikzcd}
\end{equation}
realizing the quasi-isomorphism at the cochain level, where $\bar G_{1}$ and $\bar G_{0}$ are vector bundles and all diagonal arrows are surjective.
\end{lemma}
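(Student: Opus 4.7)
The strategy is to realize the quasi-isomorphism by an honest chain map, then enlarge $E_\bullet$ by a contractible two-term complex whose role is to absorb the defect in surjectivity of the map to $F_\bullet$.

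First, since $E_\bullet$ is a bounded complex of locally free sheaves of finite rank on the ambient smooth Deligne--Mumford stack with quasi-projective coarse moduli, it is h-projective. Consequently the given isomorphism between $E_\bullet$ and $F_\bullet$ in the derived category is represented by an honest chain map $\phi\colon E_\bullet \to F_\bullet$ that is a quasi-isomorphism. By the resolution property of $X$, choose a vector bundle $V$ on $X$ together with a surjection $\alpha\colon V \twoheadrightarrow F_1$. Then set
\[ \bar G_1 := E_1 \oplus V, \qquad \bar G_0 := E_0 \oplus V, \qquad d_{\bar G}(e_1,v) := (d_E e_1,\, v). \]
This is a two-term complex of vector bundles, and the projection $\pi_E\colon \bar G_\bullet \to E_\bullet$ onto the first summand in each degree is a surjective chain map; it is a quasi-isomorphism because its kernel is the contractible complex $[V \xrightarrow{\mathrm{id}} V]$.

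Now define $\pi_F\colon \bar G_\bullet \to F_\bullet$ by
\[ (e_1, v) \mapsto \phi_1(e_1) + \alpha(v), \qquad (e_0, v) \mapsto \phi_0(e_0) + d_F\alpha(v). \]
The chain-map condition is immediate from $d_F \phi_1 = \phi_0 d_E$. Surjectivity onto $F_1$ is built in via $\alpha$. Surjectivity onto $F_0$ uses that $\phi$ is a quasi-isomorphism: $H_0(\phi)$ surjective gives $\phi_0(E_0) + \mathrm{im}(d_F) = F_0$, and $\mathrm{im}(d_F) = d_F\alpha(V)$ since $\alpha$ is surjective. Finally, the explicit homotopy $h_0(e_0,v):=\alpha(v)$, $h_1:=0$ exhibits a chain homotopy $\pi_F \sim \phi\circ \pi_E$, so $\pi_F$ is also a quasi-isomorphism and the resulting roof realizes the prescribed morphism $\phi$ in $D(X)$.

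The only nontrivial ingredient beyond formal homological algebra is the resolution property, which is exactly what the hypothesis on $X$ provides. The conceptual point worth flagging is that once one insists on surjectivity onto $F_1$ through $\alpha$, surjectivity onto $F_0$ comes for free from the quasi-isomorphism hypothesis on $\phi$; this is what lets the construction stay within two-term complexes rather than needing a three-term mapping cylinder or path space. Applying the same argument with the roles of $E_\bullet$ and $F_\bullet$ reversed, or simply noting that the roles are symmetric in the roof diagram \eqref{e:roofdiagram}, completes the proof.
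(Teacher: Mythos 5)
There is a genuine gap at the very first step. You assert that $E_\bullet$, being a bounded complex of vector bundles, is h-projective, and hence that the given isomorphism in the derived category is represented by an honest chain map $\phi\colon E_\bullet\to F_\bullet$. Neither claim holds over a non-affine base: vector bundles are not projective objects in quasi-coherent sheaves, and a morphism in the derived category between two-term complexes of vector bundles need not be realizable by any chain map in either direction. A concrete counterexample on $\mathbb{P}^1$: take $E_\bullet=[\mathcal{O}(-1)\to\mathcal{O}]$ and $F_\bullet=[\mathcal{O}(-3)\to\mathcal{O}(-2)]$, with differentials given by sections vanishing at a point $p$, so both resolve the skyscraper $\mathcal{O}_p$ and are quasi-isomorphic. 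Any chain map $E_\bullet\to F_\bullet$ has degree-zero component in $\mathrm{Hom}(\mathcal{O},\mathcal{O}(-2))=0$, hence induces zero on cokernels and cannot realize the isomorphism. So the quasi-isomorphism exists only as a roof, and producing a two-term roof whose apex consists of vector bundles is precisely the content you have skipped. The paper supplies it by starting from an arbitrary roof $E_\bullet\leftarrow H_\bullet\to F_\bullet$, truncating $H_\bullet$ to a two-term complex $K_\bullet$ (the maps to both ends descend because $E_\bullet$ and $F_\bullet$ are concentrated in two degrees), choosing a vector bundle $G_0\twoheadrightarrow K_0$, and recognizing the resulting $G_1$ as the kernel of the surjection of vector bundles $G_0\oplus E_1\to E_0$, which forces $G_1$ to be a vector bundle.

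The second half of your argument is correct: given an honest chain-map quasi-isomorphism $\phi$, adjoining the contractible summand $[V\xrightarrow{\mathrm{id}}V]$ with $V\twoheadrightarrow F_1$, checking the chain-map identity, deducing surjectivity onto $F_0$ from surjectivity of $\phi$ on cokernels, and exhibiting the homotopy $\pi_F\sim\phi\circ\pi_E$ all go through, and this is essentially the same device as the paper's final step (where the contractible summands are $E_1$ and $F_1$ and the legs are corrected by explicit homotopies). Your proof would be complete if the false h-projectivity claim were replaced by the roof-straightening construction above; as written, it fails at step one.
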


\begin{proof}
First, there exists a roof diagram \[ E_\bullet \xleftarrow{}  H_\bullet \xrightarrow{} F_\bullet,\]
such that both maps of complexes of coherent sheaves are quasi-isomorphisms.  Replacing $H_\bullet$ with the canonical 2-term truncation \[
K_\bullet = \bigg [ K_1 \to K_0 \bigg ] 
\] yields a quasi-isomorphic complex which still maps to $E_\bullet$ and $F_\bullet$.  There exists a surjection onto $K_{0}$ by a locally free coherent sheaf $G_{0}$.  

Now, the mapping cone of the morphism of complexes
$[0 \to G_{0}] \to [K_{1} \to K_{0}]$ is the complex
\[
[G_{0} \oplus K_{1} \to K_{0}].
\]
Since the map from $G_{0} \oplus K_{1}$ to $K_{0}$ is surjective, this complex has a single cohomology group which we denote by $G_{1}$.  This yields a complex
\[
G_\bullet := [G_{1} \to G_{0}]
\]
and a quasi-isomorphism
\[
G_\bullet \to K_\bullet.
\]

Since the map $K_\bullet \to E_\bullet$ is also a quasi-isomorphism,
the mapping cone of the morphism $[0 \to G_{0}] \to [E_{1} \to E_{0}]$ also has a single cohomology group, namely $G_{1}$.  Hence, the vector bundle $G_{1}$ can also be realized as the kernel of the surjective morphism of vector bundles
\[
G_{0} \oplus E_{1} \to E_{0}.
\]
It follows that $G_{1}$ is also a vector bundle.

This realizes the isomorphism $E_\bullet$ with $F_\bullet$ in the derived category, by a roof diagram 
\begin{equation}\label{e:roofdiagram2} E_\bullet \xleftarrow{ e_\bullet}  G_\bullet \xrightarrow{ f_\bullet} F_\bullet
\end{equation} where $G_\bullet$ is a two-term complex of vector bundles.  It is left to show that we can modify this diagram so that the morphisms of cochain complexes consist of surjective maps from $G_i$.

Let 
\[\bar G_\bullet := G_{1} \oplus E_{1} \oplus F_{1} \xrightarrow{d_{\bar G} = (d_G, \op{id}_{E_{1}}, \op{id}_{F_{1}})} G_{0} \oplus E_{1} \oplus F_{1}.\] Then we can replace \eqref{e:roofdiagram2} with 
\[ E_\bullet \xleftarrow{\bar e_\bullet} \bar G_\bullet \xrightarrow{\bar f_\bullet} F_\bullet,\]
where $\bar e_i = e_i \circ \pi_{0}$ is the composition of $e_i$ with the projection onto $G_i$, and $\bar f_i$  is defined similarly.  Finally, if we define 
\[\hat e_{1} = (e_{1} + \op{id}_{E_{1}}) \circ \pi_{12} \text{ and } \hat e_{0} = (e_{0} + d_E)\circ \pi_{12},\]
where $\pi_{12}$ is projection onto $G_i \oplus E_{1}$, the map $\hat e_\bullet:\bar G_\bullet \to E_\bullet$ is homotopic to $\bar e_\bullet$.  
It is clear that $\hat e_{1}$ is surjective.  Since $e_\bullet$ maps the cokernel of $G_\bullet$ onto the cokernel of $E_\bullet$, every element of $E_{0}$ must be contained in $\op{im}(d_E) + \op{im}(e_{0})$, so $\hat e_{0}$ is surjective as well.

  The analogous construction of $\hat f_\bullet$ gives the desired diagram \eqref{e:roofdiagram}.
\end{proof}

With the above lemmas proven, we have concluded the proof of Proposition \ref{PV support}.

\subsection{$R$-charge equivariance}\label{s:rigev}
Recall that $V=\oplus_{\eta\in \widehat{\CC^*_R}}V_\eta$ comes with a {\it lower} grading, induced by the $R$-charge action. 
Since the actions of $\CC^*_R$ and $G$ commute by assumption, each $V_\eta$ is a $G$-invariant subspace. Setting 
$\cV_\eta:=P\times_\Gamma V_\eta$, it follows that the vector bundle $\cV$ on $\cC$ has the induced 
grading $\cV =\oplus_{\eta\in \widehat{\CC^*_R}}\cV_\eta$, and similarly
$\RR\pi_*\cV =\oplus_{\eta\in \widehat{\CC^*_R}}\RR\pi_*\cV_\eta$. 

We refine the discussion of admissible resolutions by imposing that the lower grading is respected.
In other words, we work with the derived categories 
$\dbcoh {[\cC/\CC^*_R]}$ and $\dbcoh {[S/\CC^*_R]}$, where $\cC$ and $S$ are given the trivial $\CC^*_R$-actions.
This means that our admissible resolutions $[A\xrightarrow {d}B]\cong \RR\pi_*\cV$ 
will have terms $A=\oplus_{\eta\in \widehat{\CC^*_R}}A_\eta$ and $B=\oplus_{\eta\in \widehat{\CC^*_R}}B_\eta$, the differential $d$
will be $\CC^*_R$-equivariant (i.e., of degree zero with respect to the lower grading), and each
$[A_\eta\to B_\eta]$ will be a resolution of $\RR\pi_*\cV_\eta$. Conditions \ref{assume ev chainlevel}, \ref{assume alpha chainlevel}, and \ref{compatible} will now be required to hold in $\dbcoh {[S/\CC^*_R]}$.

Hence, Condition \ref{assume ev chainlevel} will require in addition that $\op{ev}_A : A\ra \pi_*(\cV|_{\sG})$ has degree zero with respect to the lower grading. 

To formulate the equivariant Condition 
\ref{assume alpha chainlevel}, recall first that $\eta_{\bdeg}:\CC^*_R\ra \Gamma\xrightarrow{\chi} \CC^*$ is the character $t\mapsto t^{\bdeg}$ of
$\CC^*_R$. Therefore, as an object of $\dbcoh {[\cC/\CC^*_R]}$, the line bundle
$\cL_\chi$ has $\CC^*_R$-weight $\bdeg$. Then $\varkappa:\cL_\chi\ra\log$ of \eqref{kappa_w} can be viewed as an isomorphism in 
$\dbcoh {[\cC/\CC^*_R]}$ by placing $\log$ in lower degree $\eta_\bdeg$ as well, i.e., by changing the target to $\log\ot\CC({\eta_\bdeg})$.
Similarly, by considering equivariant sheaf  $\cO_S({\eta_\bdeg})$ as the target
of \eqref{eq: finalalpha} we get a morphism in 
$\dbcoh {[S/\CC^*_R]}$. Condition \ref{assume alpha chainlevel} will now require that $\alpha$ realizes this morphism at cochain level in $\dbcoh {[S/\CC^*_R]}$.

The compatibility of Condition \ref{compatible} becomes the equality of two $\CC^*_R$-equivariant maps.

Proposition \ref{prop: satisfy all} (and similarly for Proposition \ref{prop: satisfy all geometric}) gives the existence of $\CC^*_R$-equivariant resolutions under the same assumptions and with the same proof (we take the bundles $\cE$ and $\cO(1)$ in Lemma \ref{lem: cohomology vanishing} to have $\CC^*_R$-weight equal to zero). 
If $[A\xrightarrow{d} B]$ is a $\CC^*_R$-equivariant admissible resolution, then  $\op{tot} (A)$ has the natural $\CC^*_R$-action which is trivial on the base $S$ and acts fiber-wise according to the decomposition $A=\oplus_{\eta\in \widehat{\CC^*_R}}A_\eta$; the projection
$p\colon \op{tot} (A)\to S$ is equivariant. 

The stacks $[V/G]$ and $I[V/G]$ have the natural residual $\CC^*_R$-action, for which the inertia map $I[V/G]\to [V/G]$ is equivariant. 
On either $[V/G]$, or on its inertia stack, the superpotential $w$ is naturally a section of $\cO(\eta_\bdeg):=\cO \ot \CC(\eta_\bdeg)$.

In addition, the stack $\op{tot}(\pi_* (\cV|_{\sG_i}))$ has the (fiber-wise) $\CC^*_R$-action coming from the decomposition $\cV =\oplus_{\eta\in \widehat{\CC^*_R}}\cV_\eta$. By its construction, the evaluation map 
$$\op{ev}^i_\cV: \op{tot}(\pi_* (\cV|_{\sG_i})\to I[V/G]$$
of \eqref{eval V} is $\CC^*_R$-equivariant. 
It follows that the composition
$$\op{ev}^i \colon  \op{tot} (A) \xrightarrow{\op{ev}_A^i} \op{tot}(\pi_* (\cV|_{\sG_i})  \xrightarrow{\op{ev}^i_\mathcal V}  I[V/G]$$
is $\CC^*_R$-equivariant. 
We also have the induced $\CC^*_R$-equivariant total evaluation map
$$\op{ev} \colon  \op{tot} (A) \to (I[V/G])^r,$$
where the target has the diagonal action of $\CC^*_R$. By a slight abuse, from now on we will not distinguish notationally the
$\CC^*_R$-equivariant maps between spaces with action and the induced maps between the respective stack quotients by $\CC^*_R$, i.e., we will also write
\begin{equation}\label{e:evtwidle}
\op{ev} \colon  [\op{tot} (A)/\CC^*_R] \to [(I[V/G])^r/\CC^*_R].\end{equation}
When such ambiguity occurs, the context should make clear which map is meant.

The $\CC^*_R$-equivariant admissible resolution $[A\xrightarrow{d} B]$ gives a $\CC^*_R$-invariant section
$\beta \colon \cO_{\op{tot}(A)}\to p^*B$ and a $\CC^*_R$-invariant cosection
$\alpha \colon  p^*B\ot \cO_{\op{tot}(A)}(\eta^{-1}_\bdeg) \to \cO_{\op{tot}(A)}$, 
satisfying
$$\alpha^\vee \circ (\beta\ot \op{id}_{\cO_{\op{tot}(A)}(\eta^{-1}_\bdeg)})= (\sum_{i=1}^r\op{ev}^i)^* w\in H^0(\op{tot}(A),\cO_{\op{tot}(A)}(\eta_\bdeg)).$$
This yields a PV factorization 
\[
\{-\alpha,\beta\} \in \op{D}([\op{tot} (A)/\CC^*_R], -(\sum_{i=1}^r\op{ev}^i)^* w)_{[Z(dw_{\tot (\pi_*\cV)}(\exp \tau_{\pi_*\cV})) / \CC^*_R]}.
\]

\section{Construction of a projective embedding}\label{s:conFF}
In this section, we aim to construct a Deligne--Mumford stack $S$ over the Artin stack $\Mfrak^\orb_{g,r}(B\Gamma)_{\log}$ such that it is equipped with an admissible resolution and such that the PV factorization is supported on $LG_{g,r}(\cc Z, d) = Z(\alpha,\beta)$.
For the moment, we can only do it for hybrid models $$(V=V_1\oplus V_2, G, \CC^*_R, \theta, w),$$
(see \S\ref{ss: hybrid models}). We recall the notations $\cX=[V_1 /\!\!/_\theta G]$ and $\cT=[\VmodtG]$ for the GIT quotients stacks, and $\cZ=Z(dw)$ for the degeneracy locus.
We emphasize that, unless $G$ is finite,
$$\cc X \neq [V_1/G] \quad \textrm{and} \quad \cc T \neq [V/G].$$
Moreover, we recall that the projection map
$\cT\xrightarrow q \cX$ realizes $\cT$ as a total space $\cT=\op{tot}\cE$, where $\cE$ is a vector bundle on $\cX$ with fiber $V_2$.

Subject to an additional technical condition made explicit in \S\ref{ss:convexity} below,
we construct the moduli space $\square = \square_{g,r,d}$ described in \S\ref{s:plan}.
The general machinery of admissible resolutions and PV factorizations from \S\ref{s:PV machinery} will then
produce a fundamental factorization 
\[K=K_{g,r, d} \in \dabsfact{[\square/\CC^*_R],  - \op{ev}^{*} \boxplus_{i = 1}^r w},\]
which we shall use to define the CohFT of the GLSM hybrid model.

\subsection{Convexity}\label{ss:convexity}
Given a hybrid GLSM $(V = V_1 \oplus V_2, G, \CC^*_R, \theta, w)$, note that by definition $\GJ$ acts trivially on $V_1$, thus 
$$G_1:=G/\GJ = \Gamma / \CC ^*_R$$ acts on $V_1$.
Introduce the rigidification of $\cc X$:
$$\cX^{\op{rig}}:=\VGJ.$$
We assume here that $\theta$ is in the image of $\widehat{G_1}\to \widehat{G}$, which can always be arranged after replacing $\theta$ by an appropriate power.

\begin{definition}\label{d:convexity}
Consider a Deligne--Mumford stack $\cM$ over $BG_1$ and assume that $\cM \ra BG_1$ is representable and smooth.
The stack $\cM$ is called \newterm{convex over $BG_1$} if for any representable morphism $f:\cc C \to \cM$ 
from a genus zero orbi-curve $\cc C$ (with any number of markings)  over $\mathrm{Spec} (\mathbb{C})$ we have \[ H^1( \cc C, f^*T_{\cM /BG _1}) = 0 . \]
\end{definition}

\begin{remark} 
\begin{enumerate}
\item Upper semi-continuity: the dimension of the vector space $H^1( \cc C, f^*T_{\cM /BG _1})$ is an upper semi-continuous function of the point $[f \colon \cc C \to \cM]$ in the moduli space of stable maps.

\item If $\cM$ is a smooth variety: the usual definition of \newterm{convexity} (over $\Spec(\CC)$) for a smooth variety $X$ is when for any morphism $f: \cc C \to X$ from a genus zero curve $\cc C$, we have $H^1( \cc C, f^* T_X) = 0$.
If a space $\cM$ as in Definition \ref{d:convexity} is a smooth variety and if the group $G_1$ is abelian, then convexity over $BG_1$ is equivalent to usual convexity, as can be seen from the distinguished triangle of tangent complexes for the morphism $\cM\ra  BG_1$. 

\item  If $G_1$ is abelian: in that case, a Deligne--Mumford stack $\cM$ is convex over $BG_1$ if and only if it is convex over $\Spec(\CC)$.
The reason is as follows. On $\cM$, there is an exact sequence of locally free sheaves: $0 \ra \cO _{\cM}^{\rank G} \ra T_{\cM /BG} \ra T_{\cM/\Spec (\CC)} \ra 0$.
Hence, it is enough to show that $H^1(\cc C, \cO_{\cc C}) = 0$ for a genus zero orbicurve. It is obvious since
$H^1(\cc C, \cO_{\cc C}) = H^1(\underline{\cc C}, \cO_{\underline{\cc C}}) = 0 $, where $\underline{\cc C}$ denotes the coarse moduli of $\cc C$.

\item If 
$G_1$ is non-abelian: convexity over $BG_1$ is a priori a stronger condition than convexity over 
$\Spec(\CC)$, though in familiar examples of GIT quotients of the form $[V_1 /\!\!/_\theta G_1]$, such as Grassmannians, both conditions hold.

\item The Orbifold case: to prove convexity in the presence of orbifold structure on $\cM$, it is not sufficient to check the condition from Definition \ref{d:convexity} for maps from orbi-curves with at most two markings.
For instance, consider a smooth projective elliptic curve $M$ with a nontrivial $\mu_2$-action.
Then the stack quotient $\cM$ be $[M/\mu_2]$ has 4 orbifold $B\mu_2$ points. 
If $\cc C$ is an irreducible orbicurve with at most two markings then there is no nontrival representable map from $\cc C$ to $\cM$. This can be seen by observation that the induced map 
$\cc C\times _{\cM} M \ra M$ must be trivial since $\cc C\times _{\cM} M$ is $\PP^1$.
Hence, there is no genus zero stable map with at most two markings to $\cM$, and the condition in Definition \ref{d:convexity} is therefore satisfied up to two markings.
However, the stack $\cM$ is not convex since the identity map $\op{id}\colon \cc C:= \cM \ra \cM$, with the four orbifold markings, has nonvanishing $H^1(\cc C, \op{id}^* T_{\cM/ B\mu_2})$.

\item For every closed point $B\mu _m$ of a Deligne--Mumford stack $\cM$ and for every $k$ with $k|m$, suppose that there is a genus zero stable map with one marking $B\mu _k$ to $\cM$.
In that case, to prove the convexity of $\cM$ over $BG_1$, it is enough to check the condition in Definition \ref{d:convexity} up to two markings.
This can be seen by considering comb-like genus zero curves with orbifold nodal points.
\end{enumerate}
\end{remark}

\medskip

For the remainder of the paper, we assume that $\Xrig$ is convex over $BG_1$. Explicitly, this means that for any {\it genus zero stable map} $f:\cc C\to \Xrig$, with any number of markings and with associated principal $G_1$-bundle $P_1$, we have the vanishing $H^1(\cc C, P_1\times_{G_1}V_1)=0$.

\subsection{Quasi-projective embeddings}\label{ssqp}

Fix $g$ and $r$ in the stable range, i.e., satisfying $2g-2 + r > 0$.  We will denote
by $\sKbar_{g, r}(\Xrig, d)$ the moduli stack of degree $d$ stable maps from families of $r$-pointed, genus $g$ twisted curves (Definition~\ref{d:twistedcurves}) to the Deligne--Mumford stack $\Xrig$.  Following the notation in \cite{AGV}, we use $\sKbar$ to emphasize that the gerbe markings may not have sections.  The corresponding moduli stack in which the gerbe markings do have sections will be denoted $\sMbar_{g,r}(\Xrig, d)$.  Note that when $\Xrig$ is a smooth variety, these two moduli stacks are identical.  More generally $\sMbar_{g,r}(\Xrig, d)$ may be defined as the fiber product of the gerbe markings over $\sKbar_{g, r}(\Xrig, d)$ (See \cite[Section~6.1.3]{AGV}).

In this subsection, following an idea in \cite{CKGWall},
we construct a closed immersion of $\sKbar_{g, r}(\Xrig, d)$ into a smooth Deligne--Mumford stack which, in turn, is an open substack of a stack satisfying Assumption $(\star)$ of \S\ref{ss:sc}. 

It will be convenient to have the following definition.

\begin{definition}\label{def: MBG}
Recall the definitions of \S \ref{moduli stacks}. Given a reductive algebraic group $H$ and a choice of character $\theta \in \widehat H = \mathrm{Hom}(H,\QQ)$, let  $\Mfrak^\tw_{g,r}(BH, d)^\circ$
denote  the open substack of $\Mfrak^\tw_{g,r}(BH, d)$ where $\log \otimes \cL_\theta^{\otimes M}$ is ample for $M$ sufficiently large.
Similarly, define $\Mfrak^{\orb/\tw}_{g,r}(B\Gamma)_{\log}^\circ$ to be the open subset of $\Mfrak^{\orb/\tw}_{g,r}(B\Gamma)_{\log}$   where $\log \otimes \cL_\theta^{\otimes M}$ is ample for $M$ sufficiently large.
\end{definition}

\medskip

Let $\mathcal{C}$ denote the universal curve over the moduli of stable curves $\sMbar_{g,r}$. Its coarse moduli space 
$\underline{\mathcal{C}}$ is projective over $\Spec (\CC )$. Choose a closed immersion $\underline{\mathcal{C}}  \subseteq \PP ^{N-1}$.
This in turn induces a morphism 
\begin{equation}\label{map v}
v: \sMbar_{g,r} \to \sMbar_{g, r}(\PP ^{N-1}, e) 
\end{equation}
for some degree $e$.
Consider the relative Picard stack $\fM_{g,r}(B\CC ^*, e)$ parametrizing line bundles of degree $e$ on prestable curves and
let $\fM_{g,r}(B\CC ^*, e)^{\circ}$ be the open locus 
for which the universal bundle $\mathcal{N}$ on the universal curve $\cC_{B\CC^*}\xrightarrow{\pi} \fM_{g,r}(B\CC ^*, e)$
is $\pi$-acyclic. 

The forgetful morphism $\sMbar_{g, r}(\PP ^{N-1}, e) \to \fM_{g,r}(B\CC ^*, e)$ induces 
$$h \colon \sMbar_{g,r} \to \fM_{g,r}(B\CC ^*, e)$$
by precomposing with the map $v$ of \eqref{map v}. If $\hat{h}\colon \mathcal{C} \to \cC_{B\CC^*}$ is the corresponding map of universal curves,
we have $\rho^*(\cO_{\PP^{N-1}}(1)|_{\underline{\mathcal{C}}})=\hat{h}^*\cN$, with $\rho \colon  \mathcal{C}\to \underline{\mathcal{C}}$ the coarse moduli map.
We may assume the map $h$
factors through $\fM_{g,r}(B\CC ^*, e)^{\circ}$ (if necessary, compose with a sufficiently high Veronese embedding $\PP ^{N-1}\hookrightarrow \PP^{N'-1}$, replace $\PP ^{N-1}$ by $\PP^{N'-1}$, and change $e$ accordingly).
This in particular implies that the image of the morphism $v$ is contained in the open locus  $\sMbar_{g, r}(\PP ^{N-1}, e)^{\circ}$ 
parametrizing unobstructed stable maps to $\PP^{N-1}$ for which maps are closed immersions of stable curves. 

Let $\op{st} \colon \fM^\tw_{g,r} \to  \sMbar_{g,r}$  be the stabilization map  defined by stabilizing the coarse curve of a twisted curve (see Definition~\ref{d:twistedcurves} for notation). For any algebraic stack $S$ with a morphism to 
$\fM^\tw_{g,r}$ we have the diagram 
\begin{equation}\label{diagram for curly N}
	\begin{tikzpicture}[scale=1]
	\node (AA) at (5.25,1.5) {$\underline{\mathcal{C}}  \subseteq \PP ^{N-1}$};
	\node (A) at (0,1.5) {$\fC_S$};
	\node (B) at (1.5,1.5) {$\mathfrak{C}$};
	\node (C) at (3,1.5) {$\mathcal{C}$};
	\node (D) at (0,0) {$S$};
	\node (E) at (1.5,0) {$\fM^\tw_{g,r}$};
	\node (F) at (3,0) {$\sMbar_{g,r}$};
	\draw[->,] (A) -- (B);
	\draw[->,] (B) -- (C);
	\draw[->,] (C) -- (AA);
	\draw[->,] (D) -- (E);
	\draw[->,] (E) -- (F);
	\draw[->,] (A) -- (D);
	\draw[->,] (B) -- (E);
	\draw[->,] (C) -- (F);
	\node at (2.25,0.75){};
	\node[above] at (2.25,0){$\mathrm{st}$};
	\node[above] at (3.75,1.5){$\rho$};
	\end{tikzpicture}
\end{equation}
The pull-back of $\cO _{\PP ^{N-1}}(1)$ via the composition of the maps in the top row is a line bundle on the universal curve $\fC_S$, which
we denote by $\cN_S$. 

Apply this to the forgetful map $\fM^\tw_{g,r}(BG _1, d)\to \fM^\tw_{g,r}$ to get a line bundle that we denote by $\cN_{BG_1}$ instead of $\cN_{\fM^\tw_{g,r}(BG _1, d)}$ for simplification.
Let $\fM^\tw_{g,r}(BG _1, d)^{\circledcirc}$ be the open substack of $\fM^\tw_{g,r}(BG _1, d)^{\circ}$ for which
\begin{eqnarray}\label{AC for M}  \RR^1\pi _* (\cV_1 (-\sG  )\ot \mathcal{N} _{BG_1} ) = 0, \end{eqnarray}
where  $\cV_1 : = \cc P_1 \times _{G_1} V_1$ is defined by the universal principal $G_1$-bundle $\cc P_1$. \footnote{In fact the constructions of this section work just as well over $\fM^\tw_{g,r}(BG _1, d)^{\circ}$ as over $\fM^\tw_{g,r}(BG _1, d)^{\circledcirc}$.  
However the vanishing condition \eqref{AC for M} may have useful future applications which is why we choose to work over $\fM^\tw_{g,r}(BG _1, d)^{\circledcirc}$.}

Similarly, the composition 
$\sKbar_{g,r} (\Xrig, d)\to \fM^\tw_{g,r}(BG _1, d)\to \fM^\tw_{g,r}$ gives a line bundle
$\cN_{\Xrig}$ on the universal curve $\cC_{\Xrig}$ of $\sKbar_{g,r} (\Xrig, d)$, compatible with $\cN_{BG_1}$. 
Possibly after replacing $\cN_{\Xrig}$ by $\cN_{\Xrig} ^{\ot m}$
for some large enough $m$ depending on $g$, $d$, $r$ (using the Veronese embedding described above),
we may assume that the forgetful map
\begin{equation}\label{e:usage of convexity} \xymatrix{ \sKbar_{g,r}(\Xrig, d) \ar[r] \ar@{-->}[rd] &  \fM^\tw_{g,r}(BG _1, d) \\
                                                   & \fM^\tw_{g,r}(BG _1, d)^{\circledcirc} \ar@{^{(}->}[u] }
\end{equation}
factors through $\fM^\tw_{g,r}(BG _1, d)^{\circledcirc}$.

\begin{remark}
It is important to notice that for the factorization of the diagram \eqref{e:usage of convexity} to hold, the convexity over $BG_1$
of $\Xrig$ is needed, as twisting by $\cN_{\Xrig}$ cannot be used to control the required $H^1$-vanishing on rational tails and rational bridges of the domain curves.
\end{remark}

Let $\CC^*$ act on $\CC^N$ diagonally, where we recall that the integer $N$ is fixed by the closed immersion $\underline{\mathcal{C}}  \subseteq \PP ^{N-1}$.
Consider $V_1\ot \CC^N$ with the $G_1$-action on the first factor and with the $\CC ^*$-action on the second second factor.
Define a quotient stack $\cc Y$ as follows:
\begin{align}\label{Definition curlyY}
\cc Y :&= \left[ \left( (\CC^N\setminus \{0\}) \times (V_1 \otimes \CC^N)^{ss}(\theta) \right) / (\CC^*\times {G_1}) \right]\\
\nonumber & = \left(\CC^N\setminus \{0\}\right) \times_{\CC^*} [(V_1 \otimes \CC^N)^{ss}(\theta)/G_1].
\end{align}

By the Hilbert-Mumford criterion, the point $(v_1, ..., v_N) \in V_1\ot \CC^N$ is $\theta$-semistable if and only if some $v_i$ is in $V_1^{ss}(\theta)$.
It follows there are no strictly semistable points in $V_1\ot\CC^N$ and hence $\cc Y$ is a separated Deligne--Mumford stack.
It is clear from its construction that $\cc Y$ is a fiber bundle over the projective space $\PP^{N-1}$, with fiber 
$[(V_1 \otimes \CC^N)^{ss}(\theta)/G_1]$. This fibration is not trivial, as it is twisted by the transition functions of $\cO_{\PP^{N-1}}(1)$.
Moreover, the coarse space map $\underline{\cc Y}\ra \PP ^{N-1} $
is a projective morphism, since 
$\left( \Sym (V_1^\vee) \right)^{G_1} = \CC$ by the definition of hybrid GLSM.
In particular, the coarse space $\underline{\cc Y}$ is projective over $\Spec(\CC)$.

Considering $d':=(d, e)$ as an element in $\widehat{G _1 \times \CC ^*}$, we have the moduli stack $\sKbar_{g,r}(\cY ,d' )$.
The projection $\cY \to \PP ^{N-1}$ induces a map
$$\op{pr} \colon \sKbar_{g,r}(\cY ,d' ) \to \sMbar_{g,r} (\mathbb{P} ^{N-1}, e).$$
On the other hand via the forgetful map $\op{fgt}\colon  \sKbar_{g,r}(\cY ,d' )\to  \fM^\tw_{g,r}$, we also have 
$$v\circ \op{st} \circ \op{fgt}\colon  \sKbar_{g,r}(\cY ,d' ) \to \sMbar_{g,r} (\mathbb{P} ^{N-1}, e).$$
 
Define $\sKbar_{g,r}^{eq} (\cc Y, d' )$ as 
the fiber product 
 \begin{equation}\label{def of M eq}
 \xymatrix{  \sKbar_{g,r}^{eq} (\cc Y, d') \ar[d]_{\op{fgt}}   \ar@{^{(}->}[r] & \sKbar_{g,r} (\cc Y, d')  \ar[d]^{ (\op{fgt} , \op{pr} )} \\
 \fM^\tw_{g,r}  \ar@{^{(}->}[r]_-{(\op{id},{v\circ \op{st}) }}  &    \fM^\tw_{g,r}  \times \sMbar_{g,r} (\mathbb{P} ^{N-1},e). } 
 \end{equation}
Since $\sMbar_{g,r} (\mathbb{P} ^{N-1})^{\circ}$ consists of closed immersions, the image of $v$ is contained in the separated {\it scheme} $\sMbar_{g,r} (\mathbb{P} ^{N-1})^{\circ}$, therefore
the ``graph of $v\circ \stab$" morphism $(\op{id}, v\circ \stab)$ is a representable closed immersion and thus 
$\sKbar_{g,r}^{eq} (\cc Y, d')$ is a closed substack
of $\sKbar_{g,r} (\cc Y, d') $.
More informally, the stack $\sKbar_{g,r}^{eq} (\cc Y, d')$ is the closed substack of $\sKbar_{g,r} (\cc Y, d')$ on which the equality $v\circ \op{st} \circ \op{fgt}=\op{pr}$ holds.

\begin{remark}\label{moduli description}
It is straightforward to check that for a scheme $S$, the $S$-points of $\sKbar_{g,r}^{eq} (\cc Y, d')$ are given by families
\begin{equation}\label{points of M eq}
 ( (\cC_S \ra S, \sG_1,\dots , \sG_r, P_1, \{u_1,\dots u_N\} ),
 \end{equation}
where $(\cC_S \ra S, \sG_1,\dots , \sG_r)$ is a twisted $r$-pointed curve of genus $g$ over $S$, $P_1$ is a principal $G_1$ bundle of degree $d$ on $\cC_S$, and $u_i$ are sections of $\cV_1\ot \cN_S$,
satisfying an appropriate stability condition. 
Here $\cV_1=P_1\times_{G_1} V_1$, and $\cN_S$ is the line bundle defined by 
the map $S \to \fM^\tw_{g,r}$ via the diagram \eqref{diagram for curly N}. The stability condition requires first that the map to $V_1\ot \CC^N$ given by the sections lands in $(V_1\ot \CC^N)^{ss}(\theta)$, and second that $\log\ot \cL_{\theta}^M$ is relatively ample for large enough $M$.

In fact, the stack $\sKbar_{g,r}^{eq} (\cc Y, d')$ could have been {\it defined} directly, without reference to $\cY$, as the moduli stack parametrizing the families 
\eqref{points of M eq}.
It is clear that this moduli definition gives an algebraic stack. In fact, it gives an open substack in the cone 
$\op{tot}\left(\pi_*(\cc V_1 \otimes \cN _{BG_1}^{\oplus N})\right)$
over $\fM^\tw_{g,r}(BG_1, d)^\circ$. Stability implies it is a Deligne--Mumford stack. 
However, its concrete realization given by \eqref{def of M eq} also implies immediately the following additional properties: 
the Deligne--Mumford stack $\sKbar_{g,r}^{eq} (\cc Y, d')$ is a global quotient by a linear algebraic group, and it has projective coarse moduli.
Indeed, the ambient stack $\sKbar_{g,r} (\cc Y, d')$ has these properties by \cite[Corollary 1.0.3 \& \S 3.2]{AGOT}, \cite[Theorem 1.4.1]{AV} 
and  \cite[Theorem 2.14]{EHKV}.
\end{remark}
Define 
${U_{\cc Y}}$ by the fiber square
\begin{equation}
\xymatrix{U_{\cc Y}\ar[r]\ar[d] & \sKbar^{eq}_{g,r}(\cc Y, d')\ar[d]\\
\fM^\tw_{g,r}(BG_1, d)^{\circledcirc}\ar[r] & \fM^\tw_{g,r}(BG_1, d)^\circ,  
}
\end{equation}
so that ${U_{\cc Y}}$ is an open substack of $\sKbar^{eq}_{g,r}(\cc Y, d')$.

Let $\cN^{eq}$ be the line bundle on the universal curve over $\sKbar_{g,r}^{eq} (\cc Y, d' )$ induced by the map $\op{fgt}$ via 
\eqref{diagram for curly N}, and note that it coincides with the pull-back of $\cN_{BG_1}$.
By its moduli description in Remark \ref{moduli description}, the stack $\sKbar^{eq}_{g,r}(\cc Y, d')$ has a perfect obstruction theory relative to $ \fM^\tw _{g,r}(BG_1,d)$, given by
$$\RR\pi_*((\cV_1\ot \cN^{eq})^{\oplus N}).$$
Hence the obstruction sheaf is isomorphic to $\RR^1\pi_*((\cV_1\ot \cN^{eq})^{\oplus N})$,
which is a quotient of $\RR^1\pi_*((\cV_1(-\sG)\ot \cN^{eq})^{\oplus N})$. By \eqref{AC for M}, the obstruction sheaf vanishes when restricted to ${U_{\cc Y}}$. This implies that ${U_{\cc Y}}$ is smooth over $\Spec (\CC )$.

\begin{lemma}\label{UY} Fix $(g,r)$ with $2g-2+r>0$.

$(a)$ There is a two-term locally free resolution 
$$ [A_1 \xrightarrow{d_1} B_1]\cong \RR\pi _* \cV_1 $$
on $\fM^\tw_{g,r}(BG _1, d)^{\circledcirc}$,
and a natural open immersion $U_\cY  \subseteq \tot A_1$, compatible with the maps to  $\fM^\tw_{g,r}(BG _1, d)^{\circledcirc}$.

$(b)$ Let $p_1\colon \tot A_1 \ra  \fM^\tw_{g,r} (BG_1, d)^{\circledcirc}$ denote the projection and let $\beta _1$ be the section of 
$p_1^* B_1|_{U_\cY}$ induced from $d_1$, with zero locus $Z(\beta_1) \subseteq U_\cY$.
Assume that $\Xrig$ is convex.  Then there is a canonical identification
$\sKbar_{g,r}(\Xrig, d) \cong Z(\beta _1)$, so that
we have a closed immersion making a commuting diagram
 \[\xymatrix{  \sKbar_{g,r}(\Xrig, d) \ar[dr]  \ar@{^{(}->}[r] &  U_{\cc Y} \ar[d]   \\
&    \fM^\tw_{g,r} (BG_1, d)^{\circledcirc}       . } \]
\end{lemma}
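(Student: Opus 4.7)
For part (a), the plan is to construct $[A_1\xrightarrow{d_1}B_1]$ so that $A_1=\pi_*((\cV_1\ot\cN_{BG_1})^{\oplus N})$ on $\fM^\tw_{g,r}(BG_1,d)^{\circledcirc}$; by the moduli description in Remark~\ref{moduli description}, this choice of $A_1$ automatically gives $U_{\cc Y}$ as an open substack of $\tot A_1$, the openness being the pointwise semistability in $(V_1\ot\CC^N)^{ss}(\theta)$ together with the ampleness of $\log\ot\cL_\theta^{\ot M}$. To produce the resolution, I would start from the surjection $\cO^{\oplus N}\twoheadrightarrow \cN_{BG_1}$ on the universal curve induced by the closed embedding $\underline{\cC}\hookrightarrow\PP^{N-1}$ of \S\ref{ssqp}; dualizing gives a short exact sequence $0\to \cN_{BG_1}^{-1}\to \cO^{\oplus N}\to \cK^\vee\to 0$ with $\cK^\vee$ locally free, and tensoring with $\cV_1\ot\cN_{BG_1}$ produces
\[ 0\to \cV_1 \to (\cV_1\ot\cN_{BG_1})^{\oplus N} \to \cV_1\ot\cN_{BG_1}\ot\cK^\vee\to 0. \]
Pushing forward by $\pi$ and using that \eqref{AC for M} together with the gerbe sequence $0\to\cV_1(-\sG)\ot\cN_{BG_1}\to \cV_1\ot\cN_{BG_1}\to \cV_1\ot\cN_{BG_1}|_\sG\to 0$ implies $\RR^1\pi_*(\cV_1\ot\cN_{BG_1})=0$, the long exact sequence collapses to a two-term resolution provided the additional vanishing $\RR^1\pi_*(\cV_1\ot\cN_{BG_1}\ot\cK^\vee)=0$ holds. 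I would secure this last vanishing by replacing the initial embedding $\underline{\cC}\hookrightarrow\PP^{N-1}$ by a sufficiently high Veronese re-embedding beforehand, exactly as was already done in \S\ref{ssqp} to pass into $\fM^\tw_{g,r}(BG_1,d)^{\circledcirc}$; this does not disturb the identification of $U_{\cc Y}$ since the moduli description is insensitive to the Veronese degree.

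For part (b), I would apply Lemma~\ref{explicit  Zbeta} to the resolution above: it identifies $Z(\beta_1)\subseteq\tot A_1$ canonically with $\tot\pi_*\cV_1$, i.e.\ with families $(\cC\to S,\sG_\bullet,P_1,\sigma)$ where $\sigma$ is a section of the associated vector bundle $\cV_1$ on $\cC$. Under the open embedding $U_{\cc Y}\subseteq\tot A_1$, such a $\sigma$ corresponds, via the inclusion $\cV_1\hookrightarrow (\cV_1\ot\cN_{BG_1})^{\oplus N}$, to the $N$-tuple $(s_1\sigma,\dots,s_N\sigma)$, where the $s_i\in H^0(\cC,\cN_{BG_1})$ are the tautological sections coming from $\underline{\cC}\hookrightarrow\PP^{N-1}$. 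Since the $s_i$ generate $\cN_{BG_1}$ fiberwise by construction, the Hilbert--Mumford criterion for $(V_1\ot\CC^N)^{ss}(\theta)$ reduces pointwise to $\sigma(p)\in V_1^{ss}(\theta)$. Combined with the ampleness half of the stability condition cutting out $U_{\cc Y}$, this is exactly the stability condition defining $\sKbar_{g,r}(\Xrig,d)$, since $\GJ$ acts trivially on $V_1$ so the descent from $[V_1^{ss}(\theta)/G]$ to $[V_1^{ss}(\theta)/G_1]=\Xrig$ is automatic. Conversely, a stable map to $\Xrig$ produces the data above, and convexity of $\Xrig$ over $BG_1$, used precisely as in \eqref{e:usage of convexity}, guarantees that the induced point of $\fM^\tw_{g,r}(BG_1,d)$ lies in the open substack $\fM^\tw_{g,r}(BG_1,d)^{\circledcirc}$. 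This yields the canonical isomorphism $\sKbar_{g,r}(\Xrig,d)\cong Z(\beta_1)$ and the commutativity of the final diagram is immediate from the construction.

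I expect the main technical obstacle to be the uniform acyclicity $\RR^1\pi_*(\cV_1\ot\cN_{BG_1}\ot\cK^\vee)=0$ needed to obtain $B_1$ as a vector bundle on all of $\fM^\tw_{g,r}(BG_1,d)^{\circledcirc}$, since this stack is only locally of finite type; handling this requires a careful componentwise/boundedness argument together with a high enough fixed Veronese twist, and this choice must be coordinated with the choice of $\cN$ already made in \S\ref{ssqp} so that $U_{\cc Y}$ is not altered. The identification of $Z(\beta_1)\cap U_{\cc Y}$ with $\sKbar_{g,r}(\Xrig,d)$ is then essentially formal once Lemma~\ref{explicit  Zbeta} and convexity are in hand.
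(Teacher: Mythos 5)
Your proposal is correct and follows essentially the same route as the paper: the paper builds $[A_1\to B_1]$ by pulling back the Euler sequence on $\PP^{N-1}$, tensoring with $\cV_1\ot\cN_{BG_1}$ (your sequence involving $\cK^\vee$ is the same one, with $\cQ=\cV_1\ot\cN_{BG_1}\ot\cK^\vee$ as the cokernel), pushing forward by $\pi$, and then deducing (b) from the resulting four-term exact sequence together with the convexity factorization \eqref{e:usage of convexity}, exactly as you do. The one place you over-engineer is the vanishing $\RR^1\pi_*(\cV_1\ot\cN_{BG_1}\ot\cK^\vee)=0$, which you flag as the main technical obstacle requiring a further Veronese twist: it is in fact automatic, since in the long exact sequence this group is a quotient of $\RR^1\pi_*((\cV_1\ot\cN_{BG_1})^{\oplus N})=0$ (which follows from \eqref{AC for M}) because $\RR^2\pi_*$ vanishes on a family of twisted curves, so no additional choice needs to be coordinated with the embedding fixed in \S\ref{ssqp}.
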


\begin{proof}
$(a)$ Consider the Euler sequence 
$$0\ra \cO_{\PP^{N-1}} \ra \cO_{\PP^{N-1}}(1)^{\oplus N}\ra T_{\PP^{N-1}}\ra 0$$
on $\PP^{N-1}$.
Pulling it back via the composition $ \cC_{BG_1} \to  \underline{\mathcal{C}} \to \PP^{N-1}$ and tensoring with $\cV_1$, we obtain a short exact sequence
\begin{equation}\label{e:tautV} 0 \to \cc V_1  \to \cc V_1 \otimes \cN _{BG_1}^{\oplus N} \xrightarrow{q} \cc Q \to 0\end{equation}
where $\cc Q$ is the cokernel.  Pushing forward via $\pi_*$ and using the vanishing \eqref{AC for M} gives the long exact sequence
\begin{equation}\label{e:tautVlong}
0\ra \pi_*(\cc V_1) \ra \pi_*(\cc V_1 \otimes \cN _{BG_1} ^{\oplus N})\xrightarrow{\pi_*q}  \pi_*(\cc Q)\ra \RR^1\pi_*(\cc V_1)\ra 0,
\end{equation} 
on $\fM^\tw_{g,r}(BG _1, d)^{\circledcirc}$, with the middle two terms locally free. Hence
$$[A_1 \to B_1] := [\pi_*(\cc V_1 \otimes \cN _{BG_1}^{\oplus N}) \to \pi_*(\cc Q)]$$
gives the required complex of vector bundles on the stack $\fM^\tw_{g,r}(BG _1, d)^{\circledcirc}$.
Clearly $U_\cY$ is the open locus in $\tot A_1$ obtained by imposing the condition that the sections in 
$A_1=\pi_*(\cc V_1 \otimes \cN _{BG_1}^{\oplus N})$ give a map landing in $(V_1\ot \CC^N)^{ss}(\theta)$.

$(b)$ This follows from \eqref{e:tautVlong} and \eqref{e:usage of convexity}. 
\end{proof}

\subsection{Properties of moduli of LG maps}\label{genhybmod}
In this subsection we show the moduli stacks of Landau--Ginzburg maps to
the ``base" $\cX=[V_1 /\!\!/_\theta G]$ of a hybrid model $(V = V_1 \oplus V_2, G, \CC^*_R, \theta, w)$ 
are global quotient stacks with projective coarse moduli. These properties will follow from the corresponding ones for
the related spaces of stable maps to $\Xrig$ discussed in the previous subsection.

Recall from \S\ref{s:OC} that the group $\Gamma$ surjects onto $G_1 \times \CC^*$ via the map $g \cdot \lambda \mapsto ([g], \lambda^{\bdeg})$, where $G_1 := G/\langle J \rangle$.
It yields the exact sequence 
\[ 1\to \langle J \rangle \to \Gamma \to G_1 \times \CC^* \to 1.\]
 Borrowing an idea from \cite[\S1.5]{AJ}, we consider the fiber diagram:
\begin{equation}\label{diagram}\begin{tikzcd}
\cC_{\Xrig,\Gamma} \ar{r} \ar{d}  & B\Gamma \ar{d} \\
\cC_{\Xrig} \arrow[r, "{\left[\cP\right]}  \times \log"] 
\ar{d}
 & BG_1  \times B\CC^*\\
\sKbar_{g,r}(\Xrig, d) & 
\end{tikzcd}
\end{equation}
where $\cC_{\Xrig}$ is the universal curve, we denote by $\left[\cc P\right]$ the universal principal $G_1$-bundle, and $\cC_{\Xrig,\Gamma}$ is defined as the fiber product.
The map $B\Gamma \to BG _1 \times B\CC^*$ is an \'etale gerbe, therefore the map $\cC_{\Xrig,\Gamma} \to\cC_{\Xrig}$ is also an \'etale gerbe.

For each connected component of the space $\sKbar_{g,r}(\Xrig, d),$
let $F$ be the class of a fiber  of $\cC_{\Xrig} \to \sKbar_{g,r}(\Xrig, d)$ in the homology group of $\cC_{\Xrig}$.
Let us consider the moduli space
$$\sMbar_{g,r} \left(\cC_{\Xrig,\Gamma} / \sKbar_{g,r}(\Xrig, d), F\right)$$
of balanced stable maps from orbi-curves to $\cC_{\Xrig,\Gamma} $ {\it relative to} 
$\sKbar_{g,r}(\Xrig, d)$,  of class $F$ (see \cite[\S8.3]{AV}).   

\begin{remark}\label{r:gerbesections} In \cite{AV}, the notation $\sKbar_{g,r} \left(\cC_{\Xrig,\Gamma} / \sKbar_{g,r}(\Xrig, d), F\right)$ is used to denote the space of relative twisted stable maps.  In this case, the marked point divisors on the universal curve are (nontrivial) gerbes over the moduli space.  In keeping with the notation of \cite[\S6.1.3]{AGV}, we use $\mathcal M$ to denote the space of relative stable maps \textit{with sections at the gerbe markings} (recall part \textit{(a)} of Definition~\ref{d:plg}).  $\sMbar_{g,r} \left(\cC_{\Xrig,\Gamma} / \sKbar_{g,r}(\Xrig, d), F\right)$ is a finite cover of $\sKbar_{g,r} \left(\cC_{\Xrig,\Gamma} / \sKbar_{g,r}(\Xrig, d), F\right)$, and is easily constructed as in \S\ref{moduli stacks} and \cite[\S6.1.3]{AGV} as
\begin{equation}\label{e:addsections}\sMbar_{g,r} \left(\cC_{\Xrig,\Gamma} / \sKbar_{g,r}(\Xrig, d), F\right) := \sG_1 \times_{\sKbar}  \sG_2 \times_{\sKbar} \cdots \times_{\sKbar} \sG_r,\end{equation}
where $\sKbar$ here denotes $\sKbar_{g,r} \left(\cC_{\Xrig,\Gamma} / \sKbar_{g,r}(\Xrig, d), F\right)$.   Note that by construction, there is a forgetful map $\sMbar_{g,r} \left(\cC_{\Xrig,\Gamma} / \sKbar_{g,r}(\Xrig, d), F\right) \to \f M^\orb_{g,r}(B\Gamma, d)$.
\end{remark}

Recall that a map \[S \to LG_{g,r}(\cX, d)\] is a family over $S$ of prestable orbicurves with gerbe markings, sections of these gerbes, a principal $\Gamma$-bundle $\cc P$, 
an isomorphism $\chi_*(\cc P) \simeq \log$, and sections of the associated vector bundle $\cc V_1$ satisfying some stability conditions.
Composing with the map $B\Gamma \to BG_1 \times B\CC^*$, the bundle $\cc P$ induces a principal $G_1$-bundle $\left[ \cc P \right]$ and a line bundle isomorphic to $\log$.
Moreover, the vector bundle $\cV_1$ is also the associated vector bundle to $\left[ \cc P \right]$.
Thus we get a family over $S$ of prestable orbicurves with gerbe markings, with the bundle $\left[\cc P\right]$, and with sections of the vector bundle $\cc V_1$ satisfying the same stability conditions as before.  After partially rigidifying the curve in the sense of \cite[Proposition 9.1.1]{AV} we obtain a map: 
$$S \to \sKbar_{g,r}(\Xrig, d).$$
As a consequence, we get a morphism
\begin{equation}\label{maptobase}
LG_{g,r}(\cX, d) \to \sKbar_{g,r}(\Xrig, d)
\end{equation}
Hence, the universal curve over $LG_{g,r}(\cX, d)$ maps to the universal curve $\cC_{\Xrig}$ over $\sKbar_{g,r}(\Xrig, d)$.
But it also maps to $B\Gamma$ via the universal bundle $\cc P$.  Both $B\Gamma$ and $\cC_{\Xrig}$ map to $BG_1 \times B\CC^*$.
By construction of the morphism \eqref{maptobase}, the corresponding maps from the universal curve over $LG_{g,r}(\cX, d)$ to $BG_1 \times B\CC^*$ are equal, so that we get a map from the universal curve over 
$LG_{g,r}(\cX, d)$ to $\cC_{\Xrig,\Gamma}$.
Such a map induces a morphism
\begin{equation}\label{maptomaps}
LG_{g,r}(\cX, d) \to  \sMbar_{g,r} \left(\cC_{\Xrig,\Gamma} / \sKbar_{g,r}(\Xrig, d), F\right).
\end{equation}
More precisely, this map is a closed immersion and the stack $LG_{g,r}(\cX, d)$ can be identified with the closed substack consisting of stable maps $f$ such that, for every marking $\sigma_i$, the point $f(\sigma_i)$ is sent to the corresponding marking $\sigma_i \in \cC_{\Xrig}$ via the map $\cC_{\Xrig,\Gamma} \to \cC_{\Xrig}$.

\begin{proposition}\label{p: projmod}
$(a)$ The morphism $LG_{g,r}(\cX, d) \to \sKbar_{g,r}(\Xrig, d)$ defined in \eqref{maptobase} is proper,  quasi-finite, and of Deligne--Mumford type. 

$(b)$ The stack $LG_{g,r}(\cX, d)$ is a global quotient stack with projective coarse moduli space.
\end{proposition}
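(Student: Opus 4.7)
The plan is to deduce both statements from the closed immersion \eqref{maptomaps}, which realizes $LG_{g,r}(\cX, d)$ as a closed substack of the relative twisted stable map moduli $\sMbar_{g,r}(\cC_{\Xrig,\Gamma}/\sKbar_{g,r}(\Xrig, d), F)$. The essential geometric input is that $\cC_{\Xrig,\Gamma} \to \cC_{\Xrig}$ is an \'etale gerbe banded by the finite group $\GJ \cong \mu_\bdeg$, inherited from the gerbe $B\Gamma \to BG_1 \times B\CC^*$.

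For part $(a)$, the forgetful map $\sMbar_{g,r}(\cC_{\Xrig,\Gamma}/\sKbar_{g,r}(\Xrig, d), F) \to \sKbar_{g,r}(\Xrig, d)$ is proper by the usual properness of twisted stable map moduli over a proper base family. Quasi-finiteness follows from the choice of fiber class $F$: a twisted stable map of class $F$ to a fiber of $\cC_{\Xrig,\Gamma}$ projects onto the corresponding fiber of $\cC_{\Xrig}$ as an isomorphism of orbi-curves, reducing the remaining data to a $\Gamma$-bundle lifting the given $G_1$-bundle compatibly with $\log$, together with gerbe-section choices at the markings. Such lifts form a torsor under $H^1(\cC, \GJ)$, which is finite, and gerbe sections at each $B\mu_{m_i}$ contribute only finitely many options. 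The Deligne--Mumford type property is automatic on both sides. Composing with \eqref{maptomaps} transfers these three properties to the map \eqref{maptobase}.

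For part $(b)$, observe that $\Xrig = [V_1^{ss}(\theta)/G_1]$ is a smooth Deligne--Mumford stack with projective coarse moduli, since $(\Sym V_1^\vee)^{G_1} = (\Sym V_1^\vee)^G = \CC$ by Definition~\ref{dfn: hybrid model}$(a)$. Hence by \cite{AGOT} and \cite{AV}, $\sKbar_{g,r}(\Xrig, d)$ is a global quotient stack by a linear algebraic group with projective coarse moduli; consequently, the universal curve $\cC_{\Xrig}$ and its \'etale $\GJ$-gerbe $\cC_{\Xrig,\Gamma}$ are proper Deligne--Mumford stacks with projective coarse moduli. A second application of \cite{AGOT, AV} to the absolute moduli of twisted stable maps to $\cC_{\Xrig,\Gamma}$, inside which the relative moduli above sits as a locally closed substack, yields an ambient global quotient stack with projective coarse moduli; closed substacks inherit both properties, so $LG_{g,r}(\cX, d)$ does as well via \eqref{maptomaps}.

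The main obstacle I anticipate is the careful bookkeeping for part $(a)$'s quasi-finiteness, and in particular verifying that the $V_2$-component of the section $\sigma \colon \cC \to \op{tot}\cV$ contributes no continuous moduli to the fiber. This is handled by the fact that $\cX$ sits inside $\cT$ as the zero section, so by Definition~\ref{stableLG} applied with $Z = V_1 \times \{0\}$ the stability condition forces $\sigma$ to factor through $V_1 \subseteq V$, leaving only the finite lifting data described above.
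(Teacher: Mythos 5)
Part (a) of your argument follows the same route as the paper, which simply deduces the listed properties from the identification \eqref{maptomaps} of $LG_{g,r}(\cX,d)$ with a closed substack of the relative twisted stable map space and cites \cite[Theorem 1.4.1 and \S 8.3]{AV}. Your elaboration of quasi-finiteness is consistent with this: since $\GJ$ and $\CC^*_R$ act trivially on $V_1$, the section of $\cV_1$ is already determined by the underlying stable map to $\Xrig$, so the fiber consists only of the finitely many lifts of $([\cP],\log)$ to a $\Gamma$-bundle together with the gerbe-section choices at the markings.

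For part (b) there is a genuine gap. To apply \cite[Corollary 1.0.3]{AGOT} to the moduli of twisted stable maps into $\cC_{\Xrig,\Gamma}$, the target must be a \emph{global quotient stack} by a linear algebraic group, not merely a proper Deligne--Mumford stack with projective coarse moduli --- and the latter is all you establish. Being an \'etale $\GJ$-gerbe over the quotient stack $\cC_{\Xrig}$ does not formally make $\cC_{\Xrig,\Gamma}$ a quotient stack; this is exactly the content of the paper's Lemma \ref{quotient target}, whose proof is not automatic: it invokes \cite[Proposition 5.1]{KreschGeometry} three times, first to produce a finite flat scheme cover $R \to \cC_{\Xrig}$, then to realize $R' = R \times_{\cC_{\Xrig}} \cC_{\Xrig,\Gamma}$ as a global quotient via the total space of the $G_1 \times \CC^*$-bundle on $R$ and to extract a finite flat scheme cover $R'' \to R' \to \cC_{\Xrig,\Gamma}$, and finally (in the converse direction) to conclude that $\cC_{\Xrig,\Gamma}$ is a global quotient. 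Without this step your ``second application of \cite{AGOT, AV}'' rests on an unverified hypothesis. Once that lemma is supplied, the remainder of your part (b) --- the ambient stable-map space is a global quotient with projective coarse moduli, and the closed substack $LG_{g,r}(\cX,d)$ inherits both properties --- coincides with the paper's argument.
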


\begin{proof}
(a) The properties listed follow from the description \eqref{maptomaps} of $LG_{g,r}(\cX, d)$ as a space of relative stable maps (see \cite[Theorem 1.4.1 and Section 8.3]{AV}).

(b) This is due to Lemma \ref{quotient target} below, \cite[Corollary 1.0.3]{AGOT} and \cite[Theorem 2.14]{EHKV}.
\end{proof}

\begin{lemma}\label{quotient target}  The stack $\cC_{\Xrig,\Gamma}$ is a global quotient by a linear algebraic group, and has projective coarse moduli space.
\end{lemma}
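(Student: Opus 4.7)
The plan is to identify $\cC_{\Xrig,\Gamma} \to \cC_{\Xrig}$ as a finite \'etale gerbe and then transfer known properties of $\cC_{\Xrig}$ to $\cC_{\Xrig,\Gamma}$.

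First I would make the gerbe structure explicit. The map $\Gamma \to G_1 \times \CC^*$ sending $g \cdot \lambda \mapsto ([g], \lambda^{\bdeg})$ has kernel equal to $G \cap \CC^*_R = \langle J \rangle \cong \mu_{\bdeg}$, so $B\Gamma \to BG_1 \times B\CC^*$ is a $\mu_{\bdeg}$-banded gerbe. Being defined by the fiber square, the projection $\cC_{\Xrig,\Gamma} \to \cC_{\Xrig}$ is the base change of this map and hence is itself a $\mu_{\bdeg}$-gerbe. In particular $\cC_{\Xrig,\Gamma}$ is a Deligne--Mumford stack of finite type with finite inertia, and induces an isomorphism on coarse moduli spaces.

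Next I would establish the required properties for $\cC_{\Xrig}$. By \cite[Theorem 1.4.1]{AV} combined with \cite[Corollary 1.0.3]{AGOT} (as already invoked in Remark~\ref{moduli description}), the stack $\sKbar_{g,r}(\Xrig, d)$ is a global quotient by a linear algebraic group $H$ with projective coarse moduli. Writing $\sKbar_{g,r}(\Xrig, d) = [T/H]$, the universal curve is presented as $[\cC_T/H]$ for the $H$-equivariant family $\cC_T \to T$. Since $\cC_T \to T$ is proper and the coarse moduli of $[T/H]$ is projective, the coarse moduli of $\cC_{\Xrig}$ is likewise projective, and this presentation exhibits $\cC_{\Xrig}$ as a global quotient by the linear algebraic group $H$.

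Finally, I would transfer these properties to $\cC_{\Xrig,\Gamma}$. Projectivity of the coarse moduli is immediate from the first paragraph, since a gerbe induces an isomorphism on coarse moduli. For the global quotient property, I would apply \cite[Corollary 1.0.3]{AGOT} (the same criterion used to conclude (b) of Proposition~\ref{p: projmod}) to $\cC_{\Xrig,\Gamma}$: it is a Deligne--Mumford stack of finite type with projective coarse moduli, so the criterion yields a presentation as a global quotient by a linear algebraic group. The main obstacle is purely bookkeeping---verifying that the precise hypotheses of the AGOT corollary are met in this setting---and this is already implicit in the usage of the same citation for the proposition above.
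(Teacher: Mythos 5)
Your identification of $\cC_{\Xrig,\Gamma}\to\cC_{\Xrig}$ as a $\mu_{\bdeg}$-gerbe and the resulting identification of coarse moduli spaces matches the paper, and the projectivity part of the lemma is fine. The gap is in your final step: \cite[Corollary 1.0.3]{AGOT} is not a general criterion of the form ``a Deligne--Mumford stack of finite type with projective coarse moduli is a global quotient.'' That paper is specifically about the global quotient structure of moduli spaces of twisted stable maps \emph{into} a target that is already known to be a quotient stack with projective coarse moduli; this is exactly how it is used in Proposition~\ref{p: projmod}(b), where the present lemma supplies the hypothesis on the target $\cC_{\Xrig,\Gamma}$. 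So invoking it to prove the lemma is circular in spirit and, as a general criterion, false as stated: it is not known (and certainly not by that citation) that projective coarse moduli forces a DM stack to be a quotient stack. The gerbe structure is precisely the potential obstruction here, since a gerbe over a scheme need not obviously be a quotient stack.

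The paper instead uses Kresch's characterization \cite[Proposition 5.1]{KreschGeometry}: a DM stack is a quotient stack iff it admits a finite flat surjective cover by a scheme. Concretely, one takes such a cover $R\to\cC_{\Xrig}$ (which exists because $\cC_{\Xrig}$ is a quotient stack by \cite{AGOT}, with $R$ projective since $R\to\underline{\cC}_{\Xrig}$ is finite), forms $R'=R\times_{\cC_{\Xrig}}\cC_{\Xrig,\Gamma}$, and observes that the $G_1\times\CC^*$-bundle $P$ on $R$ classified by $R\to BG_1\times B\CC^*$ maps to $R'$ and exhibits $R'$ as a global quotient (this is where the specific origin of the gerbe, as a pullback of $B\Gamma\to BG_1\times B\CC^*$, is used). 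A second application of Kresch gives a finite flat scheme cover $R''\to R'$, and the composite $R''\to\cC_{\Xrig,\Gamma}$ is a finite flat surjective scheme cover, so a third application (in the converse direction) concludes. If you want to repair your proof, you need to replace the appeal to AGOT in the last paragraph with this (or an equivalent) construction of an explicit finite flat scheme cover of $\cC_{\Xrig,\Gamma}$. A smaller point: your presentation of $\cC_{\Xrig}$ as $[\cC_T/H]$ does not immediately exhibit it as a quotient of a \emph{scheme}, since the universal twisted curve $\cC_T$ over the atlas $T$ is itself a stack; the paper simply cites \cite{AGOT} for the quotient structure of the universal curve.
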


\begin{proof}
 First note that the coarse moduli of $\cC_{\Xrig,\Gamma}$ and $\cC_{\Xrig}$ coincide, and that
$\cC_{\Xrig}$ has projective coarse moduli by \cite{AV}.
 
Second, the stack $\cC_{\Xrig}$ is a quotient stack by \cite[Corollary 1.0.3]{AGOT} and \cite[Theorem 2.14]{EHKV}.  Hence there is a scheme $R$ and a finite flat surjective morphism
$R\ra \cC_{\Xrig} $, by \cite[Proposition 5.1]{KreschGeometry}. 
Let $P$ be the $G_1  \times \CC^*$-bundle on $R$ corresponding to the map $R\ra BG_1 \times B\CC ^*$.
There is a natural map from the algebraic space $P$ to the fiber product 
$ R':=R\times _{\cC_{\Xrig   } } 
\cC_{\Xrig,\Gamma}$, realizing $R'$ as a global quotient.   The scheme $R$ is projective, since the map from $R$ to the coarse moduli of $\cC_{\Xrig}$
is finite. Therefore $R'$ has projective coarse moduli $R$.
Using again \cite[Proposition 5.1]{KreschGeometry}, there is a scheme $R''$ and a finite flat surjective morphism $R''\ra R'$.
The composition $R'' \to \cC_{\Xrig,\Gamma}$ is finite, flat, and surjective. 

Applying \cite[Proposition 5.1]{KreschGeometry} a third time (now in reverse direction), we conclude that $\cC_{\Xrig,\Gamma}$ is a global quotient.
\end{proof}

Note that the above construction can be repeated over the space $\sKbar^{eq}_{g,r}(\cc Y, d)$.  
We have the following  fiber product diagram, defining $\cC_{\cY, \Gamma}$,
\begin{equation}\label{diagram2}\begin{tikzcd}[row sep = tiny]
 &\cC_{\cY, \Gamma}  \ar[rd] \ar[dd]& \\
\cC_{\Xrig, \Gamma}  \ar[ru, hookrightarrow] \ar[dd] \ar[rr, crossing over]& & B\Gamma \ar[dd]\\
&\cC_{ \cc Y} \ar[dd] \ar[rd] & \\
\cC_{\Xrig} \ar[ru, hookrightarrow] \ar[dd] \ar[rr, crossing over]& & BG_1 \times B\CC^* \\
&  \sKbar^{eq}_{g,r}(\cc Y, d')  & \\
\sKbar_{g,r}(\Xrig, d) \ar[ru, hookrightarrow] & &
\end{tikzcd}
\end{equation}
Define the space $LG_{g,r}^{eq}(\cc Y, d')$ as the closed substack of the moduli space of relative stable maps
$$\sMbar_{g,r} \left(\cC_{\cY, \Gamma} / \sKbar^{eq}_{g,r}(\cc Y, d'), F\right)$$
consisting of stable maps $f$ such that, for every marking $\sigma_i$, the point $f(\sigma_i)$ is sent to the corresponding marking $\sigma_i \in \cC_{\cc Y}$ via the map $\cC_{\cY,\Gamma} \to \cC_{\cc Y}$.
As a consequence, Diagram \eqref{diagram2} shows that $LG_{g,r}(\cX, d)$ embeds into the larger space $LG_{g,r}^{eq}(\cc Y, d')$ of Landau--Ginzburg maps.

Define an open substack $U$ of $LG_{g,r}^{eq}(\cc Y, d')$ as the preimage of $U_{\cc Y}$ 
under the map $LG_{g,r}^{eq}(\cc Y, d') \to \sKbar^{eq}_{g,r}(\cc Y, d')$. The diagram
\begin{equation}\label{d:pullback}
\begin{tikzpicture}[scale=1]
\node (A) at (0,1.5) {$LG_{g,r}(\cX, d)$};
\node (B) at (2,1.5) {$U$};
\node (C) at (4,1.5) {$LG_{g,r}^{eq}(\cc Y, d')$};
\node (D) at (0,0) {$\sKbar_{g,r}(\Xrig, d)$};
\node (E) at (2,0) {$U_\cY$};
\node (F) at (4,0) {$\sKbar_{g,r}^{eq}(\cc Y, d')$};
\draw[right hook->,] (A) -- (B);
\draw[right hook->,] (B) -- (C);
\draw[right hook->,] (D) -- (E);
\draw[right hook->,] (E) -- (F);
\draw[->,] (A) -- (D);
\draw[->,] (B) -- (E);
\draw[->,] (C) -- (F);
\end{tikzpicture}
\end{equation}
is cartesian, where the first pair of horizontal arrows are closed immersions and the second pair of horizontal arrows are open immersions.  
Note that $U$ is smooth over $\Spec (\CC )$.

There is a natural morphism $B\Gamma \ra BG_1$, induced by the quotient map $\Gamma \ra G_1$.
In turn, it induces the forgetful morphism $\Mfrak^\orb_{g,r}(B\Gamma, d)_{\log} \ra \Mfrak^\orb_{g,r}(BG_1, d)\ra \Mfrak^\tw_{g,r}(BG_1, d)$.
Let $\Mfrak^\orb_{g,r}(B\Gamma, d)_{\log}^\circledcirc$ be the inverse image of $\Mfrak^\tw_{g,r}(BG_1, d)^{\circledcirc}$
under the above forgetful morphism. The following theorem collects together the key points of the construction in this subsection.

\begin{theorem}\label{c:swn}
Over $\Mfrak^\orb_{g,r}(B\Gamma, d)_{\log}^\circledcirc$, there exists a two-term resolution 
\[ [A_1 \stackrel{d_1}{\to} B_1] \cong \RR\pi_*(\cc V_1)\]
by vector bundles and an open substack $ U  \subseteq \tot(A_1)$  over $ \Mfrak^\orb_{g,r}(B\Gamma, d)_{\log}^\circledcirc$ which is a smooth separated Deligne--Mumford stack such that:
\begin{enumerate}
\item $Z(\beta_1) = LG_{g,r}(\cX, d)$, where $\beta_1 \in H^0(U, p_1^*B_1|_U) $ is the section induced from $d_1$. Here
$p_1 : \tot A_1 \ra \Mfrak^\orb_{g,r}(B\Gamma, d)_{\log}^\circledcirc$ is the projection.
\item There exists an open immersion of $U$ into a global quotient Deligne--Mumford stack $LG_{g,r}^{eq}(\cc Y, d')$, whose coarse moduli space is projective.
\end{enumerate}
\end{theorem}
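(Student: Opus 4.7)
The proof should essentially be an assembly of the pieces constructed throughout \S\ref{ssqp} and \S\ref{genhybmod}; nearly all the hard work has been done already. My plan is to pull back the resolution from Lemma \ref{UY} to the finer base and verify that the various properties transfer across the cartesian diagram \eqref{d:pullback}.

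First, I would take the two-term resolution $[A_1 \xrightarrow{d_1} B_1]$ of $\RR\pi_*\cV_1$ on $\fM^\tw_{g,r}(BG_1, d)^{\circledcirc}$ produced by Lemma \ref{UY}(a) and pull it back via the forgetful morphism $\Mfrak^\orb_{g,r}(B\Gamma, d)_{\log}^{\circledcirc} \to \fM^\tw_{g,r}(BG_1, d)^{\circledcirc}$. Since $\pi$ is flat and the constituent bundles in the construction of Lemma \ref{UY}(a) arise from the Euler sequence twisted by $\cV_1\ot \cN_{BG_1}^{\oplus N}$ (the formation of which commutes with base change), the pulled-back complex is a two-term vector bundle resolution of $\RR\pi_*\cV_1$ on $\Mfrak^\orb_{g,r}(B\Gamma, d)_{\log}^{\circledcirc}$. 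Define $U$ as in the discussion preceding \eqref{d:pullback}, as the preimage of $U_{\cY}$ inside $\tot(A_1)$ over the new base.

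For the structural properties of $U$: smoothness over $\Spec(\CC)$ is recorded in the paragraph preceding the theorem (the base is smooth and $U_{\cY}$ is smooth over $\fM^\tw_{g,r}(BG_1, d)^{\circledcirc}$), while separatedness and the Deligne--Mumford property follow from the open immersion $U\hookrightarrow LG_{g,r}^{eq}(\cc Y, d')$ supplied by diagram \eqref{d:pullback} together with the corresponding properties of the space of relative twisted stable maps recalled in \S\ref{genhybmod}.

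For (1), I would combine Lemma \ref{UY}(b), which gives $Z(\beta_1)\cong\sKbar_{g,r}(\Xrig, d)$ inside $U_{\cY}$, with the fact that \eqref{d:pullback} is cartesian: the left-hand square exhibits $LG_{g,r}(\cX, d)$ as the fiber product of $\sKbar_{g,r}(\Xrig, d)$ with $U$ over $U_{\cY}$. Since the section $\beta_1$ on $U$ is the pullback of $\beta_1$ on $U_{\cY}$, its zero locus on $U$ is precisely this fiber product, namely $LG_{g,r}(\cX, d)$. Here it is crucial that $\Xrig$ is convex over $BG_1$, as invoked in \eqref{e:usage of convexity}; without this, the map $\sKbar_{g,r}(\Xrig, d)\to \fM_{g,r}^\tw(BG_1, d)$ need not factor through the locus where $\R^1\pi_*(\cV_1\ot \cN_{BG_1})$ vanishes and the resolution $[A_1\to B_1]$ of Lemma \ref{UY}(a) would fail to exist.

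For (2), the open immersion $U\hookrightarrow LG_{g,r}^{eq}(\cc Y, d')$ is already part of \eqref{d:pullback}, so what remains is to certify that $LG_{g,r}^{eq}(\cc Y, d')$ is a global quotient Deligne--Mumford stack with projective coarse moduli. By construction in \S\ref{genhybmod}, this stack embeds as a closed substack of $\sMbar_{g,r}(\cC_{\cY,\Gamma}/\sKbar^{eq}_{g,r}(\cY, d'), F)$. Remark \ref{moduli description} shows that $\sKbar^{eq}_{g,r}(\cY, d')$ is itself a global quotient with projective coarse moduli; the same argument as in Lemma \ref{quotient target}, applied with $\cY$ in place of $\Xrig$ (using Kresch's result \cite[Proposition 5.1]{KreschGeometry} twice to produce a finite flat surjective cover of $\cC_{\cY,\Gamma}$ by a projective scheme), shows that the target $\cC_{\cY,\Gamma}$ has the same properties. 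Finally, \cite[Corollary 1.0.3]{AGOT} combined with the projectivity and boundedness results for relative twisted stable maps from \cite[Theorem 1.4.1 \& \S8.3]{AV} promotes these properties to the moduli space itself, completing the proof. The only nontrivial step here is the iterated application of Kresch's covering result, which was already carried out for $\Xrig$ in Lemma \ref{quotient target}; no new ideas are needed.
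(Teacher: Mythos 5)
Your proposal is correct and follows essentially the same route as the paper: the resolution $[A_1\to B_1]$ is the Euler-sequence construction of Lemma~\ref{UY} transported to $\Mfrak^\orb_{g,r}(B\Gamma,d)_{\log}^{\circledcirc}$ (the paper does this by pulling back the sheaves $\cV_1\otimes\cN_{BG_1}^{\oplus N}$ and $\cQ$ along the map of universal curves and then applying $\pi_*$, which is the careful version of your "commutes with base change" step), $U$ is the preimage of $U_\cY$, and part (2) is exactly the paper's argument (the proof of Lemma~\ref{quotient target} applied to $\cC_{\cY,\Gamma}$, followed by \cite[Corollary 1.0.3]{AGOT}). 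The only cosmetic difference is in part (1), where the paper identifies $Z(\beta_1)\cap U$ directly with the stability locus inside $\tot(\pi_*\cV_1)$ via Lemma~\ref{explicit  Zbeta} rather than invoking the cartesianness of \eqref{d:pullback}; both arguments rest on the same underlying facts.
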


\begin{proof}
Let $\gamma:\cC_{B\Gamma} \to\cC_{BG_1}$ denote the natural map between universal curves over the stacks $\Mfrak^\orb_{g,r}(B\Gamma, d)_{\log}$ and $\Mfrak^\tw_{g,r}(BG_1, d)$, 
and consider the pullback of the sequence \eqref{e:tautV} via $\gamma$,
\[ 0 \to \cc V_1 \to \cc V_1 \otimes \widetilde{\cc N}^{\oplus N} \xrightarrow{\delta_1} \gamma^*\cc Q \to 0,\]
where $\widetilde{\cc N}$ denotes $\gamma^* \cN_{BG_1}$ and, by abuse of notation, we are using $\cc V_1$ to denote bundles on the universal curves over both $\Mfrak^\orb_{g,r}(B\Gamma, d)_{\log}$ and $\Mfrak^\tw_{g,r}(BG_1, d)$.  Note that both $\cc V_1 \otimes \widetilde{\cc N}^{\oplus N}$ and $\gamma^*(\cc Q)$ are $\pi$-acyclic over $\Mfrak^\orb_{g,r}(B\Gamma, d)_{\log}^\circ$.  Hence we have the locally free resolution 
\begin{equation}\label{e:beta1}
[A_1\xrightarrow{d_1} B_1] : =[ \pi_*(\cc V_1 \otimes \widetilde{\cc N}^{\oplus N}) \to \pi_*(\gamma^* \cc Q)]\cong \RR\pi_*\cV_1
\end{equation}
on the stack $\Mfrak^\orb_{g,r}(B\Gamma, d)_{\log}$. 

As in the proof of Lemma \ref{UY}, the stack $U$ from \eqref{d:pullback} is naturally an open substack of 
$\tot(A_1)\xrightarrow{p_1} \Mfrak^\orb_{g,r}(B\Gamma, d)_{\log}^\circledcirc$.  
To see that $LG_{g,r}(\cX, d)$ is exactly the vanishing locus of $\beta_1$ in $U$, we note that the open condition defining $U$ in 
$\tot(A_1)$, when restricted to the cone $\tot( \pi_*(\cc V_1))$, gives exactly the stability condition defining $LG_{g,r}(\cX, d)$.  This proves part (a).
 
 For part (b), observe that
 the same argument as in the proof of Lemma \ref{quotient target} shows that $\cC_{\cY,\Gamma}$
is a quotient stack with projective coarse moduli. By \cite[Corollary 1.0.3]{AGOT} and \cite[Theorem 2.14]{EHKV}, 
the space $LG_{g,r}^{eq}(\cY, d')$  is also a quotient stack with projective coarse moduli space.
\end{proof}

\section{The GLSM theory for convex hybrid models}\label{s:GLSMtheory}
\subsection{The Fundamental Factorization}\label{s:FF}

By Theorem~\ref{c:swn}, the stack $LG_{g,r}^{eq}(\cc Y, d')$ satisfies Assumption $(\star)$ of \S\ref{ss:sc}
 as well as the $\pi$-ampleness assumption in \S \ref{subsubsec: cochain-level}.
By Proposition \ref{prop: satisfy all}, there is an admissible resolution (i.e.~satisfying Conditions~\ref{assume ev chainlevel}, \ref{assume alpha chainlevel}, and \ref{compatible}) 
\[[\widetilde A_1 \oplus \widetilde A_2 \to \widetilde  B_1 \oplus \widetilde B_2 ] \cong \R\pi_* (\cV_1 \oplus \cV_2) \quad \textrm{on $\overline{U}  \subseteq LG_{g,r}^{eq}(\cc Y, d')$}\]
with evaluation map $\op{ev}^i_{\widetilde A}$ and cosection $\widetilde \alpha$.  In particular,  observe that we have two resolutions of $\R\pi_* \cV_1$ lying over the space $U$.

Apply Lemma~\ref{l:surjective} to
$[\widetilde A_1 \to \widetilde B_1] \cong [A_1 \to B_1]$ and let $\bar A_2 = \widetilde A_2$ and $\bar B_2 = \widetilde B_2$, to obtain the following diagram:
\begin{equation}\label{e:roofbar}
\begin{tikzcd}
& \bar A 
 \ar[ldd, twoheadrightarrow,  "f_{\widetilde A}", swap] \ar[r, "\bar d"] \ar[rdd, twoheadrightarrow, near end, "f_{A_1}"] & \bar B \ar[rdd, twoheadrightarrow, "f_{B_1}"] \ar[ldd, twoheadrightarrow, crossing over, "   f_{\widetilde B}", near end, swap] 
& \\
& & & \\
 \widetilde  A \ar[r] & \widetilde  B  & A_1 \ar[r, "d_1"]  
  & B_1
\end{tikzcd}
\end{equation}
where the left square is a quasi-isomorphism, the right square is a composition of the projection followed by a quasi-isomorphic cochain map
$[\bar{A}_1 \ra \bar B_1] \ \ra [A_1\ra B_1]$, and 
the diagonal maps are all surjective.  
With $\op{ev}^i_{\widetilde A}$ as in Condition 1 of Definition \ref{admissible}, we define $\op{ev}^i_{\bar A}$ as the composition
\[
\op{ev}^i_{\bar A} : \bar A \xrightarrow{f_{\widetilde A}} \widetilde A \xrightarrow{\op{ev}^i_{\widetilde A} }  \O_U.
\]
Similarly, we define $\overline{\alpha}^\vee$ as the composition
\[ \overline{\alpha}^\vee : \op{Sym} \bar A \otimes \bar B \xrightarrow{\op{Sym}(f_{\widetilde A}) \otimes f_{\widetilde B}} \op{Sym} \widetilde A \otimes \widetilde B \xrightarrow{ \widetilde{\alpha}^\vee} \O_U.
\]
By Lemma~\ref{l:surjadmi} $[ \bar A \to \bar B]$ is admissible.

Let $p_{\tot(\bar A)}$ denote the projection $\tot(\bar A) \to U$ and consider the section $\zeta \in \Gamma(\tot(\bar A), p_{\tot(\bar A)}^* A_1)$ given by
\[
 \zeta= p_{\tot(\bar A)}^* f_{A_1} \circ \text{taut}_{\bar A} - p_{\tot(\bar A)}^*(\text{taut}_{A_1}),
 \]
where $\text{taut}_{A_1}$ and $\text{taut}_{\bar A}$ are the tautological sections of the bundles ${A_1} \to U$ and $\bar A \to \tot(\bar A)$ respectively (note that the former uses the fact that $U$ is an open substack of $\tot A_1$).

Recalling Definition~\ref{eval on tot A} there exist evaluation maps 
\begin{equation}\label{evA}
\op{ev}^i \colon \tot(\bar A) \to 
I[V/G]  
\end{equation}
We define $\tot(\bar A)^\circ $ as the open substack of $\tot(\bar A)$ such that $\op{ev}^i$ maps to the semistable locus $I[\VmodtG]$
\[\tot(\bar A)^\circ := \bigcap_{i=1}^r (\op{ev}^i)^{-1}I[\VmodtG]. \]

\begin{definition}\label{d:square}
Define the space
\[ \square = \square_{g,r,d} := \{\zeta = 0\}  \subseteq \tot(\bar A)^\circ.\]
Since the map $\bar A \to A_1$ is surjective, the space $\square$ is a smooth Deligne--Mumford stack.  
\end{definition}

\begin{remark}
Note that the stack $\square$ is determined by two different resolutions.
First over $\Mfrak^\orb_{g,r}(B\Gamma, d)_{\log}^{\circ}$ we construct a resolution $[A_1 \stackrel{d_1}{\to} B_1]$ of $\mathbb R\pi_*(\cc V_1)$ by vector bundles and define an open set $U  \subseteq \tot(A_1)$ 
which is a smooth separated Deligne--Mumford stack with quasiprojective coarse moduli space (see Corollary~\ref{c:swn}).   
Second, over $U$ we construct a resolution $[\bar A \stackrel{\bar d}{\to} \bar B]$ of $\mathbb R\pi_*(\cc V)$ by vector bundles.  The space $\square$ is given as a subset of $\tot(\bar A)$ over $U$. 
We refer to this construction involving two resolutions as
the \newterm{two-step procedure}.
\end{remark}

\begin{remark}\label{alternative def by cutting}
Due to the fact that the stack $\tot(\bar A)$ lies over $U  \subseteq \tot(A_1)$, the relative dimension of $\tot(\bar A) \to \Mfrak^\orb_{g,r}(B\Gamma, d)_{\log}^\circ$ is given by $\rank(A_1) + \rank(\bar A)$.  
If we view the pullback of $\bar B$ to $\tot(\bar A)$ as an obstruction bundle, the virtual dimension relative to $\Mfrak^\orb_{g,r}(B\Gamma, d)_{\log}^\circ$ is 
$\rank(\mathbb R \pi_*(\cV)) + \rank(A_1)$, where $$\rank(\mathbb R \pi_*(\cV)) := \rank(\bar A) - \rank(\bar B).$$  This virtual dimension is dimension $\rank(A_1)$ more than it should be.  To correct for this overcounting, we must restrict to the locus of points $(a_1, \bar a) \in \tot(\bar A)$ such that $f_{\widetilde A}(\bar a) = a_1$.  This is exactly the locus $\square$ defined by  the vanishing of $\zeta.$ Restricting $\bar B$ to this locus will then yield a stack of the correct virtual dimension.  
\end{remark}

\begin{definition}\label{d:koszulcut}
Define the vector bundle $E \to \square$ as (the pullback of) $\bar B$. 
The vector bundle $E$ has a section $ \bar \beta$ and a cosection $ \overline{\alpha}^\vee$.  For notational simplicity we will drop the bars and denote these simply as $\beta$ and $\alpha$ when no confusion is likely to occur.
We define
$$K = K_{g,r,d} \in \dabsfact{[\square_{g,r,d}/\CC^*_R], - \op{ev}^* (\boxplus_{i=1}^r w)}$$ to  be the Koszul factorization $\{-\alpha, \beta\}$.  We refer to $\{-\alpha, \beta\}$ as the \newterm{fundamental factorization}.
\end{definition}
It is straightforward to check that the above data
satisfies all the properties promised in \S \ref{s:plan}.  In particular we record the following fact.
\begin{lemma}\label{l:finalsupp}
The support of $K_{g,r,d}$ is equal to $LG_{g,r}(\cZ, d)$.  In particular, it has proper support.  
\end{lemma}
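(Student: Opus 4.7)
The plan is to combine the general support results for Koszul factorizations with the moduli-theoretic interpretations provided by the identifications developed in sections \S\ref{s:supofthepvfac} and \S\ref{s:conFF}, and then to invoke the Fan--Jarvis--Ruan properness theorem to conclude the second assertion.

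First I would apply Proposition~\ref{p:Ksupp} to the Koszul factorization $\{-\alpha,\beta\}$, which reduces the problem to identifying $Z(\alpha)\cap Z(\beta) \subseteq \square$ with $LG_{g,r}(\cZ,d)$. The next step is to describe $Z(\beta)$ alone. By Lemma~\ref{explicit  Zbeta} applied to the admissible resolution $[\bar A \to \bar B]$ of $\RR\pi_* \cV$ on $U$, the vanishing locus of $\beta$ inside $\tot(\bar A)$ is canonically $\tot(\pi_*\cV)$; that is, it parameterizes global sections $\sigma$ of $\cV$ on the corresponding families of orbicurves with $\Gamma$-bundle. Intersecting with the open condition defining $\tot(\bar A)^\circ$ imposes that the evaluations at the markings land in $I[\cT]$, and intersecting with $\{\zeta=0\}$ identifies the $\cV_1$-component of $\sigma$ with the tautological section on $U$, which itself comes from the moduli of (stable) maps to $\Xrig$ built into $LG^{eq}_{g,r}(\cY,d')$.

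The key observation at this point is that stability of $LG_{g,r}(\cT,d)$ is then automatic: the base-point-free condition is free because the semistability of the $\cV_1$-component, combined with condition~\eqref{i:c} of Definition~\ref{d:input} (which gives $V^{ss}(\theta)=V_1^{ss}(\theta)\times V_2$), forces $\sigma$ to land in $V^{ss}(\theta)$ over the entire curve, not just at the markings; the ampleness part of stability is inherited from $U \subseteq LG^{eq}_{g,r}(\cY,d')$, since in the hybrid setting the good lift $\nu$ of $\theta$ may be chosen trivial on $\CC^*_R$, so the line bundle controlling stability agrees up to the convention with the one used in the definition of $\sKbar^{eq}_{g,r}(\cY,d')$. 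This yields a canonical isomorphism $\square\cap Z(\beta)\cong LG_{g,r}(\cT,d)$.

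To cut down further by $Z(\alpha)$, I would invoke Proposition~\ref{PV support}: the common zero locus of $\alpha$ and $\beta$ inside $\tot(\bar A)$ is canonically $Z(dw_{\tot(\pi_*\cV)}(\exp\tau_{\pi_*\cV}))$, which is precisely the substack of $\tot(\pi_*\cV)$ cut out by the condition that the tautological section lands in the critical locus $Z(dw)=\cZ$. Combining with the identification from the previous paragraph gives $\square \cap Z(\alpha)\cap Z(\beta) \cong LG_{g,r}(\cZ,d)$, which by Proposition~\ref{p:Ksupp} is exactly the support of $K_{g,r,d}$. Finally, since for a hybrid model the nondegeneracy of $w$ forces $\cZ \subseteq \cX$ and $\cX$ has projective coarse moduli space by Definition~\ref{dfn: hybrid model}(a), the stack $\cZ$ is proper over $\Spec(\CC)$. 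The Fan--Jarvis--Ruan theorem stated before Definition~\ref{dfn: hybrid model} then gives that $LG_{g,r}(\cZ,d)$ is proper, finishing the proof. The main subtlety will be verifying that the stability condition inherited from $U \subseteq LG^{eq}_{g,r}(\cY,d')$ precisely matches the stability condition of Definition~\ref{stableLG}; this is essentially the content of Proposition~\ref{p: projmod} and the diagram \eqref{d:pullback}, and the hybrid-model reduction of the good lift explained in the remark following Definition~\ref{dfn: hybrid model}.
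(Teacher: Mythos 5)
Your proposal is correct and follows essentially the same route as the paper: identify $Z(\beta)$ with $\tot\pi_*\cV$ via Lemma~\ref{explicit  Zbeta}, use the condition $\zeta=0$ together with the quasi-isomorphism $[\bar A_1\to\bar B_1]\to[A_1\to B_1]$ to cut $\square\cap Z(\beta)$ down to $LG_{g,r}(\cT,d)$ over $Z(\beta_1)=LG_{g,r}(\cX,d)$, and then apply Proposition~\ref{PV support} to identify the further intersection with $Z(\alpha)$ as $LG_{g,r}(\cZ,d)$. The only (harmless) divergence is at the properness step: you invoke the Fan--Jarvis--Ruan properness theorem via properness of $\cZ$, whereas the paper simply observes that $LG_{g,r}(\cZ,d)$ is a closed substack of $LG_{g,r}(\cX,d)$, which is proper by Proposition~\ref{p: projmod}.
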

\begin{proof}
Recall that by  Theorem~\ref{c:swn}~(a), the stack $U$ is an open substack of $\op{tot} A_1$ over $\Mfrak^\orb_{g,r}(B\Gamma, d)_{\log}^\circledcirc $ with $Z(\beta_1) = LG_{g,r}(\cX, d)$.  In particular, we may view $\op{tot} \bar A$ over $U$ as an open substack of $\op{tot} A_1 \times_{\Mfrak^\orb_{g,r}(B\Gamma, d)_{\log}^\circledcirc } \op{tot} \bar A$.  Now, by Lemma~\ref{explicit  Zbeta}, the zero locus $Z(\beta)$ in $\tot(\bar A)$ is isomorphic to $\tot \pi_* \cV$ over $U$.  Hence, we may view $Z(\beta)$ as an open subset of $\op{tot} A_1 \times_{\Mfrak^\orb_{g,r}(B\Gamma, d)_{\log}^\circledcirc } \tot \pi_* \cV$.

Recall that 
\[
 \zeta= p_{\tot(\bar A)}^* f_{A_1} \circ \text{taut}_{\bar A} - p_{\tot(\bar A)}^*(\text{taut}_{A_1}).
 \]
 Since  $[\bar A_1 \to \bar B_1] \to [A_1 \to B_1]$ is a quasi-isomorphism, we have a commutative diagram
 \[
 \begin{tikzcd}
  \pi_* \cV_1 \ar[r] \ar[d, "\op{Id}"] & \bar A \ar[d, "f_{A_1}"] \\
    \pi_* \cV_1 \ar[r] & A_1. \\
 \end{tikzcd}
 \]
  It follows that the restriction 
 \[
 \zeta|_{\op{tot} A_1 \times_{\Mfrak^\orb_{g,r}(B\Gamma, d)_{\log}^\circledcirc } \tot \pi_* \cV}
 \]
  defines an open subset of
 \[
 \Delta  \times_{\Mfrak^\orb_{g,r}(B\Gamma, d)_{\log}^\circledcirc } \tot \pi_* \cV_2
 \]
 where $\Delta$ is diagonal map of $\op{tot} \pi_* \cV_1$ over $\Mfrak^\orb_{g,r}(B\Gamma, d)_{\log}^\circledcirc$.  This is isomorphic to $\tot \pi_* \cV$.  By definition of $U$, the intersection 
$ (\Delta  \times_{\Mfrak^\orb_{g,r}(B\Gamma, d)_{\log}^\circledcirc } \tot \pi_* \cV_2) \cap U$ is contained in $\tot (\bar A)^\circ$.  
  It follows that $\square \cap Z(\beta) $, which by definition is $Z(\zeta) \cap Z(\beta) \cap \tot(\bar A)^\circ$, is isomorphic to the open substack of $\tot \pi_* \cV$ over $\Mfrak^\orb_{g,r}(B\Gamma, d)_{\log}^\circledcirc$ defined by $U$ i.e. it is equal to $\tot \pi_* \cV_2$ over $Z(\beta_1) = LG_{g,r}(\cX, d)$.  This is just $LG_{g,r}(\cT, d)$.

  To finish the proof, note that by Proposition~\ref{PV support}, intersecting $LG_{g,r}(\cT, d)$ with $Z(\alpha)$ gives the degeneracy locus of \eqref{cosection dw} which in this case is $LG_{g,r}(\cZ, d)$.  This is a closed substack of $LG_{g,r}(\cX, d)$ hence proper by Proposition~\ref{p: projmod}.
\end{proof}

\begin{lemma}
The space $[\square / \C^*_R]$ is a nice quotient stack. 
\end{lemma}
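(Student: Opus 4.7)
The plan is to trace through the construction of $\square$ step by step and check that the nice quotient property (Definition \ref{d:nice}) is preserved at each step, while ensuring the $\CC^*_R$-action can be incorporated compatibly. Throughout, I will use two standard facts: (i) open and closed substacks of nice quotient stacks are nice quotient stacks (pull back the presentation and the ample family of equivariant line bundles), and (ii) the total space of an equivariant vector bundle on a nice quotient stack is again a nice quotient stack (pull back the line bundles along the projection).

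I would first invoke Theorem~\ref{c:swn}(b), which tells us that $LG_{g,r}^{eq}(\cc Y, d')$ is a global quotient stack whose coarse moduli is projective, so in particular it has quasi-projective coarse moduli. Passing to the open substack $U$ of Definition~\ref{d:square}, $U$ is still a quotient stack with quasi-projective coarse moduli; since $U$ was shown above to be a smooth Deligne--Mumford stack, we may apply the theorem of Kresch and Edidin--Hassett--Kresch--Vistoli to write $U = [T_0/\mathrm{GL}_n]$ with $T_0$ a smooth quasi-projective scheme. In this presentation, $T_0$ admits an ample family of $\mathrm{GL}_n$-equivariant line bundles (indeed an ample $\mathrm{GL}_n$-equivariant line bundle, from the quasi-projectivity of the coarse moduli).

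Next, I would lift the vector bundle $\bar A \to U$ to a $\mathrm{GL}_n$-equivariant vector bundle $\widetilde{A}$ on $T_0$. The $\CC^*_R$-action on $\bar A$ defined in \S\ref{s:rigev} comes from the $R$-charge decomposition and is a fiberwise linear action on $\bar A$, which commutes with the structural $\mathrm{GL}_n$-action (the former is internal to the bundle, the latter acts on the base). Hence $\widetilde{A}$ carries commuting actions of $\mathrm{GL}_n$ and $\CC^*_R$, and $[\operatorname{tot}\bar A / \CC^*_R] = [\operatorname{tot}\widetilde{A}/(\mathrm{GL}_n \times \CC^*_R)]$. The open substack $\operatorname{tot}(\bar A)^\circ$ is cut out by the condition that the evaluation maps land in the semistable locus, which is $\CC^*_R$-stable, and similarly the closed substack $\square = Z(\zeta)$ inside $\operatorname{tot}(\bar A)^\circ$ is cut out by the $\CC^*_R$-equivariant section $\zeta$. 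Thus at the level of the cover, $\square$ corresponds to a $(\mathrm{GL}_n \times \CC^*_R)$-invariant locally closed subscheme $T \subseteq \operatorname{tot}\widetilde{A}$, giving $[\square/\CC^*_R] = [T/(\mathrm{GL}_n \times \CC^*_R)]$. Since $\mathrm{GL}_n \times \CC^*_R$ is reductive, the group-theoretic requirement of Definition~\ref{d:nice} is satisfied.

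The final step is to produce an ample family of $(\mathrm{GL}_n \times \CC^*_R)$-equivariant line bundles on $T$. Starting from an ample family on $T_0$, pulling back along $\operatorname{tot}\widetilde{A} \to T_0$ yields an ample family on $\operatorname{tot}\widetilde{A}$ (again using quasi-projectivity over $T_0$), which we upgrade to a $(\mathrm{GL}_n \times \CC^*_R)$-equivariant family by endowing each line bundle with the trivial $\CC^*_R$-action. The ample family restricts to an ample family on any locally closed subscheme (immediate from the definition), and hence to one on $T$. This is the step I expect to require the most care: the verification that ampleness is preserved under each of these operations (and in particular that an ample family on the base lifts to one on the total space of a vector bundle) is the genuine content, though it is standard. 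Combining, $[\square/\CC^*_R]$ is a nice quotient stack.
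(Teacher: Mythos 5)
Your argument is correct and follows essentially the same route as the paper's proof: present $U$ as a nice quotient stack $[T/H]$, lift $\bar A$ to an $H$-equivariant bundle whose $R$-charge weight decomposition supplies the commuting $\CC^*_R$-action, realize the quotient as $[\,\cdot\,/(H\times \CC^*_R)]$, and pull back the ample family of equivariant line bundles from $T$. The only cosmetic differences are that the paper equips the pulled-back family with equivariant structures by twisting by all characters of $\CC^*_R$ rather than by the trivial action, and works on all of $\tot(\bar A)$ without explicitly restricting to the locally closed substack $\square$; either variant satisfies Definition~\ref{d:nice}.
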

\begin{proof} Since $U$ is a smooth Deligne--Mumford quotient stack it is in particular a nice quotient stack i.e. $U = [T/H]$ for some noetherian scheme $T$ and reductive linear algebraic group $H$ admitting an ample family of line bundles.  Therefore $\bar A$ corresponds to an $H$-equivariant vector bundle $E$ on $T$.  The weight decomposition of $\bar A$ coming from the $\C^*_R$-action induces the same decomposition on $E$.  This gives a $\C^*_R$-action on $\tot E$ such that $\tot(\bar A) = [\tot E / H \times \C^*_R ]$.  The projection map from $\tot E$ to $U$ is equivariant with respect to the projection map $H \times \C^*_R \to H$.  Hence, we can pull back the $H$-equivariant ample family of line bundles on $T$ to $\tot E$ and twist by all characters of $\C^*_R$ to obtain a $H \times \C^*_R$-equivariant family of ample line bundles on $\tot E$.  
\end{proof}

\begin{remark}
The two lemmas above insure that the fundamental factorization will provide a well-defined Fourier--Mukai transform (see Diagram~\ref{e:DFM}).  We use this fact implicitly in the remainder of the article.
\end{remark}

\subsection{Independence of choices}\label{s:ind}
In this section, we show that the various choices made in this construction do not have a large affect on the fundamental factorization.
More precisely, two different choices yield two different factorizations related by pushforward (see Definition \ref{relatedbypush}).
In \S\ref{s:GLSMinvs}, we will see that GLSM invariants are thus independent of the choices made.

\subsubsection{Choice of $\alpha^\vee$}
In equation \eqref{eq: alphachainmap}, the map $\alpha^\vee$ is determined only up to a homotopy.  We must therefore check that the PV factorization $\{-\alpha, \beta\}$ depends only on the homotopy class of $\alpha^\vee$.  This is proven in \cite{PV}:
\begin{lemma}[{\cite[Section~4.3]{PV}}]
Given two homotopic maps $\alpha^\vee$ and ${\alpha}'^\vee$ realizing \eqref{eq: alphachainmap}, there is an induced isomorphism in $\dabsfact{[\tot \bar A/\CC^*_R],  -\op{ev}^* (\boxplus_{i=1}^r w)}$ between the corresponding PV factorizations $\{-\alpha, \beta\}$ and $\{-\alpha ', \beta\}$.
\end{lemma}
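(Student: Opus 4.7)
The plan is to build an explicit morphism of factorizations from the chain homotopy and show it is an isomorphism. Unpacking Condition~\ref{assume alpha chainlevel}, the cosection $\alpha^\vee$ is determined by the extension property for maps from the complex
\[
\op{Sym}^d A  \xrightarrow{(\delta, Z)}  \op{Sym}^{d-1} A  \otimes B \oplus \O_S^r  \xrightarrow{(\delta, 0)}  \op{Sym}^{d-2} A  \otimes \bigwedge\nolimits^2 B  \xrightarrow{\delta} \cdots
\]
to $\O_S[-1]$. A homotopy between two such extensions is a collection of maps $h_d \colon \op{Sym}^{d-2} A \otimes \bigwedge\nolimits^2 B \to \O_S$ satisfying $\alpha^\vee - \alpha'^\vee = h \circ \delta$ in the appropriate graded sense, modulo the contribution of $Z$ which is the same for both cosections.

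First I would reinterpret $h$ geometrically. Pulling $h$ back to $\op{tot}(\bar A)$ and pairing with the tautological section $\tau_{\bar A}$ gives a section $\widetilde{h} \in H^0(\op{tot}(\bar A), p^*\bigwedge\nolimits^2 \bar B^\vee)$, which by the wedge/contract formalism defines an odd-degree endomorphism of the graded sheaf $\{-\alpha,\beta\}$ via the double contraction operator. One should then define
\[
\Phi := \exp(\widetilde{h}\, \lrcorner\, \cdot )\colon \{-\alpha,\beta\} \longrightarrow \{-\alpha',\beta\},
\]
where the exponential makes sense because applying contraction with $\widetilde{h}$ lowers the wedge degree by two, hence is locally nilpotent on each bounded piece of the Koszul factorization.

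The main step, and the main obstacle, is to verify that $\Phi$ is a closed degree-zero morphism, i.e.\ that it intertwines the differentials $\phi^{\{-\alpha,\beta\}}$ and $\phi^{\{-\alpha',\beta\}}$. Writing out $[\phi, \widetilde{h}\, \lrcorner]$ in terms of $\beta$ and the cosection, the identity $\alpha^\vee - \alpha'^\vee = h \circ \delta$ together with the derivation rule for contractions is exactly what is needed: the commutator of the wedge differential with $\widetilde{h}\, \lrcorner$ recovers the difference $(-\alpha)-(-\alpha')$ at the level of graded sheaves, so that the Baker--Campbell--Hausdorff identity (trivial here since all brackets vanish beyond first order by the parity of $\widetilde h$) yields the desired compatibility. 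This part is routine but requires careful bookkeeping with signs and with the extra term coming from $Z$.

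Finally I would conclude that $\Phi$ is an isomorphism. Since $\widetilde{h}\,\lrcorner$ is locally nilpotent, $\Phi = \op{Id} + N$ with $N$ locally nilpotent, so $\Phi$ admits an inverse given by the usual series $\op{Id} - N + N^2 - \cdots$, which converges termwise and is again a closed morphism by symmetry of the argument. Hence $\{-\alpha,\beta\}$ and $\{-\alpha',\beta\}$ are even isomorphic at the cochain level in $Z^0 \mathsf{Fact}([\op{tot}\bar A/\CC^*_R], -\op{ev}^*(\boxplus_{i=1}^r w))$, a fortiori in the derived category.
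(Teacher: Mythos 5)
Your proposal is correct and follows the same route as the paper, which itself just summarizes the argument from the beginning of \S 4.3 of Polishchuk--Vaintrob: interpret the homotopy $h$ as a $2$-form $h^\vee$ on $B$ and use contraction with $\exp(\pm h^\vee)$ (equivalently, the exponential of the degree $-2$ contraction operator, which is nilpotent hence invertible) to produce the cochain-level isomorphism $\{-\alpha,\beta\}\cong\{-\alpha',\beta\}$. The extra detail you supply on checking closedness of the morphism and on the inverse series is exactly the bookkeeping the paper delegates to \cite{PV}.
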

\begin{proof}
We summarize the explanation given at the beginning of \S4.3 of \cite{PV}.  Namely, a homotopy is a map
\[
h : \op{Sym}^{d-2} A \otimes \bigwedge\nolimits^2 B \to \O_S.
\]
The endomorphism $ \bullet \ \lrcorner  \exp(-h^\vee)$ of the exterior algebra on $B$ gives a $\Gamma$-equivariant isomorphism between $\{-\alpha, \beta\}$ and $\{-\alpha', \beta \}$ with inverse   $ \bullet \ \lrcorner  \exp(h^\vee)$ .
\end{proof}

\subsubsection{Related by pushforward}
The following definition will be used in proving that GLSM invariants do not depend on the various choices of resolutions.
\begin{definition}\label{relatedbypush}
We say two factorizations $K \in \dabsfact{[\square/\CC^*_R],  \op{ev}^* (\boxplus_{i=1}^r w)}$ and $K' \in \dabsfact{[\square '/\CC^*_R],  \op{ev}^* (\boxplus_{i=1}^r w)}$ are \newterm{related by pushforward} if there exists
\begin{enumerate}
\item a set of smooth spaces $\square_j \to U_j$  for $1 \leq j \leq n$ and $\square_{j, j+1} \to U_{j,j+1}$ for $1 \leq j \leq n-1$, each with $\CC^*_R$-equivariant evaluation maps $\op{ev}^i: \square_j \to I[\VmodtG]$ for each marked point $1 \leq i \leq r$;
\item a set of factorizations 
\[K_j \in \dabsfact{[\square_j/\CC^*_R], - \op{ev}^* (\boxplus_{i=1}^r w)} \text{ and } K_{j,j+1} \in \dabsfact{[\square_{j, j+1}/\CC^*_R], - \op{ev}^* (\boxplus_{i=1}^r w)};\] and
\item a commuting diagram of stacks over $\sMbar_{g,r}$ where all diagonal arrows are closed immersions
\[
\begin{tikzcd}
& \square_1 \ar[dd] \ar[dl, hook, swap,  "f = l_1"] \ar[dr, hook, "r_1"]  \ar[dd] & &  \ar[dl, hook, swap, "l_2"] \cdots \ar[dr, hook, "r_{n-1}"] & &
\ar[dd] \square_n \ar[dl, swap, hook, "l_n"] \ar[dr, hook, "f' = r_n"] & \\
\square \ar[dd] & &  \ar[dd]\square_{1,2} & \cdots & \ar[dd] \square_{n-1, n}& &\ar[dd] \square ' \\
& U_1 \ar[dl, hook, swap,  "g = \bar l_1"] \ar[dr, hook, "\bar r_1"] & & \ar[dl, hook, swap, "\bar l_2"] \cdots \ar[dr, hook, "\bar r_{n-1}"] & &
 U_n \ar[dl, swap, hook, "\bar l_n"] \ar[dr, hook, "g' = \bar r_n"] & \\
U & & U_{1,2} & \cdots & U_{n-1, n}& & U '
\end{tikzcd}
\]
\end{enumerate}
such that there are isomorphisms
\[\mathbb R f_* (K_1) \cong K, \text{  } \mathbb R f'_*( K_n) \cong K',\] and \[\mathbb R (r_j)_* (K_j) \cong K_{j, j+1} \cong \mathbb R (l_{j+1})_*(K_{j+1}) \text{ for }1 \leq j \leq n-1.\]
\end{definition}
We will see in \S\ref{s:GLSMinvs} that if factorizations are related by pushforward, they define the same GLSM invariants.
Thus the purpose of this section is to show that the different choices of resolutions made in the two step procedure yield factorizations which are related by pushforward. 
  
In the remainder of this section we will make heavy use of the following proposition, which is proven in \cite{PV}.
\begin{proposition}[{\cite[Proposition 4.3.1]{PV}}] \label{PV original}
Let $V$ be a vector bundle on a smooth global quotient stack $\cX$, let $w \in \op{H}^0(\cX, \cL)$ be a superpotential, and let $\{-\alpha, \beta\}$ be the Koszul factorization associated to sections $\alpha \in \op{H}^0(\cX, V^\vee \otimes \cL)$ and $\beta \in \op{H}^0(\cX, V)$ satisfying $\langle \beta,  \alpha \rangle = w$.  Let $V_1 \subseteq V$ be a subbundle such that $\beta \text{ mod } V_1$ is a regular section of $V / V_1$.  Assume that the zero locus $\cX' = Z(\beta \text{ mod }V_1)$ is smooth
 and consider the induced sections
\[
\beta' = \beta|_{\cX'} \in \op{H}^0(\cX', V / V_1) \text{ and } \alpha' = \alpha|_{\cX'} \in \op{H}^0(\cX', (V / V_1)^\vee).
\]
Assume also that either $w|_{\cX'}$ is a non-zero-divisor or $w = 0$ and the zero loci $Z(\alpha, \beta)$ and $Z(\alpha', \beta')$ are proper.  Then one has an isomorphism
\[
\{\alpha, \beta \} \cong \mathbb R i_* \{\alpha', \beta' \}
\]
in $\dabsfact{\cX, w}$ where $i : \cX' \to \cX$ is the inclusion.
\end{proposition}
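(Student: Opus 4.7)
My plan is to reduce the statement to a tensor-product decomposition of the Koszul factorization by choosing a splitting of the short exact sequence $0 \to V_1 \to V \to V/V_1 \to 0$, and then invoke the projection formula. First, I would (locally, or by choosing a splitting globally when available) write $V \cong V_1 \oplus V_2$ with $V_2 \cong V/V_1$. Under this splitting $\beta = \beta_1 + \beta_2$ and $\alpha = \alpha_1 + \alpha_2$, with $\beta_2$ identified with $\beta \mod V_1$, so $\beta_2$ is a regular section of $V_2$ cutting out $\cX'$. The potential then decomposes as $w = w_1 + w_2$ where $w_i = \langle \beta_i, \alpha_i\rangle$, and crucially $w_2|_{\cX'} = 0$ since $\beta_2|_{\cX'} = 0$.

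Second, using the decomposition $\bigwedge^\bullet V = \bigwedge^\bullet V_1 \otimes \bigwedge^\bullet V_2$ and the fact that the Koszul differential for $(\alpha,\beta)$ splits as the sum of the Koszul differentials for $(\alpha_i,\beta_i)$, one obtains a canonical isomorphism of factorizations
\[
\{-\alpha, \beta\} \;\cong\; \{-\alpha_1, \beta_1\} \otimes_{\cO_\cX} \{-\alpha_2, \beta_2\},
\]
where the two tensor factors are factorizations of $w_1$ and $w_2$ respectively. Third, I would apply the Remark following Definition \ref{definition: Koszul factorization}: since $\beta_2$ is a regular section of $V_2$ and $w_2$ vanishes on $Z(\beta_2) = \cX'$, the augmentation gives an isomorphism $\{-\alpha_2, \beta_2\} \cong i_*\cO_{\cX'}$ in $\dabsfact{\cX, w_2}$. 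Fourth, combining the previous steps and invoking the projection formula (Proposition~\ref{p:projform}), together with the fact that $\{-\alpha_1, \beta_1\}$ has locally free components (so derived tensor equals underived), yields
\[
\{-\alpha, \beta\} \;\cong\; \{-\alpha_1,\beta_1\} \otimes_{\cO_\cX}^{\mathbb L} i_*\cO_{\cX'} \;\cong\; i_*\bigl(\mathbb L i^*\{-\alpha_1,\beta_1\}\bigr) \;=\; i_*\{-\alpha_1|_{\cX'},\, \beta_1|_{\cX'}\}.
\]
The final identification with $i_*\{-\alpha', \beta'\}$ uses that $\beta|_{\cX'}$ lies in the subbundle $V_1|_{\cX'}$ (since $\bar\beta|_{\cX'} = 0$), so that $\beta_1|_{\cX'}$ is intrinsic and equals $\beta'$, and similarly $\alpha_1|_{\cX'}$ is the pullback of $\alpha$ along $V_1|_{\cX'} \hookrightarrow V|_{\cX'}$, which coincides with $\alpha'$.

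The main obstacle will be handling the fact that a global splitting of $0 \to V_1 \to V \to V/V_1 \to 0$ need not exist. I anticipate two routes. The direct route works Zariski-locally (or on a smooth atlas, using Corollary~\ref{cor: local support}) and then patches: different choices of splitting give isomorphisms differing by a homotopy of cosection-type, in the same spirit as the independence-of-$\alpha^\vee$ argument in \S\ref{s:ind}. The intrinsic route avoids splittings by filtering $\bigwedge^\bullet V$ by powers of $V_1$, noting that the associated graded reproduces $\bigwedge^\bullet V_1 \otimes \bigwedge^\bullet(V/V_1)$ and that the Koszul differential respects this filtration; one then analyzes the induced spectral sequence. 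In either approach, the technical hypothesis that $w|_{\cX'}$ is either a non-zero-divisor or identically zero is precisely what rigidifies the chain homotopies in the relevant Hom-complexes so that $\mathbb{R}i_*\{-\alpha',\beta'\}$ is well-defined up to canonical isomorphism in $\dabsfact{\cX, w}$, ensuring the local-to-global step and the identification $\{-\alpha_2,\beta_2\} \cong i_*\cO_{\cX'}$ are compatible with the overall factorization structure.
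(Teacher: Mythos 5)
First, a point of reference: the paper does not prove this statement at all --- it is quoted from \cite{PV} (their Proposition 4.3.1) and used as a black box --- so there is no internal proof to compare against. Judged on its own, your argument is the standard one and is complete \emph{whenever the extension $0\to V_1\to V\to V/V_1\to 0$ splits globally}: the tensor decomposition $\{-\alpha,\beta\}\cong\{-\alpha_1,\beta_1\}\otimes\{-\alpha_2,\beta_2\}$, the contraction of $\{-\alpha_2,\beta_2\}$ to $i_*\cO_{\cX'}$ for the regular section $\beta_2$ (the Remark after Definition~\ref{definition: Koszul factorization}), and the projection formula (Proposition~\ref{p:projform}) do exactly what you say, and your identification of $\alpha_1|_{\cX'},\beta_1|_{\cX'}$ with $\alpha',\beta'$ is correct precisely because $(\beta\bmod V_1)|_{\cX'}=0$ (note in passing that the statement's ``$\beta'\in \op{H}^0(\cX',V/V_1)$'' is a typo for $V_1|_{\cX'}$, which you silently and correctly fix).

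The genuine gap is the non-split case, and your route (a) does not close it as stated: isomorphisms in $\dabsfact{\cX,w}$ defined on the members of a cover and agreeing only up to homotopy on overlaps do not glue (derived categories do not satisfy descent for objects or isomorphisms), and this is not analogous to the independence-of-$\alpha^\vee$ argument of \S\ref{s:ind}, which manipulates a single global object. The standard repair is to produce \emph{one global closed morphism of factorizations} and then check that its cone is acyclic locally, which is legitimate because acyclicity is local in the smooth topology (Corollary~\ref{cor: local support} and the proposition preceding it). With the conventions of Definition~\ref{definition: Koszul factorization} the components of $\{\alpha,\beta\}$ are built from $\bigwedge^\bullet V^\vee$, and the canonical map $\bigwedge^\bullet V^\vee\to i_*\bigl(\bigwedge^\bullet (V_1^\vee|_{\cX'})\bigr)$ (restriction of forms to $V_1$ followed by restriction to $\cX'$) is such a morphism: compatibility with wedging by $\alpha$ is automatic, and compatibility with contraction by $\beta$ holds exactly because $\beta|_{\cX'}$ lies in $V_1|_{\cX'}$. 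Your local splitting computation then shows the cone is locally contractible, and you are done --- no patching of isomorphisms is needed. (Route (b), the $V_1$-adic filtration, can also be made to work but requires an argument for convergence in the $2$-periodic setting.) Finally, your reading of the hypothesis on $w$ is off: it is not needed to make $\mathbb{R}i_*$ well defined ($i$ is a closed immersion, so $i_*$ is already exact and preserves coherence), and none of the steps above invoke it; it is an artifact of the homotopy-category framework in which \cite{PV} prove the result.
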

  
\subsubsection{Different choices of evaluation map}
The Koszul factorization $K$ lies in
$$\dabsfact{[\square/\CC^*_R],  \op{ev}^* (\boxplus_{i=1}^r w)}$$
and thus implicitly depends on the choice of evaluation maps $\op{ev}_{\bar A}^i$ of \eqref{eq: evchainmap}.  In this section, we fix a resolution $[\bar A \to \bar B]$ and vary the evaluation map, ultimately showing that the result is related by pushforward. 

Let ${\op{ev}}_{\bar A}: \bar A \to \pi _* ( \mathcal V |_{\sG} )$ denote the direct sum of the evaluation maps ${\op{ev}_{\bar A}^i}$ as $i$ ranges from $1 $ to $r$.
As a map between cochain complexes, the evaluation map $\op{ev}_{\bar A}$ is determined only up to homotopy.  
Given two distinct cochain level realizations $\op{ev}_{\bar A}$ and $\op{ev}_{\bar A} '$ of the map $[\bar A \to \bar B] \to   \pi _* ( \mathcal V |_{\sG} )$ for which the resolution $[\bar A \to \bar B]$ is admissible, let $\op{ev}_{\bar A}$ and $\op{ev}_{\bar A} ' $ denote the corresponding induced maps $\tot(\bar A) \to I[V/G]$.
Let $\{-\alpha, \beta\}$ and $\{-\alpha ', \beta '\}$ denote the Koszul factorizations in $\dabsfact{[\tot \bar A/\CC^*], -\op{ev}^*(\boxplus_{i=1}^r w)}$ and $\dabsfact{[\tot \bar A/\CC^*], -\op{ev}^{\prime *}(\boxplus_{i=1}^r w)}$.

Let $Q$ denote the vector bundle $\pi _* ( \mathcal V |_{\sG} )$ over $U$.  For our two choices of cochain level evaluation maps $\op{ev}_{\bar A}$ and $\op{ev}_{\bar A} ' $, there exists a homotopy diagram:
\[
  \begin{tikzcd}
  \bar A \ar[r, "\bar d"] \ar[d, bend left, "\op{ev}_{\bar A}"] \ar[d, bend right, swap, "\op{ev}_{\bar A} '  "]
  & \bar B \ar[dl, "h"] \\
   Q &
   \end{tikzcd}
  \]
  such that $\op{ev}_{\bar A} - \op{ev}_{\bar A} '  = h \circ \bar d$.  
  We extend the above diagram as follows.  Let 
\[
 \widehat A := \bar A \oplus Q \text{ and }  \widehat B := \bar B \oplus Q.
\]
We then define
\begin{align*}
\hat d: \bar A & \to \bar B \\
(a,q) & \mapsto ( \bar d(a),q)
\end{align*}
i.e. $\hat d = (\bar d, \op{id}_{Q})$.
  Define further the maps ${\op{ev}}_{\widehat A} := (\op{ev}_{\bar A}, \op{id})$ and ${\op{ev}}_{\widehat A} ' := (\op{ev}_{\bar A} ' , 0)$ from $\widehat A$ to $Q$.  We observe that the map $\widehat h := (h, \op{id})$ yields a homotopy: ${\op{ev}}_{\widehat A} - {\op{ev}}_{\widehat A} ' =\hat h \circ  \hat d$.
  
\begin{lemma}
The complex $[ \widehat A \xrightarrow{\hat d} \widehat B]$ gives an admissible resolution of
 $\mathbb R \pi_* \cV$ with the evaluation map given by either ${\op{ev}}_{\widehat A}$ or ${\op{ev}}_{\widehat A} '$.
\end{lemma}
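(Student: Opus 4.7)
The plan is to exploit the observation that, as chain complexes, $[\widehat A \xrightarrow{\hat d} \widehat B]$ is the direct sum of $[\bar A \xrightarrow{\bar d} \bar B]$ and the acyclic two-term complex $[Q \xrightarrow{\op{id}} Q]$. In particular it is a locally free resolution of $\R\pi_*\cV$, quasi-isomorphic to $[\bar A \to \bar B]$, and the componentwise projection $\pi = (\pi_A, \pi_B) \colon [\widehat A \to \widehat B] \twoheadrightarrow [\bar A \to \bar B]$, with $\pi_A(a,q) = a$ and $\pi_B(b,q') = b$, is a surjective quasi-isomorphism. It then remains to verify Conditions~\ref{assume ev chainlevel}, \ref{assume alpha chainlevel}, and \ref{compatible} for each of the two evaluation maps.

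For ${\op{ev}}_{\widehat A}' = (\op{ev}_{\bar A}', 0)$, I observe that ${\op{ev}}_{\widehat A}' = \op{ev}_{\bar A}' \circ \pi_A$. Lemma~\ref{l:surjadmi} applied to the surjective quasi-isomorphism $\pi$ together with the admissible structure on $[\bar A \to \bar B]$ coming from $\op{ev}_{\bar A}'$ then gives admissibility of $[\widehat A \to \widehat B]$ with ${\op{ev}}_{\widehat A}'$ directly, with associated data $Z_{\widehat A}' := Z_{\bar A}' \circ \op{Sym}(\pi_A)$ and $\alpha_{\widehat A}^{\prime\vee} := \alpha_{\bar A}^{\prime\vee} \circ (\op{Sym}(\pi_A) \otimes \pi_B)$.

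For ${\op{ev}}_{\widehat A} = (\op{ev}_{\bar A}, \op{id})$, Condition~\ref{assume ev chainlevel} is immediate, since the identity summand onto $Q$ already forces surjectivity. Because ${\op{ev}}_{\widehat A}$ does not factor as $\op{ev}_{\bar A} \circ (\text{projection})$, Lemma~\ref{l:surjadmi} does not apply verbatim, so I would construct the admissibility data by hand: $Z_{\widehat A}$ is forced by Condition~\ref{compatible}, and the remaining task is to lift the derived morphism \eqref{eq: finalalpha} for $[\widehat A \to \widehat B]$ to a chain-level $\alpha^\vee_{\widehat A}$ satisfying the commutativity $\alpha^\vee_{\widehat A} \circ \delta = \op{sum} \circ Z_{\widehat A}$ of Condition~\ref{assume alpha chainlevel}. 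The key structural input is that the Koszul complex $K^\bullet(\widehat A, \widehat B)$ factors canonically as $K^\bullet(\bar A, \bar B) \otimes K^\bullet(Q, Q)$, where $K^\bullet(Q, Q)$ is the standard acyclic Koszul resolution of $\cO_S$ along the regular section $\op{id}\colon Q \to Q$. Consequently the $E_d$-complex for $[\widehat A \to \widehat B]$ is quasi-isomorphic to the one for $[\bar A \to \bar B]$; the derived morphism \eqref{eq: finalalpha} therefore agrees with the one realized by $\alpha^\vee_{\bar A}$, and the lift $\alpha^\vee_{\widehat A}$ is obtained from $\alpha^\vee_{\bar A}$ extended by explicit contraction terms against the Koszul differential on $K^\bullet(Q, Q)$.

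The main obstacle is the concrete bookkeeping of these correction terms. Under the decomposition $\op{Sym}\widehat A \otimes \widehat B = (\op{Sym}\widehat A \otimes \bar B) \oplus (\op{Sym}\widehat A \otimes Q)$, the $\bar B$-block of $\alpha^\vee_{\widehat A}$ must agree with $\alpha^\vee_{\bar A}$ on pure $\bar A$-monomials and extend multiplicatively in the $Q$-variables, while the $Q$-block combines the tautological pairing $\op{Sym} Q \otimes Q \to \op{Sym} Q$ (given by multiplication) with $\pi_{i*}\Sigma_i^*\varkappa_w \circ \op{natural}$. The two blocks assemble to give $\op{sum} \circ Z_{\widehat A}$ after precomposing with the Koszul differential $\delta$ by Euler's homogeneous function theorem, used exactly as in the diagram \eqref{eq: eulerthm} and in the proof of \eqref{depend on w}; this identity is the essential content that ties together the $\bar A$- and $Q$-contributions to both $Z_{\widehat A}$ and $\alpha^\vee_{\widehat A}$.
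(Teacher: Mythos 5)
Your proposal is correct and follows essentially the same route as the paper's proof: for the evaluation map that factors through the projection $\widehat A\to\bar A$ one pulls back $Z$ and $\alpha^\vee$ along the surjective quasi-isomorphism $[\widehat A\to\widehat B]\twoheadrightarrow[\bar A\to\bar B]$ (this is exactly Lemma~\ref{l:surjadmi}, which the paper reproves inline), while for the other one adds the correction $Z_w=\pi_*{\Sigma_i}_*\Sigma_i^*\varkappa_w\circ\op{natural}$ on $\op{Sym}Q$ together with an $\alpha_w^\vee$ built from the tautological pairing $\op{Sym}Q\otimes Q$ and Euler's homogeneous function theorem, just as in the paper's diagram \eqref{eq: alphaw}. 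The only divergence is a labeling one: the paper's text attaches the plain pullback data to $\op{ev}_{\widehat A}=(\op{ev}_{\bar A},\op{id})$ and the $Z_w$-corrected data to $\op{ev}_{\widehat A}'=(\op{ev}_{\bar A}',0)$, whereas Condition~\ref{compatible} forces your assignment (the primed map is the one that literally factors through the projection, so it is the ``easy'' case), so your version is the internally consistent reading and the two constructions needed are the same in either case.
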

 \begin{proof}
 It is obvious that $[ \widehat A \xrightarrow{\hat d} \widehat B] \cong \mathbb R \pi_* \cV$.  We must check that Conditions~\ref{assume ev chainlevel}, \ref{assume alpha chainlevel}, and \ref{compatible} are satisfied for the resolution $[\widehat A \to \widehat B]$ using either evaluation map. Condition~\ref{assume ev chainlevel} is immediate.  To check Conditions \ref{assume alpha chainlevel} and \ref{compatible}, we define the following maps:
\[\widehat Z : \Sym \widehat A  \xrightarrow{\op{proj}} \Sym \bar A \xrightarrow{Z} \O_U^r\] and 
\[\widehat{\alpha}^\vee: \Sym \widehat A \otimes \widehat B \xrightarrow{\op{proj}} \Sym \bar A \otimes \bar B \xrightarrow{\alpha^\vee} \O_U.\]
It is clear that \eqref{eq: alphachainmaporiginal} commutes with $\widehat Z$ and $\widehat{\alpha}^\vee$ replacing $Z$ and $\alpha^\vee$ and that $\widehat{\alpha}^\vee|_{\op{Sym}^{d-1} A \otimes B}$ represents $\R(\pi_d)_*f_d$.  Thus Condition \ref{assume alpha chainlevel} holds.  Furthermore Condition \ref{compatible} holds after replacing $Z$ and $\op{ev}^i_A$ with $\widehat Z$ and ${\op{ev}}_{\widehat A}^i$.  This proves that $[ \widehat A \xrightarrow{\hat d} \widehat B]$ gives an admissible resolution with respect to ${\op{ev}}_{\widehat A}$.

Recall from the discussion surrounding \eqref{derived compatibility}, the map $\pi_*{\Sigma_i}_*{\Sigma_i}^* \varkappa_w \circ \op{natural}: \Sym Q \to \O^r_U$.  Label this map $Z_w$.  Define
\[\widehat Z ': \Sym \widehat A \xrightarrow{\op{proj}} \Sym \bar A \oplus \Sym Q \xrightarrow{ Z ' + Z_w} \O_U^r.\]
It is clear to see that Condition \ref{compatible} holds after replacing $Z$ and $\op{ev}^i_A$ with $\widehat Z '$ and ${\op{ev}}_{\widehat A}^{i \prime}$.  By quasi-homogeneity, one can easily check using Euler's homogeneous function theorem that there exists a map $\alpha_w^\vee$ such that 
\begin{equation}
\label{eq: alphaw}
\begin{CD}
\op{Sym} Q @>>>  \op{Sym} Q \otimes Q \\
@V Z_w VV @V \alpha_w^\vee VV\\
 \O_{U}^r@>\op{sum}>> \O_{U}
\end{CD}
\end{equation}
commutes.
Using this and  ${\alpha}'^\vee$, one can construct a map
\[(\widehat{\alpha}')^\vee: \Sym \widehat A \otimes \widehat B \to \O_U\]
 such that \eqref{eq: alphachainmaporiginal} commutes after replacing $Z, \alpha^\vee$ with $\widehat Z ', (\widehat{\alpha}')^\vee$ such that $(\widehat{\alpha}')^\vee|_{\op{Sym}^{d-1} A \otimes B}$ represents $\R(\pi_d)_*f_d$.  Thus Condition \ref{assume alpha chainlevel} holds as well.  This proves that $[ \widehat A \xrightarrow{\hat d} \widehat B]$ gives an admissible resolution with respect to ${\op{ev}}_{\widehat A} '$.
\end{proof}

We will  denote by $ \widehat{\op{\ev}}$ and $\widehat{\op{ev}}'$ the corresponding geometric evaluation maps from $\tot(\widehat A)$ to $I[V/G]$.
Let $\{-\widehat{\alpha}, \widehat{\beta}\}$  respectively $\{-\widehat{\alpha} ', \widehat{\beta} '\}$ denote the PV factorizations, in $\dabsfact{[\tot \widehat  A/\CC^*], -\widehat{\op{ev}}^*(\boxplus_{i=1}^r w)}$ respectively $\dabsfact{[\tot \widehat A/\CC^*], -\widehat{\op{ev}}^{\prime *}(\boxplus_{i=1}^r w)}$, defined by the admissible resolution $[\widehat A \to \widehat B]$ and the maps ${\op{ev}}_{\widehat A}$ and  $\widehat{\alpha}^\vee$ respectively
 ${\op{ev}}_{\widehat A} '$ and  $(\widehat{\alpha}')^\vee$.

The map from $\widehat A$ to $A_1$ is given by the projection to $\bar A$ composed with the map $\bar A \to A_1$.  
The locus $\tot \bar A^\circ$ (defined just before Definition~\ref{d:square}) depends on which evaluation map is used, thus so does the definition of $\square$.  Let $\square$ and $\square '$ denote the two loci in $\tot \bar A$ given by Definition~\ref{d:square}, using the evaluation maps $\op{ev}_{\bar A}$ and $\op{ev}_{\bar A} ' $ respectively.  Applying Definition~\ref{d:square} to $[\widehat A \to \widehat B]$, we see the the corresponding loci in $\tot \widehat A$ are equal to $\square \times \tot Q$ and $\square ' \times \tot Q$ respectively.
Let $i$ and $i ' $ denote the inclusions of $\square $ and $\square '$ into $\square \times \tot Q$ and $\square ' \times \tot Q$.
 
\begin{lemma}
We have the following relations: 
\[\mathbb R i_*\{- \alpha, \beta\} \cong \{-\widehat{\alpha}, \widehat{\beta}\} \text{ and }\mathbb R i_* '\{- \alpha ', \beta '\} \cong \{-\widehat{\alpha} ', \widehat{\beta} '\}.\]
\end{lemma}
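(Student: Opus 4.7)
The plan is to realize both isomorphisms as direct applications of Proposition~\ref{PV original} (Polishchuk--Vaintrob's reduction for Koszul factorizations), using the direct sum decomposition $\widehat A = \bar A \oplus Q$, $\widehat B = \bar B \oplus Q$ with differential $\hat d = (\bar d, \op{id})$.

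First I would unpack the geometry. Since $\widehat A = \bar A \oplus Q$, we have $\tot \widehat A = \tot \bar A \times_U \tot Q$, and the ambient open stack $\tot(\widehat A)^\circ$ restricts over $\square \times \tot Q$ (this uses the hypothesis of the lemma). On the total space the tautological section of $p^*\widehat A$ is $(\tau_{\bar A}, \tau_Q)$, and by the definition $\hat d = (\bar d, \op{id})$ the induced section $\widehat \beta \in H^0(\square \times \tot Q, p^*\widehat B)$ decomposes as $\widehat\beta = (\beta, \tau_Q)$ under $p^*\widehat B = p^*\bar B \oplus p^*Q$. Similarly, unwinding the definition of $\widehat{\alpha}^\vee = \alpha^\vee \circ (\op{proj}_{\bar A} \otimes \op{proj}_{\bar B})$ and evaluating at the tautological section, the component of $\widehat{\alpha}$ along $p^*\bar B$ is exactly $\alpha$, while the component along $p^*Q$ is zero (since $\widehat\alpha^\vee$ annihilates the $Q$-summand of $\widehat B$).

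Next, apply Proposition~\ref{PV original} with ambient stack $\cX = \square \times \tot Q$, bundle $V = p^*\widehat B$, and subbundle $V_1 = p^*\bar B$, so that $V/V_1 = p^*Q$. Under the decomposition above, the projection of $\widehat\beta$ to $V/V_1$ is the tautological section $\tau_Q$ of $p^*Q$, which is a regular section whose vanishing locus is precisely the zero-section inclusion $i \colon \square \hookrightarrow \square \times \tot Q$; in particular this zero locus is smooth. The restrictions $\widehat\beta|_\square$ and $\widehat\alpha|_\square$ to the subbundle $V_1 = p^*\bar B$ are, by the previous paragraph, exactly $\beta$ and $\alpha$. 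The hypothesis on the potential is satisfied because the critical/common zero locus $Z(\widehat\alpha, \widehat\beta)$ on $\square \times \tot Q$ coincides with $Z(\alpha,\beta) \subseteq \square$ under $i$, and this is $LG_{g,r}(\cZ,d)$, which is proper by Lemma~\ref{l:finalsupp}. Proposition~\ref{PV original} then yields $\{-\widehat\alpha, \widehat\beta\} \cong \mathbb R i_*\{-\alpha, \beta\}$.

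The argument for the primed factorizations is identical, replacing everywhere $\op{ev}_{\bar A}$, $\alpha^\vee$, $\widehat{\op{ev}}_A$, and $\widehat\alpha^\vee$ by their primed counterparts, and using that the $Q$-summand of $\widehat{\op{ev}}_A' = (\op{ev}_{\bar A}', 0)$ is zero so that the decomposition $\widehat\beta' = (\beta', \tau_Q)$ still holds and the restriction of $(\widehat{\alpha}')^\vee$ to $\bar B$ still recovers $(\alpha')^\vee$ (noting that $(\widehat{\alpha}')^\vee$ was defined via $\widehat Z' = Z' + Z_w$ and an extension using $\alpha_w^\vee$, but both extra pieces project to zero on $\bar B \subseteq \widehat B$). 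The main subtlety to check carefully is really just the bookkeeping of the cosection formulas for $\widehat\alpha'$ extending $\alpha'$ through the auxiliary pieces $Z_w$ and $\alpha_w^\vee$; once this is verified, Proposition~\ref{PV original} applies identically and yields $\{-\widehat\alpha', \widehat\beta'\} \cong \mathbb R i'_*\{-\alpha', \beta'\}$.
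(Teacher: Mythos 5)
Your proposal is correct and follows exactly the paper's argument: the paper's proof is a one-line invocation of Proposition~\ref{PV original} with $\bar B$ viewed as a subbundle of $\widehat B$, noting that $\widehat\beta \bmod \bar B$ is the tautological section of $Q$ and hence trivially regular with zero locus the zero section $i\colon \square \hookrightarrow \square\times\tot Q$. Your write-up simply supplies more of the bookkeeping (the decomposition of $\widehat\alpha$ and the primed case) that the paper leaves implicit.
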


\begin{proof}
This is a direct application of Proposition~\ref{PV original}.  In this case $B$ is viewed as a sub-bundle of $\widehat B$ via the natural inclusion.  Note that $\widehat{\beta} \mod B$ is simply the tautological section to $Q$ and so trivially a regular section.
\end{proof}
 
Define
\[L := \ker({\op{ev}}_{\widehat A} - {\op{ev}}_{\widehat A} ') \text{ and } M:= \ker(\hat h).
\]
Since both ${\op{ev}}_{\widehat A} - {\op{ev}}_{\widehat A} '$ and $\hat h$ are clearly surjective over all of $U$, the sheaves $L$ and $M$ are both vector bundles.  Consider the inclusion $\tot L \hookrightarrow \tot \widehat A$.  Since the evaluation maps $\widehat\ev$ and  $\widehat\ev '$ are equal on $\tot L$, the inclusion 
induces pushforwards to both $\dabsfact{[\tot \widehat  A/\CC^*], -\widehat{\op{ev}}^*(\boxplus_{i=1}^r w)}$ and $\dabsfact{[\tot \widehat A/\CC^*], -\widehat{\op{ev}}^{\prime *}(\boxplus_{i=1}^r w)}$.  

Let $\left(\tot L\right)^\circ$ denote the open locus where $\widehat{\op{ev}}|_L$ maps to $\left(I\cc T\right)^r$, where we recall $\cc T = \VmodtG \hookrightarrow [V/G]$.
Let $\square_L$ denote the locus $\{\zeta = 0\}  \subseteq \left(\tot L\right)^\circ$ of Definition \ref{d:square}.  It is easy to check that the map $L \to \widehat A \to A_1$ is still surjective, so $\square_L$ is smooth.  
 
Let $j: \square_L \to \square \times \tot Q$ and $j': \square_L \to \square ' \times \tot Q$ denote the inclusions.
\begin{lemma}
The complex $[L \to M]$ gives an admissible resolution of $\mathbb R \pi_* \cV$.  The corresponding PV factorization $\{ - \alpha '', \beta ''\}$ satisfies the following:
\begin{align*}
\mathbb R j_*\{ - \alpha '', \beta ''\} &\cong \{-\widehat{\alpha}, \widehat{\beta}\} \\
\mathbb R j ' _*\{ - \alpha '', \beta ''\} &\cong \{-\widehat{\alpha} ', \widehat{\beta} '\}.
\end{align*}
\end{lemma}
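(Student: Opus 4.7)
The plan is to realize $\{-\widehat{\alpha}, \widehat{\beta}\}$ and $\{-\widehat{\alpha}', \widehat{\beta}'\}$ as pushforwards of a single PV factorization on $\square_L$ via Proposition~\ref{PV original}, after first identifying $[L \to M]$ as a bundle-level reformulation of $[\bar A \to \bar B]$ paired with $\op{ev}_{\bar A}'$.

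First I unwind the direct sum structure $\widehat A = \bar A \oplus Q$, $\widehat B = \bar B \oplus Q$. Since $\op{ev}_{\widehat A} - \op{ev}_{\widehat A}' = (h\circ\bar d,\, \op{id}_Q)$ and $\hat h = (h,\, \op{id}_Q)$, the first-factor projections give isomorphisms
\[
L \xrightarrow{\sim} \bar A, \quad (a, -h(\bar d(a))) \mapsto a, \qquad M \xrightarrow{\sim} \bar B, \quad (b, -h(b)) \mapsto b,
\]
under which the differential $L \to M$ becomes $\bar d$. Hence $[L \to M]$ is quasi-isomorphic to $[\bar A \to \bar B]$ and so resolves $\mathbb R \pi_*\cV$. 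A direct computation shows both $\op{ev}_{\widehat A}|_L$ and $\op{ev}_{\widehat A}'|_L$ correspond to $\op{ev}_{\bar A}'$ under $L \cong \bar A$, so Conditions~\ref{assume ev chainlevel}, \ref{assume alpha chainlevel}, and \ref{compatible} for $[\bar A \to \bar B]$ paired with $\op{ev}_{\bar A}'$ transport directly to $[L \to M]$. Thus the PV factorization $\{-\alpha'', \beta''\}$ on $\square_L$ is well-defined, and it is the same object whether one views $\tot L$ as a subbundle of $\tot\widehat A$ carrying the first or the second evaluation.

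Next I apply Proposition~\ref{PV original} inside the ambient stack $\square \times \tot Q$ with the subbundle $V_1 := p^*M \subseteq p^*\widehat B$ of corank $\rank Q$. Because the tautological section of $p^*\widehat A$ equals $(\tau_{\bar A},\, \tau_Q)$, we have $\widehat\beta = (\bar d(\tau_{\bar A}),\, \tau_Q)$; the projection $\widehat B \twoheadrightarrow \widehat B/M \cong Q$ is $(b,q)\mapsto q + h(b)$, so
\[
\widehat\beta \bmod V_1 \;=\; \tau_Q + h(\bar d(\tau_{\bar A})) \;\in\; H^0(\square\times \tot Q,\, p^*Q).
\]
Its vanishing locus in $\square \times \tot Q$ is the graph $\{q = -h(\bar d(a))\}$, which under the identification $L \cong \bar A$ coincides with $\square_L$ (the semistability conditions defining each side agree, as both are governed by $\op{ev}_{\bar A}'$). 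This section has rank equal to its corank in the ambient, hence is regular, and $\square_L$ is smooth because the composition $L \to A_1$ remains surjective. Restricting $\widehat\alpha$ and $\widehat\beta$ along the inclusion $j$ recovers $\alpha''$ and $\beta''$ by construction. Proposition~\ref{PV original} then yields
\[
\mathbb R j_*\, \{-\alpha'',\, \beta''\} \;\cong\; \{-\widehat{\alpha},\, \widehat{\beta}\},
\]
and the completely parallel argument inside $\square' \times \tot Q$, using the same subbundle $V_1 = p^*M$, yields $\mathbb R j'_*\, \{-\alpha'',\, \beta''\} \cong \{-\widehat{\alpha}',\, \widehat{\beta}'\}$.

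The main obstacle is the bookkeeping needed to verify that (i) the open semistability loci in $\square \times \tot Q$ and $\square' \times \tot Q$ each cut out the same subspace $\square_L$ of $\tot L$, and (ii) the hypothesis of Proposition~\ref{PV original} on the restricted superpotential is met; for the latter one notes that the common support $Z(\widehat\alpha, \widehat\beta)\cap \tot L = Z(\alpha'', \beta'')$ equals $LG_{g,r}(\cZ, d)$, which is proper by Lemma~\ref{l:finalsupp}, placing us squarely in the applicable case of the proposition.
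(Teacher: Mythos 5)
Your proof is correct and takes essentially the same route as the paper: both arguments verify admissibility of $[L \to M]$ and then invoke Proposition~\ref{PV original} with the subbundle $M \subseteq \widehat B$, identifying $\widehat{\beta} \bmod M$ with the section $\op{ev}_{\widehat A} - \op{ev}_{\widehat A}' = \hat h \circ \hat d$ whose zero locus is $\tot L$ (hence $\square_L$ after intersecting with the open semistability loci, which agree on $\tot L$ since both evaluations restrict there to $\op{ev}_{\bar A}'$). Your explicit graph identifications $L \cong \bar A$ and $M \cong \bar B$ are a cleaner packaging of the surjectivity and admissibility checks that the paper carries out by a direct element chase, but the substance of the argument is the same.
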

 
\begin{proof}
It is an easy exercise to check that $[L \to M]$ is quasi-isomorphic to $[\widehat A \to \widehat B]$ and therefore to $\mathbb R \pi_* \cV$.  We define the evaluation map $L \to Q$ to simply be the restriction of $\widehat \ev$ or $\widehat \ev '$ (they are equal on $L$ by construction).
To check surjectivity consider the following.  Given $p \in Q$, choose $a \in \bar A$ such that $\bar \ev '(a) = p$.
This is possible since $\bar \ev '$ is surjective.
Let $p' = \bar \ev(a) - p$.
Then $\widehat \ev(a, p) - \widehat \ev '(a, p) = \bar \ev(a) + p' - \bar \ev '(a) = 0$.
Therefore $(a, p) \in \widehat A$ actually lies in $L$, and $\widehat \ev '(a, p') = p$.
This shows that Condition~\ref{assume ev chainlevel} is satisfied.
To verify Condition \ref{assume alpha chainlevel}, we simply restrict the map $\widehat{\alpha}^\vee: \Sym \widehat A \otimes \widehat B \to \O_U$.
Condition \ref{compatible} is apparent by construction.
 
Let $\{ - \alpha '', \beta ''\}$ denote the corresponding PV factorization on $[\square_L/\CC^*_R]$.  Applying Proposition~\ref{PV original} to each of $\{-\widehat{\alpha}, \widehat{\beta}\}$ and $\{-\widehat{\alpha} ', \widehat{\beta} '\}$ yields the result.  In the notation of Proposition~\ref{PV original}, the vector bundle $V$ is given by $\widehat B$ and $V_1$ is $M$.  The map $
\bar \beta \mod M$ is equal to the section ${\op{ev}}_{\widehat A} - {\op{ev}}_{\widehat A} '$ defining $\tot L$ inside $\tot \widehat A$ (and $\square_L$ inside both $\square \times Q$ and $\square ' \times Q$), and therefore  regular.
\end{proof}

We arrive at the desired statement.
\begin{proposition}
The factorizations $\{- \alpha, \beta\}$ and $\{- \alpha ', \beta '\}$ are related by pushforward.
\end{proposition}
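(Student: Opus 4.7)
The plan is to verify Definition \ref{relatedbypush} directly, assembling a zig-zag of length $n = 3$ from the two preceding lemmas. Set $\square_1 := \square$, $\square_2 := \square_L$, and $\square_3 := \square'$, and take the ``peak'' spaces to be $\square_{1,2} := \square \times \tot Q$ and $\square_{2,3} := \square' \times \tot Q$. The boundary maps $f = l_1$ and $r_3 = f'$ are the identities on $\square$ and $\square'$, while $r_1 := i$, $l_2 := j$, $r_2 := j'$, and $l_3 := i'$ are exactly the closed immersions produced by the preceding two lemmas. The corresponding factorizations are $K_1 = \{-\alpha,\beta\}$, $K_2 = \{-\alpha'',\beta''\}$, $K_3 = \{-\alpha',\beta'\}$, $K_{1,2} = \{-\widehat\alpha,\widehat\beta\}$, and $K_{2,3} = \{-\widehat\alpha',\widehat\beta'\}$, where $K_{1,2}$ and $K_{2,3}$ are interpreted as factorizations on the peak spaces by restriction from their ambient total spaces $\tot \widehat A$.

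With these identifications, the four pushforward isomorphisms required by the definition,
\[ \mathbb R(r_j)_* K_j \cong K_{j,j+1} \cong \mathbb R(l_{j+1})_* K_{j+1}, \qquad j = 1, 2, \]
follow verbatim from the preceding two lemmas, while the endpoint conditions $\mathbb R f_*(K_1) \cong K$ and $\mathbb R f'_*(K_3) \cong K'$ hold trivially since $f$ and $f'$ are identities. Smoothness of the peak stacks is automatic since $\tot Q \to U$ is a vector bundle on a smooth base, and the $\CC^*_R$-equivariance of all constructed data is a consequence of the R-charge decomposition of $Q = \pi_*(\cV|_\sG)$ and the compatibility of the splitting $\widehat A = \bar A \oplus Q$ with the R-charge grading on $\bar A$. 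Commutativity over $\sMbar_{g,r}$ is inherited from the common base $U$.

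The most technical point to check carefully is that each peak space admits an evaluation map into $I[\cT]^r$ (the semistable inertia) rather than only $I[V/G]^r$, and that the derived-category isomorphisms of the preceding lemmas remain valid when we restrict to such open substacks. This is handled by noting that the supports of the Koszul factorizations $\{-\widehat\alpha,\widehat\beta\}$ and $\{-\widehat\alpha',\widehat\beta'\}$ are contained in $LG_{g,r}(\cZ,d)$---by Lemma \ref{l:finalsupp} and its direct analogs for the resolutions $[\widehat A \to \widehat B]$ and $[L \to M]$---which already sits inside the semistable locus of either peak. Restricting $\square \times \tot Q$ and $\square' \times \tot Q$ to the open loci where $\widehat\ev$, respectively $\widehat\ev'$, factor through $I[\cT]^r$ therefore preserves all of the pushforward isomorphisms, and all hypotheses of Definition \ref{relatedbypush} are met.
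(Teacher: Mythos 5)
Your proof is correct and follows essentially the same route as the paper: the paper's proof simply chains the two preceding lemmas, $\mathbb R i_*\{-\alpha,\beta\}\cong\{-\widehat\alpha,\widehat\beta\}\cong\mathbb R j_*\{-\alpha'',\beta''\}$ and $\mathbb R i'_*\{-\alpha',\beta'\}\cong\{-\widehat\alpha',\widehat\beta'\}\cong\mathbb R j'_*\{-\alpha'',\beta''\}$, which is exactly the length-three zig-zag you assemble to verify Definition~\ref{relatedbypush}. Your additional checks on smoothness of the peak stacks, $\CC^*_R$-equivariance, and restriction to the semistable loci are consistent with (and make explicit) what the paper leaves implicit in the statements of those lemmas.
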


\begin{proof}
By the previous two lemmas we have $ \mathbb R i_*\{- \alpha, \beta\} \cong \{-\widehat{\alpha}, \widehat{\beta}\} \cong \mathbb R j_*\{ - \alpha '', \beta ''\}$  and  $\mathbb R i_*\{- \alpha ', -\beta '\} \cong \{-\widehat{\alpha} ', \widehat{\beta} '\} \cong \mathbb R j ' _*\{ - \alpha '', \beta ''\}$.
The conclusion follows.
\end{proof}
  
 \subsubsection{Different choices of resolution}
  
 We next consider the choice of resolution $[\bar A \to \bar B]$ over $U$.

\begin{lemma}\label{l:r2}
Given two different choices of admissible resolutions $[\bar A \to \bar B]$ and $[\bar A' \to \bar B']$ of $\mathbb R\pi_*(\cc V)$ over $U$, the factorizations $\{\alpha, \beta\} $ and  $\{\alpha ', \beta '\}$ are related by pushforward.
\end{lemma}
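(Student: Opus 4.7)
The plan is to construct a common refinement of the two resolutions and apply Proposition~\ref{PV original} twice to exhibit a chain of closed immersions witnessing the relation by pushforward.

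First, apply Lemma~\ref{l:surjective} to obtain a two-term complex $[\bar A_0 \to \bar B_0]$ of vector bundles together with component-wise surjective quasi-isomorphisms onto both $[\bar A \to \bar B]$ and $[\bar A' \to \bar B']$. Invoking Lemma~\ref{l:surjadmi} for each surjection endows $[\bar A_0 \to \bar B_0]$ with admissible data compatible with both original resolutions. Let $\{-\alpha_0, \beta_0\}$ denote the corresponding PV factorization on the associated stack $\square_0 \subseteq \tot(\bar A_0)^\circ$. Writing $K_A := \ker(\bar A_0 \to \bar A)$ and $K_B := \ker(\bar B_0 \to \bar B)$, acyclicity of the kernel complex $[K_A \to K_B]$ forces the induced map $K_A \to K_B$ to be an isomorphism of vector bundles.

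Second, observe that the construction in Lemma~\ref{l:surjective} yields a canonical splitting of $\bar A_0 \twoheadrightarrow \bar A$ (namely, the inclusion of the middle summand $E_1 = \bar A$ into $\bar A_0 = G_1 \oplus \bar A \oplus \bar A'$). This gives a decomposition $\bar A_0 = \bar A \oplus K_A$, an identification $\square_0 = \square \times_U \tot(K_A)$, and a canonical closed immersion $i \colon \square \hookrightarrow \square_0$ as the zero section. To conclude $\{-\alpha_0, \beta_0\} \cong \mathbb{R} i_* \{-\alpha, \beta\}$ via Proposition~\ref{PV original}, we select a sub-bundle $V_1 \subseteq \bar B_0$ such that the composition $V_1 \hookrightarrow \bar B_0 \twoheadrightarrow \bar B$ is an isomorphism; with such a splitting, $\beta_0 \mod V_1$ is identified (via $K_A \cong K_B$) with the tautological section of the pullback of $K_A$ to $\square_0$, which is regular with zero locus exactly $i(\square)$. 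Compatibility of the admissible structures then ensures that the induced Koszul factorization on $\square$ is the original $\{-\alpha, \beta\}$. Applying the symmetric argument to the second surjection yields a closed immersion $i' \colon \square' \hookrightarrow \square_0$ with $\{-\alpha_0, \beta_0\} \cong \mathbb{R} i'_* \{-\alpha', \beta'\}$, and the resulting chain $\square \hookleftarrow \square \hookrightarrow \square_0 \hookleftarrow \square' \hookrightarrow \square'$ of closed immersions, all lying over $U$, exhibits the two PV factorizations as related by pushforward in the sense of Definition~\ref{relatedbypush}.

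The main obstacle is the global existence of the sub-bundle $V_1$, equivalently a splitting of $0 \to K_B \to \bar B_0 \to \bar B \to 0$. Whereas the analogous splitting on the $\bar A$-side is canonical from the construction in Lemma~\ref{l:surjective}, the $\bar B$-side splitting need not exist globally on the Deligne--Mumford stack $U$. This can be handled either by further enlarging $[\bar A_0 \to \bar B_0]$ by suitable $\CC^*_R$-equivariant acyclic summands until the required splitting becomes canonical---an operation which affects neither the quasi-isomorphism class nor the admissibility and which, by another application of Proposition~\ref{PV original} as in the case of adding an acyclic summand, preserves the PV factorization up to pushforward---or by appealing to the local nature of the conclusion of Proposition~\ref{PV original} and gluing via smooth descent on $U$. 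Throughout, $\CC^*_R$-equivariance must be maintained by choosing all splittings and resolutions compatibly with the lower grading.
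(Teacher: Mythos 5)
Your overall strategy---resolve both admissible resolutions by a common roof with surjective legs (Lemmas~\ref{l:surjective} and~\ref{l:surjadmi}) and use Proposition~\ref{PV original} to push both PV factorizations into the one on the roof---shares its ingredients with the paper's proof, but it stalls at exactly the two points the paper is careful about. First, a single choice of admissible data on the roof is \emph{not} ``compatible with both original resolutions'': Lemma~\ref{l:surjadmi} produces the evaluation map and the cosection on $[\bar A_0 \to \bar B_0]$ by composing with \emph{one} of the two surjections, and the data induced from the two legs need not coincide (they are only homotopic). You therefore get two roof factorizations, not one, and you must insert the independence-of-evaluation-map comparison from the preceding subsubsection to relate them---this is precisely why the paper's proof invokes that result before reducing to a single cochain map. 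Second, and more seriously, the subbundle $V_1 \subseteq \bar B_0$ you need for Proposition~\ref{PV original} must both split $0 \to K_B \to \bar B_0 \to \bar B \to 0$ and contain $\bar d_0(s_A(\bar A))$, so that $\beta_0 \bmod V_1$ becomes the tautological section of $K_B \cong K_A$ with zero locus the zero-section copy of $\square$. With the canonical section $s_A$ you cite (inclusion of the middle summand), this is impossible even fiberwise: for $a \in \ker \bar d$ one computes $\bar d_0(s_A(a)) = (0,a,0)$, a nonzero element of $K_B$, so $\bar d_0(s_A(\bar A))$ meets $K_B$ nontrivially and no complement of $K_B$ can contain it. Dropping the containment, the zero locus of $\beta_0 \bmod V_1$ is only a graph over $\tot(\bar A)$, and identifying the restricted Koszul data with $\{-\alpha,\beta\}$ then needs an argument you have not supplied. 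Your descent fallback does not repair this: Proposition~\ref{PV original} yields isomorphisms in a triangulated category, and local isomorphisms of objects do not glue without higher coherence data.

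The paper's device for avoiding all of this is the mapping-cylinder factorization: after reducing to a single cochain map $[\bar A \to \bar B] \to [\bar A' \to \bar B']$, it replaces the target by $[\bar A' \oplus \bar B \to \bar B' \oplus \bar B]$, so that up to homotopy the map becomes a degreewise split injection of vector bundles. Then $\bar B$ sits inside $\bar B' \oplus \bar B$ as a canonical direct summand, $\beta' \bmod \bar B$ is a regular section cutting out $\square$ inside $\square'$, and Proposition~\ref{PV original} applies in one stroke with no choice of splitting; the smaller factorization pushes forward to the larger one. This is essentially your first proposed repair (``enlarge by acyclic summands until the splitting becomes canonical'') carried out concretely---if you make that repair precise you will reproduce the paper's argument. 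I recommend restructuring along those lines: reduce to an injective chain map and push one factorization into the other, rather than pushing both into the roof.
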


\begin{proof}
By Lemma~\ref{l:surjective}, there exists a roof diagram realizing the quasi-isomorphism between $[\bar A \to \bar B]$ and $[\bar A' \to \bar B']$ such that the roof is a two term complex $[\bar A '' \to \bar B '']$ of vector bundles and all maps are surjective.  By Lemma~\ref{l:surjadmi}, the roof provides another admissible resolution.  The evaluation map on $\bar A ''$ may be induced either by the map to $\bar A$ or the map to $\bar A '$.  These may not agree, however by the previous section the corresponding factorizations are related by pushforward.  

Thus we reduce to the situation that there exists a morphism of cochain complexes
$[\bar A \to \bar B]\to [\bar A' \to \bar B']$
realizing the quasi-isomorphism, and such that $[\bar A \to \bar B]$ is admissible and the evaluation map $\op{ev}_{\bar A}$ from $\bar A$ factors through $\bar A '$.  Up to homotopy, this map decomposes as 
\[
[\bar A \to \bar B]\to [\bar A'\oplus \bar B \to \bar B'\oplus \bar B] \to [\bar A' \to \bar B']. \]
The map $\bar A \to \bar A'\oplus \bar B$ is injective due to the fact that 
$[\bar A \to \bar B]\to [\bar A' \to \bar B']$ is a quasi-isomorphism.
The second map has (up to homotopy) a right inverse consisting of injections, thus we may assume without loss of generality that
the maps of vector bundles in $$[\bar A \to \bar B]\to [\bar A' \to \bar B']$$
are injective.  Because the map $\bar A \to A_1$ factors through $\bar A '$, as does the evaluation map $\op{ev}_{\bar A}$, the closed immersion $\tot(A) \to \tot(\bar A ')$ induces an immersion $\square \to \square '$.  The section $\beta ' \mod \bar B$ defines the locus $\square  \subseteq \square '$, therefore by Proposition~\ref{PV original},
the Koszul factorization  
 $\{-\alpha ', \beta '\}|_\square$ is isomorphic to the pushforward of $\{-\alpha, \beta\}|_{\square '}$ under the map $\square \to \square '$.
 \end{proof}

The resolution $[{A_1} \xrightarrow{d_1} {B_1}]$ of $\mathbb R\pi_*(\cc V_1)$ in Lemma~\ref{UY} and consequently the closed immersion of $LG_{g,r}(\cX, d) \to \sKbar_{g,r}(\Xrig, d)$ in Proposition~\ref{p: projmod} depend on a choice of closed immersion from the universal curve $\underline{\mathcal C} \subseteq \PP^{N-1}$ (recall \S~\ref{ssqp}).  We next show that two fundamental factorizations coming from different choices of this immersion are related by pushforward.  

Consider two different resolutions $i: \underline{\mathcal C} \to \PP^{N-1}$ and $i': \underline{\mathcal C} \to \PP^{M - 1}$.  Pulling back the first two terms of the Euler sequence on $\PP^{N-1}$ resp. $\PP^{M-1}$ to $\cC_{BG_1}$ yields two vector bundles with sections: 
\[\cO_{\cC_{BG_1}} \ra (\cN_{BG_1})^{\oplus N} \text{ resp. } \cO_{\cC_{BG_1}} \ra (\cc M_{BG_1} )^{\oplus M }.\]
Tensoring the two yields a section of $(\cN_{BG_1})^{\oplus N} \otimes (\cc M_{BG_1})^{\oplus M }$ which defines the map 
$\cC_{BG_1} \to \PP^{N + M - 1}$ coming from the Segre embedding $\PP^{N-1} \times \PP^{M - 1} \to \PP^{N + M -1}$.  Note the section factors as
\[\cO_{\cC_{BG_1}} \ra \cN_{BG_1}^{\oplus N} \to (\cN_{BG_1})^{\oplus N} \otimes {\cc M_{BG_1} }^{\oplus M }.\]
Let $N' := N +M$ and let $ \cN_{BG_1} ' := \cN_{BG_1} \otimes {\cc M}_{BG_1}$.  Tensoring the above inclusions by $\cV_1$ yields the following map of short exact sequences
\begin{equation}\label{e:2res}
\begin{tikzcd}
0 \ar[r] & \cV_1 \ar[r] \ar[d, "\op{id}"]& \cV_1 \otimes \cN_{BG_1}^{\oplus N} \ar[d, hookrightarrow] \ar[r] &\cc Q \ar[r] \ar[d] & 0 \\
0 \ar[r] & \cV_1 \ar[r] & \cV_1 \otimes (\cN_{BG_1} ' )^{\oplus N ' } \ar[r] & \cc Q ' \ar[r] & 0
\end{tikzcd}
\end{equation}
where $\cc Q$ and $\cc Q '$ are defined as the respective cokernels.  In the notation of Lemma~\ref{UY}, let $A_1 := \mathbb{R} \pi_*  \cV_1 \otimes \cN_{BG_1}^{\oplus N}$ resp. $A_1 ' := \cV_1 \otimes (\cN_{BG_1} ' )^{\oplus N ' }$ and $B_1 := \mathbb{R} \pi_* \cc Q$ resp. 
$B_1 ' := \mathbb{R} \pi_* \cc Q '$.  

Following the embedding construction of \S~\ref{ssqp} and \S~\ref{genhybmod} with each of the two resolutions of $\cV_1$ from \eqref{e:2res}, we obtain open substacks $U \subset \tot(A_1)$ resp. $U' \subset \tot(A_1 ')$ over $ \Mfrak^\orb_{g,r}(B\Gamma, d)_{\log}^\circ$ such that all the properties of Theorem~\ref{c:swn} hold.  Furthermore, from the inclusion $A_1 \hookrightarrow A_1 '$ obtained from \eqref{e:2res}, one checks that there is an induced morphism $U \hookrightarrow U'$.  
By replacing $U '$ with a smaller open subset if necessary, we may assume we are in the following situation.

\begin{reduction}\label{red} Given two resolutions  $[{A_1} \to {B_1}] $ and $ [{A_1}' \to {B_1}']$ of $\mathbb R\pi_*(\cc V_1)$ over $\Mfrak^\orb_{g,r}(B\Gamma, d)_{\log}^\circ$, constructed as in \S~\ref{ssqp},
we can assume without loss of generality that there exists a quasi-isomorphism
$$f \colon [{A_1} \to {B_1}] \to [{A_1}' \to {B_1}']$$
realized at the cochain level, such that the maps $f_{A_1} \colon {A_1} \to {A_1}'$ and $f_{B_1}: {B_1} \to {B_1}'$ are injective, and $U  \subseteq \tot({A_1})$ is the preimage of $U'  \subseteq \tot{{A_1}'}$ under $f$.   We can assume further that there exist admissible resolutions on both $U$ and $U'$.
\end{reduction}
\begin{lemma}\label{l:r1}  With the situation given as above, 
there exist admissible resolutions $[\bar A \to \bar B]$ of $\mathbb R\pi_*(\cc V)$ over $U$ and $[\bar A' \to \bar B']$ of $\mathbb R\pi_*(\cc V)$ over $U'$ together with a closed immersion $\bar f: \tot(\bar A) \to \tot(\bar A')$ such that the diagram
\[
\begin{tikzcd}
 \square \arrow{d} \arrow[r, hookrightarrow, "\bar f"] & \square ' \arrow{d}\\
 U \arrow[r, hookrightarrow, "f"] & U'
\end{tikzcd}
\]
commutes, and the pushforward $\mathbb R \overline f_*(\{-\alpha, \beta\})$ is isomorphic to $\{-\alpha ', \beta '\}$ in the derived category of factorizations.
\end{lemma}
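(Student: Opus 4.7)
The plan is to mirror the proof of Lemma~\ref{l:r2}, now coordinating admissible resolutions across the two distinct base stacks $U$ and $U'$. First I will choose an admissible resolution $[\bar A' \to \bar B']$ of $\RR\pi_*\cV$ on $U'$ via the two-step procedure, equipped with a surjection $g_{A_1'}\colon \bar A' \twoheadrightarrow A_1'$. Pulling back along $f$ gives an admissible resolution of $\RR\pi_*\cV$ on $U$ whose $\cV_1$-component surjects onto $f^*A_1' = A_1'|_U$. Using the component-wise injection $f_{A_1}\colon A_1 \hookrightarrow A_1'|_U$ from Reduction~\ref{red}, I will form the fiber-product resolution
\[ \bar A := A_1\times_{A_1'|_U}f^*\bar A', \qquad \bar B := B_1\times_{B_1'|_U}f^*\bar B', \]
which, by surjectivity of $f^*g_{A_1'}$, defines vector bundles on $U$ carrying both a surjection $g_{A_1}\colon \bar A \twoheadrightarrow A_1$ and an injection $\iota_A\colon \bar A \hookrightarrow f^*\bar A'$ (and analogously $\iota_B$), satisfying the compatibility $f^*g_{A_1'}\circ \iota_A = f_{A_1}\circ g_{A_1}$. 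Admissibility for $[\bar A \to \bar B]$ transfers by restricting the evaluation maps, $Z$, and $\alpha^\vee$ along $\iota$ from the admissible data on $f^*[\bar A' \to \bar B']$; by Lemma~\ref{l:r2}, any other admissible resolution on $U$ yields a fundamental factorization related by pushforward, so there is no loss of generality in making this specific choice.

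Next, the injection $\iota$ induces a closed immersion $\bar f\colon \tot \bar A \hookrightarrow \tot \bar A'$, which factors through $U\times_{U'}\tot \bar A'$, itself closed in $\tot \bar A'$ since $f\colon U\hookrightarrow U'$ is a closed immersion by Reduction~\ref{red}. A direct calculation with tautological sections gives the identity
\[ \zeta'\bigl(\bar f(u,\bar a)\bigr) = f_{A_1}\bigl(\zeta(u,\bar a)\bigr), \]
so injectivity of $f_{A_1}$ shows $\bar f^{-1}(\{\zeta'=0\}) = \{\zeta=0\}$, while compatibility of the evaluation maps (which factor through $\iota_A$) preserves the open semistability condition. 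Hence $\bar f$ restricts to a closed immersion $\bar f\colon \square \hookrightarrow \square'$ fitting into the required commuting diagram over $\sMbar_{g,r}$.

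Finally, I will invoke Proposition~\ref{PV original} to conclude $\RR\bar f_*\{-\alpha,\beta\}\cong\{-\alpha',\beta'\}$. Working on $\cX = \square'$ with $V = \bar B'|_{\square'}$, the appropriate sub-bundle $V_1\subseteq V$ is the one corresponding to $\iota_B(\bar B)$; commutativity of the map of complexes forces $\beta'$ to take values in $\iota_B(\bar B)$ along $\tot \bar A \subseteq \tot \bar A'$, so $\beta' \bmod V_1$ vanishes precisely where $\bar f$ lands, and together with the ``base change'' closed condition cutting out $U \hookrightarrow U'$ this identifies $\square$ as a regularly embedded closed substack of $\square'$. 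Proposition~\ref{PV original} will then produce the desired isomorphism of Koszul factorizations. The main obstacle is this last step: cleanly combining the ``fiber'' contribution from $\bar A\hookrightarrow f^*\bar A'$ with the ``base-change'' contribution from $U\hookrightarrow U'$ in order to isolate the correct sub-bundle $V_1$ and verify both the sub-bundle and regularity hypotheses of Proposition~\ref{PV original} in this mixed setting.
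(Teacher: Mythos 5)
Your construction of the resolutions is, up to notation, the same as the paper's: with $C:=\op{coker}(f_{A_1}\colon A_1\to A_1')$, the fiber products $\bar A = A_1\times_{A_1'}f^*\bar A'$ and $\bar B = B_1\times_{B_1'}f^*\bar B'$ are exactly the kernels of the induced maps $f^*\bar A'\to C$ and $f^*\bar B'\to C$ used in the paper, and the resulting surjection onto $A_1$, the injections $\iota_A,\iota_B$, the closed immersion $\bar f$, and the identity $\zeta'\circ\bar f = f_{A_1}\circ\zeta$ giving the commuting square are all as in the paper's proof. (One small caveat you share with the paper's terse treatment: restricting $\op{ev}_{\bar A'}$ along the \emph{injection} $\iota_A$ is not covered by Lemma~\ref{l:surjadmi}, whose hypotheses are surjectivity; the surjectivity in Condition~\ref{assume ev chainlevel}(b) for $\bar A$ needs a word, though it is not the essential difficulty.)

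The genuine gap is the step you yourself flag as "the main obstacle," and it is precisely where the content of the lemma lies. The resolution is that the "fiber" condition and the "base-change" condition are not two separate conditions to be combined: \emph{on $\square'$ they are the same section}. Take $V=E'=p'^*\bar B'$ and $V_1=E=p'^*\bar B$, so $E'/E\cong p'^*C$. Since the right face of the cube commutes, $\beta'\bmod E = p'^*\bigl(\bar\gamma\circ\mathrm{taut}_{\bar A'}\bigr)$ where $\bar\gamma\colon\bar A'\to A_1'\to C$. But $\square'$ is defined by $\zeta'=0$, i.e.\ $\rho'(\mathrm{taut}_{\bar A'})=\mathrm{taut}_{A_1'}$, so on $\square'$ one has $\bar\gamma(\mathrm{taut}_{\bar A'})=\gamma(\mathrm{taut}_{A_1'})$. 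The vanishing of $\gamma(\mathrm{taut}_{A_1'})$ is exactly the condition that the base point lies in $\tot(A_1)\cap U'=U$, and, given that, the condition $\bar\gamma(\mathrm{taut}_{\bar A'})=0$ is automatic and places the fiber coordinate in $\bar A$; hence $Z(\beta'\bmod E)\cap\square'=\square'\times_{U'}U=\square$, with no further intersection needed. Regularity of $\beta'\bmod E$ then follows from smoothness of $\square$ and $\square'$ together with the codimension count $\dim\square'-\dim\square=\rank\bar A'-\rank\bar A=\rank C$. With these hypotheses verified, Proposition~\ref{PV original} applies verbatim and yields $\mathbb R\bar f_*\{-\alpha,\beta\}\cong\{-\alpha',\beta'\}$. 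Without this identification your appeal to Proposition~\ref{PV original} is not justified, since a priori $Z(\beta'\bmod V_1)$ could strictly contain $\bar f(\square)$ and the proposition would then compute the pushforward from the wrong substack.
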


\begin{proof}
By abuse of notation, we let ${A_1}, {A_1}', {B_1}, {B_1}'$ denote the pulled-back vector bundles over $U'$.  Let $\bar A ' \to \bar B'$ denote a resolution of $\mathbb R\pi_*(\cc V)$ over $U'$ which surjects onto $[{A_1}' \to {B_1}']$.  Let $C$ denote the cokernel of $f: {A_1} \to {A_1}'$.  We have the following commutative diagram:
\[
\begin{tikzcd}
 & \bar B \ar[dd, two heads] \ar[rr, hookrightarrow, "\bar g" near end] & &\bar B' \ar[dd, two heads]  \ar[rr] & & C \ar[dd, "\cong"] \\
 \bar A \ar[dd, crossing over, two heads, "\rho" near start] \ar[ur] \ar[rr, hookrightarrow, crossing over, "\bar f" near end] & & 
                                                    \bar A '   \ar[ur, "\bar d '"] \ar[rr, crossing over, "\bar \gamma" near end] 
                                                                              &    &C \ar[ur, "\cong"]  & \\
  &  {B_1}  \ar[rr, hookrightarrow, "g" near end] & & {B_1}'  \ar[rr] & & C \\
  {A_1} \ar[ur] \ar[rr, hookrightarrow, " f" near end] &&  A_1 ' \ar[uu, crossing over, twoheadleftarrow, "\rho ' " near start] \ar[ur, " d '"] \ar[rr, " \gamma" near end] &&C \ar[ur, "\cong"] \ar[uu, crossing over, twoheadleftarrow, "\cong" near start]    &
 \end{tikzcd}
\]
where $\bar \gamma$ is the composition $\bar A' \to {A_1}' \to C$, the vector bundle $\bar A = \text{ker}(\bar \gamma)$, and similarly $\bar B$ is the kernel of the composition from $\bar B'$ to $C$.  Note that $\bar A$ surjects onto ${A_1}$, and that $[\bar A \to \bar B]$ is quasi-isomorphic to $[\bar A ' \to \bar B']$.

Let $\tot(\bar A|_U)$ and $\tot(\bar A')$ denote the total spaces of $\bar A$ and $\bar A'$ over $U$ and $U'$ respectively. Let $p:T \to U$ and $p': T' \to U'$ denote the projections.  Recall that $p^*(A_1)$ and ${p '}^*(A_1 ')$ have sections
\[\zeta  = {p}^*(\rho  \circ \text{taut}_{\bar A }) - {p}^*(\text{taut}_{{A_1} })\] and \[\zeta ' = {p'}^*(\rho ' \circ \text{taut}_{\bar A '}) - {p'}^*(\text{taut}_{A_1 '})\] whose zero loci define $\square $ and $\square '$ respectively.  The above diagram shows the map $\tot(\bar A|_U) \to \tot(\bar A')$ induced by $\bar f$ sends $\square$ into $\square '$.  In fact $\square$ is the zero locus of the section ${p'}^* (\gamma \circ \text{taut}_{ A_1 '}) \in \Gamma(\square ' , {p'}^*( C))$.

On $\square '$ we have the obstruction bundle $E' = {p'}^*(\bar B')$ which contains $E = {p'}^*(\bar B)$ as a sub-bundle.  $E'$ has a section $\beta ' = \bar d ' \circ \text{taut}_{\bar A '}$.  Note that $E' / E ={p'}^*(\bar B'/\bar B)= {p'}^*(C)$ and $\beta '\text{mod}(E)$ is given by 
\[\beta' \text{mod}(E) = {p'}^* (\bar \gamma \circ \text{taut}_{\bar A '})= {p'}^* (\gamma \circ \text{taut}_{ A_1 '}),\]
where the last equality holds since we are on $\square '$.
By Proposition~\ref{PV original}, the claim follows.
\end{proof}

\begin{proposition}\label{p:ind}
Any two Koszul factorizations 
\[\{-\alpha, \beta\} \in \dabsfact{[\square/\CC^*_R], - \op{ev}^* (\boxplus_{i=1}^r w)} \text{ and } \{-\alpha ', \beta '\} \in \dabsfact{[\square '/\CC^*_R], - \op{ev}^* (\boxplus_{i=1}^r w)}\] 
constructed via the two-step procedure are related by pushforward.
\end{proposition}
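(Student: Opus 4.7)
The plan is to chain together the results proved earlier in the section to connect any two Koszul factorizations arising from the two-step procedure by a sequence of pushforwards, thereby exhibiting the ``relation by pushforward'' of Definition~\ref{relatedbypush}. There are three types of choices in the two-step procedure: $(\mathrm{i})$ the embedding $\underline{\cC}\hookrightarrow \PP^{N-1}$ determining the first resolution $[A_1\to B_1]$ of $\mathbb R\pi_*\cV_1$ (and hence the open substack $U\subseteq \tot A_1$), $(\mathrm{ii})$ the admissible resolution $[\bar A\to\bar B]$ of $\mathbb R\pi_*\cV$ over $U$, and $(\mathrm{iii})$ the cochain-level evaluation maps $\op{ev}_{\bar A}^i$ (and the cosection $\alpha^\vee$, which is already known to be independent up to homotopy). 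It suffices to show each choice can be varied while preserving the relation by pushforward.

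First, to handle $(\mathrm{i})$, given two embeddings $\underline{\cC}\hookrightarrow \PP^{N-1}$ and $\underline{\cC}\hookrightarrow \PP^{M-1}$, I would form the Segre embedding $\underline{\cC}\hookrightarrow \PP^{NM-1}$ exactly as in the discussion preceding Reduction~\ref{red}. This produces a third first resolution $[A_1''\to B_1'']$ together with \emph{injective} quasi-isomorphisms $[A_1\to B_1]\hookrightarrow [A_1''\to B_1'']$ and $[A_1'\to B_1']\hookrightarrow [A_1''\to B_1'']$, and induced open immersions $U\hookrightarrow U''$, $U'\hookrightarrow U''$. Applying Lemma~\ref{l:r1} twice then produces, for each of $U$ and $U'$, admissible second resolutions, say $[\tilde A\to\tilde B]$ over $U$ and $[\tilde A'\to \tilde B']$ over $U'$, together with admissible resolutions over $U''$, such that the resulting Koszul factorizations are pushforwards of each other via closed immersions $\square\hookrightarrow\square''$ and $\square'\hookrightarrow\square''$.

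Second, to handle $(\mathrm{ii})$, the second resolutions $[\tilde A\to\tilde B]$ and $[\tilde A'\to\tilde B']$ produced by Lemma~\ref{l:r1} are in general different from the original choices $[\bar A\to\bar B]$ and $[\bar A'\to\bar B']$ used to define $\{-\alpha,\beta\}$ and $\{-\alpha',\beta'\}$. Lemma~\ref{l:r2} guarantees that, on each of $U$ and $U'$, the Koszul factorizations built from any two admissible second resolutions are related by pushforward. Finally, to handle $(\mathrm{iii})$, the discussion preceding Lemma~\ref{l:r2} shows that Koszul factorizations corresponding to different cochain-level evaluation maps on the same admissible resolution are related by pushforward (via a common enlarged resolution $[\widehat A\to\widehat B]$ and an intermediate kernel resolution $[L\to M]$).

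Concatenating these three kinds of moves yields an explicit zigzag of closed immersions of smooth Deligne--Mumford stacks
\[
\square\hookleftarrow \cdots \hookrightarrow \square''\hookleftarrow \cdots\hookrightarrow \square',
\]
together with compatible Koszul factorizations at each vertex, realizing $\{-\alpha,\beta\}$ and $\{-\alpha',\beta'\}$ as related by pushforward in the sense of Definition~\ref{relatedbypush}. The main technical obstacle is bookkeeping: one must check that at each step the intermediate stacks remain smooth Deligne--Mumford, that the evaluation maps $\op{ev}^i$ and the stabilization maps to $\sMbar_{g,r}$ are compatible across the diagram, and that the supports of all intermediate factorizations remain inside $LG_{g,r}(\cZ,d)$ (hence proper), so that the various pushforwards are in fact defined between the categories named in Definition~\ref{relatedbypush}. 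Each of these compatibilities follows from the construction in \S\ref{s:FF} together with Lemma~\ref{l:finalsupp}.
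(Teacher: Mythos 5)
Your proposal is correct and follows essentially the same route as the paper: reduce the choice of projective embedding to an injective quasi-isomorphism of first resolutions via the Segre construction (the paper's Reduction~\ref{red}), apply Lemma~\ref{l:r1} to pass between the resulting ambient spaces, and then invoke Lemma~\ref{l:r2} (which already subsumes the evaluation-map independence) over a fixed $U$. Your version merely makes explicit the zigzag through the Segre resolution that the paper compresses into a "without loss of generality," and the concatenation of these moves is exactly what Definition~\ref{relatedbypush} permits.
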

\begin{proof}

Given two resolutions $[{A_1} \xrightarrow{d_1} {B_1}]$ and $[A_1 ' \xrightarrow{d_1 '} {B_1}']$ 
of $\mathbb R\pi_*(\cc V_1)$ over $\Mfrak^\orb_{g,r}(B\Gamma, d)_{\log}^\circ$ 
 and corresponding open sets $U  \subseteq \tot({A_1})$ and $U'  \subseteq \tot({A_1}')$, by Reduction~\ref{red} we can assume that these two complexes are related by a quasi-isomorphism
 \[f: [{A_1} \to {B_1}] \to [{A_1}' \to {B_1}']\]
 such that 
 \begin{itemize} 
 \item the maps $f_A:{A_1} \to {A_1}'$ and $f_B: {B_1} \to {B_1}'$ are injective; and
 \item  $U  \subseteq \tot({A_1})$ is the preimage of $U'  \subseteq \tot{{A_1}'}$ under $f$.
 \end{itemize}
 In this particular case, by Lemma~\ref{l:r1} 
 there exist resolutions $[\bar A \to \bar B]$ of $\mathbb R\pi_*(\cc V)$ over $U$ and $[\bar A' \to \bar B']$ of $\mathbb R\pi_*(\cc V)$ over $U'$
 together with a closed immersion $\bar f: \tot(\bar A) \to \tot(\bar A')$ such that the factorization 
 $\{-\alpha, \beta\}$ on $\square$ is equivalent to the factorization $\{-\alpha ', \beta '\}$ on $\square '$ by pushforward under $\bar f|_\square$.
 
Thus we reduce to the case that $U$ is fixed. Then by Lemma~\ref{l:r2} we conclude that the factorizations are related by pushforward.
\end{proof}

\subsection{Rigidified evaluation and the state space}\label{s:rigev2}
\subsubsection{Rigidified evaluation}\label{ss:rigev}
Recalling  Definition~\ref{d:square} there exist $\CC^*_R$-equivariant evaluation maps 
\begin{equation}\label{evsquare}
\op{ev}^i \colon\square \to I\cc T  =  \coprod_{(g)}[ (V^{ss}(\theta))^g) / C_G(g)]
\end{equation}
where $(g)$ runs over all conjugacy classes in $G$.  For each conjugacy class $(g)$, we choose a representative $g$; then $C_G(g)$ denotes the centralizer of $g$ in $G$, and $(V^{ss}(\theta))^g$ denotes the points of $V^{ss}(\theta)$ which are fixed by $g$.
These maps combine to form a map
\[
\op{ev} \colon  \square \to \left(I\cc T\right)^r = \coprod_{\bg} \left[ \big(\bigoplus_{i=1}^r (V^{ss}(\theta))^{g_i} / \prod_{i=1}^r C_G(g_i) \right].
\]
where $\bg$ runs over all ordered sets of $r$ conjugacy classes $(g_1), \ldots, (g_r)$.

As described in \S\ref{s:rigev}, the map $\op{ev}$ is $\CC^*_R$-equivariant with respect to the natural action on $\op{tot} A$ and the diagonal action of $\CC^*_R$ on $V^r$.  Thus it induces a map (which we will also denote by $\op{ev}$) from $[\op{tot} A /\CC^*_R ]$ to the quotient of $(I[\VmodtG])^r$ by $\CC^*_R$. 
Also, notice that the action of $G^r \times \CC^*_R$ has a generic stabilizer given by $\lan J\ran $. 
$\lan J\ran  = G \cap \CC^*_R \subseteq \op{Gl}(V)$.  
This gives an exact sequence,
\[ \begin{array}{ccccccccc}
1          &  \to & \lan J  \ran   &  \to &  G^r \times \CC^*_R & \to           &  \overbrace{\Gamma \times_{\CC^*} \cdots \times_{\CC^*} \Gamma}^{r-\text{times}} & \to &  1 \\
&    &                      &      & (g_1, ..., g_r, t)           & \mapsto  & (g_1t, g_2t , ..., g_rt)                   &  &
\end{array} \] 
where $\Gamma \times_{\CC^*} \cdots \times_{\CC^*} \Gamma $ denotes the fiber product over the character $\chi$, that is,
$\Gamma \times_{\CC^*} \cdots \times_{\CC^*} \Gamma $ is  the kernel of map 
$ \Gamma \times \cdots \times \Gamma \ra \CC  ^* \times \cdots \times \CC ^*$ sending  \[  (\gamma _1, ..., \gamma _r) \mapsto (\chi (\gamma _1)\chi (\gamma _2 )^{-1} , 
\chi (\gamma _2) \chi (\gamma _3)^{-1}, ..., \chi (\gamma _{r-1}) \chi (\gamma _{r})^{-1}). \]
Analogously,
\[
1 \to \lan J\ran  \to \prod_{i=1}^r C_G(g_i) \times \CC^*_R \to C_\Gamma(g_1) \times_{\CC^*} \cdots \times_{\CC^*} C_\Gamma(g_r) \to 1
\] 
where $C_\Gamma(g) = C_G(g) \cdot \CC^*_R$ is the centralizer of $g$ in $\Gamma$.  
Rigidification gives a map,
\[
\mathrm{rigidify} \colon [V^r / (G^r \times \CC^*_R)] \to [V^r / (\Gamma \times_{\CC^*} \cdots \times_{\CC^*} \Gamma)].
\]
Let $\bg$ denote an ordered set of $r$ conjugacy classes $(g_1), \ldots, (g_r)$.  The map $\mathrm{rigidify}$ induces the map
\[
I_\mathrm{rig} \colon 
\left[ \left( \bigoplus_{i=1}^r (V^{ss}(\theta))^{g_i} \right) / \left( \prod_{i=1}^r C_G(g_i) \times \CC^*_R \right) \right] \to 
\left[ \left( \bigoplus_{i=1}^r (V^{ss}(\theta))^{g_i} \right) / \left( C_\Gamma\bg \right) \right],
\]
where 
$C_\Gamma\bg := C_\Gamma(g_1) \times_{\CC^*} \cdots \times_{\CC^*} C_\Gamma(g_r)$.
Note that the source of $I_\mathrm{rig}$ is simply a component of the quotient of $(I[\VmodtG])^r$ by $\CC^*_R$.
Combined, these observations allow us to make the following definition.
\begin{definition}\label{d:evmap}  
Define the \newterm{evaluation map} as the composition of $I_\mathrm{rig}$ with $\op{ev}$.
\[
\op{ev}  = I_\mathrm{rig} \circ \op{ev} \colon [\square/\CC^*_R]  \to  \coprod_{\bg} \left[ \big(\bigoplus_{i=1}^r (V^{ss}(\theta))^{g_i} / C_\Gamma\bg \right],
\]
where the union is over all $r$-tuples $\bg$ of conjugacy classes in $G$.
\end{definition}

\subsubsection{The state space of a GLSM}

\begin{definition}\label{d:stsp}
	Define \[ \cc H^{\op{ext}}_{(g)} :=  \op{HH}_*\left([(V^{ss}(\theta))^{g} / C_\Gamma(g)], w\right).\] 
	The \newterm{extended GLSM state space} is defined to be $\cc H^{\op{ext}} := \bigoplus_{(g)} \cc H^{\op{ext}}_{(g)}$  where $(g)$ runs over all conjugacy classes in $G$.
\end{definition}

\begin{remark}\label{r:stsp}
	To better understand the target of the evaluation map $\op{ev}$, note that
	\[ 
	\begin{array}{cl} \op{HH}_* \left( \left[ \big(\bigoplus_{i=1}^r (V^{ss}(\theta))^{g_i} / C_\Gamma\bg \right], {\textstyle \boxplus_{i=1}^r} w \right) &= \bigotimes_{i=1}^r \op{HH}_*\left([(V^{ss}(\theta))^{g_i} / C_\Gamma(g_i)], w\right)\\
	& =\bigotimes_{i=1}^r  \cc H^{\op{ext}}_{(g_i)} \\
	&  \subseteq (\cc H^{\op{ext}})^{\otimes r}.
	\end{array}\]
\end{remark}

\subsection{The restricted state space} \label{s:theresstaspa}
In fact we will deal most often with a subspace of $\cc H^{\op{ext}}$ called the \newterm{restricted state space} and denoted by 
\[\cc H^{res}  \subseteq \cc H^{\op{ext}},\]
similarly to \cite[Equation (5.13)]{PV}.
The definition and properties of the restricted state space are not strictly necessary for the results of this paper, thus we will save the bulk of the discussion for the sequel \cite{CKFGS}.  We record here only a brief summary of the key facts for completeness of exposition.

\begin{proposition}[\cite{CKFGS}]\label{p:decomp}
The vector space $\cc H^{\op{ext}}_{(g)}$ decomposes as a direct sum
\[\cc H^{\op{ext}}_{(g)} = \bigoplus_{(h)} e_{(h)}(\cc H^{\op{ext}}_{(g)}),\]
where $(h)$ runs over all conjugacy classes of $G$.
\end{proposition}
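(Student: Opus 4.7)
The plan is to realize the proposed direct sum decomposition as the inertia (or ``orbifold'') decomposition of Hochschild homology, adapted to the Landau--Ginzburg setting. For an ordinary smooth Deligne--Mumford global quotient stack $[Y/H]$, the Hochschild homology decomposes as a sum indexed by conjugacy classes of $H$, each summand being the Hochschild homology of the corresponding inertia component $[Y^h/C_H(h)]$. The analogous statement for Landau--Ginzburg models was carried out by Polishchuk--Vaintrob \cite{PV} for affine quotients by finite groups, and the argument here will extend theirs to the present, slightly more general, situation.

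First I would construct the idempotents $e_{(h)}$ explicitly. Since $C_\Gamma(g) = C_G(g) \cdot \CC^*_R$ is a compatible extension of the linearly reductive group $C_G(g)$ by the R-charge torus, every $C_\Gamma(g)$-conjugacy class has a representative of the form $h \cdot \lambda$ with $h \in C_G(g)$; its image in $\op{Conj}(C_G(g))$, and in turn in $\op{Conj}(G)$, is well defined. For each $G$-conjugacy class $(h)$, let $e_{(h)}$ be the sum over those $C_\Gamma(g)$-conjugacy classes whose $G$-image is $(h)$ of the categorical idempotent corresponding to that component of the inertia stack of $[(V^{ss}(\theta))^g/C_\Gamma(g)]$. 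Concretely, each such idempotent is the Chern character of a structure-sheaf-type factorization supported on the inertia component, inserted into Hochschild homology via the Fourier--Mukai formalism of \S\ref{s:hochom}. Note that $e_{(h)} = 0$ whenever $(h)$ does not meet $C_G(g)$.

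Second I would establish that these idempotents are orthogonal and sum to the identity, by reducing to the standard orbifold HKR decomposition applied componentwise to the inertia stack. The key geometric input is the isomorphism
\[
\op{HH}_*([(V^{ss}(\theta))^g/C_\Gamma(g)], w) \cong \bigoplus_{(\tilde h) \in \op{Conj}(C_\Gamma(g))} \op{HH}_*\left(\left[((V^{ss}(\theta))^g)^{\tilde h}/C_{C_\Gamma(g)}(\tilde h)\right], w|_{((V^{ss}(\theta))^g)^{\tilde h}}\right),
\]
obtained by combining the Isik--Shipman--Hirano equivalence (Theorem~\ref{thm: Hirano}) with the inertia decomposition of Hochschild homology of a smooth DM global quotient stack, and using that $w$ is $G$-invariant and therefore restricts coherently to every fixed locus. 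Grouping the summands on the right according to their image in $\op{Conj}(G)$ yields the proposed decomposition.

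The main obstacle will be to handle the fact that $C_\Gamma(g)$ is not finite but is an extension by the torus $\CC^*_R$, so that the inertia decomposition must be taken compatibly with this torus action; the resolution is that $\CC^*_R$ is connected and central, so it acts trivially on conjugacy classes of $C_G(g)$ and contributes only to the internal grading within each summand. A secondary technical point is that the HKR-type identification underlying Theorem~\ref{theorem: HKR} must be extended to the (possibly non-proper) factorization categories $\dabsfact{[(V^{ss}(\theta))^g/C_\Gamma(g)], w}$, but since $Z(dw)$ is proper over $\Spec(\CC)$ by nondegeneracy, the relevant Hochschild homology is still finite-dimensional and the categorical idempotents act compatibly. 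Verifying the compatibility of this decomposition with the CohFT structure, and hence its utility for the GLSM invariants defined in \S\ref{s:GLSMinvs}, is deferred to the sequel \cite{CKFGS}.
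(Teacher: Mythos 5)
You should first note that the paper itself offers no proof of Proposition~\ref{p:decomp}: it is quoted from the sequel \cite{CKFGS}, with Remark~\ref{r:resPV} recording that the affine case is \cite[Theorem~2.6.1]{PV}. So there is no in-paper argument to compare against, and your proposal has to be judged on its own terms. The overall shape of your strategy --- an inertia-type decomposition of the Hochschild homology of the equivariant factorization category, generalizing \cite[Theorem~2.6.1]{PV} --- is the right one, but two steps as written would fail.

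First, the ``key geometric input'' cannot be obtained the way you describe. Theorem~\ref{thm: Hirano} applies only when the potential arises as the pairing with a regular section of the dual of a vector bundle, i.e.\ when $w$ is linear along the fiber directions of $\cT \to \cX$ (a geometric phase). For a general hybrid model --- in particular for any affine phase, where $w$ is a quasi-homogeneous polynomial with an isolated singularity --- the category $\dabsfact{[(V^{ss}(\theta))^g/C_\Gamma(g)], w}$ is \emph{not} equivalent to the derived category of a smooth Deligne--Mumford stack, so one cannot reduce to the inertia decomposition of Hochschild homology of an ordinary quotient stack. The affine case in \cite{PV} is established by a direct computation with explicit resolutions of the diagonal bimodule for equivariant matrix factorizations, not by such a reduction, and any proof in the present generality must engage with the factorization category itself. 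Second, your definition of $e_{(h)}$ as ``the categorical idempotent corresponding to that component of the inertia stack'' presupposes exactly the decomposition to be proven. The operators $e_{(h)}$ in the statement are (as \S\ref{sec: compare PV} makes clear in the finite abelian case) the projectors coming from the action of $\op{HH}_*(BG)$, equivalently of class functions in $K(BG)\otimes\CC \cong \CC[\widehat G]$ for finite $G$, acting on $\cc H^{\op{ext}}_{(g)}$ by tensoring with pulled-back representations; the content of the proposition is that these specific, independently defined operators are orthogonal idempotents summing to the identity, with images matching the twisted sectors. Finally, dismissing the $\CC^*_R$-direction as ``contributing only to the internal grading'' elides the real difficulty: $C_\Gamma(g)$ is a positive-dimensional reductive group, its conjugacy classes do not form a discrete set, and $G$ itself need not be finite, so the very meaning of a direct sum over $\op{Conj}(G)$ (only finitely many classes contributing, via stabilizers in the semistable locus) requires an argument rather than an appeal to connectedness and centrality.
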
 

\begin{definition}
Define the restricted state space to be
\[ \cc H^{res} : = \bigoplus_{(g)} e_{(id)}(\cc H^{\op{ext}}_{(g)}),\]
where $(id)$ denotes the identity conjugacy class.  Thus on each twisted sector $[(V^{ss}(\theta))^{g} / C_\Gamma(g)]$, the subspace $\cc H^{res}$ contains the piece of $\cc H^{\op{ext}}_{(g)}$ corresponding to the identity under the decomposition of proposition~\ref{p:decomp}.
\end{definition}

\begin{remark}
In the following paper \cite{CKFGS}, we will define a pairing on the restricted state space, as is necessary for the definition of a cohomological field theory.
\end{remark}

\begin{remark}\label{r:resPV}
In the special case of a GLSM $(V, G, \CC^*_R, \theta, w)$ where $[\VmodtG]$ is affine, Proposition~\ref{p:decomp} was proven by Polishchuk--Vaintrob (\cite{PV}, Theorem 2.6.1).
\end{remark}

\begin{remark}
Note that when $[\VmodtG]$ is a smooth variety, the vector spaces $\cc H^{\op{ext}}$ and $\cc H^{res}$ are equal.  
\end{remark}

\subsection{GLSM invariants}\label{s:GLSMinvs}

Denote by $\overline{U}$ the closure of $U  \subseteq LG^{eq}_{g,r}(\cc Y, d')$ (see Theorem~\ref{c:swn}).
\begin{lemma} \label{lemma: proj resol}
There exists a resolution of singularities $\widetilde U \to \overline U$ such that
the stack $\widetilde{U}$ is a smooth proper Deligne--Mumford quotient stack with a projective coarse moduli space.
\end{lemma}

\begin{proof}
Since $\overline{U}$ is a separated Deligne--Mumford quotient stack with a projective coarse moduli space,
we can write $\overline{U}$ as a quotient $[Y/G]$ of some quasi-projective scheme $Y$ by a linear action of a 
reductive group $G$ (\cite[Proposition 5.1]{KreschGeometry}).  Since the map from $Y$ to the coarse moduli space is affine (\cite[Remark 4.3]{KreschGeometry}), 
we can now apply \cite[Converse 1.12]{MFK} to find an ample $G$-linearization $L$ on $Y$ such that $Y \subset (\PP ^m)^s$, i.e., every point of $Y$ is stable.
Now we take a resolution $\widetilde{Y}$ of $Y$ by a sequence of blowing-ups of regular centers to yield $\widetilde{U} :=[\widetilde{Y}/G]$.
Denote by $\widetilde{\PP ^m}$ the outcome of the same blowing-up procedure on $\PP ^m$.
By \cite[Section 3]{Kir}, there is an ample $G$-linearization $L'$ on $\widetilde{Y}$ 
such that $\widetilde{Y} \subset (\widetilde{\PP ^m}) ^s$.
Since the coarse moduli space of $[\widetilde{Y}/G]$ is a uniform categorical and uniform geometrical quotient, 
it coincides with the GIT quotient of $\widetilde{Y}$, which is quasi-projective.
\end{proof}

By Definition~\ref{d:evmap} there exists an evaluation map
\[
\op{ev}  \colon  [\square/\CC^*_R]  \to  \coprod_{\bg} \left[ \big(\bigoplus_{i=1}^r (V^{ss}(\theta))^{g_i} / C_\Gamma\bg \right],
\]
where the union is over all $r$-tuples $\bg$ of conjugacy classes in $G$.

Denote by $\widetilde{p} \colon \square \to \widetilde U$ the composition of the projection $p \colon \square \to U$ followed by the open immersion $U \to \widetilde U$.

We use $K = K_{g,r, d}$ to define an integral transform $\Phi_{K_{g,r, d}}$ by the following diagram.
\begin{equation}\label{e:DFM}
\begin{tikzcd}[column sep=small]
\dabsfact{[\square/\CC^*_R],  \op{ev}^* (\boxplus_{i=1}^r w)} \ar[r, "-\overset{\mathbb{L}}{\otimes} K_{g,r, d}"] & \dabsfact{[\square/\CC^*_R], 0}_{LG_{g,r}( \cc Z, d)}  \ar[d, "\R \widetilde p_*"]  \\
\dabsfact{ \coprod_{\bg} \left[ \big(\oplus_{i=1}^r (V^{ss}(\theta))^{g_i} / C_\Gamma\bg \right], {\textstyle \boxplus_{i=1}^r} w} \ar[u, "\mathbb L\op{ev}^*"]  
& \ \ \ \ \ \ \ \ \ \dabsfact{[\widetilde{U}/\CC^*_R], 0} \simeq \dabsfact{[\widetilde{U}/\mu_{\bdeg}]}
\end{tikzcd}
\end{equation}
  Note that by  Lemma~\ref{l:finalsupp},  tensoring with $K_{g,r, d}$ gives a factorization supported on $LG_{g,r}(\cc Z, d)$.  Since the action of $\CC^*_R$ on $LG_{g,r}(\cc Z, d)$ and on $\widetilde{U}$ is trivial, the stack $[LG_{g,r}(\cc Z, d) / \C^*_R]$ is proper over $[\widetilde{U} / \C^*_R]$.  Thus the pushforward $\mathbb R \widetilde{p}_*$ is well-defined.  

The equivalence 
\[\dabsfact{[\widetilde{U}/\CC^*_R], 0} \cong \dabsfact{[\widetilde{U}/\mu_{\bdeg}]}\] is \cite[Proposition 1.2.2]{PV}. Pushing forward to Hochschild homology yields a map
\[ (\Phi_{K_{g,r,d}})_*: (\cc H^{\op{ext}})^{\otimes r} \to \op{HH}_*( [\widetilde{U}/\mu_{\bdeg}]).\]
We further pull back by the quotient map 
\[ q: \widetilde{U} \to [\widetilde{U} /\mu_{\bdeg}]
\]
to obtain a map to $\op{HH}_*( \widetilde{U} )$.

Given an $r$-tuple of conjugacy classes $\bg = (g_1), \ldots, (g_r)$, let $\square_{\bg}$ denote the open and closed substack of $\square$ such that $\op{ev}|_{\square_{\bg}}$ maps to $\left[ \big(\bigoplus_{i=1}^r (V^{ss}(\theta))^{g_i} / C_\Gamma\bg \right]$, and let $\widetilde  U_{\bg}$ be the open and closed substack of $\widetilde  U$ which $\square_{\bg}$ maps into via $\widetilde{p}$.
Denote by $m_i$ the order of $g_i$.  Then $1/(m_1 \cdots m_r)$ is the degree of the map 
\[\sMbar_{g,r} \left(\cC_{\Xrig,\Gamma} / \sKbar_{g,r}(\Xrig, d), F\right) \to \sKbar_{g,r} \left(\cC_{\Xrig,\Gamma} / \sKbar_{g,r}(\Xrig, d), F\right)\] from Remark~\ref{r:gerbesections} after restricting to the open and closed subset indexed by $\bg$.  
Define the map $\ord_{\bg}: \op{HH}_*( \widetilde{U}_{\bg} ) \to \op{HH}_*( \widetilde{U}_{\bg} )$ to be multiplication by $(m_1 \cdots m_r)$, and let 
\[\ord: \op{HH}_*( \widetilde{U} ) \to \op{HH}_*( \widetilde{U} )\]
be the direct sum of $\ord_{\bg}$ over all choices of $\bg$.  Because our moduli spaces have been constructed to have sections at the marked point gerbes, this correction factor is necessary to obtain the correct invariants.  This phenomena arises in orbifold Gromov--Witten theory as well (see \cite[\S6.1.3]{AGV}).

Recall that  for $\cX$ a smooth 
separated Deligne--Mumford quotient stack with projective coarse moduli space, we defined a HKR {\it morphism} 
$$\bar{\phi}_{\op{HKR}} \colon \op{HH}_*(\cX) \to \op{H}^*(\cX)$$ in Definition~\ref{d:hkrmor}. 
\begin{definition}\label{d:invariants}
Let $(V, G,\CC^*_R, \theta, w)$ be a convex hybrid model.  For $g, r$ satisfying $2g-2 + r> 0$
and $d \in \text{Hom}_{\Z}(\widehat G, \QQ)$, 
define
 $\Lambda_{g,r,d}: (\cc H^{res})^{\otimes r} \to \op{H}^*(\sMbar_{g,r})$ as the following map
\[\Lambda_{g,r,d}(s_1, \ldots, s_r) := \op{proj}_* \left(\frac{\op{td}(T_{\widetilde{U}/\Mfrak^\orb_{g,r}(B\Gamma, d)_{\log}})}{\op{td}(\mathbb R\pi_* \mathcal V)}  \cup \left( \bar \phi_{\op{HKR}}\circ \ord \circ q^* \circ{(\Phi_{K_{g,r, d}})_*}(s_1, \ldots, s_r) \right) \right)\]
where $\op{proj}$ is the projection to $\sMbar_{g,r}$ and $s_1, \ldots, s_r$ are elements of $\cc H^{res}$.  

The set $\{\Lambda_{g,r,d}\}$ for all choices of $g, r$ and $d$ will be referred to as the GLSM invariants of $(V, G, \CC^*_R, \theta, w)$.
\end{definition}

\begin{remark}
The relative tangent bundle $T_{{U}/\Mfrak^\orb_{g,r}(B\Gamma, d)_{\log}}$  is the vector bundle $A_1$.
\end{remark}

\begin{remark}
In the second paper \cite{CKFGS}, we will show that the collection $\{\Lambda_{g,r,d}\}$ defines a cohomological field theory.
\end{remark}
\begin{theorem}\label{t:finind}
Given a hybrid model GLSM $(V, G, \CC^*_R, \theta, w)$, convex over $BG_1$, the invariants $\Lambda_{g,r,d}$ are independent of the choice of resolutions.
\end{theorem}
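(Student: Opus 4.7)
The plan is to reduce the theorem to Proposition~\ref{p:ind}, which already shows that any two fundamental factorizations built by the two-step procedure are related by pushforward in the sense of Definition~\ref{relatedbypush}. What remains is to check that the operation $\Lambda_{g,r,d}$ defined in Definition~\ref{d:invariants} is invariant under this equivalence relation on factorizations. By iterating along the chain of intermediate spaces $\square_j$ and $\square_{j,j+1}$ appearing in Definition~\ref{relatedbypush}, one is immediately reduced to an elementary link: a closed immersion $l\colon \square_1 \hookrightarrow \square_{1,2}$ covering a closed immersion $\bar l\colon U_1 \hookrightarrow U_{1,2}$ of smooth Deligne--Mumford stacks, with $\mathbb{R}l_*K_1\cong K_{1,2}$ and compatible evaluation, stabilization, and $\CC^*_R$-equivariant data. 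After further replacing $\widetilde{U}_1$ and $\widetilde{U}_{1,2}$ by a common smooth resolution dominating both, one may assume the diagram of resolutions over $\sMbar_{g,r}$ is also compatible.

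For such an elementary link, the second step is to commute the Fourier--Mukai transform with pushforward. Using the projection formula of Proposition~\ref{p:projform} together with $\mathbb{L}l^*\op{ev}^*_{1,2}\cong \op{ev}^*_1$, one obtains
\[
\mathbb{R}\widetilde p_{1,2\,*}\bigl(\mathbb{L}\op{ev}^*_{1,2}\cE \overset{\mathbb{L}}{\otimes}K_{1,2}\bigr)\cong \mathbb{R}\widetilde p_{1,2\,*}\mathbb{R}l_*\bigl(\mathbb{L}\op{ev}^*_1\cE\overset{\mathbb{L}}{\otimes}K_1\bigr),
\]
so the two Fourier--Mukai transforms differ exactly by the pushforward $\mathbb{R}\bar l_*$. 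Passing to Hochschild homology via Proposition~\ref{prop: functorial push} and then to cohomology via the HKR morphism and Ramadoss's Theorem~\ref{t:ram}, this translates into multiplication by $\op{td}(N_{\widetilde U_1/\widetilde U_{1,2}})$ and pushforward by $\bar l$ in cohomology. The order correction $\ord$ and the passage through $q^*$ are manifestly natural under $\bar l$, so they pass through the argument unchanged.

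The third step is to match the Todd corrections across $\bar l$. The required identity is
\[
\frac{\op{td}(T_{\widetilde U_1/\Mfrak^\orb_{g,r}(B\Gamma, d)_{\log}})}{\op{td}(\mathbb{R}\pi_*\cV_1)}\cdot \op{td}(N_{\widetilde U_1/\widetilde U_{1,2}})\;=\; \bar l^{\,*}\!\left(\frac{\op{td}(T_{\widetilde U_{1,2}/\Mfrak^\orb_{g,r}(B\Gamma, d)_{\log}})}{\op{td}(\mathbb{R}\pi_*\cV_{1,2})}\right).
\]
It follows from the short exact sequence of relative tangent bundles for the immersion $\bar l$, combined with the fact that the cone on the two admissible resolutions involved is, up to the relative tangent bundles of the total-space constructions in Definition~\ref{d:square}, the difference controlling $N_{\widetilde U_1/\widetilde U_{1,2}}$. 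Combining the identity with the pushforward-pullback obtained from the second step and the projection to $\sMbar_{g,r}$, a final application of the projection formula for cohomology yields $\Lambda^{(1)}_{g,r,d}(s_1,\dots,s_r)=\Lambda^{(1,2)}_{g,r,d}(s_1,\dots,s_r)$, which completes the reduction step and hence the theorem.

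The main obstacle will be the Todd class identity in the third step: verifying it requires a precise comparison of the two resolutions $[A_1\to B_1]$ and $[A_{1,2}\to B_{1,2}]$ (and their partners $[\bar A\to\bar B]$ and $[\bar A'\to\bar B']$) in terms of the normal bundle data governing $\bar l$. Although this is precisely the role for which the Todd correction was built (compare \cite[Eq.~(5.15)]{PV}), carrying it out carefully across \emph{all} elementary links in Proposition~\ref{p:ind}, including those coming from changing the projective embedding of $\underline{\cC}$, from enlarging $\bar A$ by a kernel of an evaluation map, and from resolving the singularities of $\overline U$, is where the bulk of the technical work lies. The remaining choices---that of $\alpha^\vee$ within its homotopy class and of $\op{ev}_{\bar A}$ within its chain-level realization---were already dealt with in \S\ref{s:ind} and give \emph{isomorphic} factorizations, hence manifestly identical invariants.
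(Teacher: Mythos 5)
Your proposal is correct and follows essentially the same route as the paper: reduce via Proposition~\ref{p:ind} (and Reduction~\ref{red}) to a single closed immersion $\tilde f\colon \widetilde U\to\widetilde U'$ with $\mathbb{R}\tilde f_*\circ(\Phi_K)_*=(\Phi_{K'})_*$, then commute $\tilde f_*$ past $\bar\phi_{\op{HKR}}$ at the cost of $\op{td}(T_{\widetilde U/\widetilde U'})$ by the argument of Theorem~\ref{t:ram}, and absorb that factor using the tangent exact sequence relating $T_{\widetilde U/\Mfrak^\orb_{g,r}(B\Gamma,d)_{\log}}$ and $T_{\widetilde U'/\Mfrak^\orb_{g,r}(B\Gamma,d)_{\log}}$ (the class $\mathbb{R}\pi_*\cV$ being pulled back compatibly, so its Todd factor cancels on both sides). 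Your displayed Todd identity is exactly the paper's equation chain rearranged, so the only caveat is keeping the $\op{td}(N)$ versus $\op{td}(N)^{-1}$ convention straight, which your identity already does correctly.
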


\begin{proof}
Let $\square ' \to U'$ and $\square \to U$ denote two instances of the two-step procedure of the previous section for different choices of resolutions. 
Let $K$ and $K'$ denote the factorizations in $\square $ and $\square '$ respectively, and let 
 $\Lambda_{g,r,d}$ and $\Lambda_{g,r,d} '$ denote the corresponding sets of invariants for $(V, G, \CC^*_R, \theta, w)$.
 
 By Proposition~\ref{p:ind}, we can assume  without loss of generality that there exists a commuting diagram 
  \[
\begin{tikzcd}
\square \ar[d] \ar[r, hook, "f"] &\square ' \ar[d] \\
U \ar[r, hook, "g"] & U'
   \end{tikzcd}
   \]
   such that $f$ and $g$ are closed immersions, and
  \begin{equation}\label{e:pushit}
  \mathbb{R} f_*(K) = K '.
  \end{equation}
    
    Let $\widetilde{U}$ and $\widetilde{U}'$ denote resolutions of the closures $\overline{U}$ and $\overline{U}'$.  
These can be constructed so that there exists a closed immersion
$\tilde g: \widetilde{U} \to \widetilde{U}'$ such that $\widetilde{U}'$ has projective coarse moduli space (Lemma \ref{lemma: proj resol}).
By \eqref{e:pushit} we see that
 \begin{equation}\label{e:pushit2}
 \tilde g_* \circ (\Phi_K)_* = (\Phi_{K '})_*.
  \end{equation}

 By Theorem 1 of \cite{KV} and a standard Bertini-type argument as in the proof of
  Theorem 1 of \cite{KV}, we can construct a fiber diagram
  \begin{equation}\label{e:sq}
\begin{tikzcd}
\wt V \ar[r, "\hat g"] \ar[d, swap, "\pi"]  & \wt V' \ar[d, "\pi '"] \\
\wt U \ar[r, "\tilde g"] & \wt U '. \\
\end{tikzcd}
\end{equation}
  such that $\pi$, $\pi '$ are finite flat surjective maps and $\wt V$, $\wt V'$ are smooth projective varieties.
 Then
\begin{align}\label{e:eq12}
\tilde g_* \left( \td(T_{\widetilde{U}/{\widetilde{U} '}}) \bar \phi_{\op{HKR}} ( -)\right) &= \tilde g_* \left( \frac{\td(T_{\widetilde{U}/{\widetilde{U} '}})}{\deg \pi} \pi_*  \bar \phi_{\op{HKR}}^{\wt V} \pi^* ( -)\right) \\
&= \tilde g_* \left( \frac{1}{\deg \pi} \pi_* \td(T_{\widetilde{V}/{\widetilde{V} '}}) \bar \phi_{\op{HKR}}^{\wt V} \pi^* ( -)\right) \nonumber
 \\
&=  \frac{1}{\deg \pi '}  \pi '_* \hat g_*\left( \td(T_{\widetilde{V}/{\widetilde{V} '}}) \bar \phi_{\op{HKR}}^{\wt V} \pi^* ( -)\right) \nonumber
 \\
&=  \frac{1}{\deg \pi '}  \pi '_*\left(\bar  \phi_{\op{HKR}}^{\wt V '}\hat g_* \pi^* ( -)\right) \nonumber
\\
&=  \frac{1}{\deg \pi '}  \pi '_* \bar \phi_{\op{HKR}}^{\wt V '}{\pi '}^*  \tilde g_* ( -) \nonumber
\\
&=  \bar  \phi_{\op{HKR}}  \tilde g_*  ( -) .  \nonumber
\end{align}
Here the second equality is because \eqref{e:sq} is a fiber square.  The third also follows for the same reason.  The fourth equality is the statement that
$$\hat g_*\left( \td(T_{\widetilde{V}/{\widetilde{V} '}})\bar \phi_{\op{HKR}}^{\wt V}(-) \right) = \bar \phi_{\op{HKR}}^{\wt V '}\hat g_*(-).$$
This follows from Theorem~\ref{t:ram} in the case that $X = \cZ = \wt V$, $\cY = \wt V '$, and $K = \cc O_{\wt V}$, after pre-composing with $\phi_{\op{HKR}}^{\wt V}$.
The fifth equality of \eqref{e:eq12} is by flat base change.

Combining  \eqref{e:pushit} and \eqref{e:eq12} implies the result.   
\end{proof}

\begin{remark}\label{r:reldef} 
Because the kernel $K_{g, r, d}$  is supported on $LG_{g,r}(\cZ, d)$, a closed substack of $U$, the integral transform
$\Phi _{K_{g, r, d}}$ can be factored as 
\[ D (\cX, w)^r  \xrightarrow{\Phi ^{\op{rel}}_{K_{g, r, d}}} D([U/\mathbb{C}^*_R], 0)_{LG_{g,r}(\cZ, d)} \to D ([\widetilde{U}/\mathbb{C}^*_R], 0) .  \]
Let $\pi: \wt V \to \wt U$ be a proper, generically finite, surjective morphism from a smooth projective scheme as in Definition~\ref{d:hkrmor}.  Let $V := U \times_{\wt U} \wt V$ and let $Z_V := LG_{g,r}(\cZ, d) \times_{\wt U} \wt V$.
We obtain a commuting diagram
         \begin{equation}\label{e:relFM} \xymatrix{ \op{HH}_* (\cX, w)^r \ar[d]_{q^* \circ (\Phi ^{\op{rel}}_{K_{g, r, d}})_*} \ar[dr]^{q^* \circ (\Phi _{K_{g, r, d}})_*}   &  \\
         \op{HH}_*(U)_{LG_{g,r}(\cZ, d)} =  \op{HH}_*(\widetilde U)_{LG_{g,r}(\cZ, d)}  \ar[r] \ar[d]_{\pi^* }  & \op{HH}_* (\widetilde{U} ) \ar@/^2pc/[ddd]^{\bar{\phi}^{\wt U} _{\op{HKR}} } \ar[d]_{\pi^*} \\  
                  \op{HH} (V)_{ Z_V}   = \op{HH}_* (\widetilde{V})_{Z_V}  \ar[r] \ar[d]_{{\phi}^{\op{rel}} _{\op{HKR}} }  & \op{HH}_* (\widetilde{V} ) \ar[d]_{{\bar \phi^{\wt V}} _{\op{HKR}}} \\  
         \op{Hod}^*(V)_{ Z_V}       = \op{Hod}^* (\widetilde{V})_{Z_V}       \ar[r]   \ar[rdd]_{\mathrm{\tfrac{1}{\op{deg}(\pi)}proj_*}}    & \op{H}^* (\widetilde{V}) \ar[d]_{\tfrac{1}{\op{deg}(\pi)}\pi_*} \\
                                                                                      & \op{H}^* (\widetilde{U}) \ar[d]_{\mathrm{proj}_*} \\
                                                                                                     & \op{H}^*(\overline{M}_{g, r}), } \end{equation}
with the following notation.   The vector space $\op{Hod}^*(V)_{Z_V} $ denotes the Hodge cohomology of $V$ supported on $Z_V$,
the map $\op{Hod}^* (\widetilde{V})_{Z_V} \to \op{H}^* (\widetilde{V})$ is the inclusion
followed by the twisted Hodge decomposition of Definition~\ref{d:hkrmor}, and the map $\phi^{\op{rel}} _{\op{HKR}}$ is the relative HKR isomorphism from Remark \ref{rem: relative HKR}.  The map $\mathrm{proj}_*$ along the bottom left diagonal is the composition of the analytification and the pushforward by integration, i.e.
$$\op{Hod}^*(V)_{ Z_V} \to  \op{H}^{*}_{\bar{\partial}} (V )_{Z_V} \xrightarrow{\mathrm{proj}^{an}_*}   
\op{H}^*_{\bar{\partial}}(\overline{M}_{g, r}) \cong \op{H}^*(\overline{M}_{g, r}) $$  defined by the requirement 
\begin{eqnarray*}\label{eqn: rel push}  \int _{\overline{M}_{g, r}} \alpha \cup \mathrm{proj}^{an}_*  \beta = \int _{V} (\mathrm{proj}^{an})^* \alpha \cup \beta \end{eqnarray*}
for $\alpha \in  \op{H}^*_{\bar{\partial}}(\overline{M}_{g, r})$, $\beta \in \op{H}^*_{\bar{\partial}}(V)_{ Z_V} $. 
Using  the Hodge decomposition for compact K\"ahler orbifolds, 
it is straightforward to check that $\mathrm{proj}^{an}_*$ is well-defined and  compatible with the map $\mathrm{proj}_*$ appearing as the bottom right vertical arrow.
In summary, Definition~\ref{d:invariants} does not depend on the choice of $\widetilde U$.
   \end{remark}

\begin{remark}
Using \eqref{e:relFM}, Theorem~\ref{t:finind} can be proven without reference to the fact that the resolution $\wt U$ has projective coarse moduli space, however the argument is more complicated.
                                                                                                     \end{remark}

\section{Comparisons with other constructions}\label{s:comwitothcon}
\subsection{Comparison with Gromov--Witten theory and cosection localization}
\label{sec: compare GW}
In this section we compare the above GLSM invariants for a \textit{geometric phase} with various Gromov--Witten type invariants defined via a virtual fundamental class.

\subsubsection{The virtual cycle for GLSM invariants in a geometric phase}\label{sss:6.1.1}
For all of \S\ref{sec: compare GW}, we specialize to the case where the hybrid model is a so-called \newterm{geometric phase} (recall Definition~\ref{d:phases}).  We will define a \textit{virtual fundamental class} using the $\ZZ_2$-localized Chern class of \cite{PVold}.
This class can be used to define enumerative invariants via a Chow or cohomology level Fourier--Mukai transform.  

The remainder of \S\ref{sss:6.1.1} is then devoted to showing that these invariants agree with the GLSM invariants of Definition~\ref{d:invariants} (See Proposition~\ref{prop :GWcomp}).  This will be used in \S\ref{sss:6.1.2} to compare our invariants to cosection localized invariants and certain Gromov--Witten invariants.

Specializing to a geometric phase implies $\bdeg =1 $, therefore $\lan J \ran = 1$ and
$\Gamma = G \times \CC^*_R$. The character $\chi\in \widehat{\Gamma}$ is just the projection onto $\CC^*_R$, and $\eta_{\bdeg}=\eta_1=\op{id}_{\CC^*_R}$. 

Consider the vector bundle $\cE$ (with fiber $V_2$) on $\cX$, whose total space is
$$\tot\cE=\cT :=[(V_1^{ss}(\theta) \times V_2) / G],$$ 
 with projection $q\colon \cT\ra \cX$. 
 Since $w$ is linear on $V_2$, it gives rise to a section
$f\in H^0(\cX,\cE^\vee)$ of the {\it dual} vector bundle $\cE ^\vee $. 
Recall that nondegeneracy of the hybrid model implies that $f$ is a regular section with smooth zero locus $\cZ :=Z(f)=Z(dw)$.  For the remainder of this subsection, we will assume that $\cX$ is a {\it smooth variety}.

The notations in \S \ref{s:conFF} simplify, since $G_1$ is simply $G$, the stack $\cX$ is equal to $\Xrig=[V_1^{ss}(\theta)/ G]$, and 
$$ LG_{g,r}(\cX, d) \cong \sMbar_{g,r}(\cX, d),\;\;\; LG_{g,r}(\cZ, d) \cong \sMbar_{g,r}(\cZ, d)).$$
For simplicity we will sometimes denote these spaces by $LG(\cX)$ and $LG(\cZ)$ respectively.

Let  $\cV_2 '$ denote $\cV_2 (-\sG )$.
By Proposition~\ref{prop: satisfy all geometric}, we may choose an admissible resolution $[\bar{A} \to \bar{B}]$ of $\mathbb R \pi_* \cV$ over 
$U $ which splits as 
\[[\bar A_1 \to \bar B_1] \oplus [\bar A_2 ' \oplus \cc V_2|_{\sG} \to \bar B_2]\]
where 
\[[\bar A_2 ' \oplus \cc V_2|_{\sG} \to \bar B_2] \cong \mathbb R \pi_* \cV_2 \text{  and  } [\bar A_2 '  \to \bar B_2] \cong \mathbb R \pi_* \cV_2 '.\]  
Let $\square '$ denote the intersection of $\square$ and   $\tot \bar A_1 \oplus \bar A_2 ' $.  Let $U'$ denote the intersection of $\square$ and   $\tot \bar A_1  $.

\begin{remark}
The space $\square '$ can be constructed from scratch by mimicking the construction of $\square$, but replacing every instance of $\log$ with $\omega_{\cC}$.
\end{remark}
Recall from \S\ref{s:FF}  that we also have a resolution
\[ [\widetilde A_1 \to \widetilde B_1 ] \cong \mathbb R\pi_* \cV_1\] lying 
over the closure $\overline U$
together with a surjection $\bar A_1|_U \twoheadrightarrow \widetilde A_1|_U$ and the evaluation map $U' \to \cX^r$ factors through 
\[U' \to \tot (\bar A_1) \to \tot (\widetilde A_1) \to [V_1/G]^r.\]

Let $\tilde d: \widetilde U \to \overline U$ denote the desingularization of $\overline U$ from \S\ref{s:GLSMinvs} and define
\[P := \op{Proj}(\tot(\tilde d^*(\widetilde A_1) \oplus \cc O_{\widetilde U})).\] 
The map $\widetilde A_1 \to \cV_1|_\sG$ defines a geometric evaluation map $\tot(\widetilde A_1) \to [V_1/G]^r$.  Let $\tot (\widetilde A_1)^\circ$ denote the preimage of the semi-stable locus, so we obtain
\[\tot (\widetilde A_1)^\circ \to \cX^r.\]
This defines a rational map to $\op{ev}_P: P \dashrightarrow \cX^r$.  Let $\widetilde P$ denote a smooth resolution of $\overline{\Gamma_{\op{ev}_P}} \in P \times \cX^r$.  

We have the following commuting diagram

\begin{equation}\label{d:complicated}
\begin{tikzcd}
 \square \ar[r, "z_\cT"]  \ar[d, " l"] 
 & \tot (\bar A)^\circ   \ar[rd, "f_\cT"] \ar[d, " \bar l"] & & & \\
\ar[u, bend left, "s"]  \square ' \ar[r, "z_\cT ' "]   \ar[d, "m "]  &  \tot (\bar A_1 \oplus \bar A_2 ')^\circ \ar[rd, "f_\cT ' "] \ar[d] & \op{Tot}^\circ \ar[d, " l^\circ"] \ar[r, "i_\cT"] & \widetilde{\op{Tot} }   \ar[r, "\op{ev}_\cT"]  \ar[d, "\tilde l"] & \cT ^r    \ar[d]  \\
   U'   \ar[r, "z_\cX"]               &         \tot (\bar A_1)^\circ       \ar[r, "f_\cX"]       & \tot (\widetilde A_1|_{\widetilde{U}})^\circ \ar[r, "i_\cX"]  &\widetilde P   \ar[r, "\op{ev}_\cX "]     \ar[dd, bend left = 30, leftarrow, near end, "\tilde k"]      &          \cX^r       \\
LG(\cZ) \ar[rr, "\iota"]     \ar[u]       &     &   {P_{\cZ}}^\circ  \ar[r, "j_\cZ"] \ar[dr, swap, "i_{\cZ}"] \ar[u, " k^\circ"]   &  P_{\cZ} \ar[r, crossing over, "\op{ev}_\cZ"]  \ar[u, "k"] &           \cZ ^r  \ar[u]      \\
 & & & \widetilde P_\cZ \ar[u, "d_{\cZ}"] 
 &
\end{tikzcd}
\end{equation}
where the remaining spaces are defined so that all small squares (and the parallelogram) are cartesian (although the bottom left rectangle is commutative), and $d_\cZ: \widetilde P_{\cZ} \to P_{\cZ}$ is a desingularization of $P_{\cZ}$.
For convenience we will  let $n$ denote the composition $n := i_\cT \circ f_\cT \circ z_\cT: \square \to \widetilde{\op{Tot}}$.
Note that the evaluation map $\op{ev}: \square \to \cT^r$ factors as $\op{ev}_\cT \circ n$.

Recall from Definition~\ref{d:phiplus}, there is an equivalence of categories
\[
\tilde \phi_+: \dbcoh{\cZ} \xrightarrow{\sim} \dabsfact{[\cT / \CC^*_R], w},
\] 
which sends the structure sheaf $\cO_{\cZ}$ to the Koszul factorization $S_1$ of \eqref{S1}.

The fundamental factorization in Definition \ref{d:koszulcut}
\[
K = K_{g,r,d} \in \dabsfact{[{\square}/\CC^*_R], -\op{ev}^*{\textstyle \boxplus_{i=1}^r} w}
\]
is defined as the Koszul factorization on the section and cosection 
\[
\O_{\square} \xrightarrow{\beta } \bar B \xrightarrow{-\alpha^\vee} \O_{\square }
\]
i.e.
\[
K = \{ -\alpha , \beta  \}.
\]
(Note: to declutter notation, in this section, we use $\bar A, \bar B$, etc. to denote the corresponding pullback of the bundle to whichever space we are working on.)

The tensor product of $K$ with $n^* \circ \op{ev}_\cT^* (S_1^{\boxtimes r})$ is the Koszul factorization
\[
K^{S_1}:= \{ \alpha^{S_1}, \beta^{S_1} \} \in \dabsfact{[\square/\CC^*_R], 0} 
\]
where
\[
\alpha^{S_1} := (-\alpha, \op{ev}^*(q^*f)^r) \ \ \tand \ \ \beta^{S_1} :=  (\beta , \op{ev}^*\taut ) 
\]
are a cosection and a section of 
$
\bar B^{S_1} := \bar B  
\oplus  \pi_* (\cV_2|_{\sG}).
$

Recall that $LG_{g,r}(\cT , d) = Z(\beta)  \subseteq \square$.  Similarly let \[LG(\cT, d) ' := Z(\beta|_{\square '})  \subseteq \square '.\]
Furthermore, as $LG_{g,r}(\cZ, d)$ lies in $\square'$, we have
\[
LG_{g,r}(\cZ, d) = Z(\beta) \cap Z(\alpha)  \subseteq \square' \subseteq \square
\]
by Lemma~\ref{l:finalsupp}.

Let $K '$ denote $s^*(K)$.  Note that this is a factorization of zero on $\square '$.
By \cite[Proposition 4.3.1]{PV}, we observe that $\mathbb R s_*(K ') = K^{S_1}$.  
\begin{definition}\label{def: loc ch vir cycle}
Define the \newterm{virtual fundamental class} to be the cycle class,
\begin{align*}
[LG_{g,r}(\cZ, d)]^{\op{vir}}        & :=   \op{td}(\bar B|_{\square '}) \ \chp_{LG_{g,r}(\cZ, d)}^{\square '} (K ') [\square '] \\
                                    & \in A_*(LG_{g,r}(\cZ, d))_{\QQ}.
\end{align*}
\end{definition}

Using \cite{Hirano}, we can translate between the Fourier--Mukai transform for a geometric phase using a fundamental factorization and a Fourier--Mukai transform in the traditional sense, using an object of the derived category.  Namely we have the following lemma.
\begin{lemma}
There exists an object $K_{\cZ}  \in \op{D}(\widetilde{P}_{\cZ})$ such that
\begin{equation}
(\Phi_{K_{g,r,d}})_* \circ  ((\tilde \phi_+)^r)_* = (\Phi_{K_\cZ} )_*,
\label{eq: same transforms}
\end{equation}
where $\Phi_{K_\cZ}: \cZ^r \to \widetilde U$ denotes the Fourier--Mukai transform with kernel $K_\cZ$.
Furthermore, the object $K_{\cZ}$ satisfies
\begin{equation}
\label{KSP}
 \mathbb R ( \tilde l \circ  n)_* K^{S_1}   = \mathbb R \tilde k_{ *} K_{\cZ}.
\end{equation}
\end{lemma}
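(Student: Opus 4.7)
The plan is to reduce the factorization-level Fourier--Mukai transform $\Phi_{K_{g,r,d}}$ to a classical derived-category Fourier--Mukai transform with kernel $K_{\cZ}$, by systematically applying the Isik--Shipman--Hirano equivalence of Theorem~\ref{thm: Hirano}. The key observation that makes this possible is that $\tilde\phi_+(\cO_\cZ) = S_1$ (as noted after Definition~\ref{d:phiplus}), combined with the fact that after tensoring with $K$, the factorization $\op{ev}^*(S_1^{\boxtimes r})\otimes K$ is exactly the Koszul factorization $K^{S_1}$ with zero potential defined just before Definition~\ref{def: loc ch vir cycle}.

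First I would treat the special case $\cG = \cO_{\cZ^r}$. Since $(\tilde\phi_+)^r(\cO_{\cZ^r}) = S_1^{\boxtimes r}$, the construction of $K^{S_1}$ gives $\op{ev}^*(S_1^{\boxtimes r}) \otimes^{\mathbb{L}} K \cong K^{S_1}$, so $\Phi_{K_{g,r,d}}((\tilde\phi_+)^r(\cO_{\cZ^r})) = \mathbb R\tilde p_* K^{S_1} = \mathbb R(\tilde l\circ n)_* K^{S_1}$, the right-hand side of \eqref{KSP}. Next, I would construct $K_\cZ$. Since $K^{S_1}$ is a factorization of $0$ and, in the geometric phase, $\bdeg = 1$, we have $\dabsfact{[\square/\CC^*_R],0} \cong \dbcoh{\square}$, so $K^{S_1}$ corresponds to a genuine complex of coherent sheaves supported on $LG_{g,r}(\cZ,d)$. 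Pushing forward by $\tilde l \circ n$ yields an object of $\dbcoh{\widetilde P}$ supported on (the image of $LG_{g,r}(\cZ,d)$ in) $P_\cZ$. Using that $LG_{g,r}(\cZ,d)$ is a smooth Deligne--Mumford stack (Proposition~\ref{p: projmod}) together with the universal property of the desingularization $d_\cZ\colon \widetilde P_\cZ \to P_\cZ$, the map $LG_{g,r}(\cZ,d) \to P_\cZ$ lifts (after possibly replacing $\widetilde P_\cZ$ by a further blowup) to $LG_{g,r}(\cZ,d) \to \widetilde P_\cZ$, and I would define $K_\cZ$ by pushing $K^{S_1}|_{LG_{g,r}(\cZ,d)}$ forward through this lifted map; the compatibility $\tilde k \circ (\text{lift}) = (\tilde l \circ n)|_{LG}$ then yields \eqref{KSP}.

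For a general $\cG \in \op{D}(\cZ^r)$, I would use the explicit description $\tilde\phi_+(\cG) = \det(\cE^\vee)\otimes \mathbb R i_* \mathbb L\pi|_\cZ^*\cG[-\rank\cE]$ from Definition~\ref{d:phiplus}, together with the projection formula (Proposition~\ref{p:projform}). Pulling back along $\op{ev}$ and tensoring with $K$, one can write
\[
\op{ev}^*((\tilde\phi_+)^r(\cG))\otimes^{\mathbb L} K \;\cong\; K^{S_1} \otimes^{\mathbb L} \op{ev}_{\cZ}^*\cG
\]
where $\op{ev}_{\cZ}$ denotes the restricted evaluation factoring through $LG_{g,r}(\cZ,d) \to \cZ^r$. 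Pushing forward by $\tilde p$ and applying the projection formula once more, combined with Remark~\ref{r:prr} relating the two sides of Lemma~\ref{FM commutative diagram}, produces exactly $\mathbb R\tilde k_*(K_\cZ \otimes^{\mathbb L} (\op{ev}_{\cZ}^{\widetilde P})^*\cG)$, which is the standard Fourier--Mukai transform $\Phi_{K_\cZ}(\cG)$ composed with $\tilde k_*$. Passing to Hochschild homology then gives \eqref{eq: same transforms}.

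The main obstacle will be Step 2, specifically showing that the pushforward $\mathbb R(\tilde l\circ n)_* K^{S_1}$, which is supported on $P_\cZ \subset \widetilde P$, actually descends through $\tilde k_*$ to an object on the desingularization $\widetilde P_\cZ$. Set-theoretically this is immediate from support considerations, but realizing it categorically is delicate because $\tilde k$ is not an isomorphism and naive pullback $\mathbb L\tilde k^*$ does not satisfy $\mathbb R\tilde k_*\mathbb L\tilde k^* = \op{id}$. The cleanest resolution relies on the fact that $LG_{g,r}(\cZ,d)$ is itself smooth, so its map into $P_\cZ$ can be lifted to $\widetilde P_\cZ$ directly, bypassing the singular geometry of $P_\cZ$; a secondary technical issue is ensuring that this lift is compatible with the auxiliary factorizations appearing through $n$ and $\tilde l$, which ultimately reduces to a careful base-change argument along the relevant commutative squares in the diagram preceding the lemma.
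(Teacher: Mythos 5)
Your opening observation is the right one and matches the paper: $(\tilde\phi_+)^r(\cO_{\cZ^r})=S_1^{\boxtimes r}$ and $\op{ev}^*(S_1^{\boxtimes r})\overset{\mathbb{L}}{\otimes}K\cong K^{S_1}$, so the transform of the structure sheaf is $\mathbb R(\tilde l\circ n)_*K^{S_1}$, and the general case should follow by the projection formula. But your construction of $K_\cZ$ has a genuine gap. It rests on the claim that $LG_{g,r}(\cZ,d)$ is smooth, citing Proposition~\ref{p: projmod}; that proposition only asserts that the moduli space is a global quotient with projective coarse moduli. In the geometric phase $LG_{g,r}(\cZ,d)\cong\sMbar_{g,r}(\cZ,d)$ is a moduli stack of stable maps to the projective variety $\cZ$, which is in general singular of the wrong dimension --- this is precisely why the whole virtual-class apparatus is needed --- so the lift of $LG_{g,r}(\cZ,d)\to P_\cZ$ to the desingularization $\widetilde P_\cZ$ via smoothness is not available. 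A second, independent problem: even granting a lift, defining $K_\cZ$ as the pushforward of the \emph{restriction} $K^{S_1}|_{LG_{g,r}(\cZ,d)}$ destroys the derived structure. The section $\beta^{S_1}$ is not regular, so $K^{S_1}$ is supported on $LG_{g,r}(\cZ,d)$ without being the pushforward of its restriction there (just as a Koszul complex on a non-regular section is not the structure sheaf of its zero locus), and \eqref{KSP} would fail for your candidate $K_\cZ$.

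The paper circumvents both issues by never working on $LG_{g,r}(\cZ,d)$ itself. Because $\op{ev}_\cX\circ i_\cX\colon\tot(\widetilde A_1|_{\widetilde U})^\circ\to\cX^r$ is \emph{smooth}, the pullback $(\op{ev}_\cX\circ i_\cX)^*f$ is a regular section, so Theorem~\ref{thm: Hirano} and Lemma~\ref{FM commutative diagram} apply over the smooth ambient space $\tot(\widetilde A_1|_{\widetilde U})^\circ$ and produce a kernel $K^{\op{pre}}_\cZ$ living on $P_\cZ^\circ=(\op{ev}_\cX\circ i_\cX)^{-1}(\cZ^r)$. This locus is smooth (being cut out by a regular section over a smooth base with smooth $\cZ$), hence the desingularization $d_\cZ$ is an isomorphism over it and $i_\cZ\colon P_\cZ^\circ\to\widetilde P_\cZ$ is an open immersion; one sets $K_\cZ:=(i_\cZ)_*K^{\op{pre}}_\cZ$, which is legitimate because the support $LG_{g,r}(\cZ,d)$ is proper. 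Both \eqref{eq: same transforms} and \eqref{KSP} then follow from a diagram chase using flat base change and the projection formula (Proposition~\ref{p:projform}), with \eqref{KSP} obtained by inserting $\cO_{P_\cZ}$. To repair your argument you would need to replace your Step 2 by this mechanism; the smoothness you need is that of the evaluation map from the ambient resolution space, not of the moduli space of LG maps.
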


\begin{proof}
Since $\op{ev}_\cX \circ i_\cX: \tot (\widetilde A_1|_{\widetilde U})^\circ  \to \cX^r$ is smooth, the pullback $(\op{ev}_\cX \circ i_\cX)^*f$ gives a regular section of $\cV_2 |_{\sG}$.  Hence, we can  apply Theorem~\ref{thm: Hirano} and
Proposition \ref{FM commutative diagram} to obtain an  object $K^{\op{pre}}_{\cZ} \in \op{D}( \tot (\widetilde A_1|_{\widetilde U})^\circ)_{LG(\cZ)}$  such that the following diagram commutes
\begin{equation}  \label{moduli diagram}
\begin{tikzcd}
  & \op{D}(  [\op{Tot}^\circ /\CC ^*_R], \op{ev}_\cT \circ i_\cT^* {\textstyle \boxplus_{i=1}^r} w)  \ar[rr, "-\overset{\mathbb{L}}{\otimes} \mathbb R (z_\cT \circ f_\cT)_*  K"]  
& & \op{D}([  \op{Tot}^\circ /\CC ^*_R] , 0)_{[LG(\cZ)/ \C^*_R]} \ar[d, " \mathbb R  l^\circ_*"]  
\\
 & \op{D}( P_{\cZ}^\circ) \ar[u, "\tilde \phi_{ P_{\cZ}^\circ, +}"] \ar[rr, "\mathbb R k ^\circ_*(-\overset{\mathbb{L}}{\otimes} K^{\op{pre}}_{\cZ})"] & & \op{D}( \tot (\widetilde A_1|_{\widetilde U})^\circ)_{LG(\cZ)} \end{tikzcd} 
\end{equation} 
where $\tilde \phi_{P_{\cZ}^\circ, +}$ is notation for $\tilde \phi_+$ in the particular case $Z =  P_{\cZ}^\circ$ and $T = \op{Tot}^\circ$.

We can then define
\[
K_{\cZ} := (i_\cZ)_* K_{\cZ}^{\op{pre}} \in \op{D}(\widetilde{P}_{\cZ}).
\]
Let $\tilde \phi_{P_{\cZ}, +}$ denote $\tilde \phi_+$ in the particular case $Z = P_{\cZ}$ and $T = \widetilde{\op{Tot}}$ (in this case, it may not be an equivalence but the functor exists nonetheless).

The following diagram, which implies \eqref{eq: same transforms},  commutes.
\begin{equation} \label{d:compkey}\begin{tikzcd}
\op{D}( [\cT ^r /\CC ^*_R] , {\textstyle \boxplus_{i=1}^r} w ) \ar[r, "\mathbb L \op{ev}_\cT^*"]   &  \op{D}(  [\widetilde{\op{Tot}} /\CC ^*_R], \op{ev}^*_\cT {\textstyle \boxplus_{i=1}^r} w)  \ar[r, "-\overset{\mathbb{L}}{\otimes} \mathbb R n_*  K"]  
&  \op{D}([  \widetilde{\op{Tot}} /\CC ^*_R] , 0)_{[LG(\cZ) / \C^*_R]} \ar[d, " \mathbb R  \tilde{l}_*"]  
\\
\op{D}(\cZ^r) \ar[dr, swap, "\mathbb L (\op{ev}_\cZ \circ d_\cZ)^*"] \ar[r, "\mathbb L (\op{ev}_\cZ)^*"] \ar[u, "(\tilde \phi_+)^r"] & \op{D}(P_{\cZ} ) \ar[u, "\tilde\phi_{P_{\cZ}, +}  "] \ar[r, "\mathbb R  k _*(-\overset{\mathbb{L}}{\otimes} \mathbb R (d_\cZ)_* K_{\cZ})"] & \op{D}(\widetilde{P}) \\
&  \op{D}(\widetilde{P}_{\cZ} ) \ar[ur, swap, "\mathbb R \tilde k _*(-\overset{\mathbb{L}}{\otimes} K_{\cZ})" ] &
 \end{tikzcd}
\end{equation}
The left square follows from flat base change. 
For the right square, we have
\begin{align*}
\RR \tilde l _* (\tilde\phi_{P_{\cZ}, +} (A) \overset{\mathbb{L}}{\otimes} \mathbb R n_*  K) & =
\RR \tilde l _* (\tilde\phi_{P_{\cZ}, +} (A) \overset{\mathbb{L}}{\otimes} \mathbb R n_* \RR (i_\cT)_* \LL i_\cT^* K)  \\
& =
\RR \tilde l _*  \RR (i_\cT)_* (\LL i_\cT^* \tilde\phi_{P_{\cZ}, +} (A) \overset{\mathbb{L}}{\otimes} \mathbb R n_* \LL i_\cT^* K)  \\
& =
\RR (i_\cX)_*  \RR (l^\circ)_*  (\phi_{P^\circ_{\cZ}, +} (\LL j_\cZ^*A) \overset{\mathbb{L}}{\otimes} \mathbb R (z_\cT \circ f_\cT)_* K)  \\
& =
\RR (i_\cX)_*  \RR k^\circ_* (\LL j_\cZ^*A \overset{\mathbb{L}}{\otimes} K^{\op{pre}}_\cZ)  \\
& =
  \RR k_* \RR (j_\cZ)_* (\LL j_\cZ^*A \overset{\mathbb{L}}{\otimes} K^{\op{pre}}_\cZ)  \\
  & =
  \RR k_*  (A \overset{\mathbb{L}}{\otimes} \RR (d_\cZ)_* K_\cZ).
\end{align*}
  Most of these equalities are simply the projection formula of Proposition~\ref{p:projform}, however the fourth equality  requires \eqref{moduli diagram}.
 Finally, for the bottom triangle we have
\begin{align*}
\mathbb R k_* (\mathbb L(\op{ev}_\cZ)^* A \overset{\mathbb{L}}{\otimes} \mathbb R (d_\cZ)_* K_{\cZ}))& = \mathbb R k_* (\mathbb L(\op{ev}_\cZ)^* A \overset{\mathbb{L}}{\otimes} \mathbb R (j_\cZ)_* K^{\op{pre}}_{\cZ}) \\
& = \mathbb R k_* \mathbb R (j_\cZ)_*  (\mathbb L j_\cZ^* \mathbb L(\op{ev}_\cZ)^* A \overset{\mathbb{L}}{\otimes} K^{\op{pre}}_{\cZ}) \\
& = \mathbb R \tilde k_* \RR (i_\cZ)_* (\LL (i_\cZ)^* \mathbb L (\op{ev}_\cZ \circ d_\cZ)^* A \overset{\mathbb{L}}{\otimes}  K^{\op{pre}}_{\cZ}) ) \\
& = \mathbb R \tilde k_* (\mathbb L (\op{ev}_\cZ \circ d_\cZ)^* A \overset{\mathbb{L}}{\otimes}  \RR (i_\cZ)_*   K^{\op{pre}}_{\cZ}) ) \\
& = \mathbb R \tilde k_* (\mathbb L (\op{ev}_\cZ \circ d_\cZ)^* A \overset{\mathbb{L}}{\otimes} K_{\cZ}). \end{align*}

Now to justify \eqref{KSP}, plug $\O_{\cZ^r}$ into diagram~\eqref{d:compkey} to get
\begin{align*}
 \mathbb R ( \tilde l \circ  n)_* K^{S_1}  & =  \mathbb R \tilde l_* \circ \mathbb R n_* (K \overset{\mathbb{L}}{\otimes} \mathbb{L} (\op{ev}_\cT \circ n)^* (\tilde \phi_+)^r \O_{\cZ^r})   \\
 & =  \mathbb R \tilde l_* ( \mathbb R n_* K \overset{\mathbb{L}}{\otimes} \mathbb{L} (\op{ev}_\cT)^* (\tilde \phi_+)^r \O_{\cZ^r})     \\
 & =  \mathbb R \tilde l_* ( \mathbb R n_* K \overset{\mathbb{L}}{\otimes}  \tilde \phi_{P_\cZ,+} \O_{P_\cZ})   \\
  &  = \mathbb R k_{ *} \mathbb R (d_\cZ)_{*}K_{\cZ} \\
 &  = \mathbb R \tilde k_{ *} K_{\cZ}
\end{align*}
where the first line is by the definition of $K^{S_1}$ and Remark~\ref{r:os1}, the second line is the projection formula, the third line is by the left square of 
diagram~\eqref{d:compkey}, the fourth line is by the right square of diagram~\eqref{d:compkey}, and the last line is by diagram~\ref{d:complicated}. \end{proof}

\begin{lemma}\label{lem: MXvir}
The following equality holds in $A_*(LG_{g,r}(\cZ, d))_{\Q}$
\[
    [LG_{g,r}(\cZ, d)]^{\op{vir}}  = \op{td}(T_{\widetilde{P}_{\cZ}/\Mfrak^\orb_{g,r}(B\Gamma, d)_{\log}}) \op{td}(\mathbb R\pi_* \mathcal V')^{-1}  \op{ch}^{\widetilde{P}_{\cZ}}_{LG(\cZ)} (  K_{\cZ} ) [\widetilde{P}_{\cZ}].
\]
\end{lemma}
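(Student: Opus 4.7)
The argument is a bookkeeping exercise that transports the virtual class through the commutative diagram preceding the lemma, applying Proposition~\ref{prop: chiodo} at each vector bundle projection and Grothendieck--Riemann--Roch at each closed immersion. It follows the template of \cite[Lemma 5.1]{PV}, adapted to the hybrid setting.

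The first move is to replace $K'$ by $K^{S_1}$ on $\square$. Since $w$ is linear along $V_2$ and the $V_2$-component of the evaluation vanishes identically on $\square'$, the superpotential restricts to zero on $\square'$, so $K'$ is a factorization of zero and may be treated as an ordinary complex. The closed immersion $s\colon\square'\hookrightarrow\square$ is regular with normal bundle canonically $\pi_*(\cV_2|_{\sG})$, and $\RR s_*K'=K^{S_1}$ by \cite[Proposition 4.3.1]{PV}; Grothendieck--Riemann--Roch for this regular immersion then converts $\td(\bar B|_{\square'})\chp^{\square'}_{LG(\cZ)}(K')\cap[\square']$ into the localized Chern character of $K^{S_1}$ on $\square$, multiplied by $\td(\bar B)\,\td(\pi_*(\cV_2|_{\sG}))^{-1}$.

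Next I would transport this class from $\square$ to $\widetilde P_\cZ$. The inclusion $\square\hookrightarrow\tot(\bar A)^\circ$ is the zero locus of the regular section $\zeta$ of $p^*A_1$, contributing $\td(A_1)^{-1}$ via Grothendieck--Riemann--Roch for a regular embedding. The projection $\tot(\bar A)^\circ\to\widetilde U$ is (an open substack of) a vector bundle, for which Proposition~\ref{prop: chiodo} exchanges the localized Chern character of the pushed forward factorization for $\td(\bar A)\cdot\ch^{\widetilde U}_{LG(\cZ)}(\RR(\tilde l\circ n)_*K^{S_1})$. Invoking the identity \eqref{KSP}, namely $\RR(\tilde l\circ n)_*K^{S_1}=\RR\tilde k_*K_\cZ$, and Grothendieck--Riemann--Roch for the proper morphism $\tilde k\colon\widetilde P_\cZ\to\widetilde U$ between smooth Deligne--Mumford stacks, further transfers the expression to $\widetilde P_\cZ$ with an additional factor of $\td(T_{\widetilde P_\cZ/\widetilde U})$.

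Finally I would collect and simplify all Todd factors. The quasi-isomorphism $[\bar A\to\bar B]\cong\RR\pi_*\cV$ yields $\td(\bar A)/\td(\bar B)=\td(\RR\pi_*\cV)$, and the short exact sequence $0\to\cV_2(-\sG)\to\cV_2\to\cV_2|_\sG\to 0$ (together with the vanishing $\RR^1\pi_*(\cV_2|_\sG)=0$) gives $\td(\RR\pi_*\cV)=\td(\RR\pi_*\cV')\,\td(\pi_*(\cV_2|_\sG))$. Combined with the factors $\td(A_1)^{-1}$ and $\td(T_{\widetilde P_\cZ/\widetilde U})$ accumulated above, the identification $T_{\widetilde U/\Mfrak^\orb_{g,r}(B\Gamma,d)_{\log}}|_U=A_1$ (since $U\subseteq\tot(A_1)$), and the relative tangent sequence for $\widetilde P_\cZ\to\widetilde U\to\Mfrak^\orb_{g,r}(B\Gamma,d)_{\log}$, all contributions reassemble into $\td(T_{\widetilde P_\cZ/\Mfrak^\orb_{g,r}(B\Gamma,d)_{\log}})\,\td(\RR\pi_*\cV')^{-1}$. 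The hard part will be this final step: a meticulous tracking of short exact sequences of vector bundles and of the multiplicativity of Todd classes, with the secondary subtlety of ensuring that the desingularization $\tilde d\colon\widetilde U\to\overline U$ is transparent to these manipulations, which follows from the fact that $LG(\cZ)$ lies entirely in the smooth open locus $U\subseteq\widetilde U$.
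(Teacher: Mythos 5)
Your plan follows essentially the same route as the paper's proof: the paper likewise starts from the definition of $[LG_{g,r}(\cZ,d)]^{\op{vir}}$ on $\square'$, transports the localized Chern character through the big commutative diagram using Proposition~\ref{prop: chiodo} at the vector-bundle projections and Fulton's Riemann--Roch at the closed immersions and proper maps, invokes \eqref{KSP}, and closes with the identifications $T_{\widetilde{P}/\Mfrak^\orb_{g,r}(B\Gamma, d)_{\log}} = A_1\oplus\widetilde A_1$ and $\RR\pi_*\cV' = \bar A_1\oplus\bar A_2'\oplus\bar B[1]$, so your walk through the diagram (via $\square$, $\tot(\bar A)^\circ$ and $\widetilde U$ rather than $U'$, $\tot\widetilde A_1|_{\widetilde U}$ and $\widetilde P$) is only a cosmetic variant. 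The one caveat is that several of your intermediate Todd exponents are inverted as written — solving for $\chp^{\square'}_{LG(\cZ)}(K')[\square']$ in terms of $\chp^{\square}_{LG(\cZ)}(K^{S_1})[\square]$ produces $\td(\pi_*(\cV_2|_{\sG}))^{+1}$, not $\td(\pi_*(\cV_2|_{\sG}))^{-1}$, and similarly the $\zeta$-cut contributes $\td(A_1)^{+1}$ and the bundle projection $\td(\bar A)^{-1}$ — so the factors as literally listed do not multiply out to $\td(T_{\widetilde{P}_{\cZ}/\Mfrak^\orb_{g,r}(B\Gamma, d)_{\log}})\td(\RR\pi_*\cV')^{-1}$, but once each direction is fixed every step is valid and the assembly does close up.
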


\begin{proof}

Now, we have the following sequence of equalities
\begin{align}\label{eq: MXvir} 
    [LG_{g,r}(\cZ, d)]^{\op{vir}} 
& =   \op{td}(\bar{B}|_{\square '} ) \ \chp_{LG(\cZ)}^{\square '} ( K ') [\square '] \nonumber \\
& = \op{td}(\bar{B} ) \op{td}(\bar{A_2} ')^{-1}  \ch_{LG(\cZ)}^{U '} (\mathbb R (m\circ l)_*  K^{S_1}) [ U'  ] \nonumber \\
& =   \op{td}(\bar{B} \oplus \widetilde A_1 \oplus A_1) \op{td}(\bar{A_1} \oplus \bar{A_2} ')^{-1}  \ch_{LG(\cZ)}^{\tot \widetilde A_1|_{\widetilde{U}}} (\mathbb R (l^\circ \circ f_\cT \circ z_\cT )_*  K^{S_1}) [\tot \widetilde A_1|_{\widetilde{U}} ] \nonumber \\
& =  \op{td}(\bar{B} \oplus \widetilde A_1 \oplus A_1) \op{td}(\bar{A_1} \oplus \bar{A_2} ')^{-1}  \ch_{LG(\cZ)}^{\widetilde{P}} (\mathbb R (\tilde l\circ n )_*  K^{S_1}) [\widetilde{P} ] \nonumber \\
& =  \op{td}(T_{\widetilde{P}/\Mfrak^\orb_{g,r}(B\Gamma, d)_{\log}}) \op{td}(\mathbb R\pi_* \mathcal V')^{-1}  \ch_{LG(\cZ)}^{\widetilde{P}} (\mathbb R (\tilde l\circ n )_*  K^{S_1}) [\widetilde{P} ] \nonumber \\
& =  \op{td}(T_{\widetilde{P}/\Mfrak^\orb_{g,r}(B\Gamma, d)_{\log}}) \op{td}(\mathbb R\pi_* \mathcal V')^{-1}  \op{ch}^{\widetilde{P}}_{LG(\cZ)} ( \mathbb R \tilde k_* K_{\cZ} ) [\widetilde{P}]  \nonumber \\
& =  \op{td}(T_{\widetilde{P}_{\cZ}/\Mfrak^\orb_{g,r}(B\Gamma, d)_{\log}}) \op{td}(\mathbb R\pi_* \mathcal V')^{-1}  \op{ch}^{\widetilde{P}_{\cZ}}_{LG(\cZ)} (  K_{\cZ} ) [\widetilde{P}_{\cZ}]
\end{align}
where $\cV '$ denotes $\cV_1 \oplus \cV_2 (-\sG )$. 
The first equality is the definition.  The second comes from Proposition~\ref{prop: chiodo}, note that this applies since $K'$ is locally-contractible off $LG(\cZ)$ by Proposition~\ref{p:Ksupp} and equation \eqref{eq: support equals LG} hence strictly exact off $LG(\cZ)$ by \cite[Lemma 2.2 (1)]{KOcos}.  The third follows from \cite[Corollary 18.1.2]{Fulton}.  The fourth equality is obvious from the definition of localized Chern characters.  The fifth line comes from the equality
\[
\mathbb R\pi_ * \cV' = \bar{A_1} \oplus \bar A_2' \oplus \bar B[1],
\]
together with the fact that  after restricting to $LG(\cZ) \subset \tot(\widetilde A_1|_U) \subset \widetilde P$, we have
\[
\left(T_{\widetilde{P}/\Mfrak^\orb_{g,r}(B\Gamma, d)_{\log}}\right) \bigg |_{LG(\cZ)} =  \left(A_1 \oplus \widetilde A_1\right) \bigg |_{LG(\cZ)}\]
by Theorem~\ref{c:swn} (a).
  The sixth equality of \eqref{eq: MXvir} follows immediately from \eqref{KSP}.  The seventh follows from \cite[Theorem 18.2]{Fulton} (using the isomorphism $K_0(LG(\cZ)) \cong K_0( \op{D}({\widetilde{P}})_{LG(\cZ)})$).  This completes the proof.
\end{proof}

We now prove that our GLSM invariants are the same as those defined by $[LG_{g,r}(\cZ, d)]^{\op{vir}}$.  In the next subsection, we will argue that these agree with the cosection localized Gromov--Witten type invariants defined by Chang--Li  \cite{CL} and and generalized by Fan--Jarvis--Ruan \cite{FJR15}.

 By abuse of notation we will let $\iota_*[LG_{g,r}(\cZ, d)]^{\op{vir}}$ and $\ch(K_\cZ)$ now denote the corresponding classes in cohomology rather than the Chow ring.
\begin{proposition} \label{prop :GWcomp}
Let the setup be as in the beginning of this section.  Given $\vec \gamma = (\gamma_1, \ldots, \gamma_r) \in \op{H}^*(\cZ)^{\otimes r}$, the following are equal:
\begin{align*} 
 (\op{proj}_* \circ \tilde k)_* (\op{ev}_\cZ^*\vec \gamma \cup ( i_\cZ \circ \iota)_*[LG_{g,r}(\cZ, d)]^{\op{vir}}) 
 = \Lambda_{g,r,d}(\varphi^{\td}_* \gamma_1, \ldots, \varphi^{\td}_* \gamma_r). 
\end{align*}
\end{proposition}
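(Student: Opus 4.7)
\smallskip

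\noindent\textbf{Proof plan.} The strategy is to convert both sides into cohomological integral transforms on $\widetilde P_\cZ$ and match the integrands. First, I would unpack $\Lambda_{g,r,d}(\varphi^{\td}_*\gamma_1,\ldots,\varphi^{\td}_*\gamma_r)$ using Definitions~\ref{d:invariants} and~\ref{d:phitodd}. In a geometric phase we have $\bdeg=1$, so $\mu_{\bdeg}$ is trivial and the quotient map $q$ is the identity; the assumption that $\cX$ is a smooth variety forces the gerbe markings to be trivial, so that $\op{ord}$ is the identity as well. Using Remark~\ref{r:stsp} to commute the pullback $\phi_{\op{HKR}}^{-1}$ past the product over markings, this rewrites the input as
\[
(\varphi^{\td}_*\gamma_1,\ldots,\varphi^{\td}_*\gamma_r)=((\tilde\phi_+)^r)_*\phi_{\op{HKR}}^{-1}\!\bigl(\boxplus_i(\td(\cE)\cup\gamma_i)\bigr).
\]

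Next, apply the kernel identity~\eqref{eq: same transforms} to replace $(\Phi_{K_{g,r,d}})_*\circ((\tilde\phi_+)^r)_*$ by $(\Phi_{K_\cZ})_*$, and then invoke Theorem~\ref{t:ram} (Ramadoss) for the Fourier--Mukai transform $\Phi_{K_\cZ}: \dbcoh{\cZ^r}\to\dbcoh{\widetilde U}$, using that $K_\cZ$ is supported on $\widetilde P_\cZ$, which is proper over $\widetilde U$ by construction. Writing $g\colon\widetilde P_\cZ\xrightarrow{\tilde k}\widetilde P\to\widetilde U$, this gives
\[
\bar\phi_{\op{HKR}}\circ(\Phi_{K_\cZ})_*\circ\phi_{\op{HKR}}^{-1}(\psi)=g_*\!\bigl(\op{ev}_\cZ^*\psi\cup \td(T_{\widetilde P_\cZ/\widetilde U})\cup \op{ch}(K_\cZ)\bigr).
\]
Combining this with the projection formula along $g$ converts $\Lambda_{g,r,d}(\varphi^{\td}_*\gamma_1,\ldots,\varphi^{\td}_*\gamma_r)$ into
\[
(\op{proj}\circ g)_*\!\left(g^*\frac{\td(T_{\widetilde U/\Mfrak^\orb_{g,r}(B\Gamma,d)_{\log}})}{\td(\R\pi_*\cV)}\cup \op{ev}_\cZ^*\boxplus_i(\td(\cE)\cup\gamma_i)\cup \td(T_{\widetilde P_\cZ/\widetilde U})\cup \op{ch}(K_\cZ)\right).
\]
For the left-hand side, Lemma~\ref{lem: MXvir} combined with the fact that pushing $\chp_{LG(\cZ)}^{\widetilde P_\cZ}(K_\cZ)[\widetilde P_\cZ]$ forward to $\widetilde P_\cZ$ yields $\op{ch}(K_\cZ)\cap[\widetilde P_\cZ]$ gives
\[
\text{LHS}=(\op{proj}\circ\tilde k)_*\!\left(\op{ev}_\cZ^*\vec\gamma\cup\frac{\td(T_{\widetilde P_\cZ/\Mfrak^\orb_{g,r}(B\Gamma,d)_{\log}})}{\td(\R\pi_*\cV')}\cup\op{ch}(K_\cZ)\right).
\]

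Since $\op{proj}\circ g=\op{proj}\circ\tilde k$, matching integrands on $\widetilde P_\cZ$ reduces the statement to the pointwise identity
\[
g^*\frac{\td(T_{\widetilde U/\Mfrak^\orb})}{\td(\R\pi_*\cV)}\cdot \td(T_{\widetilde P_\cZ/\widetilde U})\cdot \op{ev}_\cZ^*\boxplus_i\td(\cE)=\frac{\td(T_{\widetilde P_\cZ/\Mfrak^\orb})}{\td(\R\pi_*\cV')}.
\]
The tangent triangle for $\widetilde P_\cZ\to\widetilde U\to\Mfrak^\orb_{g,r}(B\Gamma,d)_{\log}$ collapses the two Todd factors on the left into $\td(T_{\widetilde P_\cZ/\Mfrak^\orb})$, so the remaining content is $\td(\R\pi_*\cV)/\td(\R\pi_*\cV')=\op{ev}_\cZ^*\boxplus_i\td(\cE)$. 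This follows from the short exact sequence $0\to\cV_2(-\sG)\to\cV_2\to\cV_2|_\sG\to 0$ together with the base-change description $\pi_*(\cV_2|_{\sG_i})=(\op{ev}^i)^*\cE$, which is valid in the geometric phase because the gerbes $\sG_i$ are trivial and $\CC^*_R$ acts with weight one on $V_2$. The main obstacle is careful bookkeeping of the maps among $\widetilde U$, $\widetilde P$, and $\widetilde P_\cZ$, and verifying the hypotheses of Theorem~\ref{t:ram}, especially the properness of the support of $K_\cZ$ over $\widetilde U$; once these are in place the computation reduces to the K-theoretic identity above.
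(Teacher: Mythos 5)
Your proposal is correct and follows essentially the same route as the paper's proof: both rest on Lemma~\ref{lem: MXvir}, the identity \eqref{eq: same transforms}, Ramadoss's Theorem~\ref{t:ram}, the projection formula, and the final Todd-class bookkeeping via $\R\pi_*\cV = \R\pi_*\cV' \oplus \pi_*(\cV_2|_\sG)$ together with the $\td(\cE)$ factor built into $\varphi^{\td}_*$. The only cosmetic difference is that the paper computes forward from the left-hand side while you expand both sides and match integrands on $\widetilde P_\cZ$.
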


\begin{proof}
Let $\tilde u$ denote the map $\tilde u: \widetilde P \to \widetilde U$.  We have,
\begin{align*}
 & \tilde u_* \circ \tilde k_* (\bar{\op{ev}}^*\vec \gamma \cup ( i_\cZ \circ \iota)_*[LG_{g,r}(\cZ, d)]^{\op{vir}})  \\
 = & \tilde u_* \circ  \tilde k_*( \op{td}(T_{\widetilde{P}_{\cZ}/\Mfrak^\orb_{g,r}(B\Gamma, d)_{\log}}) \op{td}(\mathbb R\pi_* \mathcal V')^{-1}  \op{ch} ( K_{\cZ} ) ev^*\vec \gamma) \\
 = & \op{td}(T_{\widetilde{U}/\Mfrak^\orb_{g,r}(B\Gamma, d)_{\log}}) \op{td}(\mathbb R\pi_* \mathcal V')^{-1}\tilde u_* \circ  \tilde k_*( \op{td}(T_{\widetilde{P}_{\cZ}/\widetilde{U}}) \op{ch} ( K_{\cZ} ) ev^*\vec \gamma) \\
 =& \op{td}(T_{\widetilde{U}/\Mfrak^\orb_{g,r}(B\Gamma, d)_{\log}}) \op{td}(\mathbb R\pi_* \mathcal V')^{-1}(\Phi^{\op{H}}_{\op{td}(T_{\widetilde{P}_{\cZ}/{\widetilde{U}}}) \op{ch} ( K_{\cZ})})_*(\vec \gamma) 
 \\ =& \op{td}(T_{\widetilde{U}/\Mfrak^\orb_{g,r}(B\Gamma, d)_{\log}}) \op{td}(\mathbb R\pi_* \mathcal V')^{-1}\bar \phi_{\op{HKR}}(\Phi_{K_{\cZ}})_*\bar \phi_{\op{HKR}}^{-1}(\vec \gamma) 
  \\ =& \op{td}(T_{\widetilde{U}/\Mfrak^\orb_{g,r}(B\Gamma, d)_{\log}}) \op{td}(\mathbb R\pi_* \mathcal V')^{-1}\bar \phi_{\op{HKR}}(\Phi_{K_{g,r,d}})_*( (\tilde \phi_+)_* \circ \bar \phi_{\op{HKR}}^{-1} \gamma_1, \ldots, (\tilde \phi_+)_* \circ \bar \phi_{\op{HKR}}^{-1} \gamma_r) 
  \\ =& \op{td}(T_{\widetilde{U}/\Mfrak^\orb_{g,r}(B\Gamma, d)_{\log}}) \op{td}(\mathbb R\pi_* \mathcal V)^{-1}\bar \phi_{\op{HKR}}(\Phi_{K_{g,r,d}})_*(\varphi_*^{\td} \gamma_1, \ldots, \varphi_*^{\td} \gamma_r).
\end{align*}
The first equality is by Lemma~\ref{lem: MXvir}.  
The second is the projection formula.   The third is just the definition of the cohomological Fourier--Mukai transform, and
the fourth is Theorem~\ref{t:ram}, which we may apply by Lemma~\ref{lemma: proj resol}. %% and the representability of the map $\tilde{u}\circ \tilde{k}$.   
The fifth equality is by \eqref{eq: same transforms}. 
The last equality follows from $\mathbb R\pi_* \mathcal V = \mathbb R\pi_* \mathcal V' \oplus \pi_* \cV_2|_\sG $ and the definition of $\varphi_*^{\td}$ (Definition~\ref{d:phitodd}).  Note that the maps $\op{ord}$ and $q^*$ of Definition~\ref{d:invariants} are both the identity in this case, since $\cX$ is a smooth variety.
Applying $\op{proj}_*$ finishes the theorem.
\end{proof}

\subsubsection{Cosection Localized Gromov--Witten invariants}\label{sss:6.1.2}

In \cite{CL}, the cosection localization methods of Kiem--Li \cite{KiemLi} are used to construct a cosection localized virtual class for a particular geometric phase GLSM, namely the quintic 3-fold GLSM of Example~\ref{e:quintic}.  The cosection localized virtual class is used to define enumerative invariants for general GLSMs in \cite{FJR15}.  The construction can be summarized as follows (see Section~3 of \cite{CL} and Section~5 of \cite{FJR15} for details).

 The object 
\[ \mathbb E := \mathbb R \pi_* \cV\] gives a perfect obstruction theory over $LG(\cT)' $ relative to $\Mfrak^\orb_{g,r}(B\Gamma, d)_{\log}^\circ$.  Then given a superpotential $w: \cT \to \mathbb{A}^1$,
Equation~\eqref{dwS} defines a cosection $dw$ of $\cc Ob_{LG(\cT) '} = \mathbb R^1 \pi_* \cV(-\sG)$. 
Let $h^1/h^0 (\mathbb E)(dw)$ be the refined cone stack associated to $\mathbb E$ and $dw$, (see Equation (3.16) of \cite{CL}).
As shown in the proof of Proposition \ref{prop: CLL},  
the relative intrinsic normal cone $[c_{LG(\cT)' /\Mfrak^\orb_{g,r}(B\Gamma, d)_{\log}^\circ}]$ lies in $Z_*(h^1/h^0 (\mathbb E)(dw))_\QQ$. 
Hence, we may define the following class.
\begin{definition}
The \newterm{cosection localized virtual fundamental class} is the class
\[
[LG_{g,r}(\cT, d)']^{\op{vir}}_{dw} :=   s^!_{h^1/h^0 (\mathbb E), dw} [c_{LG(\cT)' /\Mfrak^\orb_{g,r}(B\Gamma, d)_{\log}^\circ}].
\]
where 
$$s^!_{h^1/h^0 (\mathbb E), dw}: A_*(h^1/h^0 (\mathbb E)(dw))_{\QQ}  \ra A_* ([LG_{g,r}(\cZ, d)])_{\QQ}$$
is the cosection localized Gysin map \cite[Proposition 1.3]{KiemLi}.
\end{definition}

The following result follows from results in \cite{CLL} and the compatibility of $\alpha$ and $dw$ (Lemma~\ref{alpha and dw}).
\begin{proposition}\label{prop: CLL}
The following equality holds in $A_*(LG_{g,r}(\cZ, d))_{\Q}$
\[
[LG_{g,r}(\cZ, d)]^{\op{vir}} = [LG_{g,r}(\cT, d)']^{\op{vir}}_{dw} 
\]
\end{proposition}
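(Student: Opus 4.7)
The plan is to interpret both sides of the equality as classes built from the same data, namely the closed embedding $LG_{g,r}(\cT,d)' \hookrightarrow \square'$ together with the locally free sheaf $\bar B|_{\square'}$ carrying the section $\beta$ and cosection $\alpha$, and then invoke the Chang--Li--Li identification of cosection localized virtual classes with localized Chern characters of Koszul factorizations in $[CLL]$.

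First, I would set up the geometry so that both sides live in a common framework. The stack $\square'$ is smooth over $\Mfrak^\orb_{g,r}(B\Gamma, d)_{\log}^\circ$, and $\beta|_{\square'}$ is a section of $\bar B|_{\square'}$ whose zero locus is exactly $LG_{g,r}(\cT,d)'$. This endows $LG_{g,r}(\cT,d)'$ with the relative perfect obstruction theory $[T_{\square'/\Mfrak^\orb_{g,r}(B\Gamma,d)_{\log}^\circ} \to \bar B|_{\square'}]$, which is quasi-isomorphic to $\mathbb E = \mathbb R\pi_*\cV'$ by construction of the admissible resolution on $\square'$. This is the very same perfect obstruction theory used to define $[LG(\cT)']^{\op{vir}}_{dw}$, so that the intrinsic normal cone $[c_{LG(\cT)'/\Mfrak^\orb_{g,r}(B\Gamma,d)_{\log}^\circ}]$ is canonically realized as a cone inside $\bar B|_{LG(\cT)'}$.

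Second, I would identify the two cosections. The cosection used in defining $[LG(\cT)']^{\op{vir}}_{dw}$ is the Kiem--Li cosection $dw \colon \mathbb R^1\pi_*\cV(-\sG) \to \O$. On the other hand, the cosection $\alpha \colon \bar B \to \O$ arising from the admissible resolution factors through the surjective connecting map $\partial \colon \bar B \twoheadrightarrow \mathbb R^1\pi_*\cV(-\sG)$. Lemma~\ref{alpha and dw} says precisely that, after composing with the natural map on symmetric algebras and restricting along the tautological section giving $LG(\cT)'$, one recovers the cosection $dw$. Consequently, the common degeneracy loci agree, $Z(\alpha) \cap LG(\cT)' = LG(\cZ)$, and the cosection on the obstruction sheaf used in the Kiem--Li construction coincides with the one encoded in the Koszul factorization $\{-\alpha, \beta\}$.

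Third, I would apply the Chang--Li--Li comparison. In this set-up, their result identifies the Kiem--Li cosection localized virtual cycle of $Z(\beta)$ with the cycle $\op{td}(\bar B|_{\square'}) \cdot \chp^{\square'}_{Z(\beta)\cap Z(\alpha)}(\{-\alpha, \beta\}) \cap [\square']$, which by Definition~\ref{def: loc ch vir cycle} is exactly $[LG_{g,r}(\cZ,d)]^{\op{vir}}$. The main obstacle I anticipate is not the overall scheme of the argument but rather the need to verify that the CLL comparison, originally stated in a scheme-theoretic setting, applies verbatim for smooth Deligne--Mumford ambients with orbifold/gerbe structure on the markings, and to check that the chain-level compatibility of Lemma~\ref{alpha and dw} descends to an equality of cosections on the obstruction sheaf (rather than merely up to homotopy) so that both Gysin constructions are applied to the same cycle class in $h^1/h^0(\mathbb E)(dw)$.
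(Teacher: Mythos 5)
Your proposal is correct and follows essentially the same route as the paper: realize the obstruction theory of $LG_{g,r}(\cT,d)'$ via the section $\beta$ of $\bar B|_{\square'}$ on the smooth ambient $\square'$, use Lemma~\ref{alpha and dw} to match the Kiem--Li cosection $dw$ with $\alpha$ restricted to the cone, and then invoke \cite[Proposition 5.10]{CLL} to identify the cosection localized Gysin class with $\op{td}(\bar B|_{\square'})\,\chp^{\square'}_{LG(\cZ)}(K')\cap[\square']$. The two caveats you flag (the orbifold setting and passing from the chain-level compatibility to an equality of cosections on the obstruction sheaf) are exactly the points the paper handles by writing the intrinsic normal cone as $[C_{LG(\cT)'/\square'}/\bar A]$ and comparing the two Gysin maps through the fiber diagram onto $[\op{tot}\bar B/\bar A]$.
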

\begin{proof}
Note that $LG(\cT)'$ is realized as the zero locus of a section $\beta \in \Gamma( \square', \bar B)$ in the smooth Deligne--Mumford stack $\square '$, and that over $\square '$, the object $\mathbb R \pi_* \cV$ is resolved by the two term complex $[\bar A_1 \oplus \bar A_2 ' \to \bar B]$.  In this special setting, the cosection localized virtual class can be described more concretely.  

Let $C_{LG(\cT)' / {\square '}}$ be the normal cone to $LG(\cT)'$  in $\square '$.
Since $LG(\cT)'=Z(\beta)$, the cone $C_{LG(\cT)' / {\square '}}$ can be viewed as
a closed substack in $\bar B|_{LG(\cT)'}$. Since $\ka |_{LG(\cT)'} \in \Hom (\bar B|_{LG(\cT)'}, \cO _{LG(\cT)'})$ becomes zero when it is restricted to $C_{LG(\cT)' / {\square '}}$
by $\ka \circ \kb =0$ in $\Box '$, 
the cone cycle is contained in $$\bar B|_{LG(\cT)'} (\ka |_{LG(\cT)'} ) := \bar B|_{Z(\ka, \kb )} \cup \ker (\bar B |_{LG(\cT)' \setminus Z(\ka, \kb)} \ra \cO _{LG(\cT)' \setminus Z(\ka, \kb)}).$$
Again there exists a cosection localized Gysin map 
$$s^!_{\bar B|_{LG(\cT)'}, \ka |_{LG(\cT)'}} : A_* ( \bar B|_{LG(\cT)'} (\ka |_{LG(\cT)'})  )_{\QQ}  \ra A_* (Z(\ka, \kb ))_{\QQ}. $$

The relative instrinsic normal cone  $c_{LG(\cT)' /\Mfrak^\orb_{g,r}(B\Gamma, d)_{\log}^\circ}$  can be explicitly described as the quotient cone stack $[C_{LG(\cT)' / {\square '}} / \bar A]$ since $\square'$ is smooth with relative tangent bundle $\bar A$.   This yields the following fiber diagram
\begin{equation} \label{relative cone diagram}
\begin{tikzcd}
C_{LG(\cT)' / {\square '}}\ar[r] \ar[d] & \op{tot} \bar B \ar[d] \\
{c_{LG(\cT)' /\Mfrak^\orb_{g,r}(B\Gamma, d)_{\log}^\circ}} \ar[r]  &  {[\op{tot} \bar B / \bar A]}  \\
\end{tikzcd}
\end{equation}

 We now have the following chain of equalities
\begin{align*}
[LG_{g,r}(\cT, d)']^{\op{vir}}_{dw} = \ &   s^!_{h^1/h^0 (\mathbb E), dw} [c_{LG(\cT)' /\Mfrak^\orb_{g,r}(B\Gamma, d)_{\log}^\circ}] \\
 = \ & s^!_{\bar B|_{LG(\cT)'}, \ka |_{LG(\cT)'}} \gamma^* [c_{LG(\cT)' /\Mfrak^\orb_{g,r}(B\Gamma, d)_{\log}^\circ}] \\
 = \ & s^!_{\bar B|_{LG(\cT)'}, \ka |_{LG(\cT)'}} [C_{LG(\cT)' / {\square '}}] \\
 = \ & \op{td}(\bar B|_{\square '}) \ \chp_{LG_{g,r}(\cZ, d)}^{\square '} (K ') [\square ']  \\
 = \ & [LG_{g,r}(\cZ, d)]^{\op{vir}}.
\end{align*}

The first line is by definition.  The second line uses the definition of the Gysin map together with the fact that $\sigma$ agrees with $ \ka |_{LG(\cT)'}$ (Lemma~\ref{alpha and dw}).   The fourth line follows from \eqref{relative cone diagram}.  The fifth line is \cite[Proposition 5.10]{CLL}.  The final line is also by definition.
\end{proof}

With the above result and Proposition~\ref{prop :GWcomp}, we can identify our GLSM invariants in the geometric phase with invariants defined by the cosection localized virtual class.  
\begin{theorem} \label{t:mainGWcomp}
After identifying $\op{H}^*(\cZ)$ with $\op{HH}_*([\cT/\CC^*_R], w)$ via $\varphi^{\td}_*$,
the invariants defined by the cosection localized virtual fundamental class of \cite{CL} are equal to the invariants $\Lambda_{g,r,d}$ for the corresponding GLSM from Definition~\ref{d:invariants}.  
\end{theorem}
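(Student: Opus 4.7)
The plan is to combine the two key propositions established immediately prior, namely Proposition~\ref{prop :GWcomp} and Proposition~\ref{prop: CLL}, which together handle the entire identification. Once these are in hand, the theorem is essentially a bookkeeping statement about how the various invariants are defined as integrals against cycle classes in $A_*(LG_{g,r}(\cZ, d))_{\QQ}$ (or its image in $H^*$).

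First I would recall precisely how the cosection localized invariants of \cite{CL, FJR15} are defined: given insertions $\gamma_1, \dots, \gamma_r \in H^*(\cZ)$, the invariants are
\[
\op{proj}_*\Big((\tilde k \circ i_\cZ \circ \iota)_* [LG_{g,r}(\cT, d)']^{\op{vir}}_{dw} \cap \op{ev}_\cZ^* (\gamma_1 \boxtimes \cdots \boxtimes \gamma_r)\Big),
\]
where $\op{proj}\colon \widetilde U \to \sMbar_{g,r}$ is the stabilization map. By Proposition~\ref{prop: CLL}, we have the equality
\[
[LG_{g,r}(\cT, d)']^{\op{vir}}_{dw} = [LG_{g,r}(\cZ, d)]^{\op{vir}}
\]
in $A_*(LG_{g,r}(\cZ, d))_{\QQ}$, which (passing to cohomology) allows us to replace the cosection localized class with the $\Z_2$-localized Chern class cycle of Definition~\ref{def: loc ch vir cycle}.

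Then I would directly apply Proposition~\ref{prop :GWcomp}, which computes
\[
(\op{proj}_* \circ \tilde k)_*\big(\op{ev}_\cZ^* \vec\gamma \cup (i_\cZ \circ \iota)_* [LG_{g,r}(\cZ, d)]^{\op{vir}}\big) = \Lambda_{g,r,d}(\varphi^{\td}_* \gamma_1, \dots, \varphi^{\td}_* \gamma_r).
\]
Chaining these two identities under the identification $\varphi^{\td}_*\colon H^*(\cZ) \xrightarrow{\sim} \op{HH}_*([\cT/\CC^*_R], w)$ of Definition~\ref{d:phitodd} immediately yields the claimed equality of invariants.

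The only genuinely nontrivial input is the compatibility between the cosection $dw$ used in the Kiem--Li construction and the cosection $\alpha$ coming from our admissible resolution; this is exactly Lemma~\ref{alpha and dw}, and it is already invoked inside the proof of Proposition~\ref{prop: CLL}. Hence there is no additional obstacle at the level of the final theorem: the substantive work has been absorbed into the two propositions (Proposition~\ref{prop :GWcomp}, whose proof rests on Ramadoss' Theorem~\ref{t:ram}, the Isik--Shipman--Hirano equivalence of Theorem~\ref{thm: Hirano}, and the projection formula; and Proposition~\ref{prop: CLL}, whose proof uses \cite[Proposition~5.10]{CLL} together with Lemma~\ref{alpha and dw}). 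The theorem then follows formally.
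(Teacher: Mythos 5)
Your proposal is correct and follows exactly the paper's own argument: the paper proves this theorem by the one-line observation that it follows immediately from Propositions~\ref{prop :GWcomp} and \ref{prop: CLL}, which is precisely the chaining you carry out (your additional remarks about Lemma~\ref{alpha and dw} and the ingredients of the two propositions are accurate but already absorbed into their proofs).
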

\begin{proof}
This follows immediately from Propositions~\ref{prop :GWcomp} and \ref{prop: CLL}.
\end{proof}

By results of \cite{CL, KOcos, CLcos2}, the cosection localized virtual class is equal to the Behrend--Fantechi \cite{BF} virtual class on the space of stable maps to $\cZ$ up to a sign.  This was proven at the level of numerical invariants for the quintic threefold by Chang and J. Li  in \cite{CL}.  The general case was proven by the third author and J.-Oh in \cite{KOcos}, and, using different methods, for $\cZ$ a hypersurface in projective space by Chang and M.-L. Li in \cite{CLcos2}.

\begin{theorem}[{\cite{CL, KOcos, CLcos2}}]\label{thm: CL}
Given a geometric phase GLSM, the cosection localized virtual class agrees with the Behrend--Fantechi virtual class up to a sign: 
\[[LG_{g,k}(\cT, d)']^{\op{vir}}_{dw}  = (-1)^{\rank(\mathbb R \pi_*(\cV_2 '))}\left[\sMbar_{g,k}(\cZ,d)\right]^{\op{vir}},\]
where $\rank(\mathbb R \pi_*(\cV_2 '))$ is defined to be $\rank( A_2) - \rank( B_2)$ for $[A_2 \to B_2]$ a resolution of $\mathbb R \pi_*(\cV_2 ')$ by vector bundles.  
\end{theorem}

\begin{corollary}
After identifying $\op{H}^*(\cZ)$ with $\op{HH}_*([\cT/\CC^*_R], w)$ via $\varphi^{\td}_*$,
the invariants defined by  $\Lambda_{g,r,d}$ are equal to the Gromov--Witten invariants of $\cZ$ up to a sign.
\end{corollary}

\begin{proof}
This follows immediately from Theorem~\ref{t:mainGWcomp} and Theorem~\ref{thm: CL}.  
\end{proof}

This holds, in particular, when $V_2 = 0$, although in this case the proof can be simplified considerably.  
\begin{corollary} \label{cor: w=0}
In the special case where $V_2 =0$ and $w=0$, the Gromov--Witten invariants defined by the Behrend--Fantechi virtual cycle for the GIT quotient $\cX = [V_1 \sslash_\theta G]$ agree with $\Lambda_{g,r,d}$. 
\end{corollary}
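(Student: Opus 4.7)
The plan is to reduce to Theorem~\ref{t:mainGWcomp} and then show that in the degenerate situation $V_2=0$, $w=0$, the cosection-localized virtual fundamental class coincides with the ordinary Behrend--Fantechi virtual class of the moduli space of stable maps to $\cX=[V_1\sslash_\theta G]$.

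First I would unpack what the construction of \S\ref{sec: compare GW} specializes to in this case. Since $V_2=0$, we have $\cT=\cX=\cZ$ and (by Example~\ref{exa : w =0}) the $\CC^*_R$-action is trivial with $\Gamma=G\times\CC^*_R$; the vector bundle $\cV$ on the universal curve coincides with $\cV_1$. The stack $LG_{g,r}(\cT,d)$ is canonically identified with the moduli $\sMbar_{g,r}(\cX,d)$ of orbifold stable maps of degree $d$ from $r$-pointed prestable orbi-curves to $\cX$ (up to the gerbe section correction $\op{ord}$ handled as in Definition~\ref{d:invariants}), and the perfect obstruction theory $\mathbb E=\RR\pi_*\cV$ relative to $\Mfrak^\orb_{g,r}(B\Gamma,d)^\circ_{\log}$ is the standard relative perfect obstruction theory for orbifold stable maps to $\cX$. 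This identifies the Behrend--Fantechi virtual class of $\sMbar_{g,r}(\cX,d)$ with the one built from $\mathbb E$.

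Next, since $w=0$, the cosection $dw$ of \eqref{dwS} is identically zero, so the cosection-localized Gysin map $s^!_{h^1/h^0(\mathbb E),dw}$ of \cite{KiemLi} reduces by construction to the ordinary Gysin map $0^!_{h^1/h^0(\mathbb E)}$. Combining this with the specialization in the previous paragraph yields
\[
[LG_{g,r}(\cT,d)']^{\op{vir}}_{dw}\;=\;0^!_{h^1/h^0(\mathbb E)}[c_{LG(\cT)'/\Mfrak^\orb_{g,r}(B\Gamma,d)^\circ_{\log}}]\;=\;[\sMbar_{g,r}(\cX,d)]^{\op{vir}}_{\op{BF}},
\]
i.e.\ the cosection-localized virtual class equals the Behrend--Fantechi virtual class. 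Applying Theorem~\ref{t:mainGWcomp} to the $w=0$ geometric phase (which is a legitimate geometric phase in the sense of Definition~\ref{d:phases} with $V_2=0$) then identifies the Gromov--Witten type invariants built from $[\sMbar_{g,r}(\cX,d)]^{\op{vir}}_{\op{BF}}$ with the invariants $\Lambda_{g,r,d}$.

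The only further bookkeeping is to check the state space identification. When $V_2=0$ the equivalence $\tilde\phi_+$ of Definition~\ref{d:phiplus} is the identity on $\dbcoh{\cX}=\dbcoh{\cZ}$ (up to the Todd twist that is absorbed in Definition~\ref{d:phitodd}), so $\varphi^{\td}_*$ becomes the standard HKR morphism from $\op{H}^*(\cX)$ to $\op{HH}_*(\cX)$; under this identification the cohomological Fourier--Mukai transform of Proposition~\ref{prop :GWcomp} is precisely the classical cohomological integral transform used to define Gromov--Witten invariants from the Behrend--Fantechi class. The main subtlety I would expect in writing this out carefully is verifying that the Todd correction $\op{td}(T_{\widetilde U/\Mfrak^\orb_{g,r}(B\Gamma,d)_{\log}})/\op{td}(\RR\pi_*\cV)$ appearing in Definition~\ref{d:invariants} cancels exactly against the Todd factors produced by Theorem~\ref{t:ram} and by the identification $\varphi^{\td}_*=\phi_{\op{HKR}}^{-1}(\op{td}(\cE)\cup{-})$ when $\cE=0$; this is the same cancellation underlying the computation \eqref{eq: MXvir} in the proof of Lemma~\ref{lem: MXvir}, specialized to $V_2=0$. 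Once this bookkeeping is in place the corollary follows.
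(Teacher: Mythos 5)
Your proposal is correct and follows essentially the same route as the paper: the paper's proof simply observes that with $w=0$ the cosection vanishes, so the cosection-localized construction (via \cite[Proposition 5.10]{CLL}) produces the ordinary Behrend--Fantechi virtual cycle, and then Theorem~\ref{t:mainGWcomp} applies. Your additional bookkeeping on the state space and Todd corrections is accurate but not needed beyond what Theorem~\ref{t:mainGWcomp} already packages.
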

\begin{proof}
In this special case, we get the Behrend--Fantechi virtual cycle without having to apply the cosection localization method i.e.\ \cite[Proposition 5.10]{CLL} is used in the case where the cosection is 0.
\end{proof}

\subsection{Comparison with the Polishchuk--Vaintrob construction}
\label{sec: compare PV}
Let $G$ denote a finite abelian subgroup of $(\CC^*)^m$, and let $w: \A^m \to \A^1$ denote a $G$-invariant function with an isolated singularity at the origin. 
Suppose that a $\CC^*_R$-action on $\A ^m$ is given by an embedding into $(\CC^*)^m$ such that: $w$ has a homogeneous degree $\bdeg$ with respect to $\CC^*_R$-action
and $G\cap \CC^*_R$ is a finite cyclic group $\lan J \ran$ with order $\bdeg$. 
With this input data, a cohomological field theory called
Fan--Jarvis--Ruan--Witten (FJRW) theory has been developed by Fan--Jarvis--Ruan \cite{FJR13} in the analytic category 
and by Polishchuk--Vaintrob \cite{PV} in the algebraic category using factorizations.  These theories can be thought of as giving invariants of the singularity defined by 
\[  w: [\A^m/ G] \to \A^1.\]

A GLSM corresponding to such a singularity is known as an (abelian) \newterm{affine phase GLSM}.  We construct it as follows.
Take $\Gamma = G \cdot \CC^*_R  \subseteq (\CC^*)^m$, set $V_1=\Spec(\CC)$ and $V_2=\A^m$, and let $\theta : G \ra \CC^*$ be trivial.
 Note that $(V, G, \CC^*_R, \theta, w)$ is an abelian GLSM 
such that $\VG$ is the \'etale gerbe $BG$. Here $\chi$ is defined by $g \cdot \lambda \mapsto \lambda^\bdeg$ for $g\in G, \lambda\in \CC^*_R$. 
In this section we show the invariants $\Lambda_{g,r,d}$ defined for the GLSM $(V, G, \CC^*_R, \theta, w)$ agree with Polishchuk--Vaintrob's definition of the FJRW invariants of $ w: [\A^m/ G] \to \A^1$.

Note first that in the special case where $V_1$ is rank zero, 
the stack $LG_{g,r}(BG, 0)$ is a smooth and proper Deligne--Mumford stack over $\Spec (\CC)$, with projective coarse moduli.  This follows from Proposition~\ref{p: projmod} after observing that $\sMbar_{g,r}(\Xrig, 0) = \sMbar_{g,r}(BG_1, 0)$ is smooth.  
\subsubsection{The Polishchuk--Vaintrob construction}
In \cite{PV} the stack $LG_{g,r}(BG, 0)$ is denoted as $\cc S_{g,r} = \cc S_{g,r, \Gamma, \chi}$ and is referred to as the moduli of {\it $\Gamma$-spin structures}.  We will adopt this notation for the remainder of this section ot simplify the comparison.
In \cite{PV}, a cohomological field theory {\it with coefficients in $\CC[\widehat G]$} is defined as follows.  First, a rigidified stack $\cc S_{g, r}^{\op{rig}} \to \cc S_{g, r}$ is defined, which parametrizes $\Gamma$-spin structures together with trivializations
\[\cc P|_{p_i} \cong \Gamma/\langle h_i \rangle\]
for each marked point $p_1, \ldots, p_r$, where $h_i$ denotes the generator of the isotropy group at $p_i$, viewed as an element of $\Gamma$.   
There is a natural action of $G^r$ on $S_{g, r}^{\op{rig}}$ by scaling the respective trivializations.  Note that the induced action of the diagonal $G \leq G^r$ is trivial.

A $\CC^*_R$-equivariant resolution  \[[A \stackrel{d}{\to} B] \cong \mathbb R\pi_*( \cc V)\]  is constructed over $\cc S_{g, r}^{\op{rig}}$ satisfying Conditions~\ref{assume ev chainlevel}, \ref{assume alpha chainlevel}, and \ref{compatible}.  This is then used to define a 
Koszul factorization $K^{PV} = K^{PV}_{g,r} \in \dabsfact{[\op{tot}(A)/\CC^*_R],  - \op{ev}^* (\boxplus_{i=1}^r w)}$.\footnote{To be precise the resolution is constructed over a different rigidification $S_{g, r}^{\op{rig}, \circ}$.  The corresponding factorization is then pulled back to define $K^{PV}$ (see the end of \S4.2 of \cite{PV}).}  

The $G^r$ action on  $S_{g, r}^{\op{rig}}$ extends to an action on $\op{tot}(A)$.  The trivializations at the marked points allow one to define a $G^r$-equivariant evaluation map 
\[\op{ev}: \op{tot}(A) \to \coprod_{\bg} \bigoplus_{i=1}^r V^{g_i},\]
where the disjoint union is over all conjugacy classes (i.e. $r$-tuples) $\bg$ in $G^r$.  
This defines a functor 
\[ \begin{array}{rrcl}
\Phi_{K^{PV}_{g,r}}: &\dabsfact{\coprod_{\bg} [\bigoplus_{i=1}^r V^{g_i}/\Gamma],  {\textstyle \boxplus_{i=1}^r} w} &\to &\dabsfact{[S_{g, r}^{\op{rig}}/\Gamma], 0} \cong \op{D}_G(S_{g,r}^{\op{rig}})\\
&E &\mapsto & p_* ( K^{PV}_{g,r} \otimes \op{ev}^*(E)),
\end{array}
\]
where $p$ is the map $[\op{tot}(A)/\Gamma] \to [\cc S_{g, r}^{\op{rig}}/\Gamma]$.

It is shown in \cite[Corollary 2.6.2]{PV} that $(\cc H^{\op{ext}})^{\otimes r}$, the $r$-fold product of the state space, is isomorphic to $\op{HH}_*(\coprod_{\bg} \bigoplus_{i=1}^r V^{g_i},  {\textstyle \boxplus_{i=1}^r} w)^{G^r}$.  We obtain a map on $(\cc H^{\op{ext}})^{\otimes r}$ as follows:
\begin{align} \phi_g: (\cc H^{\op{ext}})^{\otimes r} \hookrightarrow &\op{HH}_*\left({ \textstyle \coprod_{\bg}  [\bigoplus_{i=1}^r } V^{g_i}/\Gamma],  {\textstyle \boxplus_{i=1}^r} w\right) \xrightarrow{ {\Phi_{K^{PV}_{g,r}}}_*} \op{HH}_*(\cc S_{g, r}^{\op{rig}} \times BG) \cong \nonumber \\ &\op{HH}_*(S_{g, r}^{\op{rig}}) \otimes \CC[\widehat{G}] \xrightarrow{\bar \phi_{\op{HKR}}} \op{H}^*( \cc S_{g,r}^{\op{rig}}) \otimes \CC[\widehat{G}]. \nonumber
\end{align}
Finally, if we let $\op{st}_g: \cc S_{g, r}^{\op{rig}} \to \sMbar_{g,r}$ denote the forgetful map, the Polishchuk--Vaintrob invariants are defined to be 
\[\Lambda_{g,r}^{PV} := \frac{1}{\deg(\op{st}_g)} \cdot {\op{st}_g}_* \circ \phi_g: (\cc H^{\op{ext}})^{r} \to \op{H}^*(\sMbar_{g,r}) \otimes \CC[\widehat{G}].\]
By \cite[Theorem 5.1.2]{PV}, these invariants define a cohomological field theory with coefficients in $\CC[\widehat G]$.

By \cite[Theorem 2.6.1]{PV} , the state space $\cc H^{\op{ext}}$ decomposes into a direct sum $\oplus_{h \in G}  e_h(\cc H^{\op{ext}})$ indexed by elements of $G$ (see \S\ref{s:theresstaspa} and Remark~\ref{r:resPV}).  More explicitly, the vector space $\CC[\widehat G]$ has an idempotent basis $\{e_h\}_{h \in G}$ where 
\[ e_h := \frac{1}{|G|}\sum_{\eta \in \widehat G} \eta^{-1}(h) [\eta].\]
Here the notation $e_h(\cc H^{\op{ext}})$ denotes the image of the map $e_h$ in $\cc H^{\op{ext}}$.  Recall that $\cc H^{\op{red}} := e_{id}(\cc H^{\op{ext}})$.

Define the {\it reduced} map 
by
\[ \phi_g^{\op{red}} =  \pi_1 \circ \phi_g |_{{\cc H^{\op{red}}}^{\otimes r}}\]
where $\pi_1$ is the map \[\op{H}^*( \cc S_{g,r}^{\op{rig}}) \otimes \CC[\widehat{G}] \xrightarrow{e_{id}} \op{H}^*( \cc S_{g,r}^{\op{rig}}) \otimes \CC[e_{id}] \cong \op{H}^*( \cc S_{g,r}^{\op{rig}}).\]

Let $\bg$ be an r-tuple of conjugacy classes $(g_1), \ldots, (g_r)$ and choose $s_i \in \cc H^{\op{red}}_{(g_i)}$.
Polishchuk--Vaintrob define the \newterm{reduced PV invariants} by 
\[\lambda_{g,r}^{PV}(s_1, \ldots, s_r) := \sigma_g \bg \cdot \frac{1}{\deg(\op{st}_g)} \cdot {\op{st}_g}_* ( \td( R \pi_* (\cc V_2))^{-1} \cdot \phi_g^{\op{red}}(s_1, \ldots, s_r)),\]
where $\sigma_g \bg$ is a certain root of unity depending on $\bg$.

\subsubsection{The comparison}
In this section we prove that the reduced PV invariants $\lambda_g^{PV}$ agree with those of the GLSM defined in \ref{d:invariants}.
\begin{lemma}\label{l:reducedinvts}
The map $\phi_g^{\op{red}}$ is equal to $s^* \circ \phi_g|_{{\cc H^{\op{red}}}^{\otimes r}}$  where $s$ is the map 
\[s: \cc S_{g, r}^{\op{rig}} \to \cc S_{g, r}^{\op{rig}} \times BG.\] 
\end{lemma}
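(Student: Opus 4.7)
The plan is to verify the identity $s^{*} = \pi_{1}$ directly on the target $\op{H}^{*}(\cc S_{g,r}^{\op{rig}})\ot\CC[\widehat G]$ of $\phi_{g}$, after which composing with $\phi_{g}|_{(\cc H^{\op{red}})^{\otimes r}}$ immediately yields the claim.

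First I would identify the map $s$ concretely: it is the section of the first projection $\op{pr}_{1}\colon\cc S_{g,r}^{\op{rig}}\times BG\to\cc S_{g,r}^{\op{rig}}$ corresponding to the trivial $G$-bundle on $\cc S_{g,r}^{\op{rig}}$. Equivalently, $s=\op{id}\times s_{BG}$, where $s_{BG}\colon\Spec\CC\to BG$ is the canonical basepoint. With this description, the K\"unneth decomposition and the naturality of the HKR morphism (Definition \ref{d:hkrmor}) give a commutative square expressing
\[
s^{*}=\op{id}_{\op{H}^{*}(\cc S_{g,r}^{\op{rig}})}\otimes s_{BG}^{*},
\]
so the problem reduces to computing the pullback $s_{BG}^{*}\colon\op{HH}_{*}(BG)\to\op{HH}_{*}(\Spec\CC)=\CC$ under the identification $\op{HH}_{*}(BG)\cong\CC[\widehat G]$ used in the paper.

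Second, I would verify that under this identification (i.e., the Chern character $[\eta]\mapsto\op{ch}(\cO_{BG}(\eta))$), the map $s_{BG}^{*}$ takes every character class $[\eta]$ to $1$: indeed $s_{BG}^{*}\cO_{BG}(\eta)$ is the trivial line bundle on $\Spec\CC$, whose Chern character is $1\in\CC$. Expanding the idempotent $e_{h}=\tfrac{1}{|G|}\sum_{\eta\in\widehat G}\eta^{-1}(h)[\eta]$ and applying the orthogonality relations for characters of the finite abelian group $G$, I obtain
\[
s_{BG}^{*}(e_{h})=\frac{1}{|G|}\sum_{\eta\in\widehat G}\eta^{-1}(h)=\delta_{h,\,\op{id}}.
\]
This is exactly the action of $e_{\op{id}}$-multiplication followed by the identification $\CC[e_{\op{id}}]\cong\CC$, i.e.\ it agrees with $\pi_{1}$. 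Combined with the K\"unneth decomposition above, this proves $s^{*}=\pi_{1}$ on all of $\op{H}^{*}(\cc S_{g,r}^{\op{rig}})\otimes\CC[\widehat G]$, and restriction of both sides to the image of $\phi_{g}|_{(\cc H^{\op{red}})^{\otimes r}}$ gives the statement of the lemma.

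The only real subtlety is checking that the HKR isomorphism used in the definition of $\phi_{g}$ is compatible with the product structure $\cc S_{g,r}^{\op{rig}}\times BG$, i.e.\ that it decomposes as the tensor product of the HKR morphism on the $\cc S_{g,r}^{\op{rig}}$-factor with the Fourier-type identification $\op{HH}_{*}(BG)\cong\CC[\widehat G]$ on the second factor. This follows from the K\"unneth formula for Hochschild homology together with the naturality of $\bar\phi_{\op{HKR}}$ under the representable map $s_{BG}$, and it is essentially the same compatibility used in Theorem~\ref{t:ram}. Once this point is in place, the remainder of the argument is a formal character-orthogonality computation and no deeper geometric input is needed.
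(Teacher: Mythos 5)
Your proposal is correct and follows essentially the same route as the paper: both reduce via the K\"unneth decomposition of $\op{HH}_*(\cc S_{g,r}^{\op{rig}}\times BG)$ to the basepoint map $\Spec\CC\to BG$, and both then compute $s_{BG}^*(e_h)=\tfrac{1}{|G|}\sum_{\eta}\eta^{-1}(h)=\delta_{h,\op{id}}$ by character orthogonality to identify $s^*$ with $\pi_1$.
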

\begin{proof}
The definition of $\phi_g$ implicitly uses the fact that the vector space $\CC[\widehat G]$ is isomorphic to $\op{HH}_*(BG) \cong \CC[G]$ (see, e.g., \cite[Example 6.4]{Cal03}).  
This isomorphism is given by the map 
\[\op{ch}: \CC[\widehat G]\cong K(BG) \otimes \CC \to \op{HH}_*(BG) \cong \CC[G]\]
which maps the character $[\eta] \in \widehat{G}$ to 
\[\op{ch}([\eta]) := \frac{1}{|G|}\sum_{h \in G} \eta(h^{-1}) h.\]  Note that $\op{ch}(e_h) = h/|G|$.  
Furthermore, under this isomorphism, the decomposition
\[
\cc H^{\op{ext}} = \bigoplus_{h \in G}  e_h(\cc H^{\op{ext}})
\]
is compatible with the isomorphism
\[
\op{HH}_*(S_{g, r}^{\op{rig}} \times BG) \cong \op{HH}_*(S_{g, r}^{\op{rig}}) \otimes_k \op{HH}_*(BG). 
\]
Hence, it's enough to consider the case where $S_{g, r}^{\op{rig}}$ is a point.

Consider the map $\bar s: pt \to BG$.  One observes that 
the pullback map
\[\bar s^*: \CC[G] \cong \op{HH}_*(BG) \to \op{HH}_*(pt) \cong \CC\] 
is given by
\[\bar s^* (\ch(e_h)) = \ch (\bar s^*(e_h)) =  \frac{1}{|G|} \sum_{\eta \in \widehat G} \eta^{-1}(h) \ch(\cO_{pt})=\delta_{h, id}.\]
Thus applying the operator $e_{id}$ to $\CC[\widehat G]$ is equivalent to pulling back by $\bar s$.
\end{proof}

Let  $(V, G, \CC^*_R, \theta, w)$ be the GLSM described at the beginning of this section.  Note in particular that $G$ is finite.
\begin{proposition}\label{p:factcomp}
Over $LG_{g,r}(BG, 0)$ there exists a resolution $[A_0 \stackrel{d_0}{\to} B_0]$ of $\pi_*( \cc V)$ satisfying Conditions~\ref{assume ev chainlevel}, \ref{assume alpha chainlevel}, and \ref{compatible}.  
The induced Koszul factorization $\{-\alpha_0, \beta_0\}$ on $\op{tot}(A_0)$ defines $K_{g,r,0}$.  

In particular, 
$\Lambda_{g,r,d}(s_1, \ldots, s_r)$ can be computed as
\[ \Lambda_{g,r,d}(s_1, \ldots, s_r) = \op{proj}_* \left({\op{td}}(-  \mathbb R\pi_* \mathcal V) \cup \left( \bar \phi_{\op{HKR}}\circ \ord \circ q^* \circ{(\Phi_{\{-\alpha_0, \beta_0\}})_*}(s_1, \ldots, s_r) \right) \right)\]
where
\[q:  \cc S_{g,r} \to [\cc S_{g,r}/\mu_{\bdeg}]\] denotes quotient by the trivial action of $\mu_{\bdeg}$ on $\cc S_{g,r}$ and 
$(\Phi_{\{-\alpha_0, \beta_0\}})_*: {\cc H^{\op{red}}}^{\otimes r} \to \cc S_{g,r}$ is the integral transform with kernel $\{-\alpha_0, \beta_0\}$.
\end{proposition}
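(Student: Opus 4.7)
The plan is to observe that in this affine phase, the moduli space $LG_{g,r}(BG,0) = \cc S_{g,r}$ is already smooth and proper with projective coarse moduli space (as remarked in the paragraph before Proposition~\ref{p:factcomp}), so the two-step procedure of \S\ref{s:FF} collapses to a one-step construction and the smooth resolution $\widetilde{U}$ needed to correct by Todd classes can be taken to be $\cc S_{g,r}$ itself.

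First I would verify the existence of $[A_0 \xrightarrow{d_0} B_0]$. Since $V_1 = \op{Spec}(\CC)$ is the zero vector space, we have $\cV = \cV_2$, and $\cc S_{g,r}$ (viewed as an open substack of $\Mfrak^\orb_{g,r}(B\Gamma,0)_{\log}^\circledcirc$) satisfies Assumption $(\star)$ of \S\ref{ss:sc}. Applying Proposition~\ref{prop: satisfy all} directly with $S = \cc S_{g,r}$ yields a $\CC^*_R$-equivariant admissible resolution $[A_0 \to B_0] \cong \R\pi_*\cV$ satisfying Conditions~\ref{assume ev chainlevel}, \ref{assume alpha chainlevel}, and \ref{compatible}. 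This produces a cosection $\alpha_0$ and section $\beta_0$, and hence a Koszul factorization $\{-\alpha_0,\beta_0\}$ on $[\tot(A_0)/\CC^*_R]$.

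Next I would identify this Koszul factorization with $K_{g,r,0}$. Because $V_1 = 0$, the resolution of $\R\pi_*\cV_1$ used in Theorem~\ref{c:swn} is the zero complex, so $\tot(A_1)$ coincides with $\Mfrak^\orb_{g,r}(B\Gamma,0)_{\log}^\circledcirc$ itself; the stability condition cuts out $U = \cc S_{g,r}$. Running the second step of the construction in \S\ref{s:FF} with the resolution $[A_0 \to B_0]$ over this $U$, the surjection $\bar{A} \twoheadrightarrow A_1 = 0$ is trivial, so the section $\zeta$ defining $\square$ vanishes identically and thus imposes no condition. Moreover, since $\theta$ is trivial in this affine phase, $V^{ss}(\theta) = V$, so the open locus $\tot(\bar{A})^\circ = \tot(A_0)$. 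Consequently $\square = \tot(A_0)$, $E = B_0$, and the fundamental factorization $K_{g,r,0}$ of Definition~\ref{d:koszulcut} is precisely $\{-\alpha_0,\beta_0\}$. By Theorem~\ref{t:finind} (independence from choices), this is a legitimate representative of the fundamental factorization.

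Finally, the stated formula for $\Lambda_{g,r,d}$ is obtained by simplifying Definition~\ref{d:invariants}. Because $LG_{g,r}(BG,0)$ is already smooth and proper, we may take $\widetilde{U} = \bar{U} = U = \cc S_{g,r}$ in the construction preceding Definition~\ref{d:invariants}, bypassing the resolution of singularities. The forgetful map $\cc S_{g,r} \to \Mfrak^\orb_{g,r}(B\Gamma,0)_{\log}$ is an open immersion, so $T_{\widetilde{U}/\Mfrak^\orb_{g,r}(B\Gamma,0)_{\log}} = 0$ and $\op{td}(T_{\widetilde{U}/\Mfrak^\orb_{g,r}(B\Gamma,0)_{\log}}) = 1$. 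Hence $\op{td}(T_{\widetilde{U}/\Mfrak})/\op{td}(\R\pi_*\cV) = \op{td}(-\R\pi_*\cV)$, giving the advertised expression for $\Lambda_{g,r,d}$.

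The step that requires the most care is verifying the identification $\square = \tot(A_0)$ and $K_{g,r,0} = \{-\alpha_0,\beta_0\}$: one must check that the various open conditions and cutting sections entering the definition of $\square$ (namely $\zeta$, the open locus $\tot(\bar{A})^\circ$, and the compatibility of evaluation maps and cosections between the two resolutions used in the two-step procedure) all collapse correctly when $V_1 = 0$ and $\theta$ is trivial. Once this is checked, the rest of the proof is a direct specialization of Definition~\ref{d:invariants}.
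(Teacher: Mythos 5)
Your proposal is correct and takes essentially the same route as the paper: the paper's own proof is a terse version of yours, noting that since $G$ is finite the stack $\Mfrak^\orb_{g,r}(B\Gamma,0)_{\log}^\circ$ equals $\cc S_{g,r}$, hence satisfies Assumption $(\star)$ so Proposition~\ref{prop: satisfy all} yields the admissible resolution, and then simply declaring the two-step procedure "unnecessary" as the degenerate case $V_1=0$. Your additional verifications (that $\zeta$ vanishes since $A_1=0$, that $\tot(\bar A)^\circ=\tot(A_0)$ since $\theta$ is trivial, and that $T_{\widetilde U/\Mfrak^\orb_{g,r}(B\Gamma,0)_{\log}}=0$ gives the Todd-class simplification) are exactly the details the paper leaves implicit, and they all check out.
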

\begin{proof}
Since $G$ is finite, the open subset $\Mfrak^\orb_{g,r}(B\Gamma, 0)_{\log}^\circ  \subseteq \Mfrak^\orb_{g,r}(B\Gamma, 0)_{\log}$ is in fact equal to $\cc S_{g,r} = LG_{g,r}(BG, 0)$.  In particular  it is a quotient stack with projective coarse moduli.  
This allows the construction of a resolution $[A_0 \stackrel{d_0}{\to} B_0]$ of $\pi_*( \cc V)$ satisfying Conditions~\ref{assume ev chainlevel}, \ref{assume alpha chainlevel}, and \ref{compatible} over $\Mfrak^\orb_{g,r}(B\Gamma, d)_{\log}^\circ$.  Thus our two-step procedure  for constructing the fundamental factorization $K$ is unnecessary; one can use $[A_0 \to B_0]$ to construct a Koszul resolution $\{-\alpha_0, \beta_0\}$ on $\op{tot}(A_0)$.  This may be viewed simply as a degenerate case of 
our general construction, where $V_1 = 0$.
\end{proof}

From \cite[Section 4]{PV}, it is apparent that the $\CC^*_R$-equivariant resolution $[A \xrightarrow{d} B]$ can be assumed to be the pullback of $[A_0 \xrightarrow{d_0} B_0]$ under the map $\cc S_{g,r}^{\op{rig}} \to \cc S_{g,r}$.  
Consequently the fundamental factorization $K^{PV}$ is the pullback of $\{-\alpha_0, \beta_0\}$ under the map $\tot(A) \to \tot (A_0)$.
Indeed this is why $\{-\alpha_0, \beta_0\}$ is referred to as a {\it PV factorization} (Definition~\ref{d:pvfact}).

\begin{theorem}\label{p:PVcomp1}
The GLSM invariants $\Lambda_{g,r,0}$ are related to the reduced invariants of \cite{PV} by an explicit factor.
Given  $s_i \in \cc H^{\op{red}}_{(g_i)}$ for $1 \leq i \leq r$, let $m_i$ be the order of $g_i$ as in \S\ref{s:GLSMinvs}, then
\[
\begin{array}{rcl}\Lambda_{g,r,0}(s_1, \ldots, s_r) &=& \frac{(m_1 \cdots m_r)\deg(\op{proj})}{\sigma_g \bg}\lambda_{g,r}^{PV}(s_1, \ldots, s_r) \\
&=&   \frac{(m_1 \cdots m_r)}{\deg(\op{rig})}   {\op{st}_g}_* ( \td( \RR \pi_* (\cc V_2))^{-1} \cdot \phi_g^{\op{red}}(s_1, \ldots, s_r)),
\end{array}
\] where $\op{proj}$ is the forgetful map $\cc S_{g,r} \to \sMbar_{g,r}$.
\end{theorem}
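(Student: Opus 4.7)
The plan is to use Proposition~\ref{p:factcomp} to rewrite $\Lambda_{g,r,0}$ in a form that can be directly compared to the PV construction, and then to relate the two integral transforms via the rigidification morphism $\op{rig} \colon \cc S_{g,r}^{\op{rig}} \to \cc S_{g,r}$ using base change and the projection formula.

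First, I would observe that because $V_1 = \Spec(\CC)$ is zero-dimensional, we have $\widetilde U = \cc S_{g,r}$ (which is already smooth and proper), so that the Todd correction $\td(T_{\widetilde U/\Mfrak^{\op{orb}}_{g,r}(B\Gamma,0)_{\log}})/\td(\RR\pi_*\cV)$ in Definition~\ref{d:invariants} collapses to $\td(\RR\pi_*\cV_2)^{-1}$, matching the correction appearing in the definition of $\lambda_{g,r}^{PV}$. Next, by Proposition~\ref{p:factcomp} the fundamental factorization $K_{g,r,0}$ agrees with the Koszul factorization $\{-\alpha_0,\beta_0\}$ built directly on $\op{tot}(A_0)$ over $\cc S_{g,r}$. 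By the construction recalled just above Theorem~\ref{p:PVcomp1}, the PV resolution $[A\to B]$ on $\cc S_{g,r}^{\op{rig}}$ is the pullback of $[A_0\to B_0]$, and hence $K^{PV}$ is the pullback of $\{-\alpha_0,\beta_0\}$ along the induced morphism $\op{tot}(A) \to \op{tot}(A_0)$.

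The core of the proof is then to use the Cartesian square
\[
\begin{tikzcd}
\op{tot}(A) \ar[r] \ar[d] & \op{tot}(A_0) \ar[d] \\
\cc S_{g,r}^{\op{rig}} \ar[r, "\op{rig}"] & \cc S_{g,r}
\end{tikzcd}
\]
together with flat base change and the projection formula (Proposition~\ref{p:projform}) to deduce an equality of the form $\op{rig}^*\circ (\Phi_{\{-\alpha_0,\beta_0\}})_* = (\Phi_{K^{PV}})_*|_{\text{reduced part}}$ at the level of Hochschild homology. Here, the ``reduced part'' statement is exactly what Lemma~\ref{l:reducedinvts} provides: passing from $\phi_g$ to $\phi_g^{\op{red}}$ is pullback along $s \colon \cc S_{g,r}^{\op{rig}} \to \cc S_{g,r}^{\op{rig}}\times BG$, which under the identification $\op{HH}_*(BG)\cong\CC[G]$ is the projection to the identity element, matching our operator $q^*$ combined with the HKR morphism on $[\cc S_{g,r}/\mu_\bdeg]$ up to the normalization $\sigma_g\bg$ coming from the twisted-vs-untwisted HKR comparison used in \cite{PV}.

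Finally I would assemble the normalization factors. Writing $\op{st}_g = \op{proj} \circ \op{rig}$, applying $\op{proj}_*$ and using the projection formula, and tracking the $m_1\cdots m_r$ factor supplied by the $\op{ord}$ correction in Definition~\ref{d:invariants}, the identity reduces to the clean equality
\[
\bar\phi_{\op{HKR}}\circ \ord\circ q^*\circ(\Phi_{\{-\alpha_0,\beta_0\}})_* \;=\; \frac{m_1\cdots m_r}{\deg(\op{rig})}\,\op{rig}_*\circ \phi_g^{\op{red}},
\]
after which substituting the definition of $\lambda_{g,r}^{PV}$ and simplifying using $\deg(\op{st}_g)=\deg(\op{proj})\deg(\op{rig})$ yields both displayed forms in the statement, with $\sigma_g\bg$ cancelling between the first and second lines. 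The main obstacle will be pinning down this normalization precisely: identifying $\deg(\op{rig})$ with the appropriate combinatorial factor involving the orders $m_i$ of the isotropies at the markings, and verifying that the $\sigma_g\bg$ appearing in \cite[Equation (5.15)]{PV} exactly matches the discrepancy between our HKR morphism $\bar\phi_{\op{HKR}}$ on $[\cc S_{g,r}/\mu_\bdeg]$ and the twisted HKR used in \cite{PV} to split off the identity character component. Once this matching is established, the rest of the computation is formal from the base change and projection formula arguments already assembled in \S\ref{s:comptenpro}.
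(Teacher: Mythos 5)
Your proposal is correct and follows essentially the same route as the paper: identify $K_{g,r,0}$ with $\{-\alpha_0,\beta_0\}$ via Proposition~\ref{p:factcomp}, note that $K^{PV}$ is the pullback of $\{-\alpha_0,\beta_0\}$ along the rigidification, invoke Lemma~\ref{l:reducedinvts} for the reduced part, and compare the two transforms through flat base change and the projection formula while tracking the factors $m_1\cdots m_r$, $\deg(\op{rig})$, and $\deg(\op{st}_g)$. The only point where you overcomplicate matters is the claimed ``main obstacle'' of matching $\sigma_g\bg$ against an HKR discrepancy: no such verification is needed, since $\sigma_g\bg$ enters only through the definition of $\lambda^{PV}_{g,r}$ and appears explicitly in the asserted scaling factor, cancelling when one substitutes that definition to pass between the two displayed lines.
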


\begin{proof}
Consider the following commutative diagram.
\[
\begin{tikzcd}
 \dabsfact{[\tot( A)/\Gamma], 0} \arrow[d, "\mathbb R \bar \pi_*"] \arrow[r, swap, "\mathbb L \widetilde t^*"] & \dabsfact{[\tot( A)/\CC^*_R], 0} \arrow[d, "\mathbb R \pi_*"]  
 & \arrow[l, "\mathbb L \widetilde{\op{rig}}^*"] \dabsfact{[\tot( A_0)/\CC^*_R], 0} \arrow[d, "\mathbb R  {\pi_0}_*"] \\
 \dabsfact{[\cc S_{g,r}^{\op{rig}}/\Gamma], 0} \ar[d, "\cong"] \ar[r, swap, "\mathbb L  t^*"] & \dabsfact{[\cc S_{g,r}^{\op{rig}}/\CC^*_R], 0}  \ar[d, "\cong"]&  \arrow[l, "\mathbb L \overline{\op{rig}}^*"]  \dabsfact{[\cc S_{g,r}]/\CC^*_R], 0}  \ar[d, "\cong"] \\
  \dabsfact{[\cc S_{g,r}^{\op{rig}}/G]} \ar[r, swap, "\mathbb L t^*"]& \dabsfact{[\cc S_{g,r}^{\op{rig}}/\mu_{\bdeg}]}  &\arrow[l, "\mathbb L  \overline{\op{rig}}^*"] \dabsfact{[\cc S_{g,r}/\mu_{\bdeg}]}
\end{tikzcd}
\]

The top squares commute by flat pullback.  The bottom squares commute by definition of the equivalence occurring in each vertical arrow.
Let \[\tilde q: \cc S_{g,r}^{\op{rig}} \to [\cc S_{g,r}^{\op{rig}}/\mu_{\bdeg}]\] denote the quotient by the trivial action of $\mu_{\bdeg}$ on $\cc S_{g,r}^{\op{rig}}$.
The proof follows by observing that both of the above invariants are equal to
\[\op{proj}_* \circ \op{rig}_* \left(\op{\td}( - \mathbb R\pi_* \mathcal V)
\bar \phi_{\op{HKR}} \circ \tilde q^* \circ {\Phi_{ \mathbb L  \widetilde{\op{rig}}^*  \{-\alpha_0, \beta_0\}}}_* (s_1, \ldots, s_r)\right)\]
after scaling by either $(m_1 \cdots m_r)/\deg(\op{rig})$ in the case of $\Lambda_{g,r,0}$ or $\sigma_g \bg /\deg(\op{rig} \circ \op{proj})$ in the case of $\lambda_{g,r}^{PV}$.   For $\Lambda_{g,r,0}$ this follows from flat pullback of factorizations together with the definition of $\bar \phi_{\op{HKR}}$ and 
Proposition~\ref{p:factcomp}.  For $\lambda_{g,r}^{PV}$
one also uses Lemma~\ref{l:reducedinvts} and the fact that 
$\mathbb L  \widetilde t^* K^{PV}$ is equal to  $\mathbb L \widetilde{\op{rig}}^* \{-\alpha_0, \beta_0\}$.
\end{proof}

\subsection{Comparison with other affine phases}\label{s:enhanced PV comparison}
\begin{definition}\label{d:equivGLSM}
Define the GLSM $(V, G, \CC^*_R, \theta, w)$ to be \newterm{equivalent} to $(V', G ' , \CC^*_R,\theta ', w')$ if the associated $\CC^*_R$-equivariant LG spaces
$([V/\!\!/_{\theta } G] , \bar{w})$ and $([V'/\!\!/_{\theta '} G'] , \bar{w}')$ are isomorphic where
$\bar{w}$ and $\bar{w}'$ are the induced regular functions on $[V/\!\!/_{\theta } G]$,  $[V'/\!\!/_{\theta '} G']$, respectively. 
\end{definition}
Recall in \S \ref{sec: compare PV}, we had a finite group $G$ and a one-dimensional (not necessarily connected) commutative algebraic group $\Gamma$.
Let us rename $G$ by $G'$, $V$ by $V' = \A^m$,  etc.  This gives a GLSM $(\A^m, G', \CC^*_R, 0, w')$.  Let $(V, G, \CC^*_R, \theta, w)$ be an equivalent abelian \newterm{affine phase} i.e.\ assume that $\VG \cong B G'$ and that the LG space  obtained from $(V, G, \CC^*_R, \theta, w)$ agrees with that of $(\A^m, G', \CC^*_R, 0, w')$ in the sense of Definition~\ref{d:equivGLSM}.
In this section, we will show that the GLSM invariants associated to $(V, G, \CC^*_R, \theta, w)$ agree with those of $(\A^m, G', \CC^*_R, 0, w')$ and consequently, with 
the FJRW invariants constructed in \cite{PV}.

The moduli spaces $\cc S_{g,r} = LG_{g,r}(BG', d)$
and $LG_{g,r}(\VG, d)$ are  isomorphic, as can be seen, for instance, from the construction in \S~\ref{s:conFF}.  Thus we can restrict our attention to degree zero, where the moduli space is non-empty.
Let $[A_1 \xrightarrow{d_1} B_1]$ be the resolution of $\mathbb R\pi_*(\cV_1)$ and $U$ the open Deligne--Mumford substack of $\tot(A_1)$ defined in Theorem~\ref{c:swn} (see also Diagram \ref{d:pullback}).
Let $[\bar A \xrightarrow{\bar d} \bar B]$ denote the resolution of $\mathbb R\pi_*(\cV)$ over $U$ \eqref{e:roofbar}.  We may assume without loss of generality that this complex splits as 
$[\bar A_1 \oplus \bar A_2 \xrightarrow{\bar d_1, \bar d_2} \bar B_1 \oplus \bar B_2]$
where $[\bar A_i \xrightarrow{\bar d_i} \bar B_i]$ is a resolution of $\mathbb R\pi_*(\cV_i)$.
Recall that $\square $ lies in $ \tot(\bar A)$ over $U$, and $\{-\alpha, \beta\}$ is the Koszul factorization associated to the vector bundle $E$ equal to the pullback of $\bar B$ with section $\beta $ induced by  $(\bar d_1, \bar d_2)$ and cosection $\alpha$.

Let $Z$ denote the restriction of $\bar A_2$ to $LG_{g,r}(BG', 0)$.
Recall that $LG_{g,r}(BG', 0)$ is a closed substack of $U$.  
Note that ${\bar A_1}|_{LG_{g,r}(BG', 0)}$ 
has a tautological section induced by the map $\pi_*(\cV_1) \to \bar A_1$.  This gives a closed immersion of $LG_{g,r}(BG', 0)$ into the total space $\tot(\bar A_1)$ over $U$.  
Combining this with the identity map on $\bar A_2$ yields a closed immersion
\[j: Z \hookrightarrow \square  \subseteq \tot(\bar A).\]

Let $E'$ denote the pullback of the vector bundle $\bar B_2$ to $Z$.  The bundle $E'$ has a natural section $\beta '$ induced by $\bar d_2$, and a cosection $\alpha '$ defined by restricting $\alpha$.

\begin{proposition}
The factorizations $K' = \{-\alpha ', \beta '\}$ and $K = \{-\alpha, \beta\}$ are related by pushforward.
\end{proposition}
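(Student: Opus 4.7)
The plan is to apply Proposition~\ref{PV original} to the ambient Koszul factorization $K=\{-\alpha,\beta\}$ on $\square$, taking $V=E$ and $V_1=E_2$ (the pullback of $\bar B_2$). Under the splitting $\bar B=\bar B_1\oplus\bar B_2$, both $\beta$ and $\alpha$ decompose as $\beta=(\beta_1,\beta_2)$, $\alpha=(\alpha_1,\alpha_2)$, so the induced quotient section $\beta\bmod V_1$ is $\beta_1\in H^0(\square,E_1)$, and the ``residual'' Koszul factorization $\{-\alpha_2|_Z,\beta_2|_Z\}$ on $E'=E_2|_Z$ is exactly $K'=\{-\alpha',\beta'\}$.

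The main technical step is to identify $Z(\beta_1)\cap\square$ with $Z$, and to verify regularity. Unwinding Definition~\ref{d:square}, a point of $\square$ over $u\in U$ consists of $(a_1,a_2)\in\bar A_1|_u\oplus\bar A_2|_u$ with the compatibility $f_{A_1}(a_1)=u$ (viewing $u$ as a section of $A_1$). The section $\beta_1$ sends this to $\bar d_1(a_1)\in\bar B_1|_u$, which vanishes iff $a_1\in\ker\bar d_1\cong\pi_*\cV_1$. Combined with the compatibility, this forces $u\in\pi_*\cV_1\cap U$, and by Theorem~\ref{c:swn}(a) specialized to the affine phase (where $\cX=BG'$), this intersection is precisely $LG_{g,r}(BG',0)$. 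The fiber over such a $u$ is the remaining copy of $\bar A_2|_u$ (with $a_1$ pinned to the tautological section of $\pi_*\cV_1$), so $Z(\beta_1)\cap\square=\bar A_2|_{LG_{g,r}(BG',0)}=Z$, and this identification is visibly compatible with the closed immersion $j$. For regularity of $\beta_1$, note that $\square\to U$ is smooth (it is open in a vector bundle over $U$), so it suffices that $\beta_1|_U\in H^0(U,B_1)$ cut out $LG_{g,r}(BG',0)$ regularly; this is built into Theorem~\ref{c:swn}, since the resulting locus is smooth of codimension $\rank\bar B_1=\rank E_1$.

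Next, one must check the compatibility $\alpha^\vee\circ\beta=-\op{ev}^*(\boxplus_i w)$ is consistent with the decomposition so that the restriction to $Z$ recovers $(\alpha')^\vee\circ\beta'$. This is immediate from the block structure of $(\alpha,\beta)$ in the splitting $E=E_1\oplus E_2$, together with the observation that $\beta_1|_Z=0$ and $\alpha_1|_Z$ therefore contributes nothing to the pairing. With these inputs, Proposition~\ref{PV original} produces the desired isomorphism $\mathbb{R}j_*K'\cong K$ in $\dabsfact{[\square/\CC^*_R],-\op{ev}^*(\boxplus_i w)}$, from which the conclusion that $K$ and $K'$ are related by pushforward (in the sense of Definition~\ref{relatedbypush}, with the one-step chain $Z\hookrightarrow\square$) follows at once.

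The main obstacle is the potential hypothesis in Proposition~\ref{PV original}: it requires either $w=0$ on $\square$, or $w|_Z$ to be a non-zero-divisor together with properness of the critical loci. In our setting the ambient superpotential $-\op{ev}^*(\boxplus_i w)$ is nonzero, so we must either verify the non-zero-divisor condition on $Z$ (using the nondegeneracy of $w$ in the hybrid model and the fact that $\op{ev}$ is smooth onto $I\cc T$, so the pulled-back function is not a zero divisor on any component of $Z$), or appeal to the properness variant of the statement; both $Z(\alpha,\beta)$ and $Z(\alpha',\beta')$ are contained in $LG_{g,r}(\cZ,d)$, which is proper by Lemma~\ref{l:finalsupp}, so the properness side of the hypothesis is automatic. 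This is the one place where care with the statement of \cite[Proposition 4.3.1]{PV} is required.
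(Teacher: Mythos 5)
Your proposal follows the paper's own route: take $V_1=\bar B_2\subseteq E$ in Proposition~\ref{PV original}, identify $Z(\beta\bmod \bar B_2)\cap\square$ with $Z=\bar A_2|_{LG_{g,r}(BG',0)}$ via $j$, observe that the induced section and cosection of $E'=\bar B_2|_Z$ are $\beta'$ and $\alpha'$, and conclude $\mathbb{R}j_*K'\cong K$. The one substantive step, however, is the regularity of $\bar\beta_1=\beta\bmod\bar B_2$, and your justification for it is too quick. Theorem~\ref{c:swn}(a) identifies $Z(\beta_1)\subseteq U$ with $LG_{g,r}(\cX,d)$ but asserts nothing about its codimension; moreover the section you must control on $\square$ is valued in $\bar B_1$ (built from $\bar d_1$ and the tautological section of $\bar A_1$), not in the pullback of $B_1$ from $U$, so ``smoothness of $\square\to U$ plus regularity of $\beta_1|_U$'' is not literally the reduction ($\rank \bar B_1\neq\rank B_1$ in general). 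What actually closes this gap in the paper is a dimension count: $\dim Z=\dim LG_{g,r}(BG',0)+\rank\bar A_2=\dim\fM_{g,r}+\rank\bar A_2$ (using $\dim\cc S_{g,r}=\dim\fM_{g,r}$ from \cite[Prop.~3.2.6]{PV}), combined with $\dim \Mfrak^\orb_{g,r}(B\Gamma,0)_{\log}^\circ=\dim\fM_{g,r}-\chi(\mathbb{R}\pi_*\cV_1)$ from orbifold Riemann--Roch, yields $\dim Z=\dim\square-\rank(E/\bar B_2)$; since $\square$ is smooth, expected codimension gives regularity. You need to supply this count (or an equivalent one); as written you are asserting the key fact rather than proving it.

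On the other hand, your attention to the potential hypothesis of Proposition~\ref{PV original} is a genuine improvement over the paper, which applies the proposition silently. One correction, though: properness of $Z(\alpha,\beta)$ and $Z(\alpha',\beta')$ is not a freestanding alternative --- in the cited statement it is only the fallback \emph{when the superpotential vanishes identically}. The clean verification is componentwise: on components of $Z$ where every marking is narrow (each $V^{g_i}=0$) the function $-\op{ev}^*(\boxplus_{i=1}^r w)$ is identically zero and one invokes properness of the critical loci, which sit inside the proper stack $LG_{g,r}(\cZ,0)$; on the remaining components $Z$ is smooth, hence integral, so the nonzero pulled-back function is automatically a non-zero-divisor. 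With the dimension count restored and the hypothesis sorted componentwise, your argument is complete and coincides with the paper's.
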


\begin{proof}
This follows from a slight variation of the arguments of \S\ref{s:ind}.
Consider the inclusion of vector bundles $\bar B_2  \subseteq E$ over $\square$.  The quotient $E/\bar B_2$ is equal to $ \bar B_1$, and 
\[\beta \mod( \bar B_2) = \bar \beta_1.\]

The zero locus of $\bar \beta_1$ in $\tot(\bar A)$ is the total space $\tot(\pi_*(\cV_1) \oplus \bar B_2)$ over $U$.  After restricting to $\square$, the zero locus of $\bar \beta_1$ is exactly the inclusion of $Z$ by $j$.
Note that on each connected component, 
\[
\begin{array}{ll} \dim Z &= \dim LG_{g,r}(BG', 0) + \rank \bar A_2 \\
& = \dim \fM_{g,r} + \rank \bar A_2  \\
& = \dim \Mfrak^\orb_{g,r}(B\Gamma, 0)_{\log}^\circ  + \rank(\mathbb R \pi_*(\cV_1))+ \rank \bar A_2 \\
 &= \dim \square - \rank E/\bar B_2.
 \end{array}
\]
The first equality is from the definition of $Z$ and the second equality is from \cite[Proposition 3.2.6]{PV} or \cite[Theorem 2.2.6]{FJR13}. 
The third equality is due to the fact that $\Mfrak^\orb_{g,r}(B\Gamma, 0)_{\log}^\circ$ is a finite cover of $\Mfrak^\orb_{g,r}(BG , 0)^\circ$.  Therefore $\dim  \Mfrak^\orb_{g,r}(B\Gamma, 0)_{\log}^\circ  = \dim \Mfrak^\orb_{g,r}(BG , 0)^\circ = \dim \fM_{g,r} - \chi (\mathbb R \pi_*  \cP \times_{G } \mathfrak g )
= \dim \fM_{g,r} - \chi (\mathbb R \pi_*\cV_1)$, by orbifold Riemann--Roch \cite[Theorem~7.2.1]{AGV}.
The fourth equality is from the definitions of $\square$ and $E$.

We conclude that $\beta \mod( \bar B_2)$ is a regular section of $E/\bar B_2$.  We observe further that the section $\beta|_{Z} \in \Gamma(Z, E')$ is equal to $\beta '$, 
and that $\alpha '  = \alpha \mod((E')^\perp|_Z)$ by construction.  It follows from Proposition~\ref{PV original} that  
 \begin{eqnarray}\label{push_PV_Fact} \mathbb R j_*(\{-\alpha ', \beta '\}) = \{-\alpha, \beta\}, \end{eqnarray}  
 which completes the proof.
\end{proof}

\begin{theorem}\label{t:equivGLSM}
The invariants $\Lambda_{g,r,0}$ for the equivalent GLSMs $(V, G, \CC^*_R, \theta, w)$ and $(\A^m, G', \CC^*_R, \bar \theta' = 0, w')$ are equal.
\end{theorem}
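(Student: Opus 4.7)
The plan is to leverage the pushforward identity $\mathbb R j_* K' = K$ established in the preceding proposition, together with the projection formula and the Todd bookkeeping needed in Definition~\ref{d:invariants}, to conclude equality of the two invariants.

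First, I would use Proposition~\ref{p:factcomp} and Theorem~\ref{t:finind} (together with Lemma~\ref{l:surjadmi}) to show that the GLSM invariants of the affine model $(\A^m, G', \CC^*_R, 0, w')$ are computed by the Koszul factorization $K'$ on $Z = \tot(\bar A_2|_{\cc S_{g,r}})$. Indeed, for this GLSM we have $V'_1 = 0$, so the two-step construction of \S\ref{s:FF} collapses; the bundle $\bar A_2|_{\cc S_{g,r}}$ inherits an admissible resolution structure by restriction, and $Z$ plays precisely the role of $\square$ for the affine GLSM. In particular, its Todd correction is $\op{td}(\RR\pi_*\cV_2)^{-1}$ (since $\widetilde{U}'$ can be taken to be $\cc S_{g,r}$ itself, whose tangent bundle relative to $\fM^{\orb}_{g,r}(B\Gamma,0)_{\log}^{\circledcirc}$ is trivial).

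Next, I would apply the projection formula (Proposition~\ref{p:projform}) to $\mathbb R j_* K' = K$. The composition $\tilde p \circ j$ factors through the inclusion $i\colon\cc S_{g,r} \hookrightarrow \widetilde U$ (arranging the desingularization so that $\cc S_{g,r}$ embeds smoothly), and the evaluation map $\op{ev}\circ j$ agrees with the evaluation map of the affine GLSM under the identification $[V/\!\!/_\theta G] \cong [\A^m/G']$ of LG spaces. One obtains
\begin{equation*}
(\Phi_K)_*(\vec s) \;=\; \RR\tilde p_*\bigl(\LL\op{ev}^*(\boxtimes s_i)\otimes \RR j_* K'\bigr) \;=\; \RR i_*\,(\Phi_{K'})_*(\vec s).
\end{equation*}

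Finally, I would reconcile the Todd corrections. For the original GLSM the factor is $\op{td}(T_{\widetilde U/\fM})\,\op{td}(\RR\pi_*\cV)^{-1}$, while for the affine one it is $\op{td}(\RR\pi_*\cV_2)^{-1}$. On $\cc S_{g,r}$ the restriction of $T_{\widetilde U/\fM}$ equals $A_1$, and the admissible complex $[A_1\to B_1]$ resolves $\pi_*\cV_1$ there, so
\begin{equation*}
\op{td}(T_{\widetilde U/\fM})\,\op{td}(\RR\pi_*\cV_1)^{-1} \;=\; \op{td}(B_1).
\end{equation*}
This compensating factor $\op{td}(B_1)$ is precisely the one introduced by the HKR morphism applied to $\RR i_*$, via a Grothendieck--Riemann--Roch argument for the closed immersion $i$, in direct analogy with Theorem~\ref{t:ram} and the proof of Theorem~\ref{t:finind}. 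Combining this identification with Step~2 and pushing forward to $\sMbar_{g,r}$ by $\op{proj}_*$ yields $\Lambda_{g,r,0} = \Lambda^{\op{aff}}_{g,r,0}$. The main obstacle is the final Todd reconciliation: one must verify that the GRR-type correction for $\RR i_*$ exactly matches $\op{td}(B_1)$ after composition with $q^*$, $\ord$, and $\bar\phi_{\op{HKR}}$; this is a localized version of the bookkeeping already performed in Lemma~\ref{lem: MXvir} and Proposition~\ref{prop :GWcomp}, and requires slightly more care because $\widetilde U$ is only a resolution of $\overline U$, not a smooth stack in which $\cc S_{g,r}$ naturally sits.
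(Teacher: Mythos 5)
Your proposal is correct and follows essentially the same route as the paper: identify the affine-phase invariants with the transform of $K'=\{-\alpha',\beta'\}$ on $Z$, use $\mathbb R j_*K'=K$ plus the projection formula to get $(\Phi_K)_*=\tilde i_*\circ(\Phi_{K'})_*$, and reconcile the Todd factors via the K-theoretic identity $T_{\cc S_{g,r}/\widetilde U}=\RR\pi_*\cV_1- A_1=-B_1$ together with the Theorem~\ref{t:ram}-type compatibility of $\bar\phi_{\op{HKR}}$ with pushforward. The "obstacle" you flag at the end is handled in the paper exactly as you anticipate (the paper computes $T_{\cc S_{g,r}/\widetilde U}$ by factoring through $Z$ and $\square$, using regularity of $\bar\beta_1$, which yields the same class $-B_1$), so there is no remaining gap.
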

\begin{proof}
The proof follows a similar argument to Theorem~\ref{t:finind}.
Observe that by construction, the factorization $\{-\alpha_0, \beta_0\}$ of Proposition~\ref{p:factcomp} can be assumed without loss of generality to be exactly $\{-\alpha ', \beta '\}$ from the previous proposition.  Here $\tot(A_0)$ is identified with $Z$.
 Recall $\widetilde{U}$ denotes a desingularization of the closure of $U$, and let 
 \[\tilde i: \cc S_{g,r}  \to \widetilde{U}\]
 denote the inclusion.
By the previous proposition, 
\[\tilde i_* \circ (\Phi_{K '})_* =  (\Phi_K )_*.\]
The GLSM invariants for $(\A^m, G', \CC^*_R, 0, w')$ are by definition equal to 
 \[ \op{proj}_* \circ \tilde i_* \left({\op{td}}( \ominus  \mathbb R\pi_* \mathcal V_2) \cup \left( \bar \phi_{\op{HKR}} \circ({\Phi_{K '}})_* ( - ) \right) \right)\]
up to a scaling, where $\op{proj}_*$ is the projection to $\sMbar_{g,r}$.   The invariants for $(V, G, \CC^*_R, \theta, w)$ on the other hand are given by 
\[ \op{proj}_* \left({\op{td}}(T_{\widetilde{U}/\Mfrak^\orb_{g,r}(B\Gamma, d)_{\log}} \ominus  \mathbb R\pi_* \mathcal V) \cup \left( \bar \phi_{\op{HKR}} \circ{(\Phi_{K})_*}( - ) \right) \right)
\] after the same scaling.

The relative tangent bundle of $\tilde i$ is equal to 
 \begin{align*}
 &T_{\cc S_{g,r}/Z} + T_{Z/\square} + T_{\square/\tilde U} \\
 \cong& (- \bar A_2) +  (- \bar B_1) + (\bar A_1 + \bar A_2 -  A_1) \\
 \cong& \mathbb R\pi_* \cc V_1 - A_1,\end{align*}
 where the isomorphism $T_{Z/\square} \cong - \bar B_1$ follows from the fact proven in the previous proposition that $\bar \beta_1$ is a regular section of $\square$.
 Note further that the pullback of $T_{\widetilde{U}/\Mfrak^\orb_{g,r}(B\Gamma, d)_{\log}}$ via $\tilde i$ is equal to $A_1$.  In particular
 \[  -\mathbb R\pi_* \cV_2  = -\mathbb R\pi_*(\cV) + \mathbb R\pi_* \cc V_1 =  -\mathbb R\pi_*(\cV) + \tilde i^*(T_{\widetilde{U}/\Mfrak^\orb_{g,r}(B\Gamma, d)_{\log}}) + T_{\cc S_{g,r} / \widetilde{U}}.\] 
Applying the projection formula and \eqref{e:eq12}, we conclude that 
   \begin{align}
  &\tilde i_* \left( \td\left(\ominus \mathbb R\pi_*\cV_2\right) 
  \bar \phi_{\op{HKR}} ( -)\right) \label{e:ram3}
  \\  \nonumber
  = &\td\left(T_{\widetilde{U}/\Mfrak^\orb_{g,r}(B\Gamma, d)_{\log} } \ominus \mathbb R \pi_* \cV\right)\tilde i_* \left( \td( T_{\cc S_{g,r} / \widetilde{U}}) \bar \phi_{\op{HKR}} ( -)\right)\\
=&\td\left(T_{\widetilde{U}/\Mfrak^\orb_{g,r}(B\Gamma, d)_{\log}} \ominus \mathbb R \pi_* \cV\right)\bar \phi_{\op{HKR}} \tilde i_*  ( -). \nonumber
  \end{align}
 The result follows.
 \end{proof}
 
 \begin{corollary}
 Given an affine phase GLSM $(V, G, \CC^*_R, \theta, w)$ constructed as above, let 
 \[\bar w: [\VmodtG] = [\A^m/G'] \to \A^1\]
 be the corresponding singularity.
 Then the GLSM invariant $\Lambda_{g,r,0}(s_1, \ldots, s_r)$ is equal  to the  reduced invariant of \cite{PV} of the singularity after scaling by the factor $(m_1 \cdots m_r) \frac{\deg(\op{proj})}{\sigma_g \bg}$.
 \end{corollary}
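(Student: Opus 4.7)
The plan is to chain together the two preceding comparison results to produce the claim. First, I would invoke Theorem~\ref{t:equivGLSM}, which shows that if $(V, G, \CC^*_R, \theta, w)$ is an affine phase abelian GLSM equivalent (in the sense of Definition~\ref{d:equivGLSM}) to the PV-style GLSM $(\A^m, G', \CC^*_R, 0, w')$ associated to the singularity $\bar w\colon[\A^m/G']\to\A^1$, then their GLSM invariants agree:
\[
\Lambda^{(V,G,\CC^*_R,\theta,w)}_{g,r,0}(s_1,\dots,s_r) \;=\; \Lambda^{(\A^m,G',\CC^*_R,0,w')}_{g,r,0}(s_1,\dots,s_r).
\]
Since the hypotheses of the corollary are exactly that $(V, G, \CC^*_R, \theta, w)$ is an affine phase corresponding to the singularity $\bar w$, by definition $\VG \cong BG'$ and the induced potentials agree, so this equivalence is in force.

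Next, I would apply Theorem~\ref{p:PVcomp1} to the right-hand side of the above equality. That theorem computes precisely the GLSM invariants $\Lambda_{g,r,0}$ of the PV-style GLSM $(\A^m, G', \CC^*_R, 0, w')$ (i.e.\ the case $V_1 = \Spec(\CC)$ treated in \S\ref{sec: compare PV}) in terms of the reduced Polishchuk--Vaintrob invariants $\lambda^{PV}_{g,r}$ of the singularity, giving
\[
\Lambda^{(\A^m,G',\CC^*_R,0,w')}_{g,r,0}(s_1,\dots,s_r) \;=\; \frac{(m_1\cdots m_r)\deg(\op{proj})}{\sigma_g\bg}\, \lambda^{PV}_{g,r}(s_1,\dots,s_r).
\]
Combining these two equalities yields the claimed scaling factor, completing the proof.

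There is essentially no obstacle: both ingredients have already been established in full generality in the paper, and the corollary is a formal consequence. The only subtle point worth double-checking is that the conventions for $\sigma_g\bg$ and the marking orders $m_i$ used in Theorem~\ref{p:PVcomp1} match those implicit in the definition of the reduced PV invariants for the singularity $\bar w$, but since the PV-style GLSM $(\A^m,G',\CC^*_R,0,w')$ is literally the GLSM to which \cite{PV} is applied in \S\ref{sec: compare PV}, this bookkeeping is automatic. Thus the proof reduces to a one-line concatenation of Theorems~\ref{t:equivGLSM} and~\ref{p:PVcomp1}.
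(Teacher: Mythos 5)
Your proposal is correct and matches the paper's own proof exactly: the corollary is obtained by combining Theorem~\ref{t:equivGLSM} (equality of invariants for equivalent GLSMs) with Theorem~\ref{p:PVcomp1} (the comparison with the reduced PV invariants, including the scaling factor). No further argument is needed.
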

 
  \begin{proof}  This follows immediately from the previous theorem and Theorem~\ref{p:PVcomp1}.
   \end{proof}

\bibliographystyle{alpha} 

\bibliography{Part1bib}{}

\end{document}